\def \cA{\mathcal{A}}
\def \cAcirc{\mathcal{A}^{\raise1pt\hbox{${\scriptstyle\circ}$}}}
\def \cB{\mathcal{B}}
\def \cC{\mathcal{C}}
\def \cE{\mathcal{E}}
\def \cF{\mathcal{F}}
\def \cG{\mathcal{G}}
\def \cH{\mathcal{H}}
\def \cJ{\mathcal{J}}
\def \cK{\mathcal{K}}
\def \cL{\mathcal{L}}
\def \cM{\mathcal{M}}
\def \cP{\mathcal P}
\def \cR{\mathcal{R}}
\def \cS{\mathcal{S}}
\def \cU{\mathcal{U}}
\def \cT{\mathcal{T}}
\def \P{\mathsf P}
\def \E{\mathsf E}
\def \N{\mathbb{N}}
\def \H{\mathbb{H}}
\def \R{\mathbb{R}}
\def \F{\mathbb F}
\def \G{\mathbb G}
\def \ud{\mathrm{d}}
\newcommand{\roundbrackets}[1]{(#1)}
\newcommand\optional[1]{\tensor*[^o]{{\kern-1pt #1}}{}}
\newcommand\previsible[1]{\tensor*[^p]{{\kern-1pt #1}}{}}
\def\punctuationSpace{\@ifnextchar.{}{\@ifnextchar,{}{\@ifnextchar;{}{ }}}}
\newcommand{\cadlag}{c\`adl\`ag\punctuationSpace}
\newcommand{\caglad}{c\`agl\`ad\punctuationSpace}
\newcommand\as{\mbox{-a.s.}\punctuationSpace}
\newcommand\Gam{\Gamma}
\newcommand{\eps}{\varepsilon}
\newcommand{\ind}{\mathbf{1}}
\newcommand\indd[1]{\ind_{\{#1\}}}
\newtheorem{theorem}{Theorem}[section]
\newtheorem{lemma}[theorem]{Lemma}
\newtheorem{corollary}[theorem]{Corollary}
\newtheorem{proposition}[theorem]{Proposition}
\newtheorem{definition}[theorem]{Definition}
\newtheorem{remark}[theorem]{Remark}
\newtheorem{assumption}[theorem]{Assumption}
\newtheorem{convention}[theorem]{Convention}
\newtheorem{notation}[theorem]{Notation}
\theoremstyle{definition}
\DeclareMathOperator*{\esssup}{ess\,sup}
\DeclareMathOperator*{\essinf}{ess\,inf}
\definecolor{ballblue}{rgb}{0.13, 0.67, 0.8}
\title[Dynkin games with asymmetric information]{Martingale theory \\ for Dynkin games with asymmetric information}
\thanks{{\bf Acknowledgements}: T.\ De Angelis was partially supported by EU -- Next Generation EU -- PRIN2022 (2022BEMMLZ) CUP: D53D23005780006 and PRIN-PNRR2022 (P20224TM7Z) CUP: D53D23018780001.}
\author[De Angelis]{Tiziano De Angelis}
\author[Palczewski]{Jan Palczewski}
\author[Smith]{Jacob Smith}
\subjclass[2020]{91A27, 91A55, 91A15, 60G07, 60G40}
\keywords{Dynkin games; zero-sum games; partial information; asymmetric information; Nash equilibrium; martingale theory}
\address{T.\ De Angelis: School of Management and Economics, Dept.\ ESOMAS, University of Torino, Corso Unione Sovietica, 218 Bis, 10134, Torino, Italy; Collegio Carlo Alberto, Piazza Arbarello 8, 10122, Torino, Italy.}
\email{\href{mailto:tiziano.deangelis@unito.it}{tiziano.deangelis@unito.it}}
\address{J.\ Palczewski: Faculty of Mathematics, Wroc\l{}aw University of Science and Technology, Wybrze\.{z}e Wyspia\'{n}skiego 27,
50-370, Wroc\l{}aw, Poland.}
\email{\href{mailto:jan.palczewski@pwr.edu.pl}{jan.palczewski@pwr.edu.pl}}
\address{J.\ Smith: School of Mathematics, University of Leeds, Woodhouse Lane, LS2 9JT Leeds, UK.}
\email{\href{mailto:mm15js@leeds.ac.uk}{mm15js@leeds.ac.uk}}
\date{\today}
\numberwithin{equation}{section}
\begin{document}

\begin{abstract}
This paper provides necessary and sufficient conditions for a pair of randomised stopping times to form a saddle point of a zero-sum Dynkin game with partial and/or asymmetric information across players. The framework is non-Markovian and covers essentially any information structure. Our methodology relies on the identification of suitable super and submartingales involving players' equilibrium payoffs. Saddle point strategies are characterised in terms of the dynamics of those equilibrium payoffs and are related to their Doob-Meyer decompositions.
\end{abstract}

\maketitle
\tableofcontents

\newpage

\section{Introduction}
Zero-sum optimal stopping games (Dynkin games) in which players have access to different filtrations are an emerging field of continuous-time stochastic game theory. Recent results by \cite{de2022value} have shown that such games admit a saddle point in randomised stopping times in a general non-Markovian setting. Players' filtrations are arbitrary and only need to satisfy the usual conditions. The underlying payoff processes are bounded in expectation, \cadlag, measurable but not necessarily adapted to either of the players' filtrations.

The results in \cite{de2022value} concern the existence of saddle points but do not offer a dynamic picture which is necessary for characterisation and construction of players' optimal strategies. The present paper addresses this shortcoming by offering three main contributions: (i) \emph{necessary} and \emph{sufficient} conditions for a pair of randomised stopping times to form a saddle point (equilibrium) of the game, (ii) a dynamic characterisation of the game, including super/submartingale conditions for players' equilibrium value processes\footnote{In line with game-theoretic terminology for repeated games, these are also referred to as continuation value processes, because they represent the optimal value that a player can obtain by continuing to play the game from a given instant of time.} and a representation of optimal strategies, (iii) an application of the abstract theory to two classes of games with asymmetric information. From the methodological perspective we break with the classical approach to zero-sum games which is based on the study of the game's value. The generality of the information structure leads us naturally to consider zero-sum games through the lens of nonzero-sum games in which each player's equilibrium value process has dynamics adapted to the player's own filtration. The zero-sum feature of the game is recovered thanks to an equivalence in expectation between the two players' payoffs, which yields a quantity that can be thought of as the {\em ex-ante} value of the game (i.e., the game's value before players have access to the pieces of information that create asymmetry in the game).

To obtain necessary and sufficient conditions for a saddle point, we build on the general theory of stochastic processes in the spirit of El Karoui's seminal work \cite{elkaroui1981} which was brought in touch with Dynkin games by Lepeltier and Maingueneau \cite{lepeltier1984} in a full information setup. Unlike those contributions, games with asymmetric information {\em cannot} be characterised by a value process common to both players. Consequenty, their stopping strategies {\em cannot} be written purely in terms of the coincidence of the value process with the respective payoff process. Instead, starting from general families of random variables and proceeding with aggregation results, we develop a martingale theory for the players' equilibrium value processes and optimal randomised strategies. 

We show that equilibrium value processes can be described by optional semi-martingales (with respect to each player's own filtration) which evolve above/below suitable optional projections of players' stopping payoffs (depending on whether a player is a maximiser or a minimiser). Differently from the full information game, the aggregation step and the super/submartingale properties require a dynamic representation of the game involving non-decreasing processes that ``generate'' the players' optimal randomised stopping times; these generating processes are conditional cumulative distribution functions of the players' 
randomised stopping times; due to the asymmetry of information, each player must project their opponent's stopping rule (e.g., optimal generating process) onto the observed filtration. We also obtain exact formulae for jumps in the dynamics of the equilibrium value processes and determine the support of the generating processes for the equilibrium pair of randomised stopping times. Finally, we deduce a relationship between the Doob-Meyer decompositions of the equilibrium value processes and the generating processes of the corresponding equilibrium stopping times. That relationship can be used to construct optimal randomised stopping times in concrete applications of the theory. 

Our paper provides theoretical foundations for the study of stopping games with asymmetric information in general non-Markovian setups, where the information structure is essentially arbitrary. We believe this to be the first comprehensive treatment of such games in their generality. The literature on Dynkin games with asymmetric information is patchy and problems have been solved on a case-by-case basis with ad-hoc methods (cf.\ next section for details). In this work we focus on the dynamic aspects of the game and devise a methodology that allows a more systematic approach to the construction of saddle points in randomised strategies. We remark that randomised strategies are indeed necessary for the construction of saddle points, because simple counterexamples with no saddle point in pure strategies are provided in, e.g., \cite{Grun2013} and \cite[Sec.\ 6]{de2022value}. 

Beyond the abstract theory, we demonstrate the scope of our study by specialising to two natural classes of games with asymmetric information and showing that our methods lead to computable expressions. In the first class of games, motivated by the seminal paper \cite{Grun2013}, there are finitely many payoff regimes. One player is fully informed and learns the realisation of the regime at the beginning of the game. The other player is only aware of its probability distribution. A fully worked out example of a game from this class is available in \cite[Ch.\ 6]{smith2024martingale}. In the second class of games, the asymmetry of information is analogous albeit the regime affects only the drift of the underlying observable diffusion which determines the payoff; this generalises a specific problem solved in \cite{DEG2020}. The difference between the first and the second class of games lies in the available learning opportunities for the less-informed player. In the first class, this player can only infer information about the regime from the lack of actions of the other player. In the second class, an additional source of information comes in the form of the observations of the underlying diffusion, which can be utilised through the application of the stochastic filtering theory. 

\subsection{Related literature}
The study of zero-sum Dynkin games, formulated by Dynkin \cite{dynkin1969}, dates back to the 1970s in the classical set-up where players have full and symmetric information. Although in this introduction we focus on the continuous-time version of the problem, we acknowledge the existence of a vast body of work in the discrete-time setting, see, e.g., \cite{domansky2002}, \cite{kiefer1971optimal}, \cite[Ch.\ VI]{neveu}, \cite{rosenberg2001}, \cite{yasuda1985}. 

The theory of zero-sum Dynkin games has been developed over the past five decades with methods ranging from PDEs to BSDEs and the theory of Markov processes. Early contributions were due to Bensoussan and Friedman \cite{bensoussan1974}, Bismut \cite{bismut1977} and Stettner \cite{stettner1982}, among others. A milestone in the general (non-Markovian) theory was the work of Lepeltier and Maingueneau \cite{lepeltier1984}, who used tools from the theory of stochastic processes to prove the existence and obtain a characterisation of the game's value and of the equilibrium stopping times.

Following this, the research on Dynkin games slowed for nearly two decades before being revived in the early 2000s. Hamad\`ene and Lepeltier \cite{hamadene2000reflected} applied BSDE methods to solve mixed control-stopping games, while Kifer \cite{kifer2000} introduced an application of Dynkin games in mathematical finance. Relaxation of classical conditions on the payoffs sparked a new wave of general results on the existence of a value in randomised stopping times in a non-Markovian setting, see, e.g., Touzi and Vieille \cite{TouziVieille2002} and Laraki and Solan \cite{LarakiSolan2005}. In parallel, Ekstr\"om and Peskir developed the theory for general Markovian games in \cite{ekstrom2008}, while Ekstr\"om and Villeneuve \cite{ekstrom2006} solved the problem for one-dimensional diffusions under relaxed integrability conditions. More recently, there has been a new drive towards the development of a general theory for Markovian equilibria in randomised stopping times for Dynkin games with full information. The field is growing rapidly with the most recent contributions by D\'ecamps, Gensbittel and Mariotti \cite{decamps2024mixed}, \cite{decamps2022mixed},  Christensen and Lindens\"jo \cite{christensen2024general}, and Christensen and Schultz \cite{christensen2024existence}.

While the full-information case has been extensively studied, 
the literature on partial and asymmetric information games is more recent and less developed. We now review the main contributions in this direction. The first work was due to Gr\"un \cite{Grun2013} who formulated a class of Markovian zero-sum Dynkin games with asymmetric information and diffusive underlying dynamics. The asymmetry arises from a discrete, finite-valued random variable that enters into the payoff functions along with the underlying diffusion. The player who observes this variable at the outset of the game is strictly more informed about the payoff processes than the opponent who only learns it when the game ends. Gr\"un studied the game with methods based on viscosity solutions of variational inequalities, inspired by the work of Cardaliaguet and Rainer \cite{cardaliaguet2009} on stochastic differential games with asymmetric information. In \cite{Grun2013}, the author showed the existence of the game's value and of an optimal strategy for the more informed player. Numerical methods for the solution of the associated variational problem were studied many years later by Ba{\v{n}}as et al.\ \cite{bavnas2025numerical}. Using similar methods as \cite{Grun2013}, Gensbittel and Gr\"un \cite{GenGrun2019} proved the existence of a saddle point in a game where each player observes only their private finite-state continuous-time Markov chain, while the payoff is a function of both Markov chains. 

In both \cite{Grun2013} and \cite{GenGrun2019}, a player's learning is restricted to inference from the opponent’s actions (or inaction). A different model is considered by De Angelis, Ekström, and Glover \cite{DEG2020}, who analysed a bi-valued regime influencing the drift of the diffusion observed by both players. The realisation of the regime is revealed to the informed player at the outset. The uninformed player must infer it from the diffusion path and, as in previously discussed papers, from the opponent's behaviour. The authors constructed a saddle point based on an educated guess followed by a verification theorem. Their approach is difficult to generalise. The variational problem obtained in \cite{DEG2020} is conceptually distinct from those in \cite{Grun2013, GenGrun2019} and closer in spirit to the theory that we develop here. 

As we already mentioned in the previous section, the recent paper \cite{de2022value} provided minimal sufficient conditions for the existence of a value and a saddle point in non-Markovian Dynkin games with partial and asymmetric information. The present paper continues that line of research by characterising the dynamics of equilibrium value processes and optimal strategies through necessary and sufficient conditions. 

Our focus here is exclusively on asymmetric-information games. We therefore do not review the literature on games with partial but \emph{symmetric} information (see, e.g., \cite{DGV2017}), where players are equally uninformed. Such models can typically be reformulated within the frameworks of \cite{lepeltier1984} or \cite{ekstrom2008} by means of Bayesian filtering.

Finally, we mention related but distinct works on nonzero-sum partial-information games. For instance, a recent work by Kwon and Palczewski \cite{kwon2023exit} considered a nonzero-sum stopping game modelling the exit problem from a duopoly, where both players' random exit values are private and unobservable by the opponent. The problem falls outside our zero-sum framework, but its structure shares similarities with the games that we analyse here. Although a Nash equilibrium is found in pure stopping times, mathematical tools used to represent the opponent's behaviour (due to the private information) are akin to randomised strategies. In this context, both players are equally uninformed and they learn solely from the actions of their opponents. 

\subsection{Structure of the paper}

The problem is formulated in Section \ref{sec:setting}. Sections \ref{sec:neccond} and \ref{sec:suffcond} contain necessary and sufficient conditions for a saddle point, respectively. In particular, Theorem \ref{thm:value} aggregates the players' equilibrium value processes into optional semi-martingales and it quantifies the size of their jumps. Proposition \ref{prop:link} establishes the connection between the two players' equilibrium value processes via an averaging procedure that can be interpreted as evaluating the game's {\em ex-ante} value (cf.\ also Corollary \ref{corr:link}). Proposition \ref{prop:support} and Corollary \ref{cor:support} establish further properties that characterise the equilibrium strategies and equilibrium value processes. Theorem \ref{thm:suff_saddle} is the main result of Section \ref{sec:suffcond}. It provides sufficient conditions for a saddle point in terms of a suitable quadruple of stochastic processes. That result is refined and reinterpreted in various ways in the subsequent statements of the section in order to provide a transparent formulation of all the key ingredients. In Section \ref{sec:examples} we apply our results to two particularly representative classes of problems. In Section \ref{sec:partial} we generalise Gr\"un's original problem from  \cite{Grun2013} to a non-Markovian setting. In Section \ref{sec:partdyn} we apply our methods to the class of games with partially observed dynamics motivated by \cite{DEG2020}. Finally, Section \ref{sec:PDE} provides an informal, yet detailed connection between our martingale methods and PDE characterisation of equilibrium value functions. This provides a theoretical justification for the variational problem used by \cite{DEG2020} in their guess-and-verify approach and a comparison with Gr\"un's variational approach in \cite{Grun2013}. The paper concludes with several technical results collected in Appendices.


\section{Setting and preliminaries}\label{sec:setting}

In this section we formally introduce the problem in a framework very similar to the one in \cite{de2022value}.  
Let $T\in(0,\infty)$. All definitions that follow remain valid in the case $T=\infty$ with a one-point compactification of $[0,\infty)$; then $[0, \infty]$ is mapped homeomorfically into $[0,1]$ and the game is considered on the latter, see also \cite{de2022value}.\footnote{When $T=\infty$, the space $\mathcal{S}$ in \cite[Section 5]{de2022value} should have been defined with a weighted Lebesgue measure such that the measure of $[0, \infty]$ is finite; all remaining arguments in the paper remain valid. Alternatively, one can map homemorfically $[0, \infty]$ into $[0,1]$ and note that all assumptions remain valid. It is the latter approach that we follow in this paper.} Let $(\Omega,\cF,\P)$ be a complete probability space, equipped with a right-continuous filtration $\F:=(\cF_t)_{t\in[0,T]}$ completed with all $\P$-null sets. We emphasise that $\cF_T\subsetneq \cF$ because we need $\cF$ to be sufficiently rich to accommodate so-called randomisation devices, according to Definition \ref{def:rst}. In the paper, all equations and inequalities between random variables are meant $\P$\as unless stated otherwise. Let $\cL_b(\P)$ be the Banach space of c\`adl\`ag $\cB([0,T])\times\cF$-measurable processes $X=(X_t)_{t\in[0,T]}$ with finite norm
\[
\|X\|_{\cL_b(\P)}:=\E\Big[\sup_{0\le t\le T}|X_t|\Big]<\infty.
\] 

There are two players in the game and they have access to the information contained in two different filtrations. In particular, we consider right-continuous filtrations $\F^1:=(\cF^1_t)_{t\in[0,T]}$ and $\F^2:=(\cF^2_t)_{t\in[0,T]}$, which are both contained in $\F$ (i.e., $\cF^1_t\vee\cF^2_t\subseteq\cF_t$ for all $t\in[0,T]$) and completed with $\P$-null sets. We assume that the first player (Player 1) has access to $\F^1$ and the second player (Player 2) has access to $\F^2$. Player 1 chooses a random time $\tau$ and Player 2 chooses a random time $\sigma$. At time $\tau\wedge\sigma$ Player 1 delivers a payoff $\cP(\tau,\sigma)$ to Player 2. Therefore, Player 1 (the $\tau$-player) is a minimiser while Player 2 (the $\sigma$-player) is a maximiser.
The payoff exchanged between players is of the following form
\[
\cP(\tau,\sigma)\coloneqq f_\tau \ind_{\{\tau<\sigma\}}+h_\tau \ind_{\{\tau=\sigma\}}+g_\sigma \ind_{\{\sigma<\tau\}},
\] 
where $f$, $g$, $h$ are stochastic processes satisfying assumptions specified below.
\begin{assumption}\label{ass:payoff}
The payoff processes in the game are $f,g,h \in \cL_b(\P)$ adapted to the filtration $\F$ and such that $f_t\ge h_t\ge g_t$ for all $t\in[0,T]$, $\P$-a.s. 
\end{assumption}
We notice that, in general, the payoff processes are not adapted to the players' filtrations and therefore they are not fully observable by either player. Players choose random times $\sigma$ and $\tau$ from the class of randomised stopping times with respect to their observed filtrations. In particular, we will say that Player 1 chooses $\tau$ as an $\F^1$-randomised stopping time and Player 2 chooses $\sigma$ as an $\F^2$-randomised stopping time, according to the next definition (that generalises slightly the one used in \cite{de2022value}). From now on, and unless otherwise specified, we denote $\G:=(\cG_t)_{t\in[0,T]}\subseteq \F$ a generic right-continuous filtration, completed with $\P$-null sets, which we use for the statement of some general definitions and results.

\begin{definition}\label{def:rst}
Given a $\G$-stopping time $\theta$, we introduce a class of processes
\begin{align*}
\cAcirc_\theta(\G):=\{\rho:&\,\text{$\rho$ is $\G$-adapted with $t\mapsto \rho_t(\omega)$ c\`adl\`ag and non-decreasing,}\\
&\text{$\rho_{\theta-}(\omega)=0$ and $\rho_T(\omega)=1$ for all $\omega\in\Omega$}\}.
\end{align*}
We say that $\eta$ is a $\G$-randomised stopping time {\em after} time $\theta$, with {\em generating process} $\rho$ and {\em randomisation device} $Z$, if
\[
\eta=\inf\{t\in[0,T]:\rho_t>Z\},
\]
where $\rho\in\cAcirc_\theta(\G)$ and $Z\sim U([0,1])$ is a random variable independent of $\cF_T$. 
 
Clearly $\P(\theta\le \eta\le T)=1$ and  the class of such randomised stopping times is denoted by $\cT^R_\theta(\G)$.
In the context of the game, the randomisation device for Player 1 is independent from the randomisation device for Player 2.
\end{definition}

For $\eta\in\cT^R_\theta(\G)$, we will often use the notation 
\begin{align}\label{eq:etau}
\eta(\rho,z)=\inf\{t\in[0,T]:\rho_t>z\},\quad\text{for $z\in[0,1]$},
\end{align}
and even write $\eta(z)$, when the underlying generating process is clear from the context.
Given a $\G$-stopping time $\theta\in[0,T]$ we also use the notation 
\[
\cT_\theta(\G):=\big\{\eta:\text{$\eta$ is $\G$-stopping time with $\eta\in[\theta,T]$, $\P$-a.s.}\big\},
\]
for the class of $\G$-stopping times {\em after} time $\theta$.
It is clear that $\cT_\theta(\G)\subset\cT^R_\theta(\G)$.

Given a pair $(\tau,\sigma)\in\cT^R_0(\F^1)\times \cT^R_0(\F^2)$ and a $\sigma$-algebra $\cH_0\subset\cF^1_0\cap\cF^2_0$, representing shared information at time zero\footnote{Since $\cF^i_0$ is not necessarily trivial, we can cover examples like $\cH_0=\cF^X_\lambda$, where $\F^X=(\cF^X_t)_{t\in[0,T]}$ is the filtration generated by a stochastic process $(X_t)_{t\in[0,T]}$ and $\lambda$ is an $\F^X$-stopping time.}, the players evaluate the associated expected payoff of the game by \[\E[\cP(\tau,\sigma)|\cH_0].\] The upper and lower value of the game {\em at time zero} read
\begin{align}\label{eq:V}
\overline V\coloneqq \essinf_{\tau\in\cT^R_0(\F^1)}\esssup_{\sigma\in\cT^R_0(\F^2)}\E[\cP(\tau,\sigma)|\cH_0]\quad\text{and}\quad \underline V\coloneqq\esssup_{\sigma\in\cT^R_0(\F^2)}\essinf_{\tau\in\cT^R_0(\F^1)}\E[\cP(\tau,\sigma)|\cH_0].
\end{align}
In \cite{de2022value} it is shown that under Assumption \ref{ass:payoff} and mild technical assumptions\footnote{Here we do not impose these assumptions because we are interested in a characterisation of optimal strategies rather than in their existence. However, for the reader's convenience we recall them: ${}^pf_t \le f_{t-}$ and ${}^pg_t \ge g_{t-}$ for $t \in (0, T)$, and ${}^pf_T = f_{T-}$ and ${}^pg_T = g_{T-}$, where ${}^p(\cdot)$ denotes the $\F$-previsible projection.} they indeed coincide, i.e., $\underline V=\overline V\eqqcolon V$, and $V$ is called the value of the game at time zero. It is also shown that both players have an optimal strategy, that is, the game admits a saddle point $(\tau_*,\sigma_*)\in\cT^R_0(\F^1)\times\cT^R_0(\F^2)$ such that
\[
\E[\cP(\tau_*,\sigma)|\cH_0]\le\E[\cP(\tau_*,\sigma_*)|\cH_0]\le \E[\cP(\tau,\sigma_*)|\cH_0] 
\]
for all other pairs $(\tau,\sigma)\in \cT^R_0(\F^1)\times\cT^R_0(\F^2)$. An important first step in the derivation of those results is the following observation (cf.\ \cite[Prop.\ 4.4]{de2022value}): for a given pair $(\tau,\sigma)\in\cT^R_0(\F^1)\times\cT^R_0(\F^2)$ with generating processes $(\xi,\zeta)\in\cA^\circ_0(\F^1)\times\cA^\circ_0(\F^2)$, it holds
\begin{align}\label{eq:payoffxizeta}
\E\big[\cP(\tau,\sigma)\big|\cH_0\big]=\E\Big[\int_{[0,T)}f_t(1-\zeta_t)\ud \xi_t+\int_{[0,T)}g_t(1-\xi_t)\ud \zeta_t+\sum_{t\in[0,T]}h_t\Delta\zeta_t\Delta\xi_t\Big|\cH_0\Big].
\end{align}
The argument is performed pathwise and it uses the well-known change of variable of integration from \cite[Prop.\ 0.4.9]{revuzyor}.
We will use this formula and suitable variants thereof several times throughout the paper.
\begin{notation}\label{not:equil}
From now on we denote $(\xi^*,\zeta^*)\in\cAcirc_{0}(\F^1)\times \cAcirc_{0}(\F^2)$ the pair of generating processes of a saddle point $(\tau_*,\sigma_*)\in \cT^R_0(\F^1)\times\cT^R_0(\F^2)$ for the game starting at time zero as in \eqref{eq:payoffxizeta}. 
\end{notation}

A dynamic formulation of the game requires a definition of upper and lower value at any random time $\theta$, much in the spirit of the formulation provided by \cite{lepeltier1984} for stopping games with full information. However, such approach also requires a definition of {\em conditional} expected payoff at time $\theta$ and, due to the different filtrations available to the two players, there is no unique way to choose the conditioning $\sigma$-algebra. Likewise, it is not clear what random times $\theta$ are to be considered. One possible approach would be to work with the common filtration shared by both players, 
$\F^{1,2}=(\cF^{1,2}_t)_{t\in[0,T]}$ with $\cF^{1,2}_t\coloneqq \cF^1_t\cap\cF^2_t$. Then, for $\theta\in\cT^R_0(\F^{1,2})$ we could consider the conditional upper and lower values
\begin{align}\label{eq:Vtheta}
\overline{V}(\theta)\coloneqq
\essinf_{\tau\in\cT^R_\theta(\F^1)}\esssup_{\sigma\in\cT^R_\theta(\F^2)}\E\big[\cP(\tau,\sigma)\big|\cF^{1,2}_\theta\big]
\quad\text{and}\quad \underline{V}(\theta)\coloneqq\esssup_{\sigma\in\cT^R_\theta(\F^2)}\essinf_{\tau\in\cT^R_\theta(\F^1)}\E\big[\cP(\tau,\sigma)|\cF^{1,2}_\theta\big].
\end{align}
Existence of a value $V(\theta)=\overline{V}(\theta)=\underline{V}(\theta)$ and of a saddle point can be deduced by the argument of proof of \cite[Thm.\ 2.6]{de2022value}. Methods as those that we will illustrate in detail in later sections (cf.\ Corollary \ref{corr:link}) allow us to aggregate the family $\{V(\theta),\,\theta\in\cT^R_0(\F^{1,2})\}$ into a $\F^{1,2}$-adapted stochastic value process $(V_t)_{t\in[0,T]}$. However, this formulation is subject to a number of drawbacks. We mention two important ones: (i) since $\F^{1,2}\subset\F^i$, $i=1,2$, the conditioning is not properly keeping track of players' updates of their information over time (this becomes particularly apparent when $\cF^1_t\cap\cF^2_t=\{\varnothing,\Omega\}$ for all $t\in[0,T]$); (ii) the dynamics of the value process $(V_t)_{t\in[0,T]}$ does not, in general, reveal optimality conditions for the saddle point exactly because of the insufficient information content of the filtration $\F^{1,2}$. For these reasons we take a different approach, considering each player's expected payoff, for a fixed strategy of their opponent. By doing that, we formally recast our zero-sum game as a nonzero-sum one. We proceed to illustrate the details in the next section.

\subsection{Players' subjective views and equilibrium values as families of random variables}
As the game proceeds, players acquire more information via their filtration and, crucially, via actions of their opponent (or rather the lack of action, in the sense that they learn for as long as their opponent has not stopped). This naturally leads to a notion of players' dynamic {\em subjective view} of the game. In order to capture that idea we need to work under dynamically changing probability measures, which motivates the next definition. 
\begin{definition}
Given a $\sigma$-algebra $\cG \subseteq \cF$, let
\[
\cR(\cG)\coloneqq\{\Pi:\text{$\Pi$ is $\cF$-measurable with $\Pi\ge 0$ and $\E[\Pi|\cG]=1$\}.}
\]
For $\Pi\in\cR(\cG)$ we denote by $\P^\Pi$ the probability measure defined by
\[
\P^\Pi(A)=\E[ 1_A\Pi],\quad\text{for $A\in\cF$}.
\]
\end{definition}
The condition $\E[\Pi|\cG]=1$ satisfied by $\Pi\in\cR(\cG)$ is stronger than the usual condition $\E[\Pi]=1$, needed for the change of measure. 
Indeed we observe that for $\Pi\in\cR(\cG)$
\[
\P^\Pi(A|\cG)=\E^\Pi[1_A|\cG]=\frac{\E[\Pi 1_A|\cG]}{\E[\Pi|\cG]}=\E[\Pi 1_A|\cG],
\]
and therefore we have the interpretation
\begin{align}\label{eq:condRNd}
\Pi=\frac{\ud \P^\Pi(\,\cdot\,|\cG)}{\ud \P(\,\cdot\,|\cG)}.
\end{align}

Recall the notation for an equilibrium pair $(\xi^*,\zeta^*)$ from Notation \ref{not:equil}. For our purposes, we are particularly interested in the dynamic changes of measure induced by the families of random variables 
\begin{align}\label{eq:Pi}
\begin{split}
&\Pi^{*,1}_\theta\coloneqq\frac{1-\zeta^*_{\theta-}}{\E[1-\zeta^*_{\theta-}|\cF^1_\theta]},\quad\widehat\Pi^{*,1}_\theta\coloneqq\frac{1-\zeta^*_{\theta-}}{\E[1-\zeta^*_{\theta-}|\cF^1_{\theta-}]},\quad\theta\in\cT_0(\F^1),\\
&\Pi^{*,2}_\gamma\coloneqq\frac{1-\xi^*_{\gamma-}}{\E[1-\xi^*_{\gamma-}|\cF^2_\gamma]},\quad\widehat\Pi^{*,2}_\gamma\coloneqq\frac{1-\xi^*_{\gamma-}}{\E[1-\xi^*_{\gamma-}|\cF^2_{\gamma-}]},\quad\gamma\in\cT_0(\F^2),
\end{split}
\end{align}
where the ratio is defined to be $1$ whenever the denominator is $0$. We emphasise that the random variables in \eqref{eq:Pi} are defined $\P$-a.s.\ as the conditional expectations are defined $\P$-a.s.\ and the set of zero probability depends on $\theta$ and $\gamma$, respectively.  We notice that $\Pi^{*,1}_\theta\in\cR(\cF^1_\theta)$ and $\Pi^{*,2}_\gamma\in\cR(\cF^2_\gamma)$ whereas $\widehat\Pi^{*,1}_\theta\in\cR(\cF^1_{\theta-})$ and $\widehat\Pi^{*,2}_\gamma\in\cR(\cF^2_{\gamma-})$, by construction. Moreover, when the filtrations $\F^1$ and $\F^2$ are continuous we have $(\widehat\Pi^{*,1}_\theta,\widehat\Pi^{*,2}_\gamma)=(\Pi^{*,1}_\theta,\Pi^{*,2}_\gamma)$.

To link random variables $\Pi^{*, 1}_\theta$ and $\Pi^{*,2}_\gamma$ to players' views on the remainder of the game, we notice that, for $\theta \in \cT_0(\F^1)$, 
\[
\P(\sigma_* \ge \theta | \cF_T) = \P(\zeta^*_{\theta-} \le Z_\zeta | \cF_T) = 1 - \zeta^*_{\theta-}, 
\]
where $Z_\zeta$ is the randomisation device of Player 2. Upon conditioning on $\cF^1_\theta$, we further have $\P(\sigma_* \ge \theta | \cF^1_\theta) = \E[1 - \zeta^*_{\theta-} | \cF^1_\theta]$. This offers a new perspective on $\Pi^{*,1}_\theta$:
\begin{equation}\label{eqn:belief2}
\Pi^{*,1}_\theta = \frac{\P(\sigma_* \ge \theta | \cF_T)}{\P(\sigma_* \ge \theta | \cF^1_\theta)}
= \frac{\E[\indd{\sigma_* \ge \theta} | \cF_T]}{\E[\indd{\sigma_* \ge \theta} | \cF^1_\theta]}.
\end{equation}
The expectation of an integrable $\cF_T$-measurable random variable $X$ given the information at time $\theta$ and conditional on Player 2 not having terminated the game prior to $\theta$ is given by (see \cite[Exercise~10, p.~94]{kallenberg2002})
\begin{equation}\label{eqn:belief1}
\E[X | \cF^1_\theta, \sigma_* \ge \theta] = \frac{\E[X \indd{\sigma_* \ge \theta} | \cF^1_\theta]}{\P(\sigma_* \ge \theta|\cF^1_\theta)}, \qquad \text{on the event $\{\P(\sigma_* \ge \theta|\cF^1_\theta)>0\}$.}
\end{equation}
Here, by the conditional expectation $\E[X | \cF^1_\theta, \sigma_* \ge \theta]$ we mean the conditional expectation given $\cF^1_\theta \vee \sigma(\{\sigma_* \ge \theta\})$. Definition \eqref{eqn:belief2} and the tower property of conditional expectation yield
\[
\E[X | \cF^1_\theta, \sigma_* \ge \theta]
=
\E \Big[ \frac{\E[\indd{\sigma_* \ge \theta}|\cF_T]}{\P(\sigma_* \ge \theta|\cF^1_\theta)} X  \Big| \cF^1_\theta \Big]
=
\E[ \Pi^{*, 1}_\theta X | \cF^1_\theta ] = \E^{\Pi^{*, 1}_\theta}[ X | \cF^1_\theta ].
\]
Hence, the measure $\Pi^{*, 1}_\theta$ encapsulates conditioning on the event $\{\sigma_* \ge \theta\}$. The fact that a change of measure describes the conditioning is natural as Player 1 reassesses her perception of the world given that the opponent has not acted yet. This motivates the following terminology. 

\begin{definition}
The family of random variables $\{\Pi^{*,1}_\theta:\ \theta \in \cT_0(\F^1)\}$ is called \emph{Player 1's subjective view}, while the family $\{\Pi^{*,2}_\gamma:\ \gamma \in \cT_0(\F^2)\}$ is called \emph{Player 2's subjective view}. 
\end{definition}
It is worth noting that processes $(\Pi^{*,i}_t)_{t\in[0,T]}$, $i=1,2$, are not adapted to players' filtrations $\F^i$, $i=1,2$. However, in the concrete examples of Section \ref{sec:examples} we show how the players’ subjective views can be linked to {\em belief processes} which are adapted to the players’ filtrations.

It will emerge that assigning one to the ratios in \eqref{eq:Pi} when the denominators are zero is convenient for the interpretation of $\Pi^{*,1}_\theta$ and $\Pi^{*,2}_\gamma$ as changes of probability measure. The freedom to choose this convention follows from the implication: for $\P$-a.e. $\omega \in \Omega$,
\begin{align}\label{eq:impl}
\E[1-\zeta^*_{\theta-}|\cF^1_\theta](\omega)=0 \implies (1-\zeta^*_{\theta-})(\omega)=0,
\end{align}
and analogously for $\xi^*$. For the proof, denote $A = \{\omega \in \Omega:\, \E[1-\zeta^*_{\theta-}|\cF^1_\theta](\omega)=0 \}$. We have $A \in \cF^1_\theta$ and
\[
\E \big[ 1_A (1-\zeta^*_{\theta-}) \big] = \E \big[ 1_A \E[1-\zeta^*_{\theta-}|\cF^1_\theta]\big] = 0,
\]
which, by the non-negativity of $1-\zeta^*_{\theta-}$ implies that $1-\zeta^*_{\theta-}(\omega) = 0$ for $\P$-a.e. $\omega \in A$.

\begin{remark}
If regular conditional probabilities $\P(\,\cdot\,|\cF^1_\theta)$ and $\P(\,\cdot\,|\cF^1_{\theta-})$ exist, then the ratios in the first line of \eqref{eq:Pi} can be defined $\omega$ by $\omega$ with the convention that $0/0 = 1$. An analogous statement can be made for the second line of \eqref{eq:Pi}. However, we want to avoid using regular conditional probabilities as their existence requires additional assumptions on the probability space and filtrations.
\end{remark}

We now focus on the concept of the {\em player's equilibrium value}, representing the player's perception of the future play in the game, 
based on the information gathered until a stopping time. For this, we need notions of \emph{dynamic payoff} associated to a filtration $\G$ and \emph{truncated controls}.

\begin{definition}
Given $\theta\in\cT_0(\G)$, $\Pi\in\cR(\cG_\theta)$ and a pair $(\tau,\sigma)\in\cT^R_{\theta}(\F)\times \cT^R_{\theta}(\F)$, the \emph{dynamic payoff} is defined as
\[
J^\Pi(\tau,\sigma|\cG_\theta)\coloneqq\E^\Pi[\cP(\tau,\sigma)|\cG_\theta]=\E[\Pi\cP(\tau,\sigma)|\cG_\theta],
\] 
where the final equality holds because $\Pi\in\cR(\cG_\theta)$. 
 \end{definition}
As in \cite[Sec.\ 4]{de2022value}, using the definition of $\cT^R_{\theta}(\F)$ we can derive the $\cG_\theta$-conditional analogue of \eqref{eq:payoffxizeta}. That is, it is not hard to verify that
\begin{align}\label{eq:J}
\begin{aligned}
&J^\Pi(\tau,\sigma|\cG_\theta)=J^\Pi(\xi,\zeta|\cG_\theta)\\
&=\E^\Pi\Big[\int_{[\theta,T)}f_t(1-\zeta_t)\ud \xi_t+\int_{[\theta,T)}g_t(1-\xi_t)\ud \zeta_t+\sum_{t\in[\theta,T]}h_t\Delta\zeta_t\Delta\xi_t\Big|\cG_\theta \Big]\\
&=\E\Big[\Pi\Big(\int_{[\theta,T)}f_t(1-\zeta_t)\ud \xi_t+\int_{[\theta,T)}g_t(1-\xi_t)\ud \zeta_t+\sum_{t\in[\theta,T]}h_t\Delta\zeta_t\Delta\xi_t\Big)\Big|\cG_\theta \Big],
\end{aligned}
\end{align}
where $(\xi,\zeta)\in\cAcirc_\theta(\F)\times \cAcirc_\theta(\F)$ are the generating processes of the pair $(\tau,\sigma)$. This notation of generating processes will be upheld throughout the paper. With a slight abuse of notation, we will also write
\[
\cP(\xi, \zeta) \coloneqq \int_{[0,T)}f_t(1-\zeta_t)\ud \xi_t+\int_{[0,T)}g_t(1-\xi_t)\ud \zeta_t+\sum_{t\in[0,T]}h_t\Delta\zeta_t\Delta\xi_t.
\]
Using this notation, \eqref{eq:J} reads $J^\Pi(\xi,\zeta|\cG_\theta) = \E[\Pi\, \cP(\xi, \zeta) | \cG_\theta]$ as the integrals and the sum over $[0, \theta)$ are zero because $(\xi, \zeta) \in \cAcirc_\theta(\F^1) \times \cAcirc_\theta(\F^2)$. When $\theta=0$ and $\Pi = 1$, with $\cG_0\supseteq \cH_0$, we recover the expected payoff of the game $\E[J^\Pi(\xi, \zeta|\cG_0)|\cH_0] = \E[\cP(\xi, \zeta)|\cH_0]$.

\begin{definition}\label{def:trunc}
Given a filtration $\G\subset\F$, a stopping time $\eta\in\cT_0(\G)$ and a generating pair $(\xi,\zeta)\in\cAcirc_0(\F)\times \cAcirc_0(\F)$ of randomised stopping times, we call {\em truncated controls} the following processes:
\begin{align}\label{eq:trunc}
\xi^{\eta}_t\coloneqq\frac{\xi_t-\xi_{\eta-}}{1-\xi_{\eta-}}\ind_{\{t\ge \eta\}}\quad\text{and}\quad
\zeta^{\eta}_t\coloneqq\frac{\zeta_t-\zeta_{\eta-}}{1-\zeta_{\eta-}}\ind_{\{t\ge \eta\}},\quad\text{for $t\in[0,T]$},
\end{align}
with the convention $0/0=1$. By construction $\xi^{\eta}_{\eta-} = \zeta^{\eta}_{\eta-} = 0$ and $\xi^{\eta}_{T} = \zeta^{\eta}_{T} = 1$.
\end{definition}

The dynamics of \emph{equilibrium values}\footnote{In the theory of non-zero sum games, players' equilibrium values are often called \emph{equilibrium payoffs},  \emph{expected payoffs} or \emph{continuation values}. However, we decided to use the terms `equilibrium value' or `player's value' to emphasise parallels with the full information theory of Dynkin games and to minimise confusion with payoff processes $f, g, h$ defining the game.} (often referred to, in short, as \emph{players' values}) for the two players are modelled via two families of random variables
\begin{align}\label{eq:fV}
\big\{V^{*,1}(\theta),\theta\in\cT_0(\F^1)\big\}\quad\text{and}\quad \big\{V^{*,2}(\gamma),\gamma\in\cT_0(\F^2)\big\},
\end{align}
defined by 
\begin{align}\label{eq:V1V2}
V^{*,1}(\theta)\coloneqq\essinf_{\xi\in\cA^\circ_\theta(\F^1)}J^{\Pi^{*,1}_\theta}\big(\xi,\zeta^{*;\theta}\big|\cF^1_\theta\big)\quad\text{ and }\quad V^{*,2}(\gamma)\coloneqq\esssup_{\zeta\in\cA^\circ_\gamma(\F^2)}J^{\Pi^{*,2}_\gamma}\big(\xi^{*;\gamma},\zeta\big|\cF^2_\gamma\big).
\end{align}

We remark that the value of $V^{*,1}(\theta)$ on the event $\{\E[\zeta^*_{\theta-}|\cF^1_\theta] = 1\}\subseteq\{\zeta^*_{\theta-}=1\}$ (cf.\ \eqref{eq:impl}) is trivially equal to $\E[g_\theta| \cF^1_{\theta}]$. Indeed, on this event, $\Pi^{*, 1}_\theta=1$, $\zeta^{*;\theta}_{\theta-}=0$ and $\zeta^{*;\theta}_t=1$ for $t\in[\theta,T]$; thus, $J^{\Pi^{*,1}}_\theta(\xi,\zeta^{*;\theta}|\cF^1_\theta)=\E[g_\theta (1-\xi_{\theta})+h_\theta \Delta\xi_{\theta}| \cF^1_{\theta}]$ for any $\xi\in\cAcirc_\theta(\F^1)$ and, since $g_\theta\le h_\theta$, it is optimal to choose $\xi_\theta=0$. 
Same comments apply to $V^{*,2}(\gamma)=\E[f_\gamma|\cF^2_\gamma]$ on the event $\{\E[\xi^*_{\gamma-}|\cF^2_{\gamma}]=1\}$. 
We note that the equilibrium values $V^{*,1}(\theta)$ and $V^{*,2}(\gamma)$ on the events $\{\E[\zeta^*_{\theta-}|\cF^1_\theta] = 1\}$ and $\{\E[\xi^*_{\gamma-}|\cF^2_{\gamma}]=1\}$, respectively, do not play any significant role in the context of the game: 
in most expressions $V^{*,1}(\theta)$ and $V^{*,2}(\gamma)$ are preceded by $\E[1-\zeta^*_{\theta-}|\cF^1_\theta]$ and $\E[1-\xi^*_{\gamma-}|\cF^2_{\gamma}]$, respectively (see, e.g., Theorem \ref{thm:value}).

The interpretation of $V^{*,1}(\theta)$ (and analogously for $V^{*,2}(\gamma)$) is as follows: at time zero the game starts and the players pick an optimal pair $(\xi^*,\zeta^*)$; at time $\theta$, if the game has not ended, Player 1 calculates $V^{*,1}(\theta)$ as the smallest payoff they can attain with a best response to the remainder of strategy $\zeta^*$ on the interval $[\theta,T]$. We are going to show in Proposition \ref{thm:aggr2} that the truncated control $\xi^{*;\theta}$ attains the infimum in $V^{*,1}(\theta)$ and, analogously, $\zeta^{*;\gamma}$ attains the infimum in $V^{*,2}(\gamma)$. Thus, for any $\beta\in\cT^R(\F^{1,2})$ the pair of truncated controls $(\xi^{*;\beta},\zeta^{*;\beta})$ is a saddle point for the game started at $\beta$ with payoff $\E[\cP(\xi,\zeta)|\cF^{1,2}_\beta]$ for $(\xi,\zeta)\in\cA^\circ_\beta(\F^1)\times\cA^\circ_\beta(\F^2)$ (cf.\ Proposition \ref{prop:link} and Corollary \ref{corr:link} for further details). 

\begin{remark}
In the theory of zero-sum games with full information, values $V^{*,1}(\beta)$ and $V^{*,2}(\beta)$ coincide for any stopping time $\beta$ with respect to the common filtration. They are then termed the value of the game and play a pivotal role in determining players' optimal strategies. Here, the values of players are distinct for multiple reasons: (i) they are defined for different familites of random times, (ii) they condition on the information available to the player at that time, and (iii) they include the updated perception of the future probabilities via $\Pi^{*, j}$ arising from learning from the opponent's inaction.
\end{remark}

In view of the above remark, it should come as no surprise that the families of random variables \eqref{eq:fV} are the main object of interest throughout the paper. We will establish a link between the two via the so-called {\em ex-ante} value of the game in Corollary \ref{corr:link}.

\begin{convention}
For the ease of exposition, in the next sections we assume $\cH_0=\{\Omega,\varnothing\}$. This comes with no loss of generality because all results continue to hold with generic $\cH_0\subseteq\cF^1_0\cap\cF^2_0$ upon replacing everywhere the unconditional expectation $\E[\cdot]$ with conditional one $\E[\cdot|\cH_0]$. 
\end{convention}

\subsection{Roadmap}\label{subsec:roadmap}
To facilitate reading of the upcoming technical content, we provide intuitions and a quick sketch of results to come. Section \ref{sec:neccond} is devoted to the derivation of necessary conditions, i.e., of conditions that must be satisfied by players' equilibrium values and their optimal strategies. Unlike the classical theory of Dynkin games, where the equilibrium value can be defined without knowing players' optimal strategies, here those strategies play a pivotal role: they reveal additional information that shapes players' subjective views about the remainder of the game and the probability measures ($\P^{\Pi^{*,1}_\theta}$ and $\P^{\Pi^{*, 2}_\gamma}$) under which they assess future payoffs. A subset of those necessary conditions is shown in Section \ref{sec:suffcond} to be sufficient for a saddle point. Hence, we will concentrate on explaining our ideas guiding developments in Section \ref{sec:neccond}.

We start by recalling the classical theory of Dynkin games with full information. Denoting by $V_t$ the value process and by $(\hat\tau, \hat \sigma)$ a saddle point, they satisfy the super- and sub-martingale conditions:
\begin{align*}
t \mapsto V_{t \wedge \hat\tau} \text{ is a supermartingale and}\ t\mapsto V_{t \wedge \hat\sigma} \text{ is a submartingale.}
\end{align*}

In our framework, due to involvement of randomised strategies and learning, these martingale conditions take a much more complicated form. For $\theta\in\cT_0(\F^1)$ and $\gamma\in\cT_0(\F^2)$, let
\begin{align*}
M^0(\theta)&= \E \Big[ \int_{[0,\theta)}\!\!g_t \ud \zeta^*_t\! \Big| \cF^1_\theta \Big] + \E[1\!-\!\zeta^*_{\theta-}|\cF^1_\theta]V^{*,1}(\theta),\\
N^0(\gamma)&= \E \Big[\! \int_{[0,\gamma)}\!\! f_t \ud \xi^*_t  \Big| \cF^2_\gamma \Big] + \E[1\!-\!\xi^*_{\gamma-}|\cF^2_\gamma] V^{*,2}(\gamma);
\end{align*}
such familities of random variables will be defined in greater generality in Section \ref{subsec:aux-mart}. We then show that the family $\{M^0(\theta):\ \theta \in \cT_0(\F^1)\}$ can be aggregared into a \cadlag $\F^1$-submartingale $(M^0_t)_{t\in[0,T]}$ and the family $\{N^0(\gamma):\ \gamma \in \cT_0(\F^2)\}$ can be aggregared into a \cadlag $\F^2$-supermartingale $(N^0_t)_{t\in[0,T]}$. An analogue of the martingale condition for $t\mapsto V_{t\wedge\hat\tau\wedge\hat\sigma}$ requires new families of random variables $\{M^*(\theta):\ \theta \in \cT_0(\F^1)\}$ and $\{N^*(\gamma):\ \gamma \in \cT_0(\F^2)\}$ involving both $\xi^*$ and $\zeta^*$, which can be aggregated into $\F^1$ and $\F^2$-martingales, respectively.

Differently from the full information setting, where the aggregation of the value process is done directly, in our setting players' equilibrium values are aggregated indirectly through the families $\{M^0(\theta):\theta\in\cT_0(\F^1)\}$ and $\{N^0(\gamma):\gamma\in\cT_0(\F^2)\}$. To be more precise, the families 
\[
\{\E[1\!-\!\zeta^*_{\theta-}|\cF^1_\theta]V^{*,1}(\theta):\theta \in \cT_0(\F^1)\}\quad\text{and}\quad\{\E[1\!-\!\xi^*_{\gamma-}|\cF^2_\gamma] V^{*,2}(\gamma):\gamma \in \cT_0(\F^2)\}
\] 
are aggregated into optional semi-martingales. The prefactor in front of the equilibrium value is needed because a player's value is only defined as long as the game has not been terminated by the opponent; it should be stressed that its role is purely technical: it is non-zero if and only if there is a possibility of the opponent to be still in the game given the information available to the player; however, players observe when the game finishes so the game will never be active when the prefactor is $0$, see also \eqref{eq:impl}. 

Our final major aim is to identify the support of optimal strategies of players, understood as the set of times when players may stop optimally. In the classical theory, that is the coincidence set of the value process with respective payoffs. We show in Subsection \ref{subsec:structure} that a close analogue of this behaviour holds in the present much more complex setting. To simplify this informal presentation, assume that players do not act simultaneously, i.e., $\Delta \xi^*_t \Delta \zeta^*_t = 0$ for all $t \in [0, T]$. Define
\[
Y^1_t \coloneqq \hat V^{*,1}_t -\optional{\big(}f_\cdot(1-\zeta^*_\cdot)\big)_t^{\F^1}\quad\text{and}\quad Y^2_t\coloneqq\hat V^{*,2}_t- \optional{\big(}g_\cdot(1-\xi^*_\cdot)\big)^{\F^2}_t,
\]
where $\optional{(}\cdot)^\G_t$ denotes the optional projection with respect to the filtration $\G$. We take the perspective of Player 1. The quantity $\optional{\big(}f_\cdot(1-\zeta^*_\cdot)\big)_t^{\F^1}$ is the perception of the payoff if Player 1 stops the game at time $t$ -- the optional projection is needed as neither the process $f$ nor the opponent's strategy $\zeta^*$ have to be $\F^1$-adapted. In analogy to the full information game, we will show that $Y^1_t \le 0$ for any $t \in [0, T]$, i.e., the equilibrium value of Player 1 is dominated by the optional projection of the payoff -- a natural requirement for a minimiser. Furthermore, we will show that Player 1 acts only when $Y^1_t = 0$, which is formally written as $\int_{[0,T]} Y^1_t \ud \xi^*_t = 0$. Analogous conditions for Player 2 are $Y^2_t \ge 0$ and $\int_{[0,T]} Y^2_t \ud\zeta^*_t = 0$.

The above conditions allow, in specific settings, to formulate variational inequalities for equilibrium value functions and to postulate players' action sets -- the sets on which players are allowed to increase their generating processes. Such examples are discussed in Section \ref{sec:examples}.

\section{Necessary conditions for a saddle point}\label{sec:neccond}

In this section we obtain properties of the equilibrium values of the two players and of their optimal strategies. The analysis is performed in a dynamic setting. 
We will later show in Section \ref{sec:suffcond} that such properties are indeed sufficient to characterise any equilibrium in the game. 
Since the section is quite rich of technical materials it is worth summarising here the results that, taken together, provide the desired necessary conditions. In particular, we are going to prove:
\begin{itemize}
\item Aggregation into optional semi-martingales of players' equilibrium values (Theorem \ref{thm:value}), 
\item Martingale characterisation of players' equilibrium values (Propositions \ref{thm:aggr1} and \ref{thm:aggr2}, and Corollary \ref{cor:M_zero_mart}), 
\item Link between players' equilibrium values and {\em ex-ante} value of the game (Proposition \ref{prop:link} and Corollary \ref{corr:link}), 
\item Properties of equilibrium strategies and their link to the dynamics of equilibrium values (Proposition \ref{prop:support} and Corollary \ref{cor:support}).
\end{itemize} 

\subsection{Aggregation of the equilibrium dynamics}\label{subsec:aggregation}
The first step in our analysis is to aggregate the families \eqref{eq:fV} into stochastic processes. For the convenience of the reader, we collect aggregation results from the general theory of stochastic processes in Appendix \ref{app:aggr}.
We start by showing upward/downward-directed properties (cf.\ Appendix \ref{subsec:updownd}) of the payoffs in \eqref{eq:V1V2} which allow replacing essential suprema/infima with monotone limits. 

\begin{lemma}\label{lem:ud}
Given $\theta\in\cT_0(\F^1)$, 
the family $\{J^{\Pi^{*,1}_\theta}(\xi,\zeta^{*;\theta}|\cF^1_\theta),\ \xi\in\cAcirc_\theta(\F^1)\}$ is downward-directed. Therefore, there is a sequence $(\xi^n)_{n\in\N}\subset\cAcirc_\theta(\F^1)$ such that
\[
V^{*,1}(\theta)=\lim_{n\to\infty}J^{\Pi^{*,1}_\theta}(\xi^n,\zeta^{*;\theta}|\cF^1_\theta),
\]
where the limit is monotone from above.

Analogously, given $\gamma\in\cT_0(\F^2)$, 
the family $\{J^{\Pi^{*,2}_\gamma}(\xi^{*;\gamma},\zeta|\cF^2_\gamma),\ \zeta\in\cAcirc_\gamma(\F^2)\}$ is upward-directed. Therefore, there is a sequence $(\zeta^n)_{n\in\N}\subset\cAcirc_\gamma(\F^2)$ such that
\[
V^{*,2}(\gamma)=\lim_{n\to\infty}J^{\Pi^{*,2}_\gamma}(\xi^{*;\gamma},\zeta^n|\cF^2_\gamma),
\]
where the limit is monotone from below.
\end{lemma}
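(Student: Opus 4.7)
The plan is to establish the downward-directed property directly from the definition by the standard ``splicing by event'' construction, and then to invoke the classical result (cf.\ the appendix on aggregation) that the essential infimum of a downward-directed family is attained as a monotone limit along a sequence. Since the two statements are symmetric, I will only detail the argument for Player 1 and indicate the obvious modifications for Player 2.

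Fix $\theta\in\cT_0(\F^1)$ and take arbitrary $\xi^1,\xi^2\in\cAcirc_\theta(\F^1)$. Define
\[
A\coloneqq\bigl\{J^{\Pi^{*,1}_\theta}(\xi^1,\zeta^{*;\theta}|\cF^1_\theta)\le J^{\Pi^{*,1}_\theta}(\xi^2,\zeta^{*;\theta}|\cF^1_\theta)\bigr\},
\]
which lies in $\cF^1_\theta$ since the two conditional expectations are $\cF^1_\theta$-measurable. Then set
\[
\xi^3_t\coloneqq \xi^1_t\ind_A+\xi^2_t\ind_{A^c},\qquad t\in[0,T].
\]
I will verify that $\xi^3\in\cAcirc_\theta(\F^1)$: the c\`adl\`ag/non-decreasing path property is inherited pointwise from $\xi^1,\xi^2$, and the boundary conditions $\xi^3_{\theta-}=0$, $\xi^3_T=1$ follow since these hold on both $A$ and $A^c$. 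For $\F^1$-adaptedness, observe that $\xi^i_t=0$ for $t<\theta$ by monotonicity and $\xi^i_{\theta-}=0$, so the only issue is on $\{t\ge\theta\}$; there, $A\cap\{\theta\le t\}\in\cF^1_t$ (as $A\in\cF^1_\theta$ and $\theta$ is an $\F^1$-stopping time), so $\xi^3_t\in\cF^1_t$.

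The key computation is that, pathwise in $\omega$, the integrals and sum entering $\cP(\xi^3,\zeta^{*;\theta})$ split along $A$ and $A^c$: indeed, for $\omega\in A$ we have $\xi^3(\omega)\equiv \xi^1(\omega)$, and for $\omega\in A^c$ we have $\xi^3(\omega)\equiv\xi^2(\omega)$, so
\[
\cP(\xi^3,\zeta^{*;\theta})=\cP(\xi^1,\zeta^{*;\theta})\ind_A+\cP(\xi^2,\zeta^{*;\theta})\ind_{A^c}.
\]
Multiplying by $\Pi^{*,1}_\theta$ and taking $\cF^1_\theta$-conditional expectation, the $\cF^1_\theta$-measurable indicators $\ind_A$ and $\ind_{A^c}$ pull out, giving
\[
J^{\Pi^{*,1}_\theta}(\xi^3,\zeta^{*;\theta}|\cF^1_\theta)=J^{\Pi^{*,1}_\theta}(\xi^1,\zeta^{*;\theta}|\cF^1_\theta)\wedge J^{\Pi^{*,1}_\theta}(\xi^2,\zeta^{*;\theta}|\cF^1_\theta)
\]
by the definition of $A$. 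This establishes that the family is downward-directed.

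Having verified the directedness, the representation of $V^{*,1}(\theta)$ as the a.s.\ monotone limit of a sequence $J^{\Pi^{*,1}_\theta}(\xi^n,\zeta^{*;\theta}|\cF^1_\theta)$ from above follows from the standard essential-infimum characterisation for downward-directed families (referenced in Appendix \ref{subsec:updownd}): given any countable family whose essential infimum equals $V^{*,1}(\theta)$, one applies the splicing construction iteratively to extract a monotone decreasing sequence with the same essential infimum. The argument for Player 2 is identical with the roles reversed: for fixed $\gamma\in\cT_0(\F^2)$ and $\zeta^1,\zeta^2\in\cAcirc_\gamma(\F^2)$, splice using $B=\{J^{\Pi^{*,2}_\gamma}(\xi^{*;\gamma},\zeta^1|\cF^2_\gamma)\ge J^{\Pi^{*,2}_\gamma}(\xi^{*;\gamma},\zeta^2|\cF^2_\gamma)\}\in\cF^2_\gamma$ to obtain upward-directedness. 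The main (minor) technical obstacle is checking the $\F^1$-adaptedness of $\xi^3$ carefully on $\{t\ge\theta\}$ using the defining property of $\cF^1_\theta$; beyond that, everything is routine.
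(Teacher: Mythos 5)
Your construction is exactly the paper's: splice $\xi^1$ and $\xi^2$ along the $\cF^1_\theta$-measurable event $A$ where $\xi^1$ does at least as well, verify membership in $\cAcirc_\theta(\F^1)$, pull the indicators out of the conditional expectation to get the minimum, and then invoke the standard monotone-sequence characterisation of the essential infimum of a downward-directed family. The proof is correct and takes essentially the same route as the paper's Appendix argument.
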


The proof is completely standard and it is provided in Appendix \ref{app:lemud}. The next lemma states that each player's equilibrium value is attained in pure strategies once the other player's strategy is fixed. Notice, however, that equilibria in pure strategies do not exist in the generality of our setting (see various counterexamples in, e.g., \cite[Sec.\ 6]{de2022value}).
 
\begin{lemma}\label{lem:pure}
For $(\theta,\gamma)\in\cT_0(\F^1)\times\cT_0(\F^2)$, let $\tau_*^\gamma$ and $\sigma_*^\theta$ be the randomised stopping times generated by the truncated controls $\xi^{*;\gamma}$ and $\zeta^{*;\theta}$, respectively. Then
\begin{align}\label{eq:pure}
V^{*,1}(\theta)=\essinf_{\tau\in\cT_\theta(\F^1)}J^{\Pi^{*,1}_\theta}(\tau,\sigma_*^\theta|\cF^1_\theta)
\quad\text{and}\quad 
V^{*,2}(\gamma)=\esssup_{\sigma\in\cT_\gamma(\F^2)}J^{\Pi^{*,2}_\gamma}(\tau_\gamma^*,\sigma|\cF^2_\gamma).
\end{align}
\end{lemma}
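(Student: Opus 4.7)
The plan is to prove the first identity for $V^{*,1}(\theta)$; the second follows by a symmetric argument exchanging the roles of the two players. One direction is immediate: every pure $\tau\in\cT_\theta(\F^1)$ is a randomised stopping time with generating process $\xi^\tau_t=\ind_{\{\tau\le t\}}\in\cAcirc_\theta(\F^1)$, and by the formula in \eqref{eq:J} we have $J^{\Pi^{*,1}_\theta}(\tau,\sigma_*^\theta|\cF^1_\theta)=J^{\Pi^{*,1}_\theta}(\xi^\tau,\zeta^{*;\theta}|\cF^1_\theta)$. Taking the essinf over $\tau\in\cT_\theta(\F^1)$ directly gives
\[
V^{*,1}(\theta)\le \essinf_{\tau\in\cT_\theta(\F^1)}J^{\Pi^{*,1}_\theta}(\tau,\sigma_*^\theta|\cF^1_\theta).
\]

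For the reverse inequality, I would fix an arbitrary $\xi\in\cAcirc_\theta(\F^1)$ and disintegrate it as a mixture of pure $\F^1$-stopping times $\tau(\xi,z)=\inf\{t\in[0,T]:\xi_t>z\}$, $z\in[0,1]$, each of which belongs to $\cT_\theta(\F^1)$ since $\xi_{\theta-}=0$ and $\xi_T=1$. Applying the change of variables \cite[Prop.\ 0.4.9]{revuzyor} to the $\ud\xi$-integral in the pathwise payoff, and using the identities $1-\xi_t=\int_0^1 \ind_{\{\tau(\xi,z)>t\}}\ud z$ and $\Delta\xi_t=\int_0^1 \ind_{\{\tau(\xi,z)=t\}}\ud z$ to rewrite the $\ud\zeta^{*;\theta}$-integral and the jump sum appearing in \eqref{eq:J}, I expect the pathwise identity
\[
\cP(\xi,\zeta^{*;\theta})=\int_0^1 \cP(\tau(\xi,z),\zeta^{*;\theta})\,\ud z,
\]
where the right-hand side uses the pure-$\tau$/randomised-$\zeta$ version of the payoff consistent with \eqref{eq:J}. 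Multiplying by $\Pi^{*,1}_\theta$, taking conditional expectation given $\cF^1_\theta$, and swapping this with the $z$-integral by Fubini (justified by Assumption \ref{ass:payoff}, by $\E[\Pi^{*,1}_\theta|\cF^1_\theta]=1$, and by measurability of $z\mapsto\tau(\xi,z)$ inherited from right-continuity of $\xi$) yields
\[
J^{\Pi^{*,1}_\theta}(\xi,\zeta^{*;\theta}|\cF^1_\theta)=\int_0^1 J^{\Pi^{*,1}_\theta}(\tau(\xi,z),\sigma_*^\theta|\cF^1_\theta)\,\ud z.
\]

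Since each integrand is bounded below by $\essinf_{\tau\in\cT_\theta(\F^1)}J^{\Pi^{*,1}_\theta}(\tau,\sigma_*^\theta|\cF^1_\theta)$, integrating in $z$ and then taking essinf over $\xi\in\cAcirc_\theta(\F^1)$ delivers the reverse inequality and hence equality. The main obstacle is the bookkeeping in the pathwise disintegration: one has to verify that the three components of $\cP(\xi,\zeta^{*;\theta})$ reassemble on the right-hand side into exactly the pure-$\tau$/randomised-$\zeta$ payoff, with correct handling of the endpoint $t=T$ (where $\xi$ may carry atomic mass $1-\xi_{T-}$, so that $\tau(\xi,z)=T$ on a set of positive Lebesgue measure in $z$) and of joint measurability of $(\omega,z)\mapsto \cP(\tau(\xi,z),\zeta^{*;\theta})(\omega)$. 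Once this identity is in place, the essential infimum/integral manipulations are routine.
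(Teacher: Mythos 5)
Your argument is correct in outline, but it follows a genuinely different route from the paper's. The paper never performs a conditional disintegration: instead it invokes the downward-directedness of $\{J^{\Pi^{*,1}_\theta}(\xi,\zeta^{*;\theta}|\cF^1_\theta),\ \xi\in\cAcirc_\theta(\F^1)\}$ (Lemma \ref{lem:ud}) together with monotone convergence to commute the essential infimum with the \emph{unconditional} expectation, reducing the claim to a chain of inequalities between numbers: $\E[\essinf_{\cT^R_\theta}J]=\inf_{\cT^R_\theta}\E[\Pi^{*,1}_\theta\cP(\tau,\sigma_*^\theta)]\ge\inf_{\cT_\theta}\E[\Pi^{*,1}_\theta\cP(\tau,\sigma_*^\theta)]\ge\E[\essinf_{\cT_\theta}J]$; the disintegration over the randomisation device is then only applied inside an ordinary expectation, and the a.s.\ identity follows from the trivial pointwise inequality $\essinf_{\cT^R_\theta}J\le\essinf_{\cT_\theta}J$ combined with the reverse inequality in expectation. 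This buys a complete bypass of the two technical points you correctly flag as "the main obstacle": the need for a jointly measurable version of $(z,\omega)\mapsto\E[\Pi^{*,1}_\theta\cP(\tau(\xi,z),\sigma_*^\theta)|\cF^1_\theta]$ and the fact that the null set in $J^{\Pi^{*,1}_\theta}(\tau(\xi,z),\sigma_*^\theta|\cF^1_\theta)\ge\essinf_{\tau\in\cT_\theta(\F^1)}J^{\Pi^{*,1}_\theta}(\tau,\sigma_*^\theta|\cF^1_\theta)$ depends on $z$, so that integrating the inequality in $z$ is not immediate. Your route is viable, but to close it you would need exactly the machinery the paper deploys elsewhere: the measurable-modification result of Lemma \ref{lem:meas} (exploiting right-continuity and monotonicity of $z\mapsto\tau(\xi,z)$, as in Proposition \ref{prop:meas}) for the conditional Fubini step, and the "test against a set $A\in\cF^1_\theta$" argument used in the proof of Theorem \ref{thm:suff_saddle_stop} to upgrade the $z$-by-$z$ a.s.\ inequality to an a.s.\ inequality for the $z$-integral. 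Your pathwise identity $\cP(\xi,\zeta^{*;\theta})=\int_0^1\cP(\tau(\xi,z),\zeta^{*;\theta})\,\ud z$ and your treatment of the atom at $T$ are sound (this is the same change of variables underlying \eqref{eq:payoffxizeta}), and the easy direction via $\cT_\theta(\F^1)\subset\cT^R_\theta(\F^1)$ matches the paper. In short: your approach is more direct and self-contained in spirit, at the cost of nontrivial measurability bookkeeping; the paper's approach trades that for the directedness lemma and an expectation-level comparison.
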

\begin{proof}
We only prove the claim for $V^{*,1}(\theta)$ as the one for $V^{*,2}(\gamma)$ can be proven analogously.

By Lemma \ref{lem:ud}, there is a sequence $(\xi_n) \subset \cAcirc_\theta(\F_1)$ such that
\[
V^{*,1}(\theta)=\lim_{n\to\infty}J^{\Pi^{*,1}_\theta}(\xi^n,\zeta^{*;\theta}|\cF^1_\theta),\quad\P-a.s.
\]
We take expectation on both sides and apply the monotone convergence theorem to obtain
\[
\E\big[V^{*,1}(\theta)\big]=\lim_{n\to\infty}\E\big[ \Pi^{*,1}_\theta \cP(\xi^n, \zeta^{*;\theta}) \big].
\]
Combining the above equality with the following upper bound
\[
\E\big[V^{*,1}(\theta)\big] = \E\big[\essinf_{\xi\in\cAcirc_\theta(\F^1)}J^{\Pi^{*,1}_\theta}(\xi,\zeta^{*;\theta}|\cF^1_\theta) \big]
\le \inf_{\xi\in\cAcirc_\theta(\F^1)} \E\big[J^{\Pi^{*,1}_\theta}(\xi,\zeta^{*;\theta}|\cF^1_\theta) \big]
= \inf_{\xi\in\cAcirc_\theta(\F^1)} \E\big[\Pi^{*,1}_\theta \cP(\xi,\zeta^{*;\theta}) \big]
\]
we obtain
\begin{align}\label{eqn:z1}
\begin{aligned}
\E\big[V^{*,1}(\theta)\big] &= \E\big[\essinf_{\xi\in\cAcirc_\theta(\F^1)}J^{\Pi^{*,1}_\theta}(\xi,\zeta^{*;\theta}|\cF^1_\theta) \big]\\
&= \inf_{\xi\in\cAcirc_\theta(\F^1)} \E\big[\Pi^{*,1}_\theta \cP(\xi,\zeta^{*;\theta}) \big] = \inf_{\tau\in\cT^R_\theta(\F^1)} \E\big[\Pi^{*,1}_\theta \cP(\tau,\sigma_*^\theta) \big],
\end{aligned}
\end{align}
where the last equality is due to the relationship between randomised stopping times and their generating processes. For $\tau\in\cT^R_\theta(\cF^1_\theta)$, recalling the notation $\tau(z)=\tau(\xi,z)$, $z\in[0,1]$, from \eqref{eq:etau}, we have
\begin{align*}
\inf_{\tau\in\cT^R_\theta(\F^1)} \E\big[\Pi^{*,1}_\theta \cP(\tau,\sigma_*^\theta )\big]
&=
\inf_{\tau\in\cT^R_\theta(\F^1)} \int_0^1 \E\big[\Pi^{*,1}_\theta \cP(\tau(z),\sigma_*^\theta )\big] dz\\
&\ge
\inf_{\tau\in\cT^R_\theta(\F^1)} \int_0^1 \inf_{\bar \tau \in \cT_\theta(\F_1)} \E\big[\Pi^{*,1}_\theta \cP(\bar\tau,\sigma_*^\theta) \big] dz\\
&=
\inf_{\tau\in\cT_\theta(\F^1)} \E\big[\Pi^{*,1}_\theta \cP(\tau,\sigma_*^\theta) \big]
= \inf_{\tau\in\cT_\theta(\F^1)} \E\big[J^{\Pi^{*,1}_\theta}(\tau,\sigma_*^\theta|\cF^1_\theta) \big],
\end{align*}
where in the first inequality we integrate with respect to the distribution of Player 1's randomisation device and the inequality holds because $\tau(z)\in\cT_\theta(\F^1)$ for each $z\in[0,1]$. We insert this estimate into the equality \eqref{eqn:z1} to notice
\[
\E\big[\essinf_{\tau\in\cT^R_\theta(\F^1)}J^{\Pi^{*,1}_\theta}(\tau,\sigma_*^\theta|\cF^1_\theta) \big]
\ge
\inf_{\tau\in\cT_\theta(\F^1)} \E\big[J^{\Pi^{*,1}_\theta}(\tau,\sigma_*^\theta|\cF^1_\theta) \big]
\ge
\E\big[\essinf_{\tau\in\cT_\theta(\F^1)} J^{\Pi^{*,1}_\theta}(\tau,\sigma_*^\theta|\cF^1_\theta) \big].
\]
Since trivially $\essinf_{\tau\in\cT^R_\theta(\F^1)}J^{\Pi^{*,1}_\theta}(\tau,\sigma_*^\theta|\cF^1_\theta)
 \le \essinf_{\tau\in\cT_\theta(\F^1)} J^{\Pi^{*,1}_\theta}(\tau,\sigma_*^\theta|\cF^1_\theta)$, we have
\[
\essinf_{\tau\in\cT^R_\theta(\F^1)}J^{\Pi^{*,1}_\theta}(\tau,\sigma_*^\theta|\cF^1_\theta) = \essinf_{\tau\in\cT_\theta(\F^1)} J^{\Pi^{*,1}_\theta}(\tau,\sigma_*^\theta|\cF^1_\theta),
\]
which concludes the proof.
\end{proof}

It should be noted that the randomised stopping times $\tau_*^\gamma$ and $\sigma_*^\theta$ appearing in the above lemma may not belong to $\cT_\gamma^R(\F^1)$ and $\cT_\theta^R(\F^2)$, respectively, because $\gamma$ and $\theta$ are stopping times with respect to the opponent's filtration. This fact causes no difficulty in the statements and proofs above and we recall that $\tau_*^\gamma$ can be expressed in terms of the truncated control $\xi^{*;\gamma}$, representing the remainder of Player 1's stopping after time $\gamma$ (we can argue analogously for $\sigma^\theta_*$ and $\zeta^{*;\theta}$). Finally, we recall that the smallest filtration under which $\tau_*^\gamma$ is a randomised stopping time is $\cF^1_t \vee \sigma(\gamma\wedge t)$, $t \in [0, T]$ (or equivalently $\cF^1_t \vee \sigma(\gamma\wedge s,\ s\le t)$, $t\in[0,T]$). Analogous considerations hold for $\sigma_*^\theta$.

An analogue of Lemma \ref{lem:ud} holds for pure stopping times. The proof of Lemma \ref{lem:ud} can be repeated nearly verbatim or one can use the classical optimal stopping theory recalled in Appendix \ref{subsec:updownd}. The result is formulated rigorously in the corollary below.
\begin{corollary}\label{cor:pure_ud}
Given $\theta\in\cT_0(\F^1)$, 
the family $\{J^{\Pi^{*,1}_\theta}(\tau,\sigma_*^\theta|\cF^1_\theta),\ \tau\in\cT_\theta(\F^1)\}$ is downward-directed. Therefore, there is a sequence $(\tau^n)_{n\in\N}\subset\cT_\theta(\F^1)$ such that
\[
V^{*,1}(\theta)=\lim_{n\to\infty}J^{\Pi^{*,1}_\theta}(\tau^n,\sigma_*^\theta|\cF^1_\theta),
\]
where the limit is monotone from above.

Analogously, given $\gamma\in\cT_0(\F^2)$, 
the family $\{J^{\Pi^{*,2}_\gamma}(\tau_\gamma^*,\sigma|\cF^2_\gamma),\ \sigma\in\cT_\gamma(\F^2)\}$ is upward-directed. Therefore, there is a sequence $(\sigma^n)_{n\in\N}\subset\cT_\gamma(\F^2)$ such that
\[
V^{*,2}(\gamma)=\lim_{n\to\infty}J^{\Pi^{*,2}_\gamma}(\tau_\gamma^*,\sigma^n|\cF^2_\gamma),
\]
where the limit is monotone from below.
\end{corollary}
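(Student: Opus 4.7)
The plan is to mirror the proof of Lemma \ref{lem:ud}: once the relevant family of conditional payoffs is shown to be downward-directed (upward-directed in the $V^{*,2}$ case), the existence of a monotone approximating sequence follows from the general aggregation machinery collected in Appendix \ref{subsec:updownd}. In view of Lemma \ref{lem:pure}, we have $V^{*,1}(\theta) = \essinf_{\tau \in \cT_\theta(\F^1)} J^{\Pi^{*,1}_\theta}(\tau, \sigma_*^\theta | \cF^1_\theta)$ and analogously $V^{*,2}(\gamma) = \esssup_{\sigma \in \cT_\gamma(\F^2)} J^{\Pi^{*,2}_\gamma}(\tau_\gamma^*, \sigma | \cF^2_\gamma)$, so only the directedness needs to be established for pure stopping times.

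For the $V^{*,1}$ assertion, I would fix $\tau_1, \tau_2 \in \cT_\theta(\F^1)$, set
\[
A \coloneqq \bigl\{J^{\Pi^{*,1}_\theta}(\tau_1, \sigma_*^\theta | \cF^1_\theta) \le J^{\Pi^{*,1}_\theta}(\tau_2, \sigma_*^\theta | \cF^1_\theta)\bigr\} \in \cF^1_\theta,
\]
and paste: $\tau_3 \coloneqq \tau_1 \ind_A + \tau_2 \ind_{A^c}$. Since $A \in \cF^1_\theta$, standard stopping-time calculus gives $\tau_3 \in \cT_\theta(\F^1)$ (for $t \ge 0$, one checks $\{\tau_3 \le t\} = (A \cap \{\theta \le t\} \cap \{\tau_1 \le t\}) \cup (A^c \cap \{\theta \le t\} \cap \{\tau_2 \le t\}) \in \cF^1_t$). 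The pathwise identity $\cP(\tau_3, \sigma_*^\theta) = \ind_A \cP(\tau_1, \sigma_*^\theta) + \ind_{A^c}\cP(\tau_2, \sigma_*^\theta)$ together with the $\cF^1_\theta$-measurability of $\ind_A$ then yields
\[
J^{\Pi^{*,1}_\theta}(\tau_3, \sigma_*^\theta | \cF^1_\theta) = \min\!\bigl(J^{\Pi^{*,1}_\theta}(\tau_1, \sigma_*^\theta | \cF^1_\theta),\, J^{\Pi^{*,1}_\theta}(\tau_2, \sigma_*^\theta | \cF^1_\theta)\bigr),
\]
which is downward-directedness. Applying the standard result from Appendix \ref{subsec:updownd} to the essential infimum yields a sequence $(\tau^n) \subset \cT_\theta(\F^1)$ along which $J^{\Pi^{*,1}_\theta}(\tau^n, \sigma_*^\theta | \cF^1_\theta)$ decreases monotonically to $V^{*,1}(\theta)$.

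The $V^{*,2}(\gamma)$ statement is obtained by a symmetric pasting argument, replacing min/max, $\inf/\sup$ and $\le/\ge$: for $\sigma_1, \sigma_2 \in \cT_\gamma(\F^2)$ one sets $B \coloneqq \{J^{\Pi^{*,2}_\gamma}(\tau_\gamma^*, \sigma_1 | \cF^2_\gamma) \ge J^{\Pi^{*,2}_\gamma}(\tau_\gamma^*, \sigma_2 | \cF^2_\gamma)\} \in \cF^2_\gamma$ and defines $\sigma_3 \coloneqq \sigma_1 \ind_B + \sigma_2 \ind_{B^c}$. I do not anticipate a real obstacle anywhere: the only minor point to watch is that in the pasting we only need $A \in \cF^1_\theta$ (respectively $B \in \cF^2_\gamma$) to preserve adaptedness of $\tau_3$ to $\F^1$ (respectively of $\sigma_3$ to $\F^2$) and to pull $\ind_A$ (respectively $\ind_B$) through the conditional expectation; the presence of the opponent's fixed randomised strategy $\sigma_*^\theta$ (respectively $\tau_\gamma^*$) does not interfere with either operation. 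As noted in the excerpt, one may alternatively bypass the pasting and invoke directly the classical optimal-stopping directedness result of Appendix \ref{subsec:updownd} applied to the pure-stopping-time reformulation from Lemma \ref{lem:pure}.
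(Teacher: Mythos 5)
Your proposal is correct and follows essentially the route the paper intends: the paper's own justification is precisely that the pasting argument of Lemma \ref{lem:ud} "can be repeated nearly verbatim" for pure stopping times (or, alternatively, that one can invoke the classical directedness result of Appendix \ref{subsec:updownd}), and your argument carries this out, correctly checking that $\tau_3=\tau_1\ind_A+\tau_2\ind_{A^c}$ with $A\in\cF^1_\theta$ is again in $\cT_\theta(\F^1)$ and using Lemma \ref{lem:pure} to identify the essential infimum over pure stopping times with $V^{*,1}(\theta)$. No gaps.
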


We state here the main aggregation result concerning players' values $V^{*, i}$, $i=1,2$. 
Its proof 
is formally presented in Section \ref{sec:proofthm}. 
Before stating the theorem, we introduce some notation. Given a process $(X_t)_{t\in[0,T]}$ we define its left and right limit as 
\[
X_{t_0+} \coloneqq \lim_{ \stackrel{s\to t_0}{s > t_0}} X_s=\lim_{s\downarrow t_0} X_s\quad\text{ and }\quad X_{t_0-} \coloneqq \lim_{\stackrel{s\to t_0}{s < t_0}} X_s=\lim_{s\uparrow t_0} X_s,
\]
whenever they exist. In order to emphasise that a process $(X_t)_{t\in[0,T]}$ is adapted/optional/previsible with respect to a filtration $\G$, we use $(X_t,\G)_{t\in[0,T]}$ or, occasionally, $(X_t,\G,\P)_{t\in[0,T]}$. 

\begin{theorem}\label{thm:value}
Given an optimal pair $(\xi^*,\zeta^*)\in\cAcirc_0(\F^1)\times\cAcirc_0(\F^2)$, the families 
\begin{align}\label{eq:families}
\begin{aligned}
&{\bf V}^{*,1}\coloneqq\big\{\E[1-\zeta^*_{\theta-}|\cF^1_\theta]\, V^{*,1}(\theta),\,\theta\in\cT_0(\F^1)\big\},\\
&{\bf V}^{*,2}\coloneqq\big\{\E[1-\xi^*_{\gamma-}|\cF^2_\gamma]\, V^{*,2}(\gamma),\,\gamma\in\cT_0(\F^2)\big\},
\end{aligned}
\end{align}
can be aggregated into optional semi-martingale processes of class $(D)$
\begin{align*}
&\big(\hat V^{*,1}_t,\,\F^1\big)_{t\in[0,T]}\quad\text{and}\quad \big(\hat V^{*,2}_t,\,\F^2\big)_{t\in[0,T]},
\end{align*}
where, for $\theta\in\cT_0(\F^1)$ and $\gamma\in\cT_0(\F^2)$,
\begin{align}\label{eq:hatv12}
\begin{aligned}
\hat V^{*,1}_\theta&=\essinf_{\tau\in\cT_\theta(\F^1)}\E\Big[f_\tau(1-\zeta^{*}_{\tau})+\int_{[\theta,\tau)}g_u\ud \zeta^{*}_u+h_\tau\Delta\zeta^{*}_\tau\Big|\cF^1_\theta\Big],\\
\hat V^{*,2}_\gamma&=\esssup_{\sigma\in\cT_\gamma(\F^2)}\E\Big[g_\sigma(1-\xi^{*}_{\sigma})+\int_{[\gamma,\sigma)}f_u\ud \xi^{*}_u+h_\sigma\Delta\xi^{*}_\sigma\Big|\cF^2_\gamma\Big].
\end{aligned}
\end{align}
Moreover, for $\theta\in\cT_0(\F^1)$ and $\gamma\in\cT_0(\F^2)$ the limits below hold $\P$-a.s. 
\begin{align}\label{eq:rlim}
\begin{aligned}
&\hat V^{*,1}_{\theta+} = \hat V^{*,1}_{\theta}- \E[1-\zeta^*_{\theta -}|\cF^1_{\theta}]\, \E^{\Pi^{*,1}_{\theta}}[g_{\theta}\Delta\zeta^{*;\theta}_{\theta}|\cF^1_{\theta}] = \hat V^{*,1}_{\theta}- \E[g_{\theta}\Delta\zeta^{*}_{\theta}|\cF^1_{\theta}],\\
&\hat V^{*,2}_{\gamma+} = \hat V^{*,2}_{\gamma}- \E[1-\xi^*_{\gamma -}|\cF^2_{\gamma}]\, \E^{\Pi^{*,2}_{\gamma}}[f_{\gamma}\Delta\xi^{*;\gamma}_{\gamma}|\cF^2_{\gamma}] = \hat V^{*,2}_{\gamma}- \E[f_{\gamma}\Delta\xi^{*}_{\gamma}|\cF^2_{\gamma}].
\end{aligned}
\end{align}
Finally, for any previsible $\theta\in\cT_0(\F^1)$ and $\gamma\in\cT_0(\F^2)$,
\begin{align}\label{eq:llim}
\begin{aligned}
\hat V^{*,1}_{\theta-} &\le \E[1-\zeta^*_{\theta-}|\cF^1_{\theta-}]\essinf_{\xi\in\cAcirc_{\theta}(\F^1)}J^{\widehat \Pi^{*,1}_{\theta}}\big(\xi,\zeta^{*;\theta}|\cF^1_{\theta-}),\\
\hat V^{*,2}_{\gamma-} 
&\ge \E[1-\xi^*_{\gamma-}|\cF^2_{\gamma-}]\esssup_{\zeta\in\cAcirc_{\gamma}(\F^2)}J^{\widehat \Pi^{*,2}_{\gamma}}\big(\xi^{*;\gamma},\zeta|\cF^2_{\gamma-}),
\end{aligned}
\end{align}
with equality on the events $\{\xi^*_{\theta-} < 1\}$ and $\{\zeta^*_{\gamma-} < 1\}$, respectively, and $\widehat\Pi^{*,1}_\theta$ and $\widehat\Pi^{*,2}_\gamma$ defined in \eqref{eq:Pi}. 
Thus, if the filtrations $\F^1$ and $\F^2$ are continuous, the processes $(\hat V^{*,1}_t)_{t\in[0,T]}$, $(\hat V^{*,2}_t)_{t\in[0,T]}$ are \caglad as long as $\xi^*_{t-} < 1$ and $\zeta^*_{t-} < 1$, respectively.
\end{theorem}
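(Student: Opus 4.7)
The plan is to rewrite the equilibrium value families as classical optimal stopping problems for each player using the truncation identities, apply standard Snell-envelope aggregation under the respective player's filtration, and then read off the one-sided limits from the structure of the resulting reward.

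For the representation \eqref{eq:hatv12}, Lemma \ref{lem:pure} lets me replace the randomised controls in $V^{*,1}(\theta)$ by pure $\F^1$-stopping times. Multiplying the resulting identity by $\E[1-\zeta^*_{\theta-}|\cF^1_\theta]$ and using the three truncation identities $(1-\zeta^*_{\theta-})(1-\zeta^{*;\theta}_\tau)=1-\zeta^*_\tau$, $(1-\zeta^*_{\theta-})\ud\zeta^{*;\theta}_u=\ud\zeta^*_u$ on $[\theta,T]$, and $(1-\zeta^*_{\theta-})\Delta\zeta^{*;\theta}_\tau=\Delta\zeta^*_\tau$, the conditional change of measure via $\Pi^{*,1}_\theta$ collapses and the claimed formula for $\hat V^{*,1}_\theta$ falls out; the event $\{\E[1-\zeta^*_{\theta-}|\cF^1_\theta]=0\}$ is handled using the implication \eqref{eq:impl} and the convention stated after \eqref{eq:V1V2}. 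Once this representation is in place, the family ${\bf V}^{*,1}$ is the value family of a standard $\F^1$-minimisation problem with reward in $\cL_b(\P)$. Consistency across stopping times follows from the tower property, so the aggregation results in Appendix \ref{app:aggr}, combined with the Mertens decomposition for strong optional supermartingales, deliver the $\F^1$-optional semi-martingale $\hat V^{*,1}$ of class $(D)$; the same argument under $\F^2$ produces $\hat V^{*,2}$.

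For the right limit \eqref{eq:rlim} I would take $\theta_n\downarrow\theta$ strictly and compute $\hat V^{*,1}_{\theta+}$ as $\lim_n\hat V^{*,1}_{\theta_n}$ using the representation applied at $\theta_n$. As $\theta_n\downarrow\theta$ the integration range in $\int_{[\theta_n,\tau)}g_u\ud\zeta^*_u$ shrinks to $(\theta,\tau)$, losing exactly the atom $g_\theta\Delta\zeta^*_\theta$; inserting this into the limit yields $\hat V^{*,1}_{\theta+}=\hat V^{*,1}_\theta-\E[g_\theta\Delta\zeta^*_\theta\,|\,\cF^1_\theta]$, and the equivalent $\Pi^{*,1}_\theta$-form is immediate from $(1-\zeta^*_{\theta-})\Delta\zeta^{*;\theta}_\theta=\Delta\zeta^*_\theta$. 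For the left limit \eqref{eq:llim} I would take a previsible $\theta$ with an announcing sequence $\theta_n\uparrow\theta$ strictly and pass to the limit in the representation at $\theta_n$. Using the \caglad regularity of $\zeta^*$ and the tower property, the limit rewrites as a conditional payoff with $\cF^1_{\theta-}$-conditioning and normalisation $\E[1-\zeta^*_{\theta-}|\cF^1_{\theta-}]$, that is, with respect to $\widehat\Pi^{*,1}_\theta$. The crucial subtlety is that $\cAcirc_\theta(\F^1)\subset\cAcirc_{\theta_n}(\F^1)$, so passing the essinf through the limit only yields the one-sided inequality in \eqref{eq:llim}; equality on $\{\xi^*_{\theta-}<1\}$ follows because on that event the truncation $\xi^{*;\theta}$ lies in $\cAcirc_\theta(\F^1)$ and, by the attainability result of Proposition \ref{thm:aggr2}, achieves the essinf. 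Under continuity of $\F^1$ one has $\widehat\Pi^{*,1}_\theta=\Pi^{*,1}_\theta$ and $\cF^1_{\theta-}=\cF^1_\theta$, so the upper bound in \eqref{eq:llim} coincides with $\hat V^{*,1}_\theta$ itself and the \caglad property follows as long as $\xi^*_{t-}<1$.

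The main obstacle will be the left-limit step: one must justify the exchange of the limit over the announcing sequence with the essinf defining $V^{*,1}(\theta_n)$, carefully track the evolution of the normalising conditional expectation from $\Pi^{*,1}_{\theta_n}$ into $\widehat\Pi^{*,1}_\theta$, and pin down precisely the set on which the inequality in \eqref{eq:llim} is tight. The aggregation and the right-limit formula reduce to classical Snell-envelope arguments once the algebraic rewriting enabled by the truncation identities has been carried out.
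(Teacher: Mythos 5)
Your overall strategy --- rewrite the equilibrium values via Lemma \ref{lem:pure} and the truncation identities, aggregate through the associated Snell envelope, then compute one-sided limits along monotone sequences of stopping times --- is essentially the paper's (the Snell envelope you invoke is the submartingale $M^0$ of Proposition \ref{thm:aggr1}, and the paper sets $\hat V^{*,1}=M^0-S^1$ with $S^1$ the $\F^1$-optional projection of $\int_{[0,\cdot)}g_s\,\ud\zeta^*_s$). There are, however, two genuine gaps in the limit computations. For the right limit, taking $\theta_n\downarrow\theta$ strictly forces every competing stopping time to satisfy $\tau\ge\theta_n>\theta$, so the limit of $\hat V^{*,1}_{\theta_n}$ is naturally an essential infimum over $\cT_{>\theta}(\F^1)\coloneqq\{\tau\in\cT_\theta(\F^1):\P(\tau>\theta)=1\}$, not over $\cT_\theta(\F^1)$. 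Your argument treats the passage to the limit as if only the integration range $[\theta_n,\tau)\to(\theta,\tau)$ changed. To obtain \eqref{eq:rlim} one must additionally prove
\[
\hat V^{*,1}_{\theta}=\essinf_{\tau\in\cT_{>\theta}(\F^1)}\E\Big[f_\tau(1-\zeta^{*}_{\tau})+\int_{[\theta,\tau)}g_u\ud \zeta^{*}_u+h_\tau\Delta\zeta^{*}_\tau\Big|\cF^1_{\theta}\Big],
\]
i.e., that admitting $\tau=\theta$ does not lower the infimum. This is \eqref{eq:vhat>} in the paper; it is proved by sending $\tau_n\downarrow\theta$ with $\tau_n>\theta$ and it uses the ordering $f\ge h\ge g$ in an essential way to dominate the immediate-stopping payoff $\E[f_\theta(1-\zeta^*_\theta)+h_\theta\Delta\zeta^*_\theta|\cF^1_\theta]$. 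Without this step the jump formula does not follow from the limit you compute.

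The second gap is the claimed equality in \eqref{eq:llim} on $\{\xi^*_{\theta-}<1\}$. Knowing that $\xi^{*;\theta}$ attains the essential infimum on the right-hand side only identifies that quantity; it does not yield the missing lower bound $\hat V^{*,1}_{\theta-}\ge \E[1-\zeta^*_{\theta-}|\cF^1_{\theta-}]\essinf_{\xi}J^{\widehat\Pi^{*,1}_\theta}(\xi,\zeta^{*;\theta}|\cF^1_{\theta-})$, which is where the work lies. The paper obtains it by invoking the optimality of the truncated controls $\xi^{*;\theta_n}$ at each term of the announcing sequence (Proposition \ref{thm:aggr2}, valid on $\{\xi^*_{\theta_n-}<1\}\supset\{\xi^*_{\theta-}<1\}$), splitting the resulting payoff at $\theta$, and controlling both the contribution on $[\theta_n,\theta)$ and the renormalisation factor $(1-\xi^*_{\theta-})/(1-\xi^*_{\theta_n-})$; several of the convergences hold only in probability and must be upgraded along a subsequence. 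Your closing paragraph correctly flags this as the hard step, but the one-line justification offered for the equality is not sufficient as it stands.
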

It is important to notice that in the formulae \eqref{eq:hatv12} for the processes $(\hat V^{*,i}_t)_{t\in[0,T]}$, $i=1,2$, the optimisation runs over stopping times for the players' respective filtrations. This is a result that we will essentially derive from the Lemma \ref{lem:pure}.

From the definition of $\widehat \Pi^{*,1}_{\theta}$ in \eqref{eq:Pi} it is easy to verify that, for any previsible $\theta\in\cT_0(\F^1)$ and $\gamma\in\cT_0(\F^2)$, the right-hand sides of inequalities in \eqref{eq:llim} can be equivalently written as
\begin{equation}\label{eq:llim_def}
\begin{aligned}
&\E[1\!-\!\zeta^*_{\theta-}|\cF^1_{\theta-}]\essinf_{\xi\in\cAcirc_{\theta}(\F^1)}J^{\widehat \Pi^{*,1}_{\theta}}\big(\xi,\zeta^{*;\theta}|\cF^1_{\theta-})
= \essinf_{\tau\in\cT_\theta(\F^1)}\E\Big[f_\tau(1\!-\!\zeta^{*}_{\tau})\!+\!\int_{[\theta,\tau)}\!\!g_u\ud \zeta^{*}_u\!+\!h_\tau\Delta\zeta^{*}_\tau\Big|\cF^1_{\theta-}\Big],\\
&\E[1\!-\!\xi^*_{\gamma-}|\cF^2_{\gamma-}]\esssup_{\zeta\in\cAcirc_{\gamma}(\F^2)}J^{\widehat \Pi^{*,2}_{\gamma}}\big(\xi^{*;\gamma},\zeta|\cF^2_{\gamma-})=\esssup_{\sigma\in\cT_\gamma(\F^2)}\E\Big[g_\sigma(1\!-\!\xi^{*}_{\sigma})\!+\!\int_{[\gamma,\sigma)}\!\!f_u\ud \xi^{*}_u\!+\!h_\sigma\Delta\xi^{*}_\sigma\Big|\cF^2_{\gamma-}\Big].
\end{aligned}
\end{equation}
This observation allows us to derive a corollary that refines Theorem \ref{thm:value}.
\begin{corollary}\label{cor:value_ineq}
When the first inequality in \eqref{eq:llim} is strict, we have $\hat V^{*,1}_{\theta-} = \E\big[f_{\theta-} (1-\zeta^*_{\theta-})\big|\cF^1_{\theta-}\big]$. When the second inequality in \eqref{eq:llim} is strict, we have $\hat V^{*,2}_{\gamma-} = \E[g_{\gamma-} (1-\xi^*_{\gamma-})|\cF^2_{\gamma-}]$.
Therefore,
\begin{equation*}
\begin{aligned}
\hat V^{*,1}_{\theta-}&=\min\Big(\E\big[f_{\theta-} (1-\zeta^*_{\theta-})\big|\cF^1_{\theta-}\big],\essinf_{\tau\in\cT_\theta(\F^1)}\E\Big[f_\tau(1\!-\!\zeta^{*}_{\tau})\!+\!\int_{[\theta,\tau)}\!\!g_u\ud \zeta^{*}_u\!+\!h_\tau\Delta\zeta^{*}_\tau\Big|\cF^1_{\theta-}\Big]\Big),\\
\hat V^{*,2}_{\gamma-}&=\max\Big(\E[g_{\gamma-} (1-\xi^*_{\gamma-})|\cF^2_{\gamma-}],\esssup_{\sigma\in\cT_\gamma(\F^2)}\E\Big[g_\sigma(1\!-\!\xi^{*}_{\sigma})\!+\!\int_{[\gamma,\sigma)}\!\!f_u\ud \xi^{*}_u\!+\!h_\sigma\Delta\xi^{*}_\sigma\Big|\cF^2_{\gamma-}\Big]\Big).
\end{aligned}
\end{equation*}
\end{corollary}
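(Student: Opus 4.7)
I focus on $\hat V^{*,1}_{\theta-}$; the claim for $\hat V^{*,2}_{\gamma-}$ is entirely symmetric. Set $A \coloneqq \E[f_{\theta-}(1-\zeta^*_{\theta-})|\cF^1_{\theta-}]$ and let $B$ denote the essinf on the right-hand side of the first line of \eqref{eq:llim_def}. From Theorem \ref{thm:value} we already know that $\hat V^{*,1}_{\theta-} \le B$, with equality on $\{\xi^*_{\theta-} < 1\}$. Two additional ingredients will complete the argument: (a) the auxiliary bound $\hat V^{*,1}_{\theta-} \le A$, and (b) the equality $\hat V^{*,1}_{\theta-} = A$ on $\{\xi^*_{\theta-} = 1\}$.

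For (a), I pick an announcing sequence $(\theta_n)$ for the previsible stopping time $\theta$, with $\theta_n < \theta$, arranged so that $\theta_n$ avoids the at-most-countably-many jump times of $\zeta^*$. Substituting the pure stopping time $\tau = \theta_n \in \cT_{\theta_n}(\F^1)$ in the representation \eqref{eq:hatv12} applied at $\theta_n$ yields
\[
\hat V^{*,1}_{\theta_n} \le \E\big[f_{\theta_n}(1-\zeta^*_{\theta_n}) + h_{\theta_n}\Delta\zeta^*_{\theta_n}\,\big|\,\cF^1_{\theta_n}\big].
\]
As $n \to \infty$, the left-hand side tends to $\hat V^{*,1}_{\theta-}$ by the \cadlag property of the aggregating semi-martingale; the random variable inside the conditional expectation tends to $f_{\theta-}(1-\zeta^*_{\theta-})$ (left-continuity at $\theta$ and $\Delta\zeta^*_{\theta_n}=0$ by choice of the sequence); and the conditional expectations converge via Levy's martingale convergence theorem, using $\cF^1_{\theta_n} \uparrow \cF^1_{\theta-}$ together with the dominant $\sup_t|f_t|\in L^1(\P)$ supplied by the class $(D)$ property.

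For (b), I use that the truncated control $\xi^{*;\theta_n}$ attains the essinf defining $V^{*,1}(\theta_n)$ (announced before Notation \ref{not:equil} and formalised in Proposition \ref{thm:aggr2}), which gives
\[
\hat V^{*,1}_{\theta_n} = \E\big[(1-\zeta^*_{\theta_n-})\,\cP(\xi^{*;\theta_n},\zeta^{*;\theta_n})\,\big|\,\cF^1_{\theta_n}\big].
\]
On $\{\xi^*_{\theta-} = 1\}$, the non-decreasing $\xi^*$ is identically $1$ on $[\theta,T]$, so its total mass on $[\theta_n,T)$ equals $\eps_n\coloneqq 1-\xi^*_{\theta_n-}$ and is supported in $[\theta_n,\theta)$. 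Unwinding Definition \ref{def:trunc} pathwise, on $\{\eps_n > 0\}$:
\[
(1-\zeta^*_{\theta_n-})\,\cP(\xi^{*;\theta_n},\zeta^{*;\theta_n}) = \tfrac{1}{\eps_n}\Big[\!\int_{[\theta_n,\theta)}\!\! f_t(1-\zeta^*_t)\ud\xi^*_t + \!\int_{[\theta_n,\theta)}\!\! g_t(1-\xi^*_t)\ud\zeta^*_t + \!\!\sum_{t\in[\theta_n,\theta)}\! h_t\Delta\xi^*_t\Delta\zeta^*_t\Big],
\]
(with the edge case $\eps_n=0$ handled by the $0/0=1$ convention, which turns $\xi^{*;\theta_n}$ into a point mass at $\theta_n$ and recovers the formula of step (a)). As $\theta_n\uparrow\theta$, the first term is a normalised $\ud\xi^*$-average over a shrinking left-interval of the left-continuous integrand $f_t(1-\zeta^*_t)$ and converges to $f_{\theta-}(1-\zeta^*_{\theta-})$; in the other two terms, the bounds $(1-\xi^*_t)\le\eps_n$ and $\Delta\zeta^*_t\le\zeta^*_{\theta-}-\zeta^*_{\theta_n-}$ exactly cancel the prefactor $1/\eps_n$ and leave a bound by a constant multiple of $\zeta^*_{\theta-}-\zeta^*_{\theta_n-}\to 0$. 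Class $(D)$ domination and Levy again push the limit through the conditional expectation, delivering $\hat V^{*,1}_{\theta-} = A$ on $\{\xi^*_{\theta-}=1\}$.

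Combining (a), (b), and \eqref{eq:llim}: on $\{\xi^*_{\theta-}<1\}$ we have $\hat V^{*,1}_{\theta-} = B$ and $\hat V^{*,1}_{\theta-} \le A$, so $\hat V^{*,1}_{\theta-}=B\le A$; on $\{\xi^*_{\theta-}=1\}$ we have $\hat V^{*,1}_{\theta-} = A$ and $\hat V^{*,1}_{\theta-}\le B$, so $\hat V^{*,1}_{\theta-}=A\le B$. In both cases $\hat V^{*,1}_{\theta-} = \min(A,B)$, which is the second claim. The first claim follows immediately: strict inequality in \eqref{eq:llim} is possible only on $\{\xi^*_{\theta-}=1\}$, where (b) gives $\hat V^{*,1}_{\theta-} = A$. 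The main obstacle is step (b): identifying the correct representation of $\hat V^{*,1}_{\theta_n}$ via the optimal truncated control and then verifying that the $1/\eps_n$ prefactor is precisely cancelled by the $\eps_n$-bounds on the $g$- and $h$-contributions as $\eps_n\to 0$, while keeping the interchange of limit and conditional expectation under control.
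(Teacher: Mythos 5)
Your step (a) is essentially the paper's first step and is fine in substance, though the device of ``arranging $\theta_n$ to avoid the jump times of $\zeta^*$'' is neither available (the jump times are random and $\zeta^*$ is not $\F^1$-adapted, so you cannot modify an announcing sequence of $\F^1$-stopping times to dodge them) nor needed: along any announcing sequence $\zeta^*_{\theta_n}\to\zeta^*_{\theta-}$ and $\Delta\zeta^*_{\theta_n}\to 0$ automatically, or one can simply use $f\ge h$ to absorb the jump term, as the paper does.

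The genuine gap is in step (b). You claim $\hat V^{*,1}_{\theta-}=\E[f_{\theta-}(1-\zeta^*_{\theta-})|\cF^1_{\theta-}]$ on \emph{all} of $\{\xi^*_{\theta-}=1\}$, but your argument only works on the sub-event $\{\xi^*_s<1\ \text{for all}\ s<\theta\}$, where $\eps_n=1-\xi^*_{\theta_n-}>0$ and the optimality of the truncated control (Proposition \ref{thm:aggr2}) applies. On the complementary event $\{\xi^*_s=1\ \text{for some}\ s<\theta\}$ (which is nonempty in general, e.g.\ $\xi^*_t=\ind_{\{t\ge s_0\}}$ with $s_0<\theta$), the identity \eqref{eqn:trunc_optim} is stated only on $\Gamma^1_{\theta_n}\subseteq\{\xi^*_{\theta_n-}<1\}$, so the representation $\hat V^{*,1}_{\theta_n}=\E[(1-\zeta^*_{\theta_n-})\cP(\xi^{*;\theta_n},\zeta^{*;\theta_n})|\cF^1_{\theta_n}]$ fails there; your fallback via the $0/0=1$ convention turns $\xi^{*;\theta_n}$ into a point mass at $\theta_n$, which is merely an admissible competitor and yields only the \emph{upper} bound of step (a), not the lower bound you need. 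In fact the asserted equality cannot hold on all of $\{\xi^*_{\theta-}=1\}$: the corollary itself gives $\hat V^{*,1}_{\theta-}=\Sigma_{\theta-}$ whenever $\Sigma_{\theta-}<\E[f_{\theta-}(1-\zeta^*_{\theta-})|\cF^1_{\theta-}]$, and nothing prevents this from occurring on that event. Consequently your final case analysis leaves the min-identity (and the first assertion of the corollary) unproved on $\{\xi^*_s=1\ \text{for some}\ s<\theta\}$, where you have two upper bounds for $\hat V^{*,1}_{\theta-}$ but no matching lower bound.

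The missing ingredient is a lower bound $\hat V^{*,1}_{\theta-}\ge\min\big(\E[f_{\theta-}(1-\zeta^*_{\theta-})|\cF^1_{\theta-}],\Sigma_{\theta-}\big)$ valid on all of $\Omega$. The paper obtains it without ever invoking optimal truncated controls: starting from the essinf representation \eqref{eq:v1} of $\hat V^{*,1}_{\theta_n}$, it splits each competitor $\tau\in\cT_{\theta_n}(\F^1)$ on $\{\tau<\theta\}$ versus $\{\tau\ge\theta\}$ (using that $\{\tau<\theta\}\in\cF^1_{\theta-}$ by previsibility of $\theta$), bounds the first piece below by $\E[\inf_{s\in[\theta_n,\theta)}f_s(1-\zeta^*_s)|\cF^1_{\theta-}]$ and the second by $\Sigma_{\theta-}$, and passes to the limit. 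You should replace step (b) by this argument (or supply an equivalent global lower bound); the rest of your combination then goes through.
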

This corollary is justified after the proof of Theorem \ref{thm:value} in Section \ref{sec:proofthm}.

\subsection{Auxiliary super/sub-martingale systems}\label{subsec:aux-mart}
Let us now prepare the ground for the proof of Theorem \ref{thm:value} and for the martingale characterisation of players' values by first introducing two auxiliary families of random variables.
Let $(\xi,\zeta)\in\cAcirc_0(\F^1)\times\cAcirc_0(\F^2)$ be an arbitrary pair and recall that $(\xi^*,\zeta^*)\in \cAcirc_0(\F^1)\times\cAcirc_0(\F^2)$ is an optimal pair for the game started at zero in \eqref{eq:V} (cf.\ Notation \ref{not:equil}). Let ${\bf M}^\xi\coloneqq\{M^\xi(\theta),\theta\in\cT_0(\F^1)\}$ and ${\bf N}^\zeta\coloneqq\{N^\zeta(\gamma),\gamma\in\cT_0(\F^2)\}$ be defined as
\begin{align}
M^\xi(\theta)&= \E \Big[\! \int_{[0,\theta)}\!\! f_t (1\!-\!\zeta^*_t)  \ud \xi_t\! +\! \int_{[0,\theta)}\!\!g_t (1\!-\!\xi_t) \ud \zeta^*_t\! +\!\! \sum_{t \in [0,\theta)}\!\!h_t \Delta \xi_t \Delta \zeta^*_t \Big| \cF^1_\theta \Big]\label{eq:Mxi}\\
&\quad+\!(1\!-\!\xi_{\theta-}) \E[1\!-\!\zeta^*_{\theta-}|\cF^1_\theta]V^{*,1}(\theta),\notag\\
N^\zeta(\gamma)&= \E \Big[\! \int_{[0,\gamma)}\!\! f_t (1\!-\!\zeta_t)  \ud \xi^*_t\! +\! \int_{[0,\gamma)}\!\!g_t (1\!-\!\xi^*_t) \ud \zeta_t\! +\! \!\sum_{t \in [0,\gamma)}\!\! h_t \Delta \xi^*_t \Delta \zeta_t \Big| \cF^2_\gamma \Big]\label{eq:Nzi}\\
&\quad+\!(1\!-\!\zeta_{\gamma-})\E[1\!-\!\xi^*_{\gamma-}|\cF^2_\gamma] V^{*,2}(\gamma).\notag
\end{align}
Two choices of $\xi$ and $\zeta$ will be of particular interest in the paper: (i) $\xi\equiv \xi^*$ and $\zeta\equiv \zeta^*$, yielding ${\bf M}^*={\bf M}^{\xi^*}$ and ${\bf N}^*={\bf N}^{\zeta^*}$, respectively, and (ii) $\xi\equiv 0$ and $\zeta\equiv 0$, yielding ${\bf M}^0$ and ${\bf N}^0$, respectively. The families ${\bf M}^*$ and ${\bf N}^*$, where both players are acting optimally, will be shown to form martingale systems -- an analogue of the martingale condition for the value process $V_{t\wedge\hat\tau\wedge\hat\sigma}$ in the full-information game (cf.\ Subsection \ref{subsec:roadmap}). 
Related super- and submartingale conditions will be formulated for the families ${\bf M}^0$ and ${\bf N}^0$ as mentioned in Subsection \ref{subsec:roadmap}.

The pair $(\xi^*,\zeta^*)$ generates the randomised stopping times $(\tau_*,\sigma_*)=(\tau_*(\xi^*,Z_1),\sigma_*(\zeta^*,Z_2))$ (Definition \ref{def:rst}), where $Z_1$ and $Z_2$ are uniformly distributed on $[0,1]$, independent of $\cF_T$ and also mutually independent. For $z\in[0,1)$ we denote, 
\begin{align}\label{eq:taubar}
\bar\tau_*(z)=\inf\{t\in[0,T]:\xi^*_t>z\}\quad\text{and}\quad \bar\sigma_*(z)=\inf\{t\in[0,T]:\zeta^*_t>z\},
\end{align}
so that $\tau_*=\bar \tau_*(Z_1)$ and $\sigma_*=\bar \sigma_*(Z_2)$. Since $z\mapsto\bar\tau_*(z)$ and $z\mapsto\bar\sigma_*(z)$ are increasing, we can define the pair of largest optimal stopping times 
\begin{align}\label{eq:finaltime}
\bar\tau_*(1)=\inf\{t\in[0,T]:\xi^*_t=1\}\quad\text{and}\quad \bar\sigma_*(1)=\inf\{t\in[0,T]:\zeta^*_t=1\}.
\end{align}
The stopping time $\bar\tau_*(1)\wedge\bar\sigma_*(1)$ is the latest time at which the game -- started at time zero -- ends, in equilibrium.

\begin{remark}\label{rem:tau_z}
Observe that $\bar \tau_*(z) \in \cT(\F^1)$ for each $z \in (0,1)$. Moreover, it follows from the proof of Lemma \ref{lem:pure} that $V^{*,1}(0) = J(\bar\tau_*(z), \sigma_*|\cF^1_0)$ for a.e.~$z \in (0,1)$. Hence, the optimal strategy $\tau_*$ can be interpreted as a randomisation over \emph{optimal pure} stopping times $\bar \tau_*(z)$ for Player 1. An analogous conclusion holds for the pair $\bar \sigma_*(z)$ and $\sigma_*$, concerning Player 2. Further properties of the optimal strategies are presented in Subsection \ref{subsec:structure}.
\end{remark}

In the next proposition we show (super/sub)martingale properties of the families ${\bf M}^0$, ${\bf M}^*$, ${\bf N}^0$ and ${\bf N}^*$ that lead to aggregation results. A reader unfamiliar with notions of super/sub-martingale systems may refer to Definition \ref{def:supsubsys} in Appendix \ref{app:aggr}.
\begin{proposition}\label{thm:aggr1}
The family ${\bf M}^0$ is a $\cT_0(\F^1)$-submartingale system and the family ${\bf N}^0$ is a $\cT_0(\F^2)$-super\-mar\-tingale system. Both families are right-continuous in expectation and of class $(D)$. Thus, they can be aggregated (uniquely up to indistinguishability) into a \cadlag  submartingale $(M^0_t,\F^1,\P)_{t\in[0,T]}$ and a \cadlag supermartingale $(N^0_t,\F^2,\P)_{t\in[0,T]}$, respectively.
\end{proposition}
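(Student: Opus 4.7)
The plan is as follows. I treat the case of $\mathbf{M}^0$; the argument for $\mathbf{N}^0$ is entirely symmetric upon exchanging $\F^1 \leftrightarrow \F^2$, $f \leftrightarrow g$, $\xi^* \leftrightarrow \zeta^*$, and essinf $\leftrightarrow$ esssup. The first step is to recast $M^0(\theta)$ as the value of a standard optimal stopping problem. Using the definition of $\Pi^{*,1}_\theta$ in \eqref{eq:Pi} together with the pathwise identities $(1-\zeta^*_{\theta-})(1-\zeta^{*;\theta}_t) = 1-\zeta^*_t$ and $(1-\zeta^*_{\theta-})\,\ud\zeta^{*;\theta}_t = \ud\zeta^*_t$ on $\{t\ge\theta\}$, the prefactor $\E[1-\zeta^*_{\theta-}|\cF^1_\theta]$ exactly cancels the denominator of $\Pi^{*,1}_\theta$ while rescaling the truncated control $\zeta^{*;\theta}$ back to $\zeta^*$. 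Absorbing the $\E[\int_{[0,\theta)} g_t\, \ud\zeta^*_t\,|\,\cF^1_\theta]$ term and extending each $\xi\in\cAcirc_\theta(\F^1)$ by zero on $[0,\theta)$ to view it as an element of $\cAcirc_0(\F^1)$, this yields the compact representation
\begin{equation*}
M^0(\theta) = \essinf_{\xi \in \cAcirc_\theta(\F^1)} \E[\cP(\xi,\zeta^*)\,|\,\cF^1_\theta].
\end{equation*}

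The submartingale property then follows by a Snell-envelope style argument. For $\theta_1 \le \theta_2$ in $\cT_0(\F^1)$, the zero-extension embedding gives $\cAcirc_{\theta_2}(\F^1)\subseteq\cAcirc_{\theta_1}(\F^1)$. A downward-directedness argument in the spirit of Lemma \ref{lem:ud}, applied at $\theta_2$, produces a sequence $\xi^n\in\cAcirc_{\theta_2}(\F^1)$ with $\E[\cP(\xi^n,\zeta^*)|\cF^1_{\theta_2}]\searrow M^0(\theta_2)$. Taking $\cF^1_{\theta_1}$-conditional expectation and passing to the limit via dominated convergence (justified by class (D), below) delivers
\begin{equation*}
\E[M^0(\theta_2)\,|\,\cF^1_{\theta_1}] = \lim_n \E[\cP(\xi^n,\zeta^*)\,|\,\cF^1_{\theta_1}] \ge M^0(\theta_1),
\end{equation*}
where the inequality uses admissibility of each $\xi^n$ at $\theta_1$. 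The class (D) property is immediate, since $|M^0(\theta)| \le \E[X^*\,|\,\cF^1_\theta]$ with $X^* := \sup_{t\in[0,T]}(|f_t|+|g_t|+|h_t|) \in L^1(\P)$ by Assumption \ref{ass:payoff}, and conditional expectations of a fixed integrable random variable form a uniformly integrable family.

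The delicate ingredient is right-continuity in expectation. For $\theta_n \downarrow \theta$, the submartingale property plus the inclusion $\cAcirc_{\theta_n}(\F^1) \subseteq \cAcirc_\theta(\F^1)$ gives a monotone limit $\E[M^0(\theta_n)] \downarrow \ell \ge \E[M^0(\theta)]$. For the converse, given an $\varepsilon$-minimiser $\xi \in \cAcirc_\theta(\F^1)$ of $\E[M^0(\theta)]$, I would construct approximations $\xi^n_t := \xi_t\,\indd{t \ge \theta_n} \in \cAcirc_{\theta_n}(\F^1)$, which concentrate any $\xi$-mass placed on $[\theta,\theta_n)$ into a single jump at $\theta_n$, and show $\E[\cP(\xi^n,\zeta^*)] \to \E[\cP(\xi,\zeta^*)]$ by bounded convergence, exploiting the \cadlag sample paths of $f,g,h,\xi$ and $\zeta^*$. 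The main obstacle I foresee is control of the simultaneous-jump term $h_{\theta_n}\,\xi_{\theta_n}\,\Delta\zeta^*_{\theta_n}$: since $\zeta^*$ has at most countably many jumps while $\theta_n$ is a random stopping time, one cannot in general have $\Delta\zeta^*_{\theta_n} \to \Delta\zeta^*_\theta$ pointwise, so either a careful perturbation of $\theta_n$ off the (graph of the) jump set of $\zeta^*$ or an averaging argument exploiting the independence of the randomisation devices will be needed.

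Once the submartingale system, class (D) and right-continuity in expectation properties are established, the existence of a \cadlag aggregating submartingale $(M^0_t,\F^1,\P)_{t\in[0,T]}$, unique up to indistinguishability, follows from the standard aggregation theorem recalled in Appendix \ref{app:aggr}. The parallel argument for $\mathbf{N}^0$ -- replacing essinf by esssup, swapping the roles of the players and their filtrations, and using the upward-directed version of Lemma \ref{lem:ud} -- produces the \cadlag $\F^2$-supermartingale $(N^0_t,\F^2,\P)_{t\in[0,T]}$ aggregating $\mathbf{N}^0$.
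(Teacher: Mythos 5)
Your representation $M^0(\theta)=\essinf_{\xi\in\cAcirc_\theta(\F^1)}\E[\cP(\xi,\zeta^*)\,|\,\cF^1_\theta]$ is correct, and your arguments for the system property, the class~$(D)$ bound and the submartingale inequality are sound and essentially the same as the paper's (the paper additionally passes to \emph{pure} stopping times via Lemma \ref{lem:pure} and checks only the expectation inequality, invoking Lemma \ref{lem:M}, but these are cosmetic differences).

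The genuine gap is exactly where you flag it: the hard direction of right-continuity in expectation. You reduce it to showing $\E[\cP(\xi^n,\zeta^*)]\to\E[\cP(\xi,\zeta^*)]$ for the shifted controls $\xi^n_t=\xi_t\indd{t\ge\theta_n}$ and then observe that the term $h_{\theta_n}\Delta\zeta^*_{\theta_n}$ need not converge to $h_\theta\Delta\zeta^*_\theta$ — but you leave this unresolved, offering two strategies (perturbing $\theta_n$ off the jump set of $\zeta^*$, or averaging over the randomisation devices) neither of which works as stated. The second is a non-starter because the problematic jumps belong to the \emph{generating process} $\zeta^*$, not to the randomisation device, so integrating out $Z$ does not remove them; the first is delicate because the jump times of $\zeta^*$ form a random countable set and perturbed times must remain $\F^1$-stopping times dominating $\theta_n$ while preserving near-optimality. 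The paper closes this gap by a different mechanism, using the ordering $f\ge h\ge g$ of Assumption \ref{ass:payoff} in an essential way (this is the content of \eqref{eq:limM0}--\eqref{eq:limM00}): fixing a near-optimal $\theta\in\cT_\tau(\F^1)$ and setting $\theta_n=\theta\vee\tau_n$, on the event $A$ where $\theta_n>\theta$ for all $n$ one absorbs the jump term via
\[
f_{\theta_n}(1-\zeta^*_{\theta_n})+h_{\theta_n}\Delta\zeta^*_{\theta_n}\le f_{\theta_n}(1-\zeta^*_{\theta_n-}),
\]
whose right-hand side \emph{does} converge (since $\zeta^*_{\theta_n-}\to\zeta^*_{\theta}$ by right-continuity when $\theta_n\downarrow\theta$ strictly from above), and the resulting limit $f_\theta(1-\zeta^*_\theta)+g_\theta\Delta\zeta^*_\theta$ is then dominated by $f_\theta(1-\zeta^*_\theta)+h_\theta\Delta\zeta^*_\theta$ using $g\le h$; on $A^c$ the times stabilise and convergence is trivial. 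Without this (or an equivalent) argument, the right-continuity claim — and hence the aggregation via Proposition \ref{prop:aggr} — is not established. Note also that this is the one place in the proposition where the payoff ordering is genuinely used, so any complete proof must invoke it somewhere in this step.
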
 
\begin{proof}
From the definition of $V^{*,1}(\theta)$ and $V^{*,2}(\gamma)$ (cf.~Eq.~\eqref{eq:fV}) it is not difficult to verify that the families ${\bf M}^\xi$ and ${\bf N}^\zeta$ are a $\cT_0(\F^1)$-system and a $\cT_0(\F^2)$-system, respectively, for any choice of $\xi\in\cAcirc_0(\F^1)$ and $\zeta\in\cAcirc_0(\F^2)$. It remains to verify the claimed (super/sub)martingale properties. We only present a proof for ${\bf M}^0$ as analogous arguments apply to ${\bf N}^0$.

Since $f,g\in\cL_b(\P)$, it is easy to verify that ${\bf M}^0$ satisfies $\E[\esssup_{\theta\in\cT_0(\F^1)}|M^0(\theta)|]<\infty$. Hence the family is of class $(D)$. 
The submartingale property of ${\bf M}^0$ is equivalent to $\E[M^0(\tau)]\ge \E[M^0(\sigma)]$ for every $\tau,\sigma\in\cT_0(\F^1)$, $\sigma\le \tau$ (cf.\ Lemma \ref{lem:M}), which we are about to prove. 

Take $\tau,\sigma\in\cT_0(\F^1)$, $\sigma \le \tau$. We will argue first on the event $\{\sigma < T\}$ as on the event $\{\sigma=T\}$ we trivially have $M^0(\sigma)=M^0(\tau)=M^0(T)$. 
By the definition of $M^0$ we have
\begin{equation}\label{eq:subm00}
M^0(\sigma) =\E\Big[\int_{[0,\sigma)}g_t\ud\zeta^*_t \Big| \cF^1_\sigma\Big] +\E[1-\zeta^*_{\sigma-}|\cF^1_\sigma]\,V^{*,1}(\sigma).
\end{equation}
We recall from \eqref{eq:V1V2} the definition of $V^{*,1}(\sigma)$ and use Lemma \ref{lem:pure} to obtain
\begin{equation}
\begin{aligned}\label{eq:subm0}
&\E[1-\zeta^*_{\sigma-}|\cF^1_\sigma]V^{*,1}(\sigma)\\
&= 
\E[1-\zeta^*_{\sigma-}|\cF^1_\sigma] \essinf_{\theta \in \cT_\sigma(\F^1)} \E\Big[\Pi^{*,1}_\sigma \Big(f_\theta(1-\zeta^{*;\sigma}_\theta)+\int_{[\sigma,\theta)}g_t\ud \zeta^{*;\sigma}_t+h_\theta\Delta\zeta^{*;\sigma}_\theta\Big) \Big|\cF^1_\sigma\Big]\\
&= 
\essinf_{\theta \in \cT_\sigma(\F^1)} \E\Big[f_\theta(1-\zeta^{*}_\theta)+\int_{[\sigma,\theta)}g_t\ud \zeta^{*}_t+h_\theta\Delta\zeta^{*}_\theta \Big|\cF^1_\sigma\Big],
\end{aligned}
\end{equation}
where for the second equality we used $\E[1-\zeta^*_{\sigma-}|\cF^1_\sigma]\Pi^{*,1}_\theta=1-\zeta^*_{\sigma-}$ by the definition of $\Pi^{*,1}_\theta$ in \eqref{eq:Pi} and the definition of truncated controls (cf.\ \eqref{eq:trunc}). Hence, taking $\eta \in \cT_\tau(\F^1)\subset\cT_\sigma(\F^1)$ we have
\begin{equation}\label{eq:subm01}
\begin{aligned}
&\E[1-\zeta^*_{\sigma-}|\cF^1_\sigma]V^{*,1}(\sigma)\le\E\Big[f_\eta(1-\zeta^*_\eta)+\int_{[\sigma,\eta)}g_t\ud \zeta^*_t+h_\eta\Delta\zeta^*_\eta\Big|\cF^1_\sigma\Big]\\
&=\E\Big[f_\eta(1-\zeta^*_\eta)+\int_{[\sigma,\tau)}g_t\ud \zeta^*_t+\int_{[\tau,\eta)}g_t\ud \zeta^*_t+h_\eta\Delta\zeta^*_\eta\Big|\cF^1_\sigma\Big]\\
&=\E\bigg[\int_{[\sigma,\tau)}g_t\ud \zeta^*_t+\E\Big[f_{\eta}(1-\zeta^*_{\eta})+\int_{[\tau,\eta)}g_t\ud \zeta^*_t+h_{\eta}\Delta\zeta^*_{\eta}\Big|\cF^1_\tau\Big]\bigg|\cF^1_\sigma\bigg]\\
&=
\E\bigg[\int_{[\sigma,\tau)}g_t\ud \zeta^*_t+\E[1-\zeta^*_{\tau-}|\cF^1_\tau]\E\Big[\Pi^{*, 1}_{\tau} \Big( f_{\eta}(1-\zeta^{*;\tau}_{\eta})+\int_{[\tau,\eta)}g_t\ud \zeta^{*;\tau}_t+h_{\eta}\Delta\zeta^{*;\tau}_{\eta} \Big)\Big|\cF^1_\tau\Big]\bigg|\cF^1_\sigma\bigg]\\
&=
\E\Big[\int_{[\sigma,\tau)}g_t\ud \zeta^*_t+\E[1-\zeta^*_{\tau-}|\cF^1_\tau]J^{\Pi^{*, 1}_{\tau}}(\eta,\sigma_*^\tau|\cF^1_\tau)\Big|\cF^1_\sigma\Big],
\end{aligned}
\end{equation}
where in the final expression $\sigma_*^\tau$ is generated by the truncated control $\zeta^{*;\tau}$ (cf.\ notation in Lemma \ref{lem:pure}). Thanks to Lemma \ref{lem:pure} and since the family 
\(\big\{J^{\Pi^{*, 1}_{\tau}}(\eta,\sigma_*^\tau|\cF^1_\tau),\,\eta\in\cT_\tau(\F^1)\big\}\)
is downward-directed (Corollary \ref{cor:pure_ud}), we can take a sequence $(\eta_n) \subset \cT_\tau(\F^1)$ 
such that
\begin{equation}\label{eqn:eta_n_limit}
\lim_{n \to \infty}J^{\Pi^{*, 1}_{\tau}}(\eta_n,\sigma_*^\tau|\cF^1_\tau)
= V^{*,1}(\tau)
\end{equation}
and the limit is monotone from above. Equations \eqref{eq:subm0} and \eqref{eq:subm01} yield
\begin{align*}
&\E[1-\zeta^*_{\sigma-}|\cF^1_\sigma]V^{*,1}(\sigma)\\
&\le \E\Big[\int_{[\sigma,\tau)}g_t\ud \zeta^*_t+\E[1-\zeta^*_{\tau-}|\cF^1_\tau]\,J^{\Pi^{*, 1}_{\tau}}(\eta_n,\sigma_*^\tau|\cF^1_\tau)\Big|\cF^1_\sigma\Big]\\
&\xrightarrow{n \to \infty}
\E\Big[\int_{[\sigma,\tau)}g_t\ud \zeta^*_t+\E[1-\zeta^*_{\tau-}|\cF^1_\tau]\,V^{*,1}(\tau)\Big|\cF^1_\sigma\Big],
\end{align*}
where the limit is by the monotone convergence theorem and \eqref{eqn:eta_n_limit}. Substituting into \eqref{eq:subm00} and adding $M^0(\sigma)$ on the event $\{\sigma = T\}$ yields
\begin{align*}
\E[M^0(\sigma)]&= \E[\indd{\sigma < T} M^0(\sigma) + \indd{\sigma = T} M^0(\sigma)]\\
&\le \E\Big[\indd{\sigma < T}\Big( \int_{[0,\tau)}g_t\ud \zeta^*_t+\E[1-\zeta^*_{\tau-}|\cF^1_\tau]\,V^{*,1}(\tau)\Big) + \indd{\sigma = T} M^0(\sigma) \Big]\\
&= \E \big[\indd{\sigma < T} M^0(\tau) + \indd{\sigma = T} M^0(\tau) \big]=\E[M^0(\tau)],
\end{align*}
where we used that $\{\sigma < T \} \in \cF^1_{\sigma}$ combined with the tower property, and $M^0(\sigma) = M^0(T) = M^0(\tau)$ on $\{\sigma = T\}$. This is the required inequality for the submartingale property of the family.

In order to show the right-continuity in expectation let us consider a sequence $(\tau_n)_{n\in\N}\subset\cT_0(\F^1)$ such that $\tau_n\downarrow \tau\in\cT_0(\F^1)$. Arguing as in \eqref{eq:subm0} with $\tau_n$ in place of $\sigma$ we have the first equality below. The second one follows by the monotone convergence theorem and Corollary \ref{cor:pure_ud} (cf.\ \eqref{eq:commute}):
\begin{equation}\label{eqn:M0_sigman}
\begin{aligned}
\E[M^0(\tau_n)]&=\E\Big[\int_{[0,\tau_n)}g_t\ud \zeta^*_t  +\essinf_{\eta\in\cT_{\tau_n}(\F^1)}\E\Big[f_\eta(1-\zeta^*_\eta)+\int_{[\tau_n,\eta)}g_t\ud \zeta^*_t+h_\eta\Delta\zeta^*_\eta\Big|\cF^1_{\tau_n}\Big] \Big]\\
&=\inf_{\eta\in\cT_{\tau_n}(\F^1)}\E\Big[f_\eta(1-\zeta^*_\eta)+\int_{[0,\eta)}g_t\ud \zeta^*_t+h_\eta\Delta\zeta^*_\eta\Big].
\end{aligned}
\end{equation}
We claim that
\begin{align}\label{eq:limM0}
\begin{aligned}
&\lim_{n \to \infty} \inf_{\eta\in\cT_{\tau_n}(\F^1)}\E\Big[f_\eta(1-\zeta^*_\eta)+\int_{[0,\eta)}g_t\ud \zeta^*_t+h_\eta\Delta\zeta^*_\eta\Big] 
\\
&=
\inf_{\eta \in \cT_\tau(\F^1)} \E\Big[f_{\eta}(1-\zeta^*_{\eta}) + \int_{[0,\eta)}g_t\ud\zeta^*_t + h_\eta \Delta \zeta^*_\eta\Big].
\end{aligned}
\end{align}
Deferring for a moment the proof of \eqref{eq:limM0}, we observe that the latter and \eqref{eqn:M0_sigman} yield
\begin{align*}
\begin{aligned}
\lim_{n\to\infty}\E[M^0(\tau_n)]&=\inf_{\eta \in \cT_\tau(\F^1)} \E\Big[f_{\eta}(1-\zeta^*_{\eta}) + \int_{[0,\eta)}g_t\ud\zeta^*_t + h_\eta \Delta \zeta^*_\eta\Big]=\E[M^0(\tau)], 
\end{aligned}
\end{align*}
where for the final equality we applied an analogue of \eqref{eqn:M0_sigman} with $\tau$ instead of $\tau_n$. This 
completes the proof of the right-continuity of $M^0$ in expectation.

In order to prove \eqref{eq:limM0} we first observe that the limit exists because $\cT_{\tau_n}(\F^1)\subset\cT_{\tau_{n+1}}(\F^1)$ and so the associated infima form a decreasing sequence. Moreover, $\cT_{\tau_n}(\F^1)\subset\cT_\tau(\F^1)$ trivially implies that \eqref{eq:limM0} holds with ``$\ge$'' instead of equality. It remains to show the opposite inequality. Let us fix $\theta \in \cT_\tau(\F^1)$. Let $\theta_n = \theta \vee \tau_n$ for $n\in\N$ and notice that 
\[
A = \{ \theta_n > \theta,\ \forall n\in\N \} = \bigcap_{n \ge 1} \{ \theta_n > \theta \} = \bigcap_{n \ge 1} \{ \tau_n > \theta \} \subset \{\theta = \tau \},
\] 
where the last inclusion is deduced from $\tau_n \downarrow \tau$. On the set $A^c$, the sequence $\theta_n$ stabilises, i.e., $\theta_n = \theta$ for all $n > N(\omega)$ and some $N(\omega)\in\N$. We will therefore argue differently on the set $A$ and on its complement $A^c$. 

Since $f \ge h$ (cf.\ Assumption \ref{ass:payoff}), 
\[
f_{\theta_n}(1-\zeta^*_{\theta_n})+h_{\theta_n}\Delta\zeta^*_{\theta_n}=f_{\theta_n}(1-\zeta^*_{\theta_n-})+(h_{\theta_n}-f_{\theta_n})\Delta\zeta^*_{\theta_n}\le f_{\theta_n}(1-\zeta^*_{\theta_n-}).
\]
Using this inequality we first write
\begin{align*}
&\E\Big[f_{\theta_n}(1-\zeta^*_{\theta_n}) + \int_{[\tau,\theta_n)}g_t\ud\zeta^*_t + h_{\theta_n}\Delta\zeta^*_{\theta_n} \Big]\\
&=
\E\Big[1_A \Big(f_{\theta_n}(1-\zeta^*_{\theta_n}) + \int_{[\tau,\theta_n)}g_t\ud\zeta^*_t + h_{\theta_n}\Delta\zeta^*_{\theta_n}\Big) + \ind_{A^c} \Big(f_{\theta_n}(1-\zeta^*_{\theta_n}) + \int_{[\tau,\theta_n)}g_t\ud\zeta^*_t + h_{\theta_n}\Delta\zeta^*_{\theta_n}\Big) \Big]\\
&\le
\E\Big[1_A \Big(f_{\theta_n}(1-\zeta^*_{\theta_n-}) + \int_{[\tau,\theta_n)}g_t\ud\zeta^*_t \Big) 
+ \ind_{A^c} \Big(f_{\theta_n}(1-\zeta^*_{\theta_n}) + \int_{[\tau,\theta_n)}g_t\ud\zeta^*_t + h_{\theta_n}\Delta\zeta^*_{\theta_n}\Big) \Big].
\end{align*}
Now, given that $\theta_n\in\cT_{\tau_n}(\F^1)$ for all $n\in\N$, we have
\begin{align}\label{eq:limM00}
\begin{aligned}
&\lim_{n \to \infty} \inf_{\eta\in\cT_{\tau_n}(\F^1)}\E\Big[f_\eta(1-\zeta^*_\eta)+\int_{[0,\eta)}g_t\ud \zeta^*_t+h_\eta\Delta\zeta^*_\eta\Big]\\
&\le
\lim_{n \to \infty} \E\Big[\ind_{A} \Big(f_{\theta_n}(1\!-\!\zeta^*_{\theta_n-})\! +\! \int_{[\sigma,\theta_n)}\!g_t\ud\zeta^*_t \Big)\! +\! 
\ind_{A^c} \Big(f_{\theta_n}(1\!-\!\zeta^*_{\theta_n})\! +\! \int_{[\sigma,\theta_n)}\!g_t\ud\zeta^*_t \!+\! h_{\theta_n}\Delta\zeta^*_{\theta_n}\Big) \Big]\\
&=
\E\Big[\ind_{A} \Big(f_{\theta}(1-\zeta^*_{\theta}) + \int_{[\sigma,\theta]}g_t\ud\zeta^*_t \Big) + 
\ind_{A^c} \Big(f_{\theta}(1-\zeta^*_{\theta}) + \int_{[\sigma,\theta)}g_t\ud\zeta^*_t + h_{\theta}\Delta\zeta^*_{\theta}\Big) \Big],
\end{aligned}
\end{align}
by the dominated convergence theorem and the fact that on $A^c$ we have $\theta_n = \theta$ for all sufficiently large $n \ge N(\omega)$. Next we use $g_\theta \le h_\theta$ to write
\[
1_A\int_{[\sigma,\theta]}g_t\ud\zeta^*_t=1_A\Big(\int_{[\sigma,\theta)}g_t\ud\zeta^*_t+g_\theta\Delta\zeta^*_\theta\Big)\le1_A\Big(\int_{[\sigma,\theta)}g_t\ud\zeta^*_t+h_\theta\Delta\zeta^*_\theta\Big). 
\]
Inserting this inequality into \eqref{eq:limM00} and recombining the indicator functions yield
\begin{align*}
\lim_{n \to \infty} \inf_{\eta\in\cT_{\tau_n}(\F^1)}\E\Big[f_\eta(1-\zeta^*_\eta)+\int_{[0,\eta)}g_t\ud \zeta^*_t+h_\eta\Delta\zeta^*_\eta\Big]\le
\E\Big[f_{\theta}(1-\zeta^*_{\theta}) + \int_{[\sigma,\theta)}g_t\ud\zeta^*_t + h_\theta \Delta \zeta^*_\theta\Big]. 
\end{align*}
From the arbitrariness of $\theta \in \cT_\tau(\F^1)$, we conclude that \eqref{eq:limM0} holds.

Properties of ${\bf N}^0$ are shown in an analogous way so we omit their proof.
It remains to invoke Proposition \ref{prop:aggr} to conclude that the families ${\bf M}^0$ and ${\bf N}^0$ can be aggregated into a \cadlag  submartingale $(M^0_t,\F^1,\P)_{t\in[0,T]}$ and a \cadlag supermartingale $(N^0_t,\F^2,\P)_{t\in[0,T]}$, respectively.
\end{proof}

We will later refine the above result by showing in Corollary \ref{cor:M_zero_mart} that the processes $(M^0_t)_{t\in[0,T]}$ and $(N^0_t)_{t\in[0,T]}$ are martingales up to the `last optimal stopping time' for Player 1 and Player 2, respectively. Next, we aggregate the families ${\bf M}^*={\bf M}^{\xi^*}$ and ${\bf N}^*={\bf N}^{\zeta^*}$.

\begin{proposition}\label{thm:aggr2}
The family ${\bf M}^*$ is a $\cT_0(\F^1)$-martingale system and the family ${\bf N}^*$ is a $\cT_0(\F^2)$-martingale system. Both are of class $(D)$. Hence, they can be uniquely aggregated into \cadlag martingales $(M^*_t,\F^1,\P)_{t\in[0,T]}$ and $(N^*_t,\F^2,\P)_{t\in[0,T]}$ (up to indistinguishability).

Moreover, 
the \emph{truncated controls} remain optimal at every time (prior to the end of the game) in the sense that
\begin{equation}\label{eqn:trunc_optim}
\begin{aligned}
&\ind_{\Gam^1_\theta} V^{*,1}(\theta)
=\ind_{\Gam^1_\theta} J^{\Pi^{*,1}_\theta}(\xi^{*;\theta},\zeta^{*;\theta}|\cF^1_\theta),\quad\theta\in\cT_0(\F^1),\\
&\ind_{\Gam^2_\gamma}V^{*,2}(\gamma)
=\ind_{\Gam^2_\gamma} J^{\Pi^{*,2}_\gamma}(\xi^{*;\gamma},\zeta^{*;\gamma}|\cF^2_\gamma),\quad\gamma\in\cT_0(\F^2),
\end{aligned}
\end{equation}
where 
\begin{align*}
&\Gam^1_\theta = \big\{\omega\in\Omega: (1-\xi^*_{\theta-}(\omega))\E[1-\zeta^*_{\theta-}|\cF^1_\theta](\omega) > 0\big\} \in \cF^1_\theta,\\
&\Gam^2_\gamma = \big\{\omega\in\Omega: (1-\zeta^*_{\gamma-}(\omega))\E[1-\xi^*_{\gamma-}|\cF^2_\gamma](\omega) > 0 \big\} \in \cF^2_\gamma.
\end{align*}
\end{proposition}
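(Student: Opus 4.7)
The strategy is to prove the identification
\begin{equation}\label{eq:plan1}
M^*(\theta) = \E[\cP(\xi^*,\zeta^*)\,|\,\cF^1_\theta], \qquad \theta \in \cT_0(\F^1),
\end{equation}
together with its symmetric counterpart $N^*(\gamma) = \E[\cP(\xi^*,\zeta^*)\,|\,\cF^2_\gamma]$. Once \eqref{eq:plan1} is established, the martingale system property is immediate from the tower property; class $(D)$ follows from $|\cP(\xi^*,\zeta^*)| \le \sup_t(|f_t|+|g_t|+|h_t|) \in L^1(\P)$ under Assumption \ref{ass:payoff}; right-continuity in expectation is inherited from the right-continuity of $\F^1$; and aggregation into a \cadlag martingale then follows from Proposition \ref{prop:aggr}. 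Writing $\cP_{[\theta,T]}(\xi,\zeta)$ for the portion of $\cP(\xi,\zeta)$ with integrals on $[\theta,T)$ and atomic sum on $[\theta,T]$, identity \eqref{eq:plan1} is equivalent to
\begin{equation}\label{eq:plan2}
(1-\xi^*_{\theta-})\E[1-\zeta^*_{\theta-}\,|\,\cF^1_\theta]\,V^{*,1}(\theta) = \E[\cP_{[\theta,T]}(\xi^*,\zeta^*)\,|\,\cF^1_\theta],
\end{equation}
which will be derived from the truncated-control optimality \eqref{eqn:trunc_optim}.

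The engine of the proof is the pathwise identity
\begin{equation}\label{eq:plan3}
\cP_{[\theta,T]}(\hat\xi',\zeta^*) = (1-\xi^*_{\theta-})(1-\zeta^*_{\theta-})\,\cP(\xi',\zeta^{*;\theta}),
\end{equation}
valid for every $\xi'\in\cAcirc_\theta(\F^1)$ and the pasted strategy $\hat\xi'_t = \xi^*_t$ for $t<\theta$, $\hat\xi'_t = \xi^*_{\theta-}+(1-\xi^*_{\theta-})\xi'_t$ for $t\ge\theta$; one checks that $\hat\xi'\in\cAcirc_0(\F^1)$. Identity \eqref{eq:plan3} is obtained by substituting $(1-\xi^*_{\theta-})\,d\xi^{*;\theta}_t = d\xi^*_t$ and $(1-\zeta^{*;\theta}_t)(1-\zeta^*_{\theta-}) = 1-\zeta^*_t$ for $t\ge\theta$ (and their mirrored analogues for $\zeta$) into each of the three terms of $\cP$. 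To prove \eqref{eqn:trunc_optim}, fix any $A\in\cF^1_\theta$ with $A\subset\Gam^1_\theta$ and $\tilde\xi\in\cAcirc_\theta(\F^1)$, and set $\check\xi = \ind_A\tilde\xi + \ind_{A^c}\xi^{*;\theta} \in \cAcirc_\theta(\F^1)$ with pasted $\hat\xi$. Applying \eqref{eq:plan3} with $\xi'=\check\xi$ and with $\xi'=\xi^{*;\theta}$ (the latter giving $\hat\xi' = \xi^*$), taking expectations, and using the optimality of $\xi^*$ against $\zeta^*$ ($\E[\cP(\xi^*,\zeta^*)] \le \E[\cP(\hat\xi,\zeta^*)]$) yields
\begin{equation*}
\E\bigl[\ind_{A}\,(1-\xi^*_{\theta-})\E[1-\zeta^*_{\theta-}\,|\,\cF^1_\theta]\bigl(J^{\Pi^{*,1}_\theta}(\tilde\xi,\zeta^{*;\theta}|\cF^1_\theta) - J^{\Pi^{*,1}_\theta}(\xi^{*;\theta},\zeta^{*;\theta}|\cF^1_\theta)\bigr)\bigr] \ge 0.
\end{equation*}
If \eqref{eqn:trunc_optim} failed, Lemma \ref{lem:ud} would supply $\tilde\xi$ and a set $A\subset\Gam^1_\theta$ of positive measure on which the bracketed term is strictly negative while the prefactor is strictly positive --- a contradiction.

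With \eqref{eqn:trunc_optim} in hand, \eqref{eq:plan2} follows on $\Gam^1_\theta$ by taking the conditional expectation of \eqref{eq:plan3} with $\xi' = \xi^{*;\theta}$ (so $\hat\xi' = \xi^*$); on $(\Gam^1_\theta)^c$, the implication \eqref{eq:impl} forces $\xi^*_{\theta-}=1$ or $\zeta^*_{\theta-}=1$, hence $\cP_{[\theta,T]}(\xi^*,\zeta^*)=0$ pathwise and both sides of \eqref{eq:plan2} vanish. This completes \eqref{eq:plan1} and all stated claims for ${\bf M}^*$; the argument for ${\bf N}^*$ is symmetric after swapping the roles of Players 1 and 2 and of minimiser and maximiser. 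I expect the main obstacle to be the bookkeeping behind \eqref{eq:plan3}: the atoms of $\xi^*$ and $\zeta^*$ at $t=\theta$ must be tracked carefully through the change of variable $d\xi^{*;\theta}_\theta = \Delta\xi^*_\theta/(1-\xi^*_{\theta-})$ and its $\zeta$-analogue, so that the scalar factor $(1-\xi^*_{\theta-})(1-\zeta^*_{\theta-})$ emerges consistently across the three terms of $\cP$.
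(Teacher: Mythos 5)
Your proposal is correct and follows essentially the same route as the paper: the pasting of a post-$\theta$ control onto $\xi^*$, the pathwise scaling identity relating the pasted payoff to the truncated controls, the minimising sequence from Lemma \ref{lem:ud}, and the global optimality of $(\xi^*,\zeta^*)$ are exactly the ingredients of the paper's proof. The only difference is organisational: the paper sandwiches $\E[M^*(\theta)]$ between $\E[M^*(0)]$ from both sides and extracts \eqref{eqn:trunc_optim} from the resulting vanishing of a nonnegative integrand with zero mean, whereas you first establish \eqref{eqn:trunc_optim} by localising the perturbation on a set $A\subset\Gam^1_\theta$ and then read off the martingale property from the representation $M^*(\theta)=\E[\cP(\xi^*,\zeta^*)\,|\,\cF^1_\theta]$ via the tower property.
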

\begin{proof}
The fact that ${\bf M}^*$ and ${\bf N}^*$ are $\cT_0(\F^1)$- and $\cT_0(\F^2)$-systems and their integrability is argued as in the proof of Proposition \ref{thm:aggr1}. Next we show the martingale property. We only consider the question for ${\bf M}^*$ as analogous arguments apply to ${\bf N}^*$.

In suffices to show that $\E[M^*(\theta)]=\E[M^*(0)]$ for any $\theta\in\cT_0(\F^1)$ in order to establish the martingale property of ${\bf M}^*$ (cf.\ Lemma \ref{lem:M}). Fix $\theta\in\cT_0(\F^1)$ and define 
\begin{align}\label{eq:xitilde}
\tilde\xi_t\coloneqq\xi^*_t \ind_{[0,\theta)}(t)+\big[\xi^*_{\theta-}+(1-\xi^*_{\theta-})\xi_t\big]\ind_{[\theta,T]}(t)
\end{align}
for some arbitrary $\xi\in\cAcirc_\theta(\F^1)$. Then it is easy to check that $\tilde\xi\in\cAcirc_0(\F^1)$.
From the definition of $M^*$, noticing that $\Pi^{*,1}_0=1$, using tower property and sub-optimality of $\tilde\xi$ we have 
\begin{align}\label{eq:inM0}
\begin{split}
\E[M^*(0)]& = \E[V^{*,1}(0)]\\
&\le \E\Big[\int_{[0,T)}f_t(1-\zeta^*_t)\ud \tilde\xi_t+\int_{[0,T)}g_t(1-\tilde\xi_t)\ud \zeta^*_t+\sum_{t\in[0,T]}h_t\Delta\zeta^*_t\Delta\tilde\xi_t\Big]\\
&=\E\Big[\int_{[0,\theta)}f_t(1-\zeta^*_t)\ud \xi^*_t+\int_{[0,\theta)}g_t(1-\xi^*_t)\ud \zeta^*_t+\sum_{t\in[0,\theta)}h_t\Delta\zeta^*_t\Delta\xi^*_t\Big]\\
&\quad+\E\Big[(1-\xi^*_{\theta-})\Big(\int_{[\theta,T)}f_t(1-\zeta^*_t)\ud \xi_t+\int_{[\theta,T)}g_t(1-\xi_t)\ud \zeta^*_t+\sum_{t\in[\theta,T]}h_t\Delta\zeta^*_t\Delta\xi_t\Big)\Big].
\end{split}
\end{align}
Notice that the random variable inside the final expectation is different from zero only on the event $\{\xi^*_{\theta-}\vee\zeta^*_{\theta-}<1\}$.
Setting $\zeta^{*;\theta}_t$ as in \eqref{eq:trunc} and recalling the convention $\frac00=1$, we can write
\begin{align*}
&\E\Big[(1-\xi^*_{\theta-})\Big(\int_{[\theta,T)}f_t(1-\zeta^*_t)\ud \xi_t+\int_{[\theta,T)}g_t(1-\xi_t)\ud \zeta^*_t+\sum_{t\in[\theta,T]}h_t\Delta\zeta^*_t\Delta\xi_t\Big)\Big|\cF^1_\theta\Big]\\
&=(1-\xi^*_{\theta-})\E\Big[(1-\zeta^*_{\theta-})\Big(\int_{[\theta,T)}f_t(1-\zeta^{*;\theta}_t)\ud \xi_t+\int_{[\theta,T)}g_t(1-\xi_t)\ud \zeta^{*;\theta}_t+\sum_{t\in[\theta,T]}h_t\Delta\zeta^{*;\theta}_t\Delta\xi_t\Big)\Big|\cF^1_\theta\Big]\\
&=(1-\xi^*_{\theta-})\E[1-\zeta^*_{\theta-}|\cF^1_\theta]J^{\Pi^{*,1}_\theta}(\xi,\zeta^{*;\theta}|\cF^1_\theta).
\end{align*}
Combining this with \eqref{eq:inM0} yields for any $\xi\in\cAcirc_\theta(\F^1)$
\begin{equation}\label{eq:Msub}
\begin{aligned}
\E[M^*(0)]&\le \E\Big[\int_{[0,\theta)}f_t(1-\zeta^*_t)\ud \xi^*_t+\int_{[0,\theta)}g_t(1-\xi^*_t)\ud \zeta^*_t+\sum_{t\in[0,\theta)}h_t\Delta\zeta^*_t\Delta\xi^*_t\\
&\qquad+(1-\xi^*_{\theta-})\E[1-\zeta^*_{\theta-}|\cF^1_\theta]\,J^{\Pi^{*,1}_\theta}(\xi,\zeta^{*;\theta}|\cF^1_\theta)\Big].
\end{aligned}
\end{equation}
Thanks to Lemma \ref{lem:ud} we can select a sequence $(\xi^n)\subset\cAcirc_\theta(\F^1)$ such that 
\begin{align}\label{eq:max-xin}
V^{*,1}(\theta)=\lim_{n\to\infty}J^{\Pi^{*,1}_\theta}(\xi^n,\zeta^{*;\theta}|\cF^1_\theta),
\end{align}
where the limit is monotone from above. We write the inequality \eqref{eq:Msub} with $\xi = \xi^n$ and let $n\to\infty$. Invoking the monotone convergence theorem, we arrive at
\begin{equation}\label{eq:Msubopt}
\begin{aligned}
\E[M^*(0)]&\le \E\Big[\int_{[0,\theta)}f_t(1-\zeta^*_t)\ud \xi^*_t+\int_{[0,\theta)}g_t(1-\xi^*_t)\ud \zeta^*_t+\sum_{t\in[0,\theta)}h_t\Delta\zeta^*_t\Delta\xi^*_t\Big]\\
&\quad+\lim_{n\to\infty}\E\Big[(1-\xi^*_{\theta-})\E[1-\zeta^*_{\theta-}|\cF^1_\theta]J^{\Pi^{*,1}_\theta}(\xi^n,\zeta^{*;\theta}|\cF^1_\theta)\Big]\\
&= \E\Big[\int_{[0,\theta)}f_t(1-\zeta^*_t)\ud \xi^*_t+\int_{[0,\theta)}g_t(1-\xi^*_t)\ud \zeta^*_t+\sum_{t\in[0,\theta)}h_t\Delta\zeta^*_t\Delta\xi^*_t\Big]\\
&\quad+\E\Big[(1-\xi^*_{\theta-})\E[1-\zeta^*_{\theta-}|\cF^1_\theta]V^{*, 1}(\theta)\Big]
=\E[M^*(\theta)].
\end{aligned}
\end{equation}

The inequality in \eqref{eq:inM0} becomes an equality if we replace $\tilde\xi$ with $\xi^*$. So \eqref{eq:Msub} becomes
\begin{align}\label{eq:Mopt}
\begin{split}
\E[M^*(0)]
&= \E\Big[\int_{[0,\theta)}f_t(1-\zeta^*_t)\ud \xi^*_t+\int_{[0,\theta)}g_t(1-\xi^*_t)\ud \zeta^*_t+\sum_{t\in[0,\theta)}h_t\Delta\zeta^*_t\Delta\xi^*_t\\
&\qquad+(1-\xi^*_{\theta-})\E[1-\zeta^*_{\theta-}|\cF^1_\theta]J^{\Pi^{*,1}_\theta}(\xi^{*;\theta},\zeta^{*;\theta}|\cF^1_\theta)\Big]\ge\E[M^*(\theta)],
\end{split}
\end{align}
where the inequality is due to the fact that {\em a priori} the truncated control $\xi^{*;\theta}\in\cAcirc_\theta(\F^1)$ may not be optimal for $J^{\Pi^{*,1}_\theta}(\cdot,\zeta^{*;\theta}|\cF^1_\theta)$.

Combining \eqref{eq:Msubopt} and \eqref{eq:Mopt} yields the desired result, i.e., ${\bf M}^*$ is a martingale system. By Corollary \ref{cor:aggr}, it can be uniquely aggregated into a \cadlag martingale $(M^*_t,\F^1,\P)_{t\in[0,T]}$ (up to indistinguishability). 
The inequalities \eqref{eq:Msubopt} and \eqref{eq:Mopt} also show that 
\[
(1-\xi^*_{\theta-})\E[1-\zeta^*_{\theta-}|\cF^1_\theta]V^{*,1}(\theta)=(1-\xi^*_{\theta-})\E[1-\zeta^*_{\theta-}|\cF^1_\theta]J^{\Pi^{*,1}_\theta}\big(\xi^{*;\theta},\zeta^{*;\theta}\big|\cF^1_\theta\big),
\]
from which we deduce \eqref{eqn:trunc_optim} and the optimality of the truncated strategy $\xi^{*; \theta}$. 
\end{proof}

\subsection{Proof of Theorem \ref{thm:value} and some further results}\label{sec:proofthm}

Thanks to Proposition \ref{thm:aggr1} we are able to obtain an aggregation of the systems of equilibrium values into optional processes. Moreover, we compute the right and left limits of such optional processes, thus providing also a formula for their jumps. 

\begin{proof}[{\bf Proof of Theorem \ref{thm:value}}]
We prove all the claims for $\hat V^{*,1}_t$ as the ones for $\hat V^{*,2}_t$ follow by analogous arguments.
By the definition of the submartingale $(M^0_t)_{t\in[0,T]}$ obtained in Proposition \ref{thm:aggr1} we have, for any $\tau \in \cT_0(\F^1)$,
\begin{align}\label{eq:hatV00}
\E[1-\zeta^*_{\tau-}|\cF^1_\tau]V^{*,1} (\tau) = M^0_\tau - \E\Big[ \int_{[0, \tau)} g_s \ud \zeta^*_s \Big| \cF^1_\tau \Big]=M^0_\tau-S^1_\tau,
\end{align}
where 
$(S^1_t)_{t\in[0,T]}$ is the $\F^1$-optional projection of $\big(\int_{[0, t)} g_s \ud \zeta^*_s\big)_{t \in [0, T]}$. This shows that ${\bf V}^{*,1}$ is aggregated into an optional process $\hat V^{*, 1} \coloneqq M^0 - S^1$. The process $S^1$ is the $\F^1$-optional projection of a bounded variation process, hence a difference of two submartingales and, in particular, a semi-martingale. Therefore, $\hat V^{*,1}$ is also a semi-martingale, as claimed. The explicit expression \eqref{eq:hatv12} for $\hat V^{*,1}$ is easily deduced from the one for $V^{*,1}(\theta)$ in \eqref{eq:V1V2}, upon noticing that 
\begin{equation}\label{eqn:re_theta}
\E[1-\zeta^*_{\theta-}|\cF^1_\theta]J^{\Pi^{*,1}_\theta}(\xi,\zeta^{*;\theta}|\cF^1_\theta)=\E\Big[f_\tau(1-\zeta^{*}_{\tau})+\int_{[\theta,\tau)}g_u\ud \zeta^{*}_u+h_\tau\Delta\zeta^{*}_\tau\Big|\cF^1_\theta\Big],
\end{equation}
for $\tau\in\cT^R_\theta(\F^1)$ generated by $\xi\in\cA^\circ_\theta(\F^1)$, and then applying Lemma \ref{lem:pure} to restrict the optimisation to stopping times $\tau\in\cT_\theta(\F^1)$ (cf.\ \eqref{eq:subm0} for the same argument). The class $(D)$ property easily follows because $f,g,h\in\cL_b(\P)$.

The process $M^0$ has \cadlag paths. Moreover, there is a set $\Omega_*\in\cF$ of probability one such that for all $\omega\in\Omega_*$ the process $t\mapsto S^1_t(\omega)$ has right and left limits at all $t\in(0,T)$, thanks to \cite[Prop.\ I.3.14]{karatzas1998brownian}, because $S^1$ is a difference of two submartingales. Then, outside of a universal null set, all paths of the process $\hat V^{*,1}$ have right and left limits $\hat V^{*,1}_{t+}$ and $\hat V^{*,1}_{t-}$ at all points $t\in(0,T)$.
Moreover, we notice that by the right continuity of the filtration, the process $(\hat V^{*,1}_{t+})_{t\in[0,T)}$ is $\F^1$-adapted.

Fix $\theta\in\cT_0(\F^1)$, $\theta<T$. From \eqref{eq:hatv12} with $\theta$ therein replaced by $\theta_n\in\cT_0(\F^1)$, $\theta_n>\theta$, we get 
\begin{equation}\label{eq:v1}
\hat V^{*,1}_{\theta_n}
=\essinf_{\tau\in\cT_{\theta_n}(\F^1)}\E\Big[f_\tau(1-\zeta^{*}_{\tau})+\int_{[\theta_n,\tau)}g_u\ud \zeta^{*}_u+h_\tau\Delta\zeta^{*}_\tau\Big|\cF^1_{\theta_n}\Big].
\end{equation}
We apply monotone convergence and Corollary \ref{cor:pure_ud} to the above equality to obtain (cf.\ \eqref{eq:commute})
\begin{equation} \label{eqn:v_cond}
\E [\hat V^{*,1}_{\theta_n} | \cF^1_{\theta} ] = \essinf_{\tau\in\cT_{\theta_n}(\F^1)}\E\Big[f_\tau(1-\zeta^{*}_{\tau})+\int_{[\theta_n,\tau)}g_u\ud \zeta^{*}_u+h_\tau\Delta\zeta^{*}_\tau\Big|\cF^1_{\theta}\Big].
\end{equation}
Then,
\[
\E [\hat V^{*,1}_{\theta_n} | \cF^1_{\theta} ] \ge \essinf_{\tau\in\cT_{>\theta}(\F^1)}\E\Big[f_\tau(1-\zeta^{*}_{\tau})+\int_{[\theta_n,\tau)}g_u\ud \zeta^{*}_u+h_\tau\Delta\zeta^{*}_\tau\Big|\cF^1_{\theta}\Big],
\]
where $\cT_{>\theta}(\F^1)\coloneqq\{\tau\in\cT_{\theta}(\F^1): \P(\tau>\theta)=1\}$. We let $\theta_n \downarrow \theta$ as $n\to\infty$ (i.e., $\theta_n\to \theta$, $\theta_n>\theta$). Taking limits in the expression above yields
\begin{align}\label{eq:V+0}
\begin{aligned}
&\hat V^{*,1}_{\theta+} 
= \E [\hat V^{*,1}_{\theta+} | \cF^1_{\theta} ]
= \lim_{n\to \infty} \E [\hat V^{*,1}_{\theta_n} | \cF^1_{\theta} ]\\
& \ge \essinf_{\tau\in\cT_{>\theta}(\F^1)}\E\Big[f_\tau(1-\zeta^{*}_{\tau})+\int_{(\theta,\tau)}g_u\ud \zeta^{*}_u+h_\tau\Delta\zeta^{*}_\tau\Big|\cF^1_{\theta}\Big],
\end{aligned}
\end{align}
where we refer to the right continuity of the filtration $\F^1$ to justify the first equality, to the dominated convergence theorem for the second equality and, for the convergence of the right-hand side, to
\begin{equation}\label{eqn:g_to_0}
0 \le \E\Big[ \int_{(\theta, \theta_n)} |g_u| \ud \zeta^*_{u} \Big] \le
\E \big[ (\zeta^*_{\theta_n-} - \zeta^*_{\theta}) \sup_{u \in [0, T]} |g_u|  \big] \to 0,
\end{equation}
which holds by the dominated convergence as $\theta_n \downarrow \theta$.

For the opposite inequality, we fix $\tau \in \cT_{>\theta}(\F^1)$, $\theta_n \in (\theta, T)$ and notice that $\tau\vee \theta_n\in\cT_{\theta_n}(\F^1)$. It then follows from \eqref{eqn:v_cond} that
\[
\E [\hat V^{*,1}_{\theta_n} | \cF^1_{\theta} ]\le \E\Big[f_{\tau \vee \theta_n}(1-\zeta^{*}_{\tau \vee \theta_n})+\int_{[\theta_n,\tau \vee \theta_n)}g_u\ud \zeta^{*}_u+h_{\tau \vee \theta_n}\Delta\zeta^{*}_{\tau \vee \theta_n} \Big|\cF^1_{\theta}\Big].
\]
Letting $\theta_n\downarrow \theta$ as $n\to\infty$, we observe that $[\theta_n, \tau \vee \theta_n)\to(\theta,\tau)$, because $\tau>\theta$, and likewise
\begin{align*}
\lim_{n\to\infty}\Big(f_{\tau \vee \theta_n}(1\!-\!\zeta^{*}_{\tau \vee \theta_n})\!+\!\int_{[\theta_n,\tau \vee \theta_n)}\!\!g_u\ud \zeta^{*}_u\!+\!h_{\tau \vee \theta_n}\Delta\zeta^{*}_{\tau \vee \theta_n}\Big)\!=\!f_{\tau}(1\!-\!\zeta^{*}_{\tau})\!+\!\int_{(\theta,\tau)}\!\!g_u\ud \zeta^{*}_u\!+\!h_{\tau }\Delta\zeta^{*}_{\tau}.
\end{align*}
Therefore, the dominated convergence theorem yields (cf.\ \eqref{eqn:g_to_0} for a similar argument)
\[
\hat V^{*,1}_{\theta+}\le \E\Big[f_\tau(1-\zeta^{*}_{\tau})+\int_{(\theta,\tau)}g_u\ud \zeta^{*}_u+h_\tau\Delta\zeta^{*}_\tau\Big|\cF^1_{\theta}\Big].
\]
Thus, combining this with \eqref{eq:V+0}
we obtain
\begin{align*}
\hat V^{*,1}_{\theta+}
&=\essinf_{\tau\in\cT_{>\theta}(\F^1)}\E\Big[f_\tau(1-\zeta^{*}_{\tau})+\int_{(\theta,\tau)}g_u\ud \zeta^{*}_u+h_\tau\Delta\zeta^{*}_\tau\Big|\cF^1_{\theta}\Big]\\
&=\essinf_{\tau\in\cT_{>\theta}(\F^1)}\E\Big[f_\tau(1-\zeta^{*}_{\tau})+\int_{[\theta,\tau)}g_u\ud \zeta^{*}_u+h_\tau\Delta\zeta^{*}_\tau\Big|\cF^1_{\theta}\Big]-\E\big[g_{\theta}\Delta\zeta^{*}_{\theta}\big|\cF^1_{\theta}\big].
\end{align*}

To prove \eqref{eq:rlim} it remains to show that, although $\cT_{>\theta}(\F^1)\subsetneq\cT_{\theta}(\F^1)$, it still holds
\begin{align}\label{eq:vhat>}
\hat V^{*,1}_{\theta}=\essinf_{\tau\in\cT_{>\theta}(\F^1)}\E\Big[f_\tau(1-\zeta^{*}_{\tau})+\int_{[\theta,\tau)}g_u\ud \zeta^{*}_u+h_\tau\Delta\zeta^{*}_\tau\Big|\cF^1_{\theta}\Big].
\end{align}
First, we notice that for any $\eta\in\cT_{\theta}(\F^1)$
\begin{align*}
&\E\Big[f_\eta(1-\zeta^*_\eta)+\int_{[\theta,\eta)}g_t\ud\zeta^*_t+h_\eta\Delta\zeta^*_{\eta}\Big|\cF^1_{\theta}\Big]\\
&=\ind_{\{\eta=\theta\}}\E\Big[f_{\theta}(1-\zeta^*_{\theta})+h_{\theta}\Delta\zeta^*_{\theta}\Big|\cF^1_{\theta}\Big]+\ind_{\{\eta>\theta\}}\E\Big[f_\eta(1-\zeta^*_\eta)+\int_{[\theta,\eta)}g_t\ud\zeta^*_t+h_\eta\Delta\zeta^*_{\eta}\Big|\cF^1_{\theta}\Big]\\
&\ge\ind_{\{\eta=\theta\}}\E\Big[f_{\theta}(1-\zeta^*_{\theta})+h_{\theta}\Delta\zeta^*_{\theta}\Big|\cF^1_{\theta}\Big]\\
&\quad+\ind_{\{\eta>\theta\}}\essinf_{\tau\in\cT_{>\theta}(\cF^1_{t})}\E\Big[f_\tau(1-\zeta^*_\tau)+\int_{[\theta,\tau)}g_t\ud\zeta^*_t+h_\tau\Delta\zeta^*_{\tau}\Big|\cF^1_{\theta}\Big].
\end{align*}
It then follows that
\begin{align}\label{eq:V+01}
\begin{aligned}
&\essinf_{\eta\in\cT_{\theta}(\F^1)}\E\Big[f_\eta(1-\zeta^*_\eta)+\int_{[\theta,\eta)}g_t\ud\zeta^*_t+h_\eta\Delta\zeta^*_{\eta}\Big|\cF^1_{\theta}\Big]\\
&=\E[f_{\theta}(1-\zeta^*_{\theta})+h_{\theta}\Delta\zeta^*_{\theta}|\cF^1_{\theta}]\wedge\essinf_{\tau\in\cT_{>{\theta}}(\F^1)}\E\Big[f_\tau(1-\zeta^*_\tau)+\int_{[\theta,\tau)}g_t\ud\zeta^*_t+h_\tau\Delta\zeta^*_\tau\Big|\cF^1_{\theta}\Big].
\end{aligned}
\end{align}
Take a sequence $(\tau_n)_{n\in\N} \subset \cT_{>\theta}(\F^1)$ such that $\tau_n\downarrow \theta$ (i.e., $\tau_n>\theta$ for all $n\in\N$). We have 
\begin{align*}
&\essinf_{\tau\in\cT_{>\theta}(\F^1)}\E\Big[f_\tau(1-\zeta^*_\tau)+\int_{[\theta,\tau)}g_t\ud\zeta^*_t+h_\tau\Delta\zeta^*_\tau\Big|\cF^1_{\theta}\Big]\\
&= \essinf_{\tau\in\cT_{>\theta}(\F^1)}\E\Big[f_\tau(1-\zeta^*_{\tau-})+\int_{[\theta,\tau)}g_t\ud\zeta^*_t+(h_{\tau}-f_\tau)\Delta\zeta^*_\tau\Big|\cF^1_{\theta}\Big]\\
&\le \lim_{n\to\infty}\E\Big[f_{\tau_n}(1-\zeta^*_{\tau_n-})+\int_{[\theta,\tau_n)}g_t\ud\zeta^*_t\Big|\cF^1_{\theta}\Big]
=\E\Big[f_{\theta}(1-\zeta^*_{\theta})+g_{\theta}\Delta\zeta^*_{\theta}\Big|\cF^1_{\theta}\Big]\\
&\le \E[f_{\theta}(1-\zeta^*_{\theta})+h_{\theta}\Delta\zeta^*_{\theta}|\cF^1_{\theta}],
\end{align*}
where we used 
$f \ge h \ge g$ in both inequalities, and we passed the limit inside the expectation by the dominated convergence theorem. Combining the above inequality and \eqref{eq:V+01} yields \eqref{eq:vhat>} as needed.
Then, the first equation in \eqref{eq:rlim} holds. Analogous arguments prove the aggregation for the family ${\bf V}^{*,2}$ and the second equation in \eqref{eq:rlim}.

We proceed now with the left-limit. For $\tau \in \cT^R_\theta(\F^1)$ and any $\upsilon\in\cT^R_0(\F^1)$ with $\upsilon\le \tau$, denote
\[
\cP_{\upsilon}(\tau,\zeta^*)\coloneqq f_{\tau}(1-\zeta^*_{\tau})+\int_{[\upsilon,\tau)}g_t\ud\zeta^*_t+h_{\tau}\Delta\zeta^*_{\tau}
\]
and notice that, by the definition of $\widehat \Pi^{*,1}_{\theta}$ in \eqref{eq:Pi},
\begin{equation}\label{eq:equiv}
\begin{aligned}
&\E[1-\zeta^*_{\theta-}|\cF^1_{\theta-}] J^{\widehat \Pi^{*,1}_{\theta}}\big(\xi,\zeta^{*;\theta}|\cF^1_{\theta-}) = 
\E[\cP_{\theta}(\tau, \zeta^*) | \cF^1_{\theta-}],\\
&\E[1-\zeta^*_{\theta-}|\cF^1_{\theta-}] J^{\widehat \Pi^{*,1}_{\theta}}\big(\xi,\zeta^{*;\theta}|\cF^1_{\theta}) = 
\E[\cP_{\theta}(\tau, \zeta^*) | \cF^1_{\theta}],
\end{aligned}
\end{equation}
where $\tau$ is the randomised stopping time generated by $\xi \in \cAcirc_{\theta}(\F^1)$. Let $(\tau_k)_{k\in\N} \subset \cT_\theta(\F^1)$ be a minimising sequence $\lim_{k \to \infty} \E[\cP_{\theta}(\tau_k, \zeta^*) | \cF^1_{\theta}]=\essinf_{\tau\in\cT_{\theta}(\F^1)} \E[\cP_{\theta}(\tau, \zeta^*) | \cF^1_{\theta}]$, which exists thanks to Corollary \ref{cor:pure_ud} and \eqref{eq:equiv}. Then, 
\begin{equation*}
\begin{aligned}
&\essinf_{\tau\in\cT_{\theta}(\F^1)} \E[\cP_{\theta}(\tau, \zeta^*) | \cF^1_{\theta-}]
\ge
\essinf_{\tau\in\cT^R_{\theta}(\F^1)} \E[\cP_{\theta}(\tau, \zeta^*) | \cF^1_{\theta-}]\\
&\ge
\E \Big[ \essinf_{\tau\in\cT^R_{\theta}(\F^1)} \E[\cP_{\theta}(\tau, \zeta^*) | \cF^1_{\theta}] \Big| \cF^1_{\theta-} \Big]
=
\E \Big[ \lim_{k \to \infty} \E[\cP_{\theta}(\tau_k, \zeta^*) | \cF^1_{\theta}] \Big| \cF^1_{\theta-} \Big]\\
&=
\lim_{k \to \infty} \E[\cP_{\theta}(\tau_k, \zeta^*) | \cF^1_{\theta-}]
\ge
\essinf_{\tau\in\cT_{\theta}(\F^1)} \E[\cP_{\theta}(\tau, \zeta^*) | \cF^1_{\theta-}],
\end{aligned}
\end{equation*}
where we used the dominated convergence theorem in the second equality. Hence,
\begin{equation}\label{eq:Sigma}
\Sigma_{\theta-}\coloneqq\essinf_{\tau\in\cT_{\theta}(\F^1)} \E[\cP_{\theta}(\tau, \zeta^*) | \cF^1_{\theta-}]
=
\essinf_{\tau\in\cT^R_{\theta}(\F^1)} \E[\cP_{\theta}(\tau, \zeta^*) | \cF^1_{\theta-}],
\end{equation}
and $(\tau_k)_{k\in\N}$ is also a minimising sequence for $\Sigma_{\theta-}$. The first inequality in \eqref{eq:llim} is equivalent to showing that $\Delta\coloneqq\hat V^{*,1}_{\theta-} - \Sigma_{\theta-} \le 0$, which we set out to do next.

Letting $(\theta_n)_{n\in\N}\subset\cT_0(\F^1)$ be an announcing sequence for $\theta$ (i.e., $\theta_n<\theta_{n+1}<\theta$ and $\theta_n\uparrow \theta$), we define $\Delta_n \coloneqq \hat V^{*,1}_{\theta_n}-\Sigma_{\theta-}$ and note that $\Delta = \lim_{n \to \infty} \Delta_n$, recalling the existence of the left limit of $\hat V^{*,1}$ at all times.  We proceed to derive an upper bound on $\Delta_n$. Let $(\tau_k)_{k\in\N}$ be the minimising sequence for $\Sigma_{\theta-}$ and set $U_k \coloneqq \E\big[\cP_{\theta}(\tau_k,\zeta^*)\big|\cF^1_{\theta-}\big] - \Sigma_{\theta-}$, so that $(U_k)_{k\in\N}$ is a non-negative sequence and it converges to zero $\P$\as as $k \to \infty$. Using that $\cT_{\theta_n}(\F^1)\supset\cT_{\theta}(\F^1)$ and \eqref{eq:v1} we have
\begin{align*}
\begin{aligned}
\Delta_n&\le \E\big[\cP_{\theta_n}(\tau_k,\zeta^*)\big|\cF^1_{\theta_n}\big]
-\E\big[\cP_{\theta}(\tau_k,\zeta^*)\big|\cF^1_{\theta-}\big] + U_k\\
&=\E\big[\cP_{\theta}(\tau_k,\zeta^*)\big|\cF^1_{\theta_n}\big]-\E\big[\cP_{\theta}(\tau_k,\zeta^*)\big|\cF^1_{\theta-}\big]+ U_k + \E\Big[\int_{(\theta_n,\theta)}g_t\ud \zeta^*_t\Big|\cF^1_{\theta_n}\Big]\\
&\le\big|\E\big[\cP_{\theta}(\tau_k,\zeta^*)\big|\cF^1_{\theta_n}\big]-\E\big[\cP_{\theta}(\tau_k,\zeta^*)\big|\cF^1_{\theta-}\big]\big|+U_k + \E\Big[\sup_{0\le t\le T}|g_t|\big(\zeta^*_{\theta-}-\zeta^*_{\theta_n}\big)\Big|\cF^1_{\theta_n}\Big].
\end{aligned}
\end{align*}
For fixed $k\in\N$ we let $n\to\infty$. For the first term in the last line, we define a martingale $\Lambda^k_t\coloneqq\E\big[\cP_{\theta}(\tau_k,\zeta^*)\big|\cF^1_t\big]$, $t\ge 0$, which is \cadlag thanks to Proposition \ref{prop:aggr}. We have   
\[
\lim_{n\to\infty}\E\big[\cP_{\theta}(\tau_k,\zeta^*)\big|\cF^1_{\theta_n}\big]=\lim_{n\to\infty}\Lambda^k_{\theta_n}=\Lambda^k_{\theta-}=\E\big[\cP_{\theta}(\tau_k,\zeta^*)\big|\cF^1_{\theta-}\big],
\]
where the third equality holds thanks to \cite[Thm.\ VI.14]{DellacherieMeyer} (recall that $\theta$ is previsible). The last term in the upper bound for $\Delta_n$ is positive and, by the Markov inequality, for any $\eps>0$,
\[
\P\Big(\E\Big[\sup_{0\le t\le T}|g_t|\big(\zeta^*_{\theta-}-\zeta^*_{\theta_n}\big)\Big|\cF^1_{\theta_n}\Big]>\eps\Big)\le \eps^{-1}\E\Big[\sup_{0\le t\le T}|g_t|\big(\zeta^*_{\theta-}-\zeta^*_{\theta_n}\big)\Big].
\]
Since $\lim_{n \to \infty} \zeta^*_{\theta_n} = \zeta^*_{\theta-}$, the right-hand side converges to $0$ by the dominated convergence theorem. Hence, $\E[\sup_{0\le t\le T}|g_t|(\zeta^*_{\theta-}-\zeta^*_{\theta_n})|\cF^1_{\theta_n}]\to 0$ in probability as $n\to\infty$, and it converges a.s.\ along a subsequence. In conclusion, for each $k$, we have $\Delta \le U_k$. Letting $k\to\infty$ yields $\Delta\le 0$, as needed.

In order to conclude the proof of the theorem it remains to show $\indd{\xi^*_{\theta-}<1}\Delta=0$. We proceed with the lower bound on $\Delta_n$ on the set $\{\xi^*_{\theta-} < 1\}$. We start from the observation that 
\begin{align*}
\begin{aligned}
&\indd{\xi^*_{\theta-} < 1}\hat V^{*,1}_{\theta_n}\\
&=\indd{\xi^*_{\theta-} < 1}\E\Big[\int_{[\theta_n,T)}f_t\big(1\!-\!\zeta^*_t\big)\ud \xi^{*;\theta_n}_t\!+\!\int_{[\theta_n,T)}g_t\big(1\!-\!\xi^{*;\theta_n}_t\big)\ud \zeta^*_t\!+\!\sum_{t\in[\theta_n,T]}h_t\Delta\xi^{*;\theta_n}_t\Delta\zeta^*_t\Big|\cF^1_{\theta_n}\Big],
\end{aligned}
\end{align*}
where we invoked the optimality of the truncated controls shown in Proposition \ref{thm:aggr2}, which holds only on the set $\{\xi^*_{\theta_n-} < 1\} \supset \{\xi^*_{\theta_-} < 1\}$. Splitting the integration at $\theta$ yields
\begin{equation}\label{eqn:theta_n_split}
\begin{aligned}
&\indd{\xi^*_{\theta-} < 1}\hat V^{*,1}_{\theta_n}\\
&=\indd{\xi^*_{\theta-} < 1}\E\Big[\int_{[\theta_n,\theta)}f_t\big(1-\zeta^*_t\big)\ud \xi^{*;\theta_n}_t+\int_{[\theta_n,\theta)}g_t\big(1-\xi^{*;\theta_n}_t\big)\ud \zeta^*_t+\sum_{t\in[\theta_n,\theta)}h_t\Delta\xi^{*;\theta_n}_t\Delta\zeta^*_t\Big|\cF^1_{\theta_n}\Big]\\
&\quad+\indd{\xi^*_{\theta-} < 1}\E\Big[\int_{[\theta,T)}f_t\big(1-\zeta^*_t\big)\ud \xi^{*;\theta_n}_t+\int_{[\theta,T)}g_t\big(1-\xi^{*;\theta_n}_t\big)\ud \zeta^*_t+\sum_{t\in[\theta,T]}h_t\Delta\xi^{*;\theta_n}_t\Delta\zeta^*_t\Big|\cF^1_{\theta_n}\Big].
\end{aligned}
\end{equation}
For the first term on the right-hand side, denoting for simplicity $Z\coloneqq \sup_{0\le t\le T}\big(|f_t|+|g_t|+|h_t|\big)$, we have 
\begin{align*}
&\indd{\xi^*_{\theta-} < 1}\E\Big[\int_{[\theta_n,\theta)}f_t\big(1-\zeta^*_t\big)\ud \xi^{*;\theta_n}_t+\int_{[\theta_n,\theta)}g_t\big(1-\xi^{*;\theta_n}_t\big)\ud \zeta^*_t+\sum_{t\in[\theta_n,\theta)}h_t\Delta\xi^{*;\theta_n}_t\Delta\zeta^*_t\Big|\cF^1_{\theta_n}\Big]\\
&\ge 
-\ind_{\{\xi^*_{\theta-}<1\}} \E\Big[Z\Big(\frac{\xi^*_{\theta-}-\xi^*_{\theta_n-}}{1-\xi^*_{\theta_n-}}+\zeta^*_{\theta-}-\zeta^*_{\theta_n-}\Big)\Big|\cF^1_{\theta_n}\Big]\\
&\ge -\ind_{\{\xi^*_{\theta-}<1\}}\E\Big[Z\big(\ind_{\{\xi^*_{\theta-}<1\}}+\ind_{\{\xi^*_{\theta-}=1\}}\big)\Big(\frac{\xi^*_{\theta-}-\xi^*_{\theta_n-}}{1-\xi^*_{\theta_n-}}+\zeta^*_{\theta-}-\zeta^*_{\theta_n-}\Big)\Big|\cF^1_{\theta_n}\Big]\\
&\ge-\ind_{\{\xi^*_{\theta-}<1\}}\Big(\E\Big[Z\ind_{\{\xi^*_{\theta-}<1\}}\Big(\frac{\xi^*_{\theta-}-\xi^*_{\theta_n-}}{1-\xi^*_{\theta_n-}}+\zeta^*_{\theta-}-\zeta^*_{\theta_n-}\Big)\Big|\cF^1_{\theta_n}\Big]+\E\big[2Z\ind_{\{\xi^*_{\theta-}=1\}}\big|\cF^1_{\theta_n}\big]\Big). 
\end{align*}

For the second term of \eqref{eqn:theta_n_split} we have,
\begin{align*}
&\indd{\xi^*_{\theta-} < 1}\E\Big[\int_{[\theta,T)}f_t\big(1-\zeta^*_t\big)\ud \xi^{*;\theta_n}_t+\int_{[\theta,T)}g_t\big(1-\xi^{*;\theta_n}_t\big)\ud \zeta^*_t+\	\sum_{t\in[\theta,T]}h_t\Delta\xi^{*;\theta_n}_t\Delta\zeta^*_t\Big|\cF^1_{\theta_n}\Big]\\
&=\indd{\xi^*_{\theta-} < 1}\E\Big[\ind_{\{\xi^*_{\theta-}<1\}}\Big(\int_{[\theta,T)}\!\!f_t\big(1\!-\!\zeta^*_t\big)\ud \xi^{*;\theta_n}_t\!+\!\int_{[\theta,T)}\!\!g_t\big(1\!-\!\xi^{*;\theta_n}_t\big)\ud \zeta^*_t\!+\!\sum_{t\in[\theta,T]}\!h_t\Delta\xi^{*;\theta_n}_t\Delta\zeta^*_t\Big)\Big|\cF^1_{\theta_n}\Big],
\end{align*}
because the expression under the expectation equals $0$ on $\{\xi^*_{\theta-} = 1\}$ as $\xi^{*;\theta_n}_t=1$ for $t\in[\theta,T]$ (recall \eqref{eq:trunc}).
By the tower property and skipping the indicator $\indd{\xi^*_{\theta-} < 1}$ outside the expectation for brevity,  
\begin{align*}
&\E\Big[\ind_{\{\xi^*_{\theta-}<1\}}\Big(\int_{[\theta,T)}f_t\big(1-\zeta^*_t\big)\ud \xi^{*;\theta_n}_t+\int_{[\theta,T)}g_t\big(1-\xi^{*;\theta_n}_t\big)\ud \zeta^*_t+\	\sum_{t\in[\theta,T]}h_t\Delta\xi^{*;\theta_n}_t\Delta\zeta^*_t\Big)\Big|\cF^1_{\theta_n}\Big]\\
&=\E\Big[\ind_{\{\xi^*_{\theta-}<1\}}\frac{1\!-\!\xi^*_{\theta-}}{1\!-\!\xi^*_{\theta_n-}}\E\Big[\int_{[\theta,T)}\!\!f_t\big(1\!-\!\zeta^*_t\big)\ud \xi^{*;\theta}_t\!+\!\int_{[\theta,T)}\!\!g_t\big(1\!-\!\xi^{*;\theta}_t\big)\ud \zeta^*_t\!+\!\sum_{t\in[\theta,T]}\!h_t\Delta\xi^{*;\theta}_t\Delta\zeta^*_t\Big|\cF^1_{\theta-}\Big]\Big|\cF^1_{\theta_n}\Big]\\
&\ge \E\Big[\ind_{\{\xi^*_{\theta-}<1\}}\frac{1-\xi^*_{\theta-}}{1-\xi^*_{\theta_n-}}\essinf_{\tau\in\cT^R_\theta(\F^1)}\E\Big[f_\tau\big(1-\zeta^*_\tau\big)+\int_{[\theta,\tau)}g_t\ud \zeta^*_t+h_\tau\Delta\zeta^*_\tau\Big|\cF^1_{\theta-}\Big]\Big|\cF^1_{\theta_n}\Big]\\
&= \E\Big[\ind_{\{\xi^*_{\theta-}<1\}}\frac{1-\xi^*_{\theta-}}{1-\xi^*_{\theta_n-}}\essinf_{\tau\in\cT_\theta(\F^1)}\E\Big[f_\tau\big(1-\zeta^*_\tau\big)+\int_{[\theta,\tau)}g_t\ud \zeta^*_t+h_\tau\Delta\zeta^*_\tau\Big|\cF^1_{\theta-}\Big]\Big|\cF^1_{\theta_n}\Big]\\
&=\E\big[\ind_{\{\xi^*_{\theta-}<1\}}\frac{1-\xi^*_{\theta-}}{1-\xi^*_{\theta_n-}}\Sigma_{\theta-}\big|\cF^1_{\theta_n}\big],
\end{align*}
where in the penultimate equality we substitute $\cT^R_\theta(\F^1)$ with $\cT_\theta(\F^1)$ by an analogous argument as in the proof of Lemma \ref{lem:pure}. Notice that $|\Sigma_{\theta-}|\le \E[Z|\cF^1_{\theta-}]$ and therefore we can further continue the lower bound as
\begin{align*}
\E\big[\ind_{\{\xi^*_{\theta-}<1\}}\frac{1-\xi^*_{\theta-}}{1-\xi^*_{\theta_n-}}\Sigma_{\theta-}\big|\cF^1_{\theta_n}\big]
&=\E\big[\ind_{\{\xi^*_{\theta-}<1\}}\Sigma_{\theta-}\big|\cF^1_{\theta_n}\big]-\E\big[\ind_{\{\xi^*_{\theta-}<1\}}\frac{\xi^*_{\theta-}-\xi^*_{\theta_n-}}{1-\xi^*_{\theta_n-}}\Sigma_{\theta-}\big|\cF^1_{\theta_n}\big]\\
&\ge \E\big[\ind_{\{\xi^*_{\theta-}<1\}}\Sigma_{\theta-}\big|\cF^1_{\theta_n}\big]-\E\big[\ind_{\{\xi^*_{\theta-}<1\}}\frac{\xi^*_{\theta-}-\xi^*_{\theta_n-}}{1-\xi^*_{\theta_n-}}\E[Z|\cF^1_{\theta-}]\big|\cF^1_{\theta_n}\big]\\
&= \E\big[\ind_{\{\xi^*_{\theta-}<1\}}\Sigma_{\theta-}\big|\cF^1_{\theta_n}\big]-\E\big[\ind_{\{\xi^*_{\theta-}<1\}}Z\frac{\xi^*_{\theta-}-\xi^*_{\theta_n-}}{1-\xi^*_{\theta_n-}}\big|\cF^1_{\theta_n}\big],
\end{align*}
where the final equality is by the tower property.

Combining the estimates above, the lower bound on $\Delta_n$ takes the form 
\begin{align}\label{eq:delta0}
\begin{aligned}
\indd{\xi^*_{\theta-} < 1} \Delta_n &\ge-\!\ind_{\{\xi^*_{\theta-}<1\}}\Big(\E\Big[Z\ind_{\{\xi^*_{\theta-}<1\}}\Big(2\frac{\xi^*_{\theta-}\!-\!\xi^*_{\theta_n-}}{1-\xi^*_{\theta_n-}}\!+\!\zeta^*_{\theta-}\!-\!\zeta^*_{\theta_n-}\Big)\Big|\cF^1_{\theta_n}\Big] - 2 \E\big[Z\ind_{\{\xi^*_{\theta-}=1\}}\big|\cF^1_{\theta_n}\big]\Big)\\
&\quad+\indd{\xi^*_{\theta-} < 1}\Big(\E\big[\ind_{\{\xi^*_{\theta-}<1\}}\Sigma_{\theta-}\big|\cF^1_{\theta_n}\big]-\Sigma_{\theta-}\Big).
\end{aligned}
\end{align}
Using again \cite[Thm.\ VI.14]{DellacherieMeyer} we have $\P$-a.s.
\begin{equation}
\begin{aligned}
\lim_{n\to\infty}\ind_{\{\xi^*_{\theta-}<1\}}\E\big[Z\ind_{\{\xi^*_{\theta-}=1\}}\big|\cF^1_{\theta_n}\big]&=\ind_{\{\xi^*_{\theta-}<1\}}\E\big[Z\ind_{\{\xi^*_{\theta-}=1\}}\big|\cF^1_{\theta-}\big]\\
&=\ind_{\{\xi^*_{\theta-}<1\}}\ind_{\{\xi^*_{\theta-}=1\}}\E[Z|\cF^1_{\theta-}]=0,
\end{aligned}
\end{equation}
and
\begin{equation*}
\begin{aligned}
\lim_{n\to\infty}\indd{\xi^*_{\theta-} < 1}\Big(\E\big[\ind_{\{\xi^*_{\theta-}<1\}}\Sigma_{\theta-}\big|\cF^1_{\theta_n}\big]-\Sigma_{\theta-}\Big)=\indd{\xi^*_{\theta-} < 1}\Big(\E\big[\ind_{\{\xi^*_{\theta-}<1\}}\Sigma_{\theta-}\big|\cF^1_{\theta-}\big]-\Sigma_{\theta-}\Big)=0.
\end{aligned}
\end{equation*}
By the Markov inequality, for any $\eps>0$, 
\begin{equation*}
\begin{aligned}
&\lim_{n\to\infty}\P\Big(\E\Big[Z\ind_{\{\xi^*_{\theta-}<1\}}\Big(2\frac{\xi^*_{\theta-}-\xi^*_{\theta_n-}}{1-\xi^*_{\theta_n-}}+\zeta^*_{\theta-}-\zeta^*_{\theta_n-}\Big)\Big|\cF^1_{\theta_n}\Big]>\eps\Big)\\
&\le \eps^{-1}\lim_{n\to\infty}\E\Big[Z\ind_{\{\xi^*_{\theta-}<1\}}\Big(2\frac{\xi^*_{\theta-}-\xi^*_{\theta_n-}}{1-\xi^*_{\theta_n-}}+\zeta^*_{\theta-}-\zeta^*_{\theta_n-}\Big)\Big]=0,
\end{aligned}
\end{equation*}
where the equality holds by the monotone convergence, because $\xi^*_{\theta_n-}\uparrow \xi^*_{\theta-}$, $\zeta^*_{\theta_n-}\uparrow \zeta^*_{\theta-}$, 
and the mapping $x \mapsto (\xi^*_{\theta-} - x)/(1-x)$ is decreasing for $x \in [0,\xi^*_{\theta-}]$.

Thus, up to selecting a subsequence, all terms on the right-hand side of \eqref{eq:delta0} vanish $\P$-a.s.\ when $n\to\infty$.
This concludes the proof of $\indd{\xi^*_{\theta-} < 1} \Delta = \lim_{n \to \infty} \indd{\xi^*_{\theta-} < 1} \Delta_n \ge 0$. Combining with the upper bound $\Delta \le 0$ demonstrates that the inequality in \eqref{eq:llim} becomes an equality on the set $\{\xi^*_{\theta-} < 1\}$.
\end{proof}

\begin{proof}[Proof of Corollary \ref{cor:value_ineq}]
We recall \eqref{eq:v1}, which implies 
\begin{equation}\label{eq:Jan0}
\begin{aligned}
\hat V^{*,1}_{\theta_n} 
&\le \E[f_{\theta_n} (1-\zeta^*_{\theta_n}) + h_{\theta_n} \Delta \zeta^*_{\theta_n}| \cF^1_{\theta_n}]
\le \E[f_{\theta_n} (1-\zeta^*_{\theta_n-}) | \cF^1_{\theta_n}] \\
&=\E[f_{\theta_n} (\zeta^*_{\theta-}-\zeta^*_{\theta_n-}) | \cF^1_{\theta_n}]+\E[(f_{\theta_n}-f_{\theta-}) (1-\zeta^*_{\theta-}) | \cF^1_{\theta_n}]+\E[f_{\theta-} (1-\zeta^*_{\theta-}) | \cF^1_{\theta_n}]\\
&\le \E[|f_{\theta_n}| (\zeta^*_{\theta-}-\zeta^*_{\theta_n-}) | \cF^1_{\theta_n}]+\E[|f_{\theta_n}-f_{\theta-}| (1-\zeta^*_{\theta-}) | \cF^1_{\theta_n}]+\E[f_{\theta-} (1-\zeta^*_{\theta-}) | \cF^1_{\theta_n}].
\end{aligned}
\end{equation}
By the Markov inequality, for any $\eps>0$ we have
\begin{equation*}
\begin{aligned}
&\P\big(\E[|f_{\theta_n}| (\zeta^*_{\theta-}-\zeta^*_{\theta_n-}) | \cF^1_{\theta_n}]>\eps\big)\le \tfrac1\eps\E\big[\sup_{0\le t\le T}|f_{t}| (\zeta^*_{\theta-}-\zeta^*_{\theta_n-}) \big],\\
&\P\big(\E[|f_{\theta_n}-f_{\theta-}| (1-\zeta^*_{\theta-}) | \cF^1_{\theta_n}]>\eps\big)\le \tfrac1\eps\E\big[|f_{\theta_n}-f_{\theta-}| (1-\zeta^*_{\theta-}) \big].
\end{aligned}
\end{equation*}
Letting $n\to\infty$, and applying the dominated convergence on the right-hand side of the above inequalities we obtain convergence in probability 
\[
\E[|f_{\theta_n}| (\zeta^*_{\theta-}-\zeta^*_{\theta_n-})|\cF_{\theta_n}]\to 0\quad\text{and}\quad\E[|f_{\theta_n}-f_{\theta-}| (1-\zeta^*_{\theta-}) | \cF^1_{\theta_n}]\to 0.
\]
Moreover, $\E[f_{\theta-} (1-\zeta^*_{\theta-}) | \cF^1_{\theta_n}]\to \E[f_{\theta-} (1-\zeta^*_{\theta-}) | \cF^1_{\theta-}]$, $\P$-a.s., by \cite[Thm.\ VI.14]{DellacherieMeyer}. In conclusion, passing to the limit along a subsequence in \eqref{eq:Jan0} we obtain
\[
\hat V^{*,1}_{\theta-} \le  \E\big[f_{\theta-} (1-\zeta^*_{\theta-})\big|\cF^1_{\theta-}\big].
\]

Recalling \eqref{eq:v1} and the notation $\Sigma_{\theta-}$ from \eqref{eq:Sigma}, the tower property yields
\begin{align*}
\hat V^{*,1}_{\theta_n} 
&\ge \essinf_{\tau \in \cT_{\theta_n}(\F^1)} \E \Big[ \indd{\tau < \theta} \inf_{s \in [\theta_n, \theta)} \Big(f_s (1-\zeta^*_s) + \int_{[\theta_n, s)} g_u \ud \zeta^*_u + h_s \Delta \zeta^*_s\Big)\\
&\hspace{65pt} + \indd{\tau \ge \theta} \Big(\int_{[\theta_n, \theta)} g_u \ud \zeta^*_u +  \Sigma_{\theta-} \Big) \Big| \cF^1_{\theta_n} \Big].
\end{align*}
Denoting $Z_n \coloneqq \E\big[\sup_{u \in [0, T]} |g_u| (\zeta^*_{\theta-} - \zeta^*_{\theta_n-})\big| \cF^1_{\theta_n}\big]$, 
we get
\begin{align*}
\hat V^{*,1}_{\theta_n} 
&\ge \essinf_{\tau \in \cT_{\theta_n}(\F^1)} \E \Big[ \indd{\tau < \theta} \E\Big[\inf_{s \in [\theta_n, \theta)} \Big(f_s (1-\zeta^*_s) + h_s \Delta \zeta^*_s\Big)\Big|\cF^1_{\theta-}\Big]+ \indd{\tau \ge \theta} \Sigma_{\theta-} \Big| \cF^1_{\theta_n} \Big]-Z_n\\
&\ge \E \Big[ \min\Big( \E\big[\inf_{s \in [\theta_n, \theta)} \big(f_s (1-\zeta^*_s) + h_s \Delta \zeta^*_s\big) \big| \cF^1_{\theta-}\big],\Sigma_{\theta-} \Big) \Big| \cF^1_{\theta_n} \Big] - Z_n,
\end{align*}
where in the first line we also used the tower property, the fact that $\Sigma_{\theta-}$ is $\cF^1_{\theta-}$-measurable and $\{\tau < \theta\} \in \cF^1_{\theta-}$ because $\theta$ is previsible.
Setting $Y_n=\E[\sup_{u \in [0, T]}|h_u| (\zeta^*_{\theta-}-\zeta^*_{\theta_n-})|\cF_{\theta_n}]$ and continuing from the above inequalities we get
\begin{equation}\label{eq:jan2}
\begin{aligned}
\hat V^{*,1}_{\theta_n} &\ge \E \Big[ \min\Big( \E\big[\inf_{s \in [\theta_n, \theta)} f_s (1-\zeta^*_s) \big| \cF^1_{\theta-}\big],\Sigma_{\theta-} \Big) \Big| \cF^1_{\theta_n} \Big]-Y_n - Z_n.
\end{aligned}
\end{equation}
By analogous arguments to those employed above, using the Markov inequality, we can show that $Y_n\to0$ and $Z_n\to 0$ in probability as $n\to\infty$. Moreover, letting $n\to\infty$
\[
\E\big[\inf_{s \in [\theta_n, \theta)} f_s (1-\zeta^*_s) \big| \cF^1_{\theta-}\big]\to \E\big[f_{\theta-} (1-\zeta^*_{\theta-}) \big| \cF^1_{\theta-}\big],
\]
$\P$-a.s.\ by the dominated convergence theorem for conditional expectation \cite[Thm.~34.2]{Billingsley}.

To simplify presentation, let 
\[
W_n\coloneqq \min\Big( \E\big[\inf_{s \in [\theta_n, \theta)} f_s (1-\zeta^*_s) \big| \cF^1_{\theta-}\big],\Sigma_{\theta-} \Big)\ \text{and}\ W\coloneqq \min\Big( \E\big[f_{\theta-} (1-\zeta^*_{\theta-}) \big| \cF^1_{\theta-}\big],\Sigma_{\theta-} \Big).
\] 
Since we have shown that $W_{n}\to W$, $\P$-a.s., it is not difficult to show 
\[
\E[W_n|\cF_{\theta_n}]-\E[W|\cF_{\theta-}]=\E[W_n-W|\cF_{\theta_n}]+\E[W|\cF_{\theta_n}]-\E[W|\cF_{\theta-}]\to 0,
\] 
in probability as $n\to\infty$, using the Markov inequality and \cite[Thm.\ VI.14]{DellacherieMeyer} as before.

Finally, we can select a subsequence $(n_k)_{n\in\N}$ along which all limits above hold $\P$-a.s., and passing to the limit along such subsequence in \eqref{eq:jan2} we get
\[
\hat V^{*,1}_{\theta-} \ge 
\min\big( \E\big[f_{\theta-} (1-\zeta^*_{\theta-})\big|\cF^1_{\theta-}\big], \Sigma_{\theta-} \big).
\]
Since we have also shown that $\hat V^{*,1}_{\theta-} \le  \E\big[f_{\theta-} (1-\zeta^*_{\theta-})\big|\cF^1_{\theta-}\big]$, we must have $\hat V^{*,1}_{\theta-} = \E\big[f_{\theta-} (1-\zeta^*_{\theta-})\big|\cF^1_{\theta-}\big]$ on the set $\{ \hat V^{*,1}_{\theta-} < \Sigma_{\theta-} \}$.
\end{proof}

The next result is a refinement of Propositions \ref{thm:aggr1} and \ref{thm:aggr2}. In the definition of the families ${\bf M}^\xi$ and ${\bf N}^\zeta$, for arbitrary $(\xi,\zeta)\in\cAcirc_0(\F^1)\times\cAcirc_0(\F^2)$, the only terms that require an aggregation step are the families $\bf V^{*,1}$ and $\bf V^{*,2}$ appearing therein (the rest is an optional projection of an $\F$-adapted process). The latter have been aggregated into optional processes in Theorem \ref{thm:value}. Then, the families ${\bf M}^\xi$ and ${\bf N}^\zeta$ can also be aggregated into optional processes. The next proposition shows that the resulting processes are respectively super- and sub-martingales as well.
\begin{proposition}\label{prop:subsupmg}
For any $(\xi,\zeta)\in\cAcirc_0(\F^1)\times\cAcirc_0(\F^2)$ the families ${\bf M}^\xi$ and ${\bf N}^\zeta$ are of class $(D)$ and can be aggregated into an optional submartingale process $(M^\xi_t,\F^1,\P)_{t\in[0,T]}$ and an optional supermartingale process $(N^\zeta_t,\F^2,\P)_{t\in[0,T]}$, respectively. 
\end{proposition}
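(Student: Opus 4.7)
The plan is to combine Theorem \ref{thm:value} with a game-theoretic essinf representation of $M^\xi(\theta)$. For the aggregation, I would write
\[
M^\xi_\theta = A^\xi_\theta + (1-\xi_{\theta-})\, \hat V^{*,1}_\theta,
\]
where $A^\xi$ is the $\F^1$-optional projection of the finite-variation process $t\mapsto \int_{[0,t)} f_s(1-\zeta^*_s)\ud\xi_s + \int_{[0,t)} g_s(1-\xi_s)\ud\zeta^*_s + \sum_{s\in[0,t)} h_s\Delta\xi_s\Delta\zeta^*_s$ and $\hat V^{*,1}$ is the optional semi-martingale aggregating ${\bf V}^{*,1}$ from Theorem \ref{thm:value}. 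Since $1-\xi_{\cdot-}$ is $\F^1$-previsible (hence optional), the whole right-hand side is $\F^1$-optional; the class $(D)$ property is immediate from $f,g,h\in\cL_b(\P)$, which yield integrable bounds on $|M^\xi(\theta)|$ uniformly in $\theta$.

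For the submartingale property, the key step is to establish the representation
\[
M^\xi(\sigma) = \essinf_{\bar\xi\in\cA^\xi_\sigma} \E\big[\cP(\bar\xi,\zeta^*)\,\big|\,\cF^1_\sigma\big], \qquad \sigma\in\cT_0(\F^1),
\]
where $\cA^\xi_\sigma \coloneqq \{\bar\xi\in\cAcirc_0(\F^1):\ \bar\xi_t=\xi_t\ \text{for all}\ t<\sigma\}$. I would obtain this by parametrising $\bar\xi\in\cA^\xi_\sigma$ through $\xi'\in\cAcirc_\sigma(\F^1)$ via $\bar\xi_t=\xi_{\sigma-}+(1-\xi_{\sigma-})\xi'_t$ on $[\sigma,T]$, and then exploiting the change-of-variable identity
\[
(1-\xi_{\sigma-})(1-\zeta^*_{\sigma-})\,\cP_\sigma(\xi',\zeta^{*;\sigma}) = \cP_\sigma(\bar\xi,\zeta^*),
\]
where $\cP_\sigma$ denotes the payoff integrated on $[\sigma,T]$; this is the same manipulation that underlies \eqref{eq:payoffxizeta} and \eqref{eq:J}. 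Multiplying the defining essinf for $V^{*,1}(\sigma)$ by $(1-\xi_{\sigma-})\E[1-\zeta^*_{\sigma-}|\cF^1_\sigma]$ and adding $\E[\cP_{[0,\sigma)}(\xi,\zeta^*)|\cF^1_\sigma]$ to both sides then yields the desired representation.

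Once this representation is in hand, the submartingale property becomes transparent. For $\sigma\le\tau$ in $\cT_0(\F^1)$, the inclusion $\cA^\xi_\tau\subseteq \cA^\xi_\sigma$ together with downward-directedness of the family $\{\E[\cP(\bar\xi,\zeta^*)|\cF^1_\tau]:\bar\xi\in\cA^\xi_\tau\}$ (obtained by a pasting argument on $\cF^1_\tau$-measurable sets in the spirit of Lemma \ref{lem:ud}) produces a decreasing minimising sequence $(\bar\xi_n)\subset\cA^\xi_\tau$ with $\E[\cP(\bar\xi_n,\zeta^*)|\cF^1_\tau]\downarrow M^\xi(\tau)$. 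Dominated convergence then gives
\[
\E[M^\xi(\tau)\,|\,\cF^1_\sigma] = \lim_n \E[\cP(\bar\xi_n,\zeta^*)\,|\,\cF^1_\sigma] \ge M^\xi(\sigma),
\]
which is the required submartingale inequality, hence (via Lemma \ref{lem:M}) the submartingale property of the system ${\bf M}^\xi$. The supermartingale statement for ${\bf N}^\zeta$ follows by the symmetric argument, swapping the roles of the two players' filtrations and $\essinf$ with $\esssup$. I expect the main technical hurdle to be the careful bookkeeping of the atoms of $\xi$ and $\zeta^*$ at $\sigma$ and $\tau$ in the change-of-variable identity and in the downward-directedness argument, as the construction of the extension $\bar\xi$ implicitly repositions the jump of $\xi$ at $\sigma$ onto $\xi'$.
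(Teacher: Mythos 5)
Your proof is correct, and it reorganises the paper's argument in a mildly but genuinely different way. The paper proves only the unconditional inequality $\E[M^\xi(\sigma)]\le\E[M^\xi(\tau)]$ (then invokes Lemma \ref{lem:M}), and it does so by inserting into the essinf defining $V^{*,1}(\sigma)$ a specific competitor $\bar\xi$ that follows the truncated control $\xi^\sigma$ on $[\sigma,\tau)$ and then has a single jump to $1$ at a pure stopping time $\eta\in\cT_\tau(\F^1)$; the reduction to pure stopping times after $\tau$ is then handled via Lemma \ref{lem:pure} and Corollary \ref{cor:pure_ud}. You instead stay entirely within randomised controls and package the whole system into the single representation $M^\xi(\sigma)=\essinf_{\bar\xi\in\cA^\xi_\sigma}\E[\cP(\bar\xi,\zeta^*)|\cF^1_\sigma]$, after which the submartingale property follows from the inclusion $\cA^\xi_\tau\subseteq\cA^\xi_\sigma$, downward-directedness, and conditional monotone convergence. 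What your route buys is (i) the conditional inequality $\E[M^\xi(\tau)|\cF^1_\sigma]\ge M^\xi(\sigma)$ directly, without the detour through Lemma \ref{lem:M}, and (ii) no appeal to the pure-strategy reduction; what the paper's route buys is that the same competitor construction is reused verbatim from the proof of Proposition \ref{thm:aggr1}. The points you should make explicit when writing this up are exactly the ones you flag: that the bijection $\xi'\leftrightarrow\bar\xi$ between $\cAcirc_\sigma(\F^1)$ and $\cA^\xi_\sigma$ only matters on $\{\xi_{\sigma-}<1\}$ (on the complement both sides of the change-of-variable identity vanish, using \eqref{eq:impl} for the $\zeta^*$ factor), and that the jump of $\xi$ at $\sigma$ is deliberately not constrained in $\cA^\xi_\sigma$, consistently with the integrals in $M^\xi(\sigma)$ running over $[0,\sigma)$ and the prefactor being $1-\xi_{\sigma-}$.
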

\begin{proof}
We only need to prove the sub/super-martingale property of the families. As usual, we provide a proof only for $M^\xi$ because the arguments for $N^\zeta$ are analogous. We argue in a similar way as in the proof of Proposition \ref{thm:aggr1}.

Since $f,g\in\cL_b(\P)$, it is easy to verify that ${\bf M}^\xi$ satisfies $\E[\esssup_{\theta\in\cT_0(\F^1)}|M^\xi(\theta)|]<\infty$. We now want to verify that $\E[M^\xi(\tau)]\ge \E[M^\xi(\sigma)]$ for every $\tau,\sigma\in\cT_0(\F^1)$, $\sigma\le \tau$ so that the submartingale property can be deduced by Lemma \ref{lem:M}.

First we argue on the set $\{\sigma < T \}$. By the definition of $M^\xi$ and noticing that 
\begin{equation*}
\int_{[0,\sigma)}(1-\zeta^*_t)f_t\ud\xi_t+\sum_{t\in[0,\sigma)}h_t\Delta\zeta^*_t\Delta\xi_t=\int_{[0,\sigma)}\big[(1-\zeta^*_t)f_t+h_t\Delta\zeta^*_t\big]\ud\xi_t,
\end{equation*}
we have
\begin{equation}\label{eq:subm00a}
\begin{aligned}
M^\xi(\sigma)&=\E\Big[\int_{[0,\sigma)}\big[(1\!-\!\zeta^*_t)f_t\!+\!h_t\Delta\zeta^*_t\big]\ud\xi_t +\!\int_{[0,\sigma)}\!(1\!-\!\xi_t)g_t\ud\zeta^*_t \Big| \cF^1_\sigma \Big]\\
&\quad 
+\!(1\!-\!\xi_{\sigma-})\E[1\!-\!\zeta^*_{\sigma-}|\cF^1_\sigma]V^{*,1}(\sigma).
\end{aligned}
\end{equation}
We recall from \eqref{eq:V1V2} the definition of $V^{*,1}(\sigma)$ and obtain
\begin{equation}
\begin{aligned}\label{eq:subm0a}
&\E[1-\zeta^*_{\sigma-}|\cF^1_\sigma]V^{*,1}(\sigma)\\
&= 
\E[1-\zeta^*_{\sigma-}|\cF^1_\sigma] \essinf_{\theta \in \cT^R_\sigma(\F^1)} \E\Big[\Pi^{*,1}_\sigma \Big(f_\theta(1-\zeta^{*;\sigma}_\theta)+\int_{[\sigma,\theta)}g_t\ud \zeta^{*;\sigma}_t+h_\theta\Delta\zeta^{*;\sigma}_\theta\Big) \Big|\cF^1_\sigma\Big]\\
&= 
\essinf_{\theta \in \cT^R_\sigma(\F^1)} \E\Big[f_\theta(1-\zeta^{*}_\theta)+\int_{[\sigma,\theta)}g_t\ud \zeta^{*}_t+h_\theta\Delta\zeta^{*}_\theta \Big|\cF^1_\sigma\Big]\\
&\le
\E\Big[f_{\bar\theta}(1-\zeta^*_{\bar\theta})+\int_{[\sigma,{\bar\theta})}g_t\ud \zeta^*_t+h_{\bar\theta}\Delta\zeta^*_{\bar\theta}\Big|\cF^1_\sigma\Big]\quad\text{for any $\bar\theta\in\cT^R_\sigma(\F^1)$,}
\end{aligned}
\end{equation}
where for the second equality we use that $\E[1-\zeta^*_{\sigma-}|\cF^1_\sigma]\Pi^{*,1}_\theta=1-\zeta^*_{\sigma-}$ by the definition of $\Pi^{*,1}_\theta$ in \eqref{eq:Pi}. In particular, we choose $\bar\theta\in\cT^R_\sigma(\F^1)$ generated by a process $\bar \xi\in\cAcirc_\sigma(\F^1)$ of the form
\[
\bar\xi_t=\xi^{\sigma}_t \ind_{\{t\in[\sigma,\tau)\}}+[\xi^{\sigma}_{\tau-}+(1-\xi^\sigma_{\tau-})\ind_{\{t\ge \eta\}}]\ind_{\{t\ge \tau \}},
\]
for an arbitrary $\eta\in\cT_\tau(\F^1)$ and where $\xi^\sigma$ is the truncated control $\xi$ at time $\sigma$. The increasing process $\bar\xi$ follows the truncated control $\xi^\sigma$ between time $\sigma$ and time $\tau$ and then it has a single jump to $1$ at time $\eta$.
Such choice of $\bar \theta\in\cT^R_\sigma(\F^1)$ in \eqref{eq:subm0a} yields 
\begin{equation}
\begin{aligned}\label{eq:subm01a}
&\E[1-\zeta^*_{\sigma-}|\cF^1_\sigma]V^{*,1}(\sigma)\\
&\le\E\Big[\int_{[\sigma,\tau)}(1-\zeta^*_t)f_t\ud \xi^\sigma_t+\int_{[\sigma,\tau)}(1-\xi^\sigma_t)g_t\ud \zeta^*_t+\sum_{s\in[\sigma,\tau)}h_s\Delta\zeta^*_s\Delta\xi^\sigma_s\\
&\qquad+(1-\xi^\sigma_{\tau-})\E\Big[f_\eta(1-\zeta^*_\eta)+\int_{[\tau,\eta)}g_t\ud \zeta^*_t+h_\eta\Delta\zeta^*_\eta\Big|\cF^1_\tau\Big]\Big|\cF^1_\sigma\Big]\\
&=
\E\Big[\int_{[\sigma,\tau)}(1-\zeta^*_t)f_t\ud \xi^\sigma_t+\int_{[\sigma,\tau)}(1-\xi^\sigma_t)g_t\ud \zeta^*_t+\sum_{s\in[\sigma,\tau)}h_s\Delta\zeta^*_s\Delta\xi^\sigma_s\\
&\qquad+(1-\xi^\sigma_{\tau-})\E[1-\zeta^*_{\tau-}|\cF^1_\tau]\E\Big[\Pi^{*, 1}_{\tau} \Big( f_{\eta}(1-\zeta^{*;\tau}_{\eta})+\int_{[\tau,\eta)}g_t\ud \zeta^{*;\tau}_t+h_{\eta}\Delta\zeta^{*;\tau}_{\eta} \Big)\Big|\cF^1_\tau\Big]\Big|\cF^1_\sigma\Big]\\
&=
\E\Big[\int_{[\sigma,\tau)}(1-\zeta^*_t)f_t\ud \xi^\sigma_t+\int_{[\sigma,\tau)}(1-\xi^\sigma_t)g_t\ud \zeta^*_t+\sum_{s\in[\sigma,\tau)}h_s\Delta\zeta^*_s\Delta\xi^\sigma_s\\
&\qquad+(1-\xi^\sigma_{\tau-})\E[1-\zeta^*_{\tau-}|\cF^1_\tau]J^{\Pi^{*,1}_\tau}(\eta,\sigma_*^\tau)|\cF^1_\tau)\Big|\cF^1_\sigma\Big],
\end{aligned}
\end{equation}
where in the final expression we use the notation $\sigma_*^\tau$ for the randomised stopping time generated by $\zeta^{*;\tau}$ (cf.\ Lemma \ref{lem:pure}). By Corollary \ref{cor:pure_ud} we can take a sequence $(\eta_n)_{n\in\N} \subset \cT_\tau(\F^1)$ such that $\P$-a.s.
\begin{equation}\label{eqn:eta_n_limita}
\lim_{n \to \infty}J^{\Pi^{*,1}_\tau}(\eta_n,\sigma(\zeta^{*;\tau})|\cF^1_\tau), 
= V^{*,1}(\tau)
\end{equation}
and the limit is monotone from above (although this is a feature which does not play a role in the arguments below). Equation \eqref{eq:subm01a} with $\eta_n$ instead of $\eta$ yields
\begin{align*}
\E[1-\zeta^*_{\sigma-}|\cF^1_\sigma]V^{*,1}(\sigma)&\le \E\Big[\int_{[\sigma,\tau)}(1-\zeta^*_t)f_t\ud \xi^\sigma_t+\int_{[\sigma,\tau)}(1-\xi^\sigma_t)g_t\ud \zeta^*_t+\sum_{s\in[\sigma,\tau)}h_s\Delta\zeta^*_s\Delta\xi^\sigma_s\\
&\qquad+(1-\xi^\sigma_{\tau-})\E[1-\zeta^*_{\tau-}|\cF^1_\tau]J^{\Pi^{*,1}_\tau}(\eta_n,\sigma_*^\tau)|\cF^1_\tau)\Big|\cF^1_\sigma\Big]\\
&\hspace{-21pt}\xrightarrow{n \to \infty}
\E\Big[\int_{[\sigma,\tau)}(1-\zeta^*_t)f_t\ud \xi^\sigma_t+\int_{[\sigma,\tau)}(1-\xi^\sigma_t)g_t\ud \zeta^*_t+\sum_{s\in[\sigma,\tau)}h_s\Delta\zeta^*_s\Delta\xi^\sigma_s\\
&\qquad+(1-\xi^\sigma_{\tau-})\E[1-\zeta^*_{\tau-}|\cF^1_\tau]V^{*,1}(\tau)\Big|\cF^1_\sigma\Big],
\end{align*}
where the limit holds by the dominated convergence theorem and \eqref{eqn:eta_n_limita}. Substituting into \eqref{eq:subm00a}, adding the trivial equality $M^\xi(\sigma) = M^\xi(\tau) = M^\xi(T)$ on $\{\sigma = T\}$ and taking expectation we obtain
\begin{equation*}
\begin{aligned}
\E[M^\xi(\sigma)]&= \E[ \indd{\sigma < T} M^\xi(\sigma) + \indd{\sigma = T} M^\xi(\sigma) ]\\
&\le \E\Big[\indd{\sigma < T}\Big( \int_{[0,\tau)}\![(1\!-\!\zeta^*_t)f_t\!+\!h_t\Delta\zeta^*_t]\ud\xi_t\!+\!\int_{[0,\tau)}(1\!-\!\xi_t)g_t\ud\zeta^*_t\!+\!(1\!-\!\xi_{\tau-})\E[1\!-\!\zeta^*_{\tau-}|\cF^1_\tau]V^{*,1}(\tau)\Big)\\
&\qquad+ \indd{\sigma=T} M^\xi(T)\Big]\\
&=\E[M^\xi(\tau)],
\end{aligned}
\end{equation*}
where we used that for $t\in[\sigma,\tau]$ the identities hold  
$(1-\xi_{\sigma-})(1-\xi^\sigma_{t})=(1-\xi_t)$ and $(1-\xi_{\sigma-})\ud \xi^\sigma_t =\ud\xi_t$. 
The above is the required inequality for the submartingale property of the family.
\end{proof}

Finally, we obtain a link between the equilibrium values of the two players. Informally, we say that such link is obtained using the information available to both players, in the sense that we consider conditional expectations with respect to the common filtration. Recall the notation $\cF^{1,2}_t=\cF^1_t\cap\cF^2_t$, $t \in [0, T]$, and $\F^{1,2}= (\cF^{1,2}_t)_{t \in [0, T]}$.
\begin{proposition}\label{prop:link}
Let $(\xi^*,\zeta^*)\in\cAcirc_0(\F^1)\times \cAcirc_0(\F^2)$ be an optimal pair and recall the families ${\bf V}^{*,1}$ and ${\bf V}^{*,2}$ from \eqref{eq:families}. Then, for any $\lambda\in\cT_0(\F^{1,2})$ it holds
\begin{align}\label{eq:link}
\begin{aligned}
&\E\big[(1-\xi^*_{\lambda-})\E[1-\zeta^*_{\lambda-}|\cF^1_\lambda]V^{*,1}(\lambda)|\cF^{1,2}_\lambda\big]\\
&=\E\big[(1-\zeta^*_{\lambda-})\E[1-\xi^*_{\lambda-}|\cF^2_\lambda]V^{*,2}(\lambda)|\cF^{1,2}_\lambda\big]\\
&=\E\Big[\int_{[\lambda,T)}(1-\zeta^*_t)f_t\ud \xi^*_t+\int_{[\lambda,T)}\big[(1-\xi^*_t)g_t+h_t\Delta\xi^*_t\big]\ud \zeta^*_t+h_T\Delta\zeta^*_T\Delta\xi^*_T\Big|\cF^{1,2}_\lambda\Big].
\end{aligned}
\end{align}
\end{proposition}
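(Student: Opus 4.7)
The plan is to establish, for every $\theta \in \cT_0(\F^1)$, the pointwise identity
\[
(1-\xi^*_{\theta-})\E[1-\zeta^*_{\theta-}|\cF^1_\theta]\, V^{*,1}(\theta) = \E\big[\cP_{[\theta,T]}(\xi^*,\zeta^*)\big|\cF^1_\theta\big],
\]
where, as local shorthand,
\[
\cP_{[\theta,T]}(\xi,\zeta) \coloneqq \int_{[\theta,T)} f_t(1-\zeta_t)\ud\xi_t + \int_{[\theta,T)}g_t(1-\xi_t)\ud\zeta_t + \sum_{t\in[\theta,T]}h_t\Delta\xi_t\Delta\zeta_t,
\]
together with the symmetric identity for $V^{*,2}(\gamma)$ with $\gamma \in \cT_0(\F^2)$. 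Both three equalities in \eqref{eq:link} will then follow by taking $\cF^{1,2}_\lambda$-conditional expectation of these two identities (with $\theta=\gamma=\lambda$), using $\cF^{1,2}_\lambda \subset \cF^1_\lambda \cap \cF^2_\lambda$ and the tower property.

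To prove the displayed identity I would start from Proposition \ref{thm:aggr2}, which gives optimality of the truncated control $\xi^{*;\theta}$ in $V^{*,1}(\theta)$ on $\Gam^1_\theta$, so that
\[
(1-\xi^*_{\theta-})\E[1-\zeta^*_{\theta-}|\cF^1_\theta]\, V^{*,1}(\theta) = (1-\xi^*_{\theta-})\E[1-\zeta^*_{\theta-}|\cF^1_\theta]\, J^{\Pi^{*,1}_\theta}(\xi^{*;\theta},\zeta^{*;\theta}|\cF^1_\theta),
\]
with the $\cF^1_\theta$-measurable prefactor absorbing $\ind_{\Gam^1_\theta}$. Expanding $J^{\Pi^{*,1}_\theta}$ via \eqref{eq:J} and using the identity $\E[1-\zeta^*_{\theta-}|\cF^1_\theta]\,\Pi^{*,1}_\theta = 1-\zeta^*_{\theta-}$ from \eqref{eq:Pi}, the right-hand side becomes $\E[(1-\xi^*_{\theta-})(1-\zeta^*_{\theta-})\,\cP_{[\theta,T]}(\xi^{*;\theta},\zeta^{*;\theta})\,|\cF^1_\theta]$. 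The key pathwise computation is then
\[
(1-\xi^*_{\theta-})(1-\zeta^*_{\theta-})\,\cP_{[\theta,T]}(\xi^{*;\theta},\zeta^{*;\theta}) = \cP_{[\theta,T]}(\xi^*,\zeta^*),
\]
which on $\{\xi^*_{\theta-}<1,\,\zeta^*_{\theta-}<1\}$ follows term-by-term from Definition \ref{def:trunc} via the relations $(1-\xi^*_{\theta-})\ud\xi^{*;\theta}_t = \ud\xi^*_t$, $(1-\zeta^*_{\theta-})(1-\zeta^{*;\theta}_t) = 1-\zeta^*_t$, $(1-\xi^*_{\theta-})\Delta\xi^{*;\theta}_t = \Delta\xi^*_t$ for $t\ge\theta$, plus the symmetric relations obtained by swapping $\xi$ and $\zeta$.

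On the complement of $\{\xi^*_{\theta-}<1,\,\zeta^*_{\theta-}<1\}$ both sides of the target identity vanish: the event $\{\xi^*_{\theta-}=1\}$ kills the LHS through its prefactor, and because $\xi^*\equiv 1$ on $[\theta,T]$ there every term of $\cP_{[\theta,T]}(\xi^*,\zeta^*)$ is zero; on the $\cF^1_\theta$-measurable event $\{\E[1-\zeta^*_{\theta-}|\cF^1_\theta]=0\}$ the LHS is again zero, while \eqref{eq:impl} gives $\zeta^*_{\theta-}=1$ a.s.\ on that event, hence $\zeta^*\equiv 1$ on $[\theta,T]$ and the restriction of $\cP_{[\theta,T]}(\xi^*,\zeta^*)$ to this set vanishes, yielding zero after $\cF^1_\theta$-conditioning. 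The argument for $V^{*,2}(\gamma)$ is identical with the roles of the two players exchanged. Finally, to match the exact form of the third line of \eqref{eq:link}, I would note that $\int_{[\lambda,T)} h_t\Delta\xi^*_t\,\ud\zeta^*_t = \sum_{t\in[\lambda,T)}h_t\Delta\xi^*_t\Delta\zeta^*_t$ since $t\mapsto \Delta\xi^*_t$ is non-zero only on the (at most countable) jump set of $\xi^*$, and adding the terminal contribution $h_T\Delta\xi^*_T\Delta\zeta^*_T$ recovers $\sum_{t\in[\lambda,T]}h_t\Delta\xi^*_t\Delta\zeta^*_t$, so that $\cP_{[\lambda,T]}(\xi^*,\zeta^*)$ coincides with the integrand in the statement. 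The only delicate step is the pathwise manipulation of truncated controls together with the careful bookkeeping on the degenerate events $\{\xi^*_{\theta-}=1\}$ and $\{\zeta^*_{\theta-}=1\}$; everything else is tower property and the definition of $\Pi^{*,i}$.
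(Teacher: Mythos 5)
Your proposal is correct and follows essentially the same route as the paper: invoke the optimality of the truncated control $\xi^{*;\lambda}$ from Proposition \ref{thm:aggr2}, unwind $J^{\Pi^{*,1}_\lambda}$ using $\E[1-\zeta^*_{\lambda-}|\cF^1_\lambda]\,\Pi^{*,1}_\lambda=1-\zeta^*_{\lambda-}$ and the pathwise relations between truncated and original controls, then pass to $\cF^{1,2}_\lambda$ by the tower property and symmetrise the $h$-terms via $\int_{[\lambda,T)}h_t\Delta\xi^*_t\,\ud\zeta^*_t=\sum_{t\in[\lambda,T)}h_t\Delta\xi^*_t\Delta\zeta^*_t$. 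Your extra bookkeeping on the degenerate events is sound (only note that the piece $\{\zeta^*_{\lambda-}=1\}\cap\{\E[1-\zeta^*_{\lambda-}|\cF^1_\lambda]>0\}$ is handled by the same reasoning you already give, since $\zeta^*\equiv 1$ on $[\lambda,T]$ there), and it merely makes explicit what the paper's terser proof leaves implicit.
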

\begin{proof}
From the definition of $V^{*,1}$ and the assumed optimality of the pair $(\xi^*,\zeta^*)$ we have
\begin{align}\label{eq:merge1}
\begin{aligned}
&\E\big[(1-\xi^*_{\lambda-})\E[1-\zeta^*_{\lambda-}|\cF^1_\lambda]V^{*,1}(\lambda)|\cF^{1,2}_\lambda\big]\\
&=\E\big[(1-\xi^*_{\lambda-})\E[1-\zeta^*_{\lambda-}|\cF^1_\lambda]\essinf_{\xi\in\cAcirc_{\lambda}(\F^1)}J^{\Pi^{*,1}_\lambda}\big(\xi,\zeta^{*;\lambda}\big|\cF^1_\lambda\big)|\cF^{1,2}_\lambda\big]\\
&=\E\big[(1-\xi^*_{\lambda-})\E[1-\zeta^*_{\lambda-}|\cF^1_\lambda]J^{\Pi^{*,1}_\lambda}\big(\xi^{*;\lambda},\zeta^{*;\lambda}\big|\cF^1_\lambda\big)|\cF^{1,2}_\lambda\big],
\end{aligned}
\end{align}
where the final equality uses the optimality of $\xi^{*;\lambda}$ (cf.\ Proposition \ref{thm:aggr2}). Then, the tower property and the definition of $\Pi^{*,1}_\lambda$ yield
\begin{align*}
\begin{aligned}
&\E\big[(1-\xi^*_{\lambda-})\E[1-\zeta^*_{\lambda-}|\cF^1_\lambda]V^{*,1}(\lambda)|\cF^{1,2}_\lambda\big]\\
&=\E\Big[\int_{[\lambda,T)}\big[(1-\zeta^*_t)f_t+h_t\Delta\zeta^*_t\big]\ud \xi^*_t+\int_{[\lambda,T)}(1-\xi^*_t)g_t\ud \zeta^*_t+h_T\Delta\zeta^*_T\Delta\xi^*_T\Big|\cF^{1,2}_\lambda\Big].
\end{aligned}
\end{align*}
By analogous arguments, we obtain
\begin{align}\label{eq:merge2}
\begin{aligned}
&\E\big[(1-\zeta^*_{\lambda-})\E[1-\xi^*_{\lambda-}|\cF^2_\lambda]V^{*,2}(\lambda)|\cF^{1,2}_\lambda\big]\\
&=\E\big[(1-\zeta^*_{\lambda-})\E[1-\xi^*_{\lambda-}|\cF^2_\lambda]\esssup_{\zeta\in\cAcirc_{\lambda}}J^{\Pi^{*,2}_\lambda}\big(\xi^{*;\lambda},\zeta\big|\cF^2_\lambda\big)|\cF^{1,2}_\lambda\big]\\
&=\E\Big[\int_{[\lambda,T)}(1-\zeta^*_t)f_t\ud \xi^*_t+\int_{[\lambda,T)}\big[(1-\xi^*_t)g_t+h_t\Delta\xi^*_t\big]\ud \zeta^*_t+h_T\Delta\zeta^*_T\Delta\xi^*_T\Big|\cF^{1,2}_\lambda\Big].
\end{aligned}
\end{align}
This concludes the proof upon noticing $\int_{[\lambda,T)}h_t\Delta\zeta^*_t\ud \xi^*_t=\int_{[\lambda,T)}h_t\Delta\xi^*_t\ud \zeta^*_t=\sum_{t\in[\lambda,T)}h_t\Delta\xi^*_t\Delta\zeta^*_t$.
\end{proof}
\begin{remark}
Notice that the first two lines of \eqref{eq:link} equivalently read
\[
\begin{aligned}
&\E\big[(1-\xi^*_{\lambda-})(1-\zeta^*_{\lambda-}) V^{*,1}(\lambda)|\cF^{1,2}_\lambda\big]
=\E\big[(1-\zeta^*_{\lambda-})(1-\xi^*_{\lambda-})V^{*,2}(\lambda)|\cF^{1,2}_\lambda\big],
\end{aligned}
\]
because $(1-\xi^*_{\lambda-}) V^{*,1}(\lambda)$ is $\cF^1_\lambda$-measurable and $(1-\zeta^*_{\lambda-}) V^{*,2}(\lambda)$ is $\cF^2_\lambda$-measurable, so that the tower property yields the required transformation of the first two expressions in \eqref{eq:link}. We formulated \eqref{eq:link} with the additional conditional expectation due to the special role played by the quantities $\E[1-\zeta^*_{\lambda-}|\cF^1_\lambda]V^{*,1}(\lambda)$ and $\E[1-\xi^*_{\lambda-}|\cF^2_\lambda]V^{*,2}(\lambda)$ in Theorem \ref{thm:value}.
\end{remark}

\begin{remark}\label{rem:3.13}
When $\F^{1,2}=\{\Omega,\varnothing\}$, \eqref{eq:link} reduces to 
\begin{align*}
&\E\big[(1-\xi^*_{\lambda-})\E[1-\zeta^*_{\lambda-}|\cF^1_\lambda]V^{*,1}(\lambda)\big]=\E\big[(1-\zeta^*_{\lambda-})\E[1-\xi^*_{\lambda-}|\cF^2_\lambda]V^{*,2}(\lambda)\big],
\end{align*}
for deterministic times $\lambda\in[0,T]$. 

When $\F^1\supset\F^2$, then \eqref{eq:link} holds for any $\lambda\in\cT_0(\F^2)$ in a more explicit form:
\begin{align*}
&(1-\zeta^*_{\lambda-})\E\big[(1-\xi^*_{\lambda-})V^{*,1}(\lambda)|\cF^2_\lambda\big]=\E\big[1-\xi^*_{\lambda-}|\cF^2_\lambda\big](1-\zeta^*_{\lambda-})V^{*,2}(\lambda).
\end{align*}
\end{remark}

The final expression in \eqref{eq:link} can be related to the {\em ex-ante value} of the game. To make the statement rigorous, we introduce the family ${\bf \bar V}\coloneqq\{\bar V(\lambda),\,\lambda\in\cT_0(\F^{1,2})\}$, where
\begin{align*}
\bar V(\lambda)&\coloneqq \E\Big[\int_{[\lambda,T)}(1-\zeta^*_t)f_t\ud \xi^*_t+\int_{[\lambda,T)}\big[(1-\xi^*_t)g_t+h_t\Delta\xi^*_t\big]\ud \zeta^*_t+h_T\Delta\zeta^*_T\Delta\xi^*_T\Big|\cF^{1,2}_\lambda\Big]\\
&=\E\Big[\int_{[\lambda,T)}(1-\zeta^*_t)\big[f_t+h_t\Delta\zeta^*_t\big]\ud \xi^*_t+\int_{[\lambda,T)}(1-\xi^*_t)g_t\ud \zeta^*_t+h_T\Delta\zeta^*_T\Delta\xi^*_T\Big|\cF^{1,2}_\lambda\Big],
\end{align*}
where the second equality is simply eliciting the symmetry of the expected payoffs. 
The family ${\bf \bar V}$ can be aggregated into an $\F^{1,2}$-optional process $(\bar V_t)_{t \in [0, T]}$, thanks to the following observation:
\[
\bar V(\lambda) 
= \E\big[ (1-\xi^*_{\lambda-}) (1-\zeta^*_{\lambda-}) V^{*,1}(\lambda) \big| \cF^{1,2}_\lambda \big]
= \E\big[ (1-\xi^*_{\lambda-}) \hat V^{*,1}_\lambda \big| \cF^{1,2}_\lambda \big],
\]
where the first equality is by \eqref{eq:link} and the second stems from the fact that $(\hat V_t)_{t \in [0, T]}$ aggregates the family $\{\E[1-\zeta^*_{\lambda-}|\cF^1_\theta] V^{*,1}(\theta):\ \theta \in \cT_0(\F^1) \}$ and $\cT_0(\F^{1,2}) \subset \cT_0(\F^1)$. Hence, the process $(\bar V_t)_{t \in [0, T]}$ that aggregates the family ${\bf \bar V}$ is the $\F^{1,2}$-optional projection of the process $((1-\xi^*_{t-}) \hat V^{*,1}_t)_{t \in [0, T]}$ or equivalently of the process $((1-\zeta^*_{t-})\hat V^{*,2}_t)_{t\in[0,T]}$.

\begin{corollary}\label{corr:link}
Let $(\xi^*,\zeta^*)\in\cAcirc_0(\F^1)\times \cAcirc_0(\F^2)$ be an optimal pair. For any $\lambda\in\cT_0(\F^{1,2})$, set 
\[
\Pi^*_\lambda\coloneqq\frac{(1-\xi^*_{\lambda-})(1-\zeta^*_{\lambda-})}{\E\big[(1-\xi^*_{\lambda-})(1-\zeta^*_{\lambda-})\big|\cF^{1,2}_\lambda\big]}\in\cR(\cF^{1,2}_\lambda),
\]
with the convention adopted in \eqref{eq:Pi} that $0/0=1$ (cf.\ \eqref{eq:impl} for a justification of this choice).

Define $V(\lambda) \coloneqq J^{\Pi^*_\lambda}(\xi^{*;\lambda},\zeta^{*;\lambda}|\cF^{1,2}_\lambda)$. Then, 
we have
\begin{align}\label{eq:exanteV}
\begin{aligned}
V(\lambda) =\essinf_{\xi\in \cAcirc_\lambda(\F^1)}\esssup_{\zeta\in \cAcirc_\lambda(\F^2)}J^{\Pi^*_\lambda}(\xi,\zeta|\cF^{1,2}_\lambda)=\esssup_{\zeta\in \cAcirc_\lambda(\F^2)}\essinf_{\xi\in \cAcirc_\lambda(\F^1)}J^{\Pi^*_\lambda}(\xi,\zeta|\cF^{1,2}_\lambda),
\end{aligned}
\end{align}
on the event $\{\E[(1-\xi^*_{\lambda-})(1-\zeta^*_{\lambda-}) |\cF^{1,2}_\lambda] > 0 \}$.
Moreover, it holds
\begin{align}\label{eq:VbarV}
\bar V(\lambda)= \E\big[(1-\xi^*_{\lambda-})(1-\zeta^*_{\lambda-})\big|\cF^{1,2}_\lambda\big]V(\lambda).
\end{align}
\end{corollary}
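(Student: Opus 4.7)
The plan is to split the corollary into the averaging identity \eqref{eq:VbarV}, which is essentially an algebraic unwinding of the change-of-measure definition of $V(\lambda)$, and the saddle-point characterisation \eqref{eq:exanteV}, which I will reduce to the single-player optimality established in Proposition \ref{thm:aggr2} via the tower property applied across the nested filtrations $\cF^{1,2}_\lambda\subset\cF^1_\lambda$ and $\cF^{1,2}_\lambda\subset\cF^2_\lambda$. Before starting, I would observe that on $\{\E[(1-\xi^*_{\lambda-})(1-\zeta^*_{\lambda-})|\cF^{1,2}_\lambda]=0\}$ the product $(1-\xi^*_{\lambda-})(1-\zeta^*_{\lambda-})$ vanishes $\P$-a.s.\ (by the $\cF^{1,2}_\lambda$-analogue of \eqref{eq:impl}), so that both sides of \eqref{eq:VbarV} are trivially zero there and all genuine work happens on the positive event.

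For \eqref{eq:VbarV} I would start from the definition $V(\lambda)=\E[\Pi^*_\lambda\cP(\xi^{*;\lambda},\zeta^{*;\lambda})|\cF^{1,2}_\lambda]$, which rearranges into
\[
\E\big[(1-\xi^*_{\lambda-})(1-\zeta^*_{\lambda-})\big|\cF^{1,2}_\lambda\big]\,V(\lambda)=\E\big[(1-\xi^*_{\lambda-})(1-\zeta^*_{\lambda-})\cP(\xi^{*;\lambda},\zeta^{*;\lambda})\big|\cF^{1,2}_\lambda\big].
\]
The heart of the computation is then the pointwise identity $(1-\xi^*_{\lambda-})(1-\zeta^*_{\lambda-})\cP(\xi^{*;\lambda},\zeta^{*;\lambda})=\int_{[\lambda,T)}f_t(1-\zeta^*_t)\ud\xi^*_t+\int_{[\lambda,T)}g_t(1-\xi^*_t)\ud\zeta^*_t+\sum_{t\in[\lambda,T]}h_t\Delta\xi^*_t\Delta\zeta^*_t$, which I would extract from the truncated-control relations $(1-\xi^*_{\lambda-})\ud\xi^{*;\lambda}_t=\ud\xi^*_t$ and $(1-\xi^*_{\lambda-})(1-\xi^{*;\lambda}_t)=1-\xi^*_t$ on $[\lambda,T]$, together with their $\zeta$-analogues and the jump identity $(1-\xi^*_{\lambda-})(1-\zeta^*_{\lambda-})\Delta\xi^{*;\lambda}_t\Delta\zeta^{*;\lambda}_t=\Delta\xi^*_t\Delta\zeta^*_t$. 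The right-hand side is precisely the integrand of $\bar V(\lambda)$ appearing in \eqref{eq:link}, so taking conditional expectation closes the argument.

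For \eqref{eq:exanteV} it is enough to prove the Player-1 inequality $J^{\Pi^*_\lambda}(\xi^{*;\lambda},\zeta^{*;\lambda}|\cF^{1,2}_\lambda)\le J^{\Pi^*_\lambda}(\xi,\zeta^{*;\lambda}|\cF^{1,2}_\lambda)$ for every $\xi\in\cAcirc_\lambda(\F^1)$; the symmetric Player-2 inequality and the standard $\esssup\essinf\le\essinf\esssup$ inequality then sandwich both iterated extrema between two copies of $V(\lambda)$. By $\cF^{1,2}_\lambda\subset\cF^1_\lambda$ and the tower property, it suffices to show this in conditional form under $\cF^1_\lambda$. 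The denominator of $\Pi^*_\lambda$ is $\cF^{1,2}_\lambda$-measurable and $(1-\xi^*_{\lambda-})$ is $\cF^1_\lambda$-measurable; combined with the identity $(1-\zeta^*_{\lambda-})=\Pi^{*,1}_\lambda\,\E[1-\zeta^*_{\lambda-}|\cF^1_\lambda]$, this yields the key factorisation
\[
\E\big[\Pi^*_\lambda\,\cP(\xi,\zeta^{*;\lambda})\big|\cF^1_\lambda\big]=R_\lambda\,J^{\Pi^{*,1}_\lambda}(\xi,\zeta^{*;\lambda}|\cF^1_\lambda),\qquad R_\lambda\coloneqq\frac{(1-\xi^*_{\lambda-})\,\E[1-\zeta^*_{\lambda-}|\cF^1_\lambda]}{\E[(1-\xi^*_{\lambda-})(1-\zeta^*_{\lambda-})|\cF^{1,2}_\lambda]},
\]
with $R_\lambda$ non-negative and $\cF^1_\lambda$-measurable. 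Proposition \ref{thm:aggr2} gives $V^{*,1}(\lambda)=J^{\Pi^{*,1}_\lambda}(\xi^{*;\lambda},\zeta^{*;\lambda}|\cF^1_\lambda)$ on $\Gam^1_\lambda$, which together with the essinf definition of $V^{*,1}(\lambda)$ provides the required $\cF^1_\lambda$-inequality on $\Gam^1_\lambda$; off $\Gam^1_\lambda$ the prefactor $R_\lambda$ vanishes, so both sides of the $R_\lambda$-weighted inequality are zero everywhere. Taking the outer $\cF^{1,2}_\lambda$-conditional expectation then concludes the Player-1 half.

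The main obstacle I anticipate is the bookkeeping of the various events on which denominators vanish, specifically the interplay between $\Gam^1_\lambda$, $\Gam^2_\lambda$ and the positive set $\{\E[(1-\xi^*_{\lambda-})(1-\zeta^*_{\lambda-})|\cF^{1,2}_\lambda]>0\}$, and the need to check that the convention $0/0=1$ is compatible with all three changes of measure $\Pi^*_\lambda$, $\Pi^{*,1}_\lambda$, $\Pi^{*,2}_\lambda$ simultaneously. The saving grace is that in every instance a vanishing denominator forces the corresponding numerator to vanish (by the argument used for \eqref{eq:impl}), so each weighted inequality arising in the proof trivialises to $0\le 0$ on the singular events, and the proof goes through without further difficulty.
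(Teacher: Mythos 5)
Your proof is correct, and it rests on the same two pillars as the paper's: the optimality of the truncated controls from Proposition \ref{thm:aggr2} and the tower-property factorisation of $\Pi^*_\lambda$ through $\Pi^{*,1}_\lambda$ (resp.\ $\Pi^{*,2}_\lambda$) across $\cF^{1,2}_\lambda\subset\cF^1_\lambda$ (resp.\ $\cF^2_\lambda$). The organisation differs in one genuine respect. The paper starts from \eqref{eq:merge1}--\eqref{eq:merge2}, commutes the $\essinf$ (resp.\ $\esssup$) with the outer $\cF^{1,2}_\lambda$-conditional expectation using the directedness of the payoff families (Lemma \ref{lem:ud}, minimising sequences and monotone convergence, i.e.\ \eqref{eq:commute}), and thereby sandwiches $\bar V(\lambda)$ between $\E[\cdot]\esssup_\zeta\essinf_\xi J^{\Pi^*_\lambda}$ and $\E[\cdot]\essinf_\xi\esssup_\zeta J^{\Pi^*_\lambda}$, closing with the trivial minimax inequality; \eqref{eq:VbarV} and \eqref{eq:exanteV} drop out together. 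You instead fix a single competitor strategy $\xi$ (resp.\ $\zeta$), prove the two saddle-point inequalities for $(\xi^{*;\lambda},\zeta^{*;\lambda})$ under $J^{\Pi^*_\lambda}(\cdot,\cdot|\cF^{1,2}_\lambda)$ via the $R_\lambda$-weighted reduction to $\Gam^1_\lambda$ and $\Gam^2_\lambda$, and then invoke the standard fact that a saddle point forces the iterated extrema to coincide; \eqref{eq:VbarV} is verified separately by the pointwise truncated-control identity. Your route buys freedom from any interchange of essential extrema with conditional expectations (so Lemma \ref{lem:ud} is not needed here), at the price of the explicit bookkeeping of the events where the various denominators vanish — which you handle correctly, since in each case a vanishing denominator forces the numerator to vanish by the argument of \eqref{eq:impl}, and \eqref{eq:exanteV} is only asserted on the positive event anyway.
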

\begin{proof}
From the second line of \eqref{eq:merge1}, using the minimising sequence from Lemma \ref{lem:ud} and the monotone convergence we get the second equality below (cf.\ \eqref{eq:commute})
\begin{align}\label{eq:barVl}
\begin{aligned}
\bar V(\lambda)&=\E\big[(1-\xi^*_{\lambda-})\E[1-\zeta^*_{\lambda-}|\cF^1_\lambda]V^{*,1}(\lambda)\,\big|\cF^{1,2}_\lambda\big]\\
&=\E\big[(1-\xi^*_{\lambda-})\E[1-\zeta^*_{\lambda-}|\cF^1_\lambda]\essinf_{\xi\in\cAcirc_{\lambda}(\F^1)}J^{\Pi^{*,1}_\lambda}(\xi,\zeta^{*;\lambda}|\cF^1_\lambda)\,\big|\cF^{1,2}_\lambda\big]\\
&=\essinf_{\xi\in\cAcirc_{\lambda}(\F^1)}\E\big[(1-\xi^*_{\lambda-})\E[1-\zeta^*_{\lambda-}|\cF^1_\lambda]J^{\Pi^{*,1}_\lambda}(\xi,\zeta^{*;\lambda}|\cF^1_\lambda)\,\big|\cF^{1,2}_\lambda\big].
\end{aligned}
\end{align}
Using the expression for $J^{\Pi^{*,1}_\lambda}\big(\xi,\zeta^{*;\lambda}\big|\cF^1_\lambda\big)$ and the tower property
\begin{align*}
\begin{aligned}
&\E\big[(1-\xi^*_{\lambda-})\E[1-\zeta^*_{\lambda-}|\cF^1_\lambda]J^{\Pi^{*,1}_\lambda}\big(\xi,\zeta^{*;\lambda}\big|\cF^1_\lambda\big)|\cF^{1,2}_\lambda\big]\\
&=\E\Big[(1-\xi^*_{\lambda-})(1-\zeta^*_{\lambda-})\Big(\int_{[\lambda,T)}\!\!\big[(1\!-\!\zeta^{*;\lambda}_T)f_t\!+\!h_t\Delta\zeta^{*;\lambda}_t\big]\ud \xi_t\\
&\hspace{120pt}+\!\int_{[\lambda,T)}\!\!(1\!-\!\xi_t)g_t\ud \zeta^{*;\lambda}_t\!+\!h_T\Delta\xi_T\Delta\zeta^{*;\lambda}_T \Big)\Big|\cF^{1,2}_\lambda\Big]\\
&=\E\big[(1-\xi^*_{\lambda-})(1-\zeta^*_{\lambda-})\big|\cF^{1,2}_\lambda\big]J^{\Pi^*_\lambda}\big(\xi,\zeta^{*;\lambda}\big|\cF^{1,2}_\lambda\big).
\end{aligned}
\end{align*}
Combining the two expressions above we deduce
\begin{align}\label{eq:ex1}
\begin{aligned}
\bar V(\lambda)&=\E\big[(1-\xi^*_{\lambda-})(1-\zeta^*_{\lambda-})\big|\cF^{1,2}_\lambda\big]\essinf_{\xi\in\cAcirc_{\lambda}(\F^1)}J^{\Pi^*_\lambda}\big(\xi,\zeta^{*;\lambda}\big|\cF^{1,2}_\lambda\big)\\
&\le \E\big[(1-\xi^*_{\lambda-})(1-\zeta^*_{\lambda-})\big|\cF^{1,2}_\lambda\big]\esssup_{\zeta\in\cAcirc_{\lambda}(\F^2)}\essinf_{\xi\in\cAcirc_{\lambda}(\F^1)}J^{\Pi^*_\lambda}\big(\xi,\zeta\big|\cF^{1,2}_\lambda\big).
\end{aligned}
\end{align}

For the reverse inequality, we start from the second line in \eqref{eq:merge2} and follow the same steps as above. That yields 
\begin{align}\label{eq:ex2}
\begin{aligned}
\bar V(\lambda)&=\E\big[(1-\zeta^*_{\lambda-})\E[1-\xi^*_{\lambda-}|\cF^2_\lambda]V^{*,2}(\lambda)\,\big|\cF^{1,2}_\lambda\big]\\
&=\esssup_{\zeta\in\cAcirc_{\lambda}(\F^2)}\E\big[(1-\zeta^*_{\lambda-})\E[1-\xi^*_{\lambda-}|\cF^2_\lambda]J^{\Pi^{*,2}_\lambda}(\xi^{*;\lambda},\zeta|\cF^2_\lambda)\,\big|\cF^{1,2}_\lambda\big]\\
&=\E\big[(1-\xi^*_{\lambda-})(1-\zeta^*_{\lambda-})\big|\cF^{1,2}_\lambda\big]\esssup_{\zeta\in\cAcirc_{\lambda}(\F^2)}J^{\Pi^*_\lambda}(\xi^{*;\lambda},\zeta|\cF^{1,2}_\lambda)\\
&\ge \E\big[(1-\xi^*_{\lambda-})(1-\zeta^*_{\lambda-})\big|\cF^{1,2}_\lambda\big]\essinf_{\xi\in\cAcirc_{\lambda}(\F^1)}\esssup_{\zeta\in\cAcirc_{\lambda}(\F^2)}J^{\Pi^*_\lambda}(\xi,\zeta|\cF^{1,2}_\lambda),
\end{aligned}
\end{align}
where the third equality is by the use of the tower property and the definition of $J^{\Pi^{*,2}_\lambda}\big(\xi^{*;\lambda},\zeta\big|\cF^2_\lambda\big)$.

Since 
\[
\essinf_{\xi\in\cAcirc_{\lambda}(\F^1)}\esssup_{\zeta\in\cAcirc_{\lambda}(\F^2)}J^{\Pi^*_\lambda}\big(\xi^{*;\lambda},\zeta\big|\cF^{1,2}_\lambda\big)\ge \esssup_{\zeta\in\cAcirc_{\lambda}(\F^2)}\essinf_{\xi\in\cAcirc_{\lambda}(\F^1)}J^{\Pi^*_\lambda}\big(\xi^{*;\lambda},\zeta\big|\cF^{1,2}_\lambda\big),
\]
the inequalities \eqref{eq:ex1} and \eqref{eq:ex2} yield \eqref{eq:VbarV} and the second equality in \eqref{eq:exanteV}. The first equality in \eqref{eq:exanteV} is easily deduced from the first line of \eqref{eq:barVl} and \eqref{eq:ex2}, using the optimality of truncated controls (cf.\ Proposition \ref{thm:aggr2}). 
\end{proof}

The random variable $V(\lambda)$ is a dynamic analogue of the ex-ante value of the game starting at time $\lambda$. In the expression for $V(\lambda)$, players optimise the expected payoff conditional on the jointly available information at time $\lambda$ (i.e., conditioning on $\cF^{1,2}_\lambda$). However, they still pick their randomised stopping times making use of their individual filtrations $\F^1$ and $\F^2$. That is to say, players know that they will have access to the full content of their individual filtrations after time $\lambda$.

\subsection{Structure of optimal strategies}\label{subsec:structure}
In this section we look into some structural properties of optimal strategies. 
The main results in this section are Proposition \ref{prop:support} and Corollary \ref{cor:support} but they require some preparation which we do in Propositions \ref{prop:meas}, \ref{prop:mnz} and Corollary \ref{cor:M_zero_mart}. Throughout the section we maintain Notation \ref{not:equil} for the optimal pairs $(\xi^*,\zeta^*)$ and $(\tau_*,\sigma_*)$.

Combining the expressions of the aggregated \cadlag processes $M^*$ and $N^*$ from Proposition \ref{thm:aggr2} and of the optional processes $\hat V^{*,1}$ and $\hat V^{*,2}$ from Theorem \ref{thm:value} we have, for any $(\theta, \gamma) \in \cT_0(\F^1) \times \cT_0(\F^2)$,
\begin{align*}
M^*_\theta&=\E\Big[\int_{[0,\theta)}\Big(f_s(1-\zeta^*_s)+h_s\Delta\zeta^*_s\Big)\ud \xi^*_s+\int_{[0,\theta)}(1-\xi^*_s)g_s\ud \zeta^*_s\Big|\cF^1_\theta\Big]+(1-\xi^*_{\theta-})\hat V^{*,1}_\theta\\
&=\E\Big[\ind_{\{\tau_*<\theta\}}\Big(f_{\tau_*}\ind_{\{\tau_*<\sigma_*\}}+h_{\tau_*}\ind_{\{\tau_*=\sigma_*\}}\Big)+\ind_{\{\sigma_*<\theta\}}\ind_{\{\sigma_*<\tau_*\}}g_{\sigma_*}\Big|\cF^1_\theta\Big]+\E[\ind_{\{\theta\le \tau_*\}}|\cF^1_\theta]\hat V^{*,1}_\theta,\\
N^*_\gamma&=\E\Big[\int_{[0,\gamma)}f_s(1-\zeta^*_s)\ud \xi^*_s+\int_{[0,\gamma)}\Big((1-\xi^*_s)g_s+h_s\Delta\xi^*_s\Big)\ud \zeta^*_s\Big|\cF^2_\gamma\Big]+(1-\zeta^*_{\gamma-})\hat V^{*,2}_\gamma\\
&=\E\Big[\ind_{\{\tau_*<\gamma\}}\ind_{\{\tau_*<\sigma_*\}}f_{\tau_*}+\ind_{\{\sigma_*<\gamma\}}\Big(\ind_{\{\sigma_*<\tau_*\}}g_{\sigma_*}+h_{\sigma_*}\ind_{\{\tau_*=\sigma_*\}}\Big)\Big|\cF^2_\gamma\Big]+\E[\ind_{\{\gamma\le \sigma_*\}}|\cF^2_\gamma]\hat V^{*,2}_\gamma.
\end{align*}
We will explicitly write the integration of players' randomisation devices and in doing so identify stopping times over which the players randomise.
Recall $(\bar \tau_*(z),\bar\sigma_*(z))\in\cT_0(\F^1)\times\cT_0(\F^2)$ from \eqref{eq:taubar}, which we denote here $(\tau_*(z),\sigma_*(z))$ for the simplicity of notation. 

The next characterisation of $M^*_\theta$ and $N^*_\gamma$ proves useful.
\begin{proposition}\label{prop:meas}
There are mappings $(z,\omega)\mapsto m^*(\theta; z)(\omega)$ and $(z,\omega)\mapsto n^*(\gamma; z)(\omega)$ which are measurable for the $\sigma$-algebras $\cB([0,1])\times\cF^1_\theta$ and $\cB([0,1])\times\cF^2_\gamma$, respectively, and such that:
\begin{itemize}
\item[(i)] it holds
\begin{equation}\label{eqn:mnstar}
M^*_\theta=\int_0^1 m^*(\theta; z)\ud z\quad\text{and}\quad N^*_\gamma=\int_0^1 n^*(\gamma; z)\ud z;
\end{equation}
\item[(ii)] for each $z\in[0,1]$, $\P$-a.s.,
\begin{align*}
\begin{aligned}
m^*(\theta; z)&=\E\Big[\ind_{\{\tau_*(z)<\theta\}}\Big(f_{\tau_*(z)}\ind_{\{\tau_*(z)<\sigma_*\}}\!+\!h_{\tau_*(z)}\ind_{\{\tau_*(z)=\sigma_*\}}\Big)\Big|\cF^1_\theta\Big]\\
&\quad+\E\Big[\ind_{\{\sigma_*<\theta\}}\ind_{\{\sigma_*<\tau_*(z)\}}g_{\sigma_*}\Big|\cF^1_\theta\Big]\!+\!\ind_{\{\theta\le \tau_*(z)\}}\hat V^{*,1}_\theta,
\end{aligned}
\end{align*}
and
\begin{align*}
n^*(\gamma; z)&=
\E\Big[\ind_{\{\sigma_*(z)<\gamma\}}\Big(\ind_{\{\sigma_*(z)<\tau_*\}}g_{\sigma_*(z)}\!+\!h_{\sigma_*(z)}\ind_{\{\tau_*=\sigma_*(z)\}}\Big)\Big|\cF^2_\gamma\Big]\\
&\quad+\E\Big[\ind_{\{\tau_*<\gamma\}}\ind_{\{\tau_*<\sigma_*(z)\}}f_{\tau_*}\Big|\cF^2_\gamma\Big]\!+\!\ind_{\{\gamma\le \sigma_*(z)\}}\hat V^{*,2}_\gamma.
\end{align*}
\end{itemize}
\end{proposition}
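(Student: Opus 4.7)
The plan is to define $m^*(\theta;z)$ and $n^*(\gamma;z)$ directly by the explicit formulae prescribed in part (ii), and then to derive the integral identities in part (i) by integrating out the players' randomisation devices via a conditional Fubini argument. The structural facts that drive everything are the representations $\tau_* = \tau_*(Z_1)$, $\sigma_* = \sigma_*(Z_2)$ with mutually independent $Z_1, Z_2 \sim U[0,1]$ each independent of $\cF_T$, together with the joint measurability of $(z,\omega)\mapsto\tau_*(z)(\omega)$ as a $\cB([0,1])\otimes\cF^1_T$-measurable map (and analogously for $\sigma_*(z)$). The latter follows from the identity $\{\tau_*(z)\le t\}=\{\xi^*_t>z\}$, valid by the right-continuity and monotonicity of $\xi^*$.

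Starting from the second representation of $M^*_\theta$ displayed immediately before the proposition, I would split it into three summands. The first two (containing $f_{\tau_*}$, $h_{\tau_*}$ and $g_{\sigma_*}$) are each of the form $\E[H(Z_1, W)\mid\cF^1_\theta]$, where $H$ is bounded Borel and $W$ is $\cF_T\vee\sigma(Z_2)$-measurable, encoding $\sigma_*$ together with the payoff processes. Since $Z_1$ is independent of $\cF_T\vee\sigma(Z_2) \supseteq \cF^1_\theta\vee\sigma(W)$, I would condition first on $\cF_T\vee\sigma(Z_2)$ to obtain $\E[H(Z_1,W)\mid\cF_T\vee\sigma(Z_2)]=\int_0^1 H(z,W)\,\ud z$, and then apply the tower property and Fubini:
\[
\E[H(Z_1,W)\mid\cF^1_\theta]=\int_0^1\E[H(z,W)\mid\cF^1_\theta]\,\ud z.
\]
The third summand $\E[\indd{\theta\le\tau_*}\mid\cF^1_\theta]\,\hat V^{*,1}_\theta$ is handled similarly, with the additional simplification that the inner conditional expectation is superfluous, since $\indd{\theta\le\tau_*(z)}\in\cF^1_\theta$ as $\tau_*(z)\in\cT_0(\F^1)$. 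Recombining the three contributions yields precisely the integral of the expression for $m^*(\theta;z)$ prescribed in (ii).

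The principal technical subtlety is the joint $\cB([0,1])\otimes\cF^1_\theta$-measurability of $(z,\omega)\mapsto m^*(\theta;z)(\omega)$: conditional expectations are only defined $\P$-a.s.\ for each fixed $z$, and the exceptional null sets depend on $z$. I would invoke the standard fact that for any bounded $\cB([0,1])\otimes\cF$-measurable function $H$ one can choose a $\cB([0,1])\otimes\cF^1_\theta$-measurable version of $(z,\omega)\mapsto\E[H(z,\cdot)\mid\cF^1_\theta](\omega)$. This is proved by a monotone class argument: the claim holds trivially for product indicators $H(z,\omega)=\ind_A(z)\ind_B(\omega)$, with version $\ind_A(z)\E[\ind_B\mid\cF^1_\theta](\omega)$; it is stable under finite linear combinations; and it extends to all bounded measurable $H$ via monotone convergence. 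Boundedness of the integrands appearing in (ii) is provided by Assumption \ref{ass:payoff}.

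The construction of $n^*(\gamma;z)$ and the verification of $N^*_\gamma=\int_0^1 n^*(\gamma;z)\,\ud z$ follow by a symmetric argument, with the roles of $(Z_1,\tau_*,\F^1)$ and $(Z_2,\sigma_*,\F^2)$ interchanged.
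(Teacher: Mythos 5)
Your proposal is correct in substance but reaches the measurability conclusion by a genuinely different route from the paper. The paper's Appendix \ref{app:propmeas} rests on Lemma \ref{lem:meas}, which produces a jointly measurable version of $z\mapsto\E[X_z|\cG]$ by dyadic discretisation in $z$ and therefore \emph{requires one-sided continuity in $z$}; as a consequence the paper must massage the integrands to fit that hypothesis (writing $\{\sigma_*<\tau_*(z)\}=\bigcap_{\eps>0}\{\sigma_*+\eps\le\tau_*(z)\}$ and passing to the limit, and splitting the $h$-term into a difference of two $z$-monotone pieces). Your monotone-class lemma — product indicators, linearity, monotone limits with $\limsup$ of the versions — dispenses with any continuity in $z$, so none of these decompositions are needed; this is arguably cleaner and is a standard fact. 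For part (i), you integrate out $Z_1$ by first conditioning on $\cF_T\vee\sigma(Z_2)$ and then invoking a conditional Fubini, whereas the paper tests both sides against an arbitrary $A\in\cF^1_\theta$ and applies ordinary Fubini; these are essentially equivalent, and indeed the cleanest justification of your interchange $\E[\int_0^1 H(z,W)\,\ud z\,|\,\cF^1_\theta]=\int_0^1\E[H(z,W)|\cF^1_\theta]\,\ud z$ \emph{is} the paper's test-set computation, so you would still want to write that line out. Three small imprecisions to fix: (a) the identity $\{\tau_*(z)\le t\}=\{\xi^*_t>z\}$ is not exact (one only has $\{\xi^*_t>z\}\subseteq\{\tau_*(z)\le t\}\subseteq\{\xi^*_t\ge z\}$, as the example $\xi^*_t=t$, $z=1/2$ shows); joint measurability still follows, e.g.\ from $\{\tau_*(z)<t\}=\bigcup_{q\in\bQ,\,q<t}\{\xi^*_q>z\}$. (b) The integrands in (ii) are not bounded — Assumption \ref{ass:payoff} only gives an integrable envelope $\sup_t(|f_t|+|g_t|+|h_t|)$ — so your monotone-class lemma for bounded $H$ must be extended by truncation and dominated convergence; this is routine but should be said. (c) You should also record that $(z,\omega)\mapsto f_{\tau_*(z)(\omega)}(\omega)$ is jointly measurable (composition of the progressive process $f$ with the jointly measurable map $(z,\omega)\mapsto(\tau_*(z)(\omega),\omega)$) so that the hypotheses of your lemma are actually met.
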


The result can be interpreted as an application of Fubini's theorem, although some care is needed because the conditional expectations on the right-hand side of the equations in (ii) are not necessarily jointly measurable in $(z,\omega)$ and therefore we need to consider suitable modifications $m^*(\theta;z)(\omega)$ and $n^*(\gamma;z)(\omega)$. Although the result is not surprising, its proof is slightly technical and we provide it in Appendix \ref{app:propmeas}. 

\begin{proposition}\label{prop:mnz}
Families $\{m^*(\theta; z), \theta \in \cT_0(\F^1) \}$ and $\{n^*(\gamma; z), \gamma \in \cT_0(\F^2) \}$ are of class $(D)$ and can be aggregated into optional processes $(m^*_t(z))_{t \in [0, T]}$ and $(n^*_t(z))_{t \in [0, T]}$, respectively, for every $z\in[0,1]$. 

Furthermore, $(m^*_t(z))_{t\in[0,T]}$ is a \cadlag $\F^1$-martingale and $(n^*_t(z))_{t\in[0,T]}$ is a \cadlag $\F^2$-martingale for almost every $z \in [0, 1]$.
We also have
\begin{equation}\label{eqn:final_statement}
M^*_\theta=\int_0^1 m^*_\theta (z)\ud z, \qquad N^*_\gamma=\int_0^1 n^*_\gamma (z)\ud z,
\end{equation}
for any $(\theta, \gamma) \in \cT_0(\F^1) \times \cT_0(\F^2)$.
\end{proposition}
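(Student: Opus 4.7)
My strategy is to identify, for almost every $z \in [0,1]$, the family $\{m^*(\theta;z),\ \theta \in \cT_0(\F^1)\}$ with the Doob-type martingale system
\[
\mu^*(\theta;z) \coloneqq \E[\cP(\tau_*(z),\sigma_*)\mid\cF^1_\theta],
\]
whose \cadlag $\F^1$-martingale version will provide the desired aggregated process $(m^*_t(z))_{t\in[0,T]}$. The class $(D)$ property of $\{m^*(\theta;z)\}$ is immediate from the explicit formula in Proposition \ref{prop:meas}(ii): each of its three terms is dominated, uniformly in $\theta$ and $z$, by $\E\bigl[\sup_{t\le T}(|f_t|+|g_t|+|h_t|)\bigm|\cF^1_\theta\bigr]$ or by $|\hat V^{*,1}_\theta|$, and $f,g,h\in\cL_b(\P)$ while $\hat V^{*,1}$ is of class $(D)$ by Theorem \ref{thm:value}. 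With class $(D)$ in hand, the abstract aggregation results of Appendix \ref{app:aggr} yield, for every $z \in [0,1]$, an optional process aggregating the family.

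The main obstacle will be the identification $m^*(\theta;z)=\mu^*(\theta;z)$ for each $\theta\in\cT_0(\F^1)$, a.e.\ $z$ and $\P$-a.s. Splitting $\cP(\tau_*(z),\sigma_*)$ on the complementary events $\{\tau_*(z)\wedge\sigma_* < \theta\}$ and $\{\theta\le\tau_*(z)\wedge\sigma_*\}$, a direct case analysis---using the $\cF^1_\theta$-measurability of $\tau_*(z)$ and the automatic inclusion $\{\sigma_*<\tau_*(z)<\theta\}\subset\{\sigma_*<\theta\}$---shows that the past contribution in $\mu^*(\theta;z)$ matches precisely the first two conditional-expectation terms in Proposition \ref{prop:meas}(ii). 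The crux is therefore to prove that, on $\{\theta\le\tau_*(z)\}$ and for a.e.\ $z$,
\begin{equation}\label{eq:plan_key}
\hat V^{*,1}_\theta = \E\Big[f_{\tau_*(z)}(1-\zeta^*_{\tau_*(z)}) + \int_{[\theta,\tau_*(z))} g_u\, d\zeta^*_u + h_{\tau_*(z)}\, \Delta\zeta^*_{\tau_*(z)} \Big|\cF^1_\theta\Big].
\end{equation}
The inequality ``$\le$'' in \eqref{eq:plan_key} is immediate from \eqref{eq:hatv12}, since $\tau_*(z)\in\cT_\theta(\F^1)$ on the event in question. For the reverse inequality, I will invoke Proposition \ref{thm:aggr2} (optimality of the truncated control $\xi^{*;\theta}$) combined with a pure-stopping-time argument paralleling the proof of Lemma \ref{lem:pure} and Remark \ref{rem:tau_z}, carried out in the game restarted at $\theta$: on $\{\xi^*_{\theta-}<1\}$, the pure stopping times
\[
\bar\tau^\theta_*(z')\coloneqq\inf\{t:\xi^{*;\theta}_t>z'\} = \tau_*\bigl(\xi^*_{\theta-}+z'(1-\xi^*_{\theta-})\bigr)
\]
attain $V^{*,1}(\theta)$ for a.e.\ $z'\in(0,1)$. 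Via the affine change of variable $z = \xi^*_{\theta-}+z'(1-\xi^*_{\theta-})$ and the identification $\{\theta\le\tau_*(z)\} = \{\xi^*_{\theta-}\le z\}$, this translates into \eqref{eq:plan_key} for a.e.\ $z \in (\xi^*_{\theta-},1)$ on $\{\xi^*_{\theta-}<1\}$. On the residual set $\{\xi^*_{\theta-}=1\}$ one has $\tau_*(z)<\theta$ for every $z<1$, so $\{\theta\le\tau_*(z)\}$ is Lebesgue-negligible there and the identity holds vacuously.

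Once the identification is established, I set $m^*_t(z)\coloneqq\mu^*_t(z)$ for every $z\in[0,1]$, using a jointly measurable \cadlag version of the Doob martingale (available by standard parameter-dependent arguments). For a.e.\ $z$ this is then a \cadlag $\F^1$-martingale of class $(D)$ which, at each $\theta\in\cT_0(\F^1)$, coincides with $m^*(\theta;z)$ $\P$-a.s. The integral representation $M^*_\theta=\int_0^1 m^*_\theta(z)\,dz$ follows from Proposition \ref{prop:meas}(i) combined with Fubini, using the $(\lambda\otimes\P)$-a.e.\ equality $m^*_\theta(z) = m^*(\theta;z)$ at each fixed $\theta$. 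The statements for $(n^*_t(z))$ are obtained by symmetric arguments, with the roles of the two players, filtrations, and payoff components interchanged.
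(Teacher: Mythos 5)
Your route is genuinely different from the paper's, and it has a gap at the final aggregation step. The paper does \emph{not} identify $m^*(\theta;z)$ with the Doob martingale of $\cP(\tau_*(z),\sigma_*)$. Instead it observes that $m^*(\theta;z)=M^\xi(\theta)$ for the pure generating process $\xi_t=\ind_{\{\tau_*(z)\le t\}}$, so Proposition \ref{prop:subsupmg} immediately supplies, for \emph{every} $z$, an optional \emph{submartingale} $(m^*_t(z))_t$ aggregating the family; it then integrates the submartingale increment over $z$, uses \eqref{eqn:mnstar} and the martingale property of $M^*$ to get $\int_0^1\E[m^*_T(z)-m^*_0(z)]\,\ud z=0$, and concludes $\E[m^*_T(z)-m^*_0(z)]=0$ for a.e.\ $z$. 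Because only the two deterministic times $0$ and $T$ are compared, Fubini applies with no quantifier issues.

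Two concrete problems with your plan. First, the aggregation ``for every $z$'' does not follow from class $(D)$ plus Appendix \ref{app:aggr}: Proposition \ref{prop:aggr} requires a (sub/super)martingale system that is right-continuous in expectation, and you never establish that $\{m^*(\theta;z)\}$ is a submartingale system. The clean way to get it is precisely the paper's identification with ${\bf M}^\xi$. Second, and more seriously, your identification $m^*(\theta;z)=\E[\cP(\tau_*(z),\sigma_*)\,|\,\cF^1_\theta]$ is obtained, for each fixed $\theta$, only for $z$ outside a Lebesgue-null set $N_\theta$ depending on $\theta$ (this is all that the averaging argument behind Lemma \ref{lem:pure} and Remark \ref{rem:tau_z} can give, since pathwise optimality of $\bar\tau^\theta_*(z')$ is extracted from an equality of integrals over $z'$). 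To define $m^*_t(z)$ as the Doob martingale and have it agree with $m^*(\theta;z)$ at \emph{every} $\theta\in\cT_0(\F^1)$ for a.e.\ $z$, you must swap the quantifiers over an uncountable family of stopping times, and $\bigcup_{\theta}N_\theta$ need not be null. This is exactly the aggregation difficulty the proposition is designed to resolve, and your proposal does not close it. (If you instead keep the submartingale aggregation for part one and try to upgrade it to a martingale by comparison with the Doob martingale, you are led back to comparing expectations at $t=0$ and $t=T$, i.e.\ to the paper's argument.) The rest of your computation — class $(D)$, the splitting of the past contribution, the treatment of $\{\xi^*_{\theta-}=1\}$, and the Fubini step for \eqref{eqn:final_statement} — is sound.
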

\begin{proof}
The class $(D)$ property is immediate because $f,g,h\in\cL_b(\P)$. We recall that the family ${\bf M}^\xi$ can be aggregated into an optional submartingale process $(M^\xi_t,\F^1,\P)_{t\in[0,T]}$ (cf.\ Proposition \ref{prop:subsupmg}) for any $\xi\in\cAcirc_0(\F^1)$. Then we observe that taking $\xi_t=\ind_{\{\tau_*(z)\le t\}}$ in the definition of $M^\xi$ we obtain $M^\xi_\theta=m^*(\theta;z)$ for any $\theta \in \cT_0(\F^1)$. Hence, for every $z\in[0,1]$ and for $\xi_t=\ind_{\{\tau_*(z)\le t\}}$
\[
(m^*_t(z))_{t\in[0,T]} \coloneqq (M^\xi_t)_{t \in [0, T]},
\] 
is an $\F^1$-optional submartingale that aggregates the family $\{m^*(\theta; z), \theta \in \cT_0(\F^1) \}$. 

From \eqref{eqn:mnstar} and from the fact that $m^*_t(z) = m^*(t; z)$, $\P$-a.s. for any $z \in [0, 1]$ we obtain
\begin{equation}\label{eqn:Mm}
\E [M^*_T - M^*_0] = \int_0^1 \E \big[m^*(T; z) - m^*(0; z)\big] \ud z
= \int_0^1 \E [m^*_T(z) - m^*_0(z)] \ud z.
\end{equation}
We can only guarantee the measurability of the map $z \mapsto \E [m^*_T(z) - m^*_0(z)]$ by referring to the measurability of $(z,\omega)\mapsto [m^*(T; z)(\omega) - m^*(0; z)(\omega)]$ (cf.\ Proposition \ref{prop:meas}) and to the fact that $m^*_T(z) = m^*(T; z)$ and $m^*_0(z) = m^*(0; z)$, $\P$\as
However, the map $z \mapsto m^*_T(z)$ may not be measurable, hence, the order of integration on the right-hand side of \eqref{eqn:Mm} cannot be interchanged.
 
Recall that $(m^*_t(z))_{t \in [0, T]}$ is an $\F^1$-submartingale for every $z \in [0,1]$. This implies that $\E [m^*_T(z) - m^*_0(z)] \ge 0$. We also note that $(M^*_t)_{t \in [0, T]}$ is an $\F^1$-martingale, so $\E [M^*_T - M^*_0] = 0$. These two observations and \eqref{eqn:Mm} lead to the conclusion that $\E [m^*_T(z) - m^*_0(z)] = 0$ for almost every $z \in [0, 1]$. When we combine this with the submartingale property of $(m^*_t(z))_{t \in [0, T]}$, we see that $(m^*_t(z))_{t \in [0, T]}$ is an $\F^1$-optional martingale for almost every $z \in [0,1]$. Then, by Corollary \ref{cor:aggr} it is indistinguishable from a \cadlag martingale: indeed for a.e.\ $z\in[0,1]$, the family $\{m^*_\theta(z),\,\theta\in\cT_0(\F^1)\}$ can be aggregated into a \cadlag martingale $(\bar m^z_t)_{t\in[0,T]}$ (i.e., $\P(m^*_\theta(z)=\bar m^z_\theta)=1$ for any $\theta \in \cT_0(\F^1)$). Since $m^*_t(z)$ and $\bar m^z_t$ are optional processes that coincide on stopping times, they are indistinguishable \cite[Lemma VI.5.2]{RogersWilliams}. Thus, $m^*_t(z)$ is itself \cadlag.

The arguments for $(n^*_t(z))_{t \in [0, T]}$ are analogous and, therefore, omitted. Equalities \eqref{eqn:final_statement} are
proven by combining \eqref{eqn:mnstar} and the aggregation result.
\end{proof}

A corollary links the above result to properties of processes $M^0$ and $N^0$ from Proposition \ref{thm:aggr1}. 
\begin{corollary}\label{cor:M_zero_mart}
The processes $(M^0_{t\wedge\tau_*(z)})_{t\in[0,T]}$ and $(N^0_{t\wedge\sigma_*(z)})_{t\in[0,T]}$ are a \cadlag $\F^1$-martingale and a \cadlag $\F^2$-martingale, respectively, for any $z\in[0,1)$.
\end{corollary}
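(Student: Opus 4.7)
The plan is to transfer the martingale property of $(m^*_t(z))_{t\in[0,T]}$, established in Proposition \ref{prop:mnz} for almost every $z$, to the stopped process $(M^0_{t\wedge\tau_*(z)})_{t\in[0,T]}$, and then to promote it from ``a.e.\ $z$'' to ``every $z\in[0,1)$'' by a density/continuity argument. The key identification I would prove first is
\begin{equation*}
m^*(\theta;z) = M^0(\theta)\quad\text{on}\quad \{\theta\le\tau_*(z)\},\qquad \theta\in\cT_0(\F^1),\ z\in[0,1).
\end{equation*}
Indeed, reading off the formula in Proposition \ref{prop:meas}(ii), on $\{\theta\le\tau_*(z)\}$ the first summand vanishes (since $\tau_*(z)\ge\theta$) and the indicator $\ind_{\{\sigma_*<\tau_*(z)\}}$ in the second summand reduces to $\ind_{\{\sigma_*<\theta\}}$, giving
\begin{equation*}
m^*(\theta;z) = \E\bigl[\ind_{\{\sigma_*<\theta\}}g_{\sigma_*}\bigm|\cF^1_\theta\bigr] + \hat V^{*,1}_\theta.
\end{equation*}
Conditioning on $\cF_T$ and integrating out the uniform randomisation device $Z_2$ of $\sigma_*$ via the change-of-variable formula \cite[Prop.\ 0.4.9]{revuzyor} rewrites the first term as $\E\bigl[\int_{[0,\theta)}g_t\ud\zeta^*_t\bigm|\cF^1_\theta\bigr]$, while $\hat V^{*,1}_\theta=\E[1-\zeta^*_{\theta-}|\cF^1_\theta]\,V^{*,1}(\theta)$ by Theorem \ref{thm:value}; summing recovers exactly $M^0(\theta)$ as defined in Proposition \ref{thm:aggr1}.

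Given this identification, applying it at $\theta'\wedge\tau_*(z)\le\tau_*(z)$ for an arbitrary $\theta'\in\cT_0(\F^1)$ yields $m^*_{\theta'\wedge\tau_*(z)}(z)=M^0_{\theta'\wedge\tau_*(z)}$ $\P$-a.s. Since both processes stopped at $\tau_*(z)$ are $\F^1$-optional and agree at every $\F^1$-stopping time, they are indistinguishable by \cite[Lemma VI.5.2]{RogersWilliams}. For a.e.\ $z\in[0,1]$, Proposition \ref{prop:mnz} provides a \cadlag $\F^1$-martingale $(m^*_t(z))_{t\in[0,T]}$ of class $(D)$; noting that $\tau_*(z)$ is an $\F^1$-stopping time (as $\xi^*$ is $\F^1$-adapted), optional sampling yields that $(m^*_{t\wedge\tau_*(z)}(z))_{t\in[0,T]}$ is a \cadlag $\F^1$-martingale, and by indistinguishability the same is true of $(M^0_{t\wedge\tau_*(z)})_{t\in[0,T]}$ for a.e.\ $z\in[0,1)$.

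To extend this to every $z\in[0,1)$, I fix such a $z$ and pick a sequence $z_n\downarrow z$ from the full-Lebesgue-measure set of ``good'' points (which is therefore dense in $[0,1]$). The right-inverse $w\mapsto\tau_*(w)=\inf\{t:\xi^*_t>w\}$ is right-continuous, so $\tau_*(z_n)\downarrow\tau_*(z)$; the \cadlag property of $M^0$ (Proposition \ref{thm:aggr1}) then gives $M^0_{r\wedge\tau_*(z_n)}\to M^0_{r\wedge\tau_*(z)}$ $\P$-a.s.\ for each $r\in[0,T]$. Class $(D)$ provides the required uniform integrability, so one may pass to the limit in the identity $\E[\ind_A M^0_{t\wedge\tau_*(z_n)}]=\E[\ind_A M^0_{s\wedge\tau_*(z_n)}]$ valid for every $A\in\cF^1_s$ and $s\le t$, obtaining the martingale property at the chosen $z$. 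The argument for $(N^0_{t\wedge\sigma_*(z)})_{t\in[0,T]}$ is entirely symmetric, relying on the analogous identification of $n^*(\gamma;z)$ with $N^0(\gamma)$ on $\{\gamma\le\sigma_*(z)\}$. The main technical step I anticipate is the clean reduction of $m^*(\theta;z)$ to $M^0(\theta)$ on $\{\theta\le\tau_*(z)\}$, which hinges on the Fubini/change-of-variable manipulation of the independent randomisation device; once this is in hand, optional stopping and the density argument are routine.
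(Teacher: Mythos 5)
Your proof is correct and, in its core, is the same as the paper's: the identification $m^*_{\theta\wedge\tau_*(z)}(z)=M^0_{\theta\wedge\tau_*(z)}$ obtained by specialising the formula of Proposition \ref{prop:meas}(ii) on $\{\theta\le\tau_*(z)\}$ and integrating out the randomisation device, followed by indistinguishability of the two optional processes via \cite[Lemma VI.5.2]{RogersWilliams}, is exactly equation \eqref{eqn:M0} and the surrounding argument, and yields the martingale property for a.e.\ $z$. The only place you diverge is the final upgrade from ``a.e.\ $z$'' to ``every $z\in[0,1)$'': you approximate $z$ from the right by good points $z_n\downarrow z$, invoke right-continuity of $w\mapsto\tau_*(w)$, the \cadlag paths of $M^0$ and class-$(D)$ uniform integrability to pass to the limit in the martingale identity. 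That works, but the paper gets the same conclusion in one line without any limit: since the good set has full measure, there is a single $\hat z\in[z,1]$ for which $(M^0_{t\wedge\tau_*(\hat z)})_{t\in[0,T]}$ is a martingale, and monotonicity of $z\mapsto\tau_*(z)$ gives $\tau_*(z)\le\tau_*(\hat z)$, so optional stopping at the $\F^1$-stopping time $\tau_*(z)$ immediately yields the claim. Your route costs an extra appeal to right-continuity of the generalized inverse and to uniform integrability, neither of which is needed; otherwise the two proofs are interchangeable.
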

\begin{proof}
Take any $z\in[0,1]$ from the full measure set on which $(m^*_t(z))_{t \in [0, T]}$ constructed in Proposition \ref{prop:mnz} is a \cadlag $\F^1$-martingale. For any $\theta\in\cT_0(\F^1)$, by the definition of $m^*_t(z)$ we have
\begin{equation}\label{eqn:M0}
\begin{aligned}
m^*_{\theta\wedge\tau_*(z)}(z)&=\E\big[\ind_{\{\sigma_*<\theta\wedge\tau_*(z)\}}g_{\sigma_*}\big|\cF^1_{\theta\wedge\tau_*(z)}\big]+\hat V^{*,1}_{\theta\wedge\tau_*(z)}\\
&=\E\Big[\int_{[0,\theta\wedge\tau_*(z))}g_s\ud \zeta^*_s\Big|\cF^1_{\theta\wedge\tau_*(z)}\Big]+\hat V^{*,1}_{\theta\wedge\tau_*(z)}=M^0_{\theta\wedge\tau_*(z)},
\end{aligned}
\end{equation}
where the final equality holds by the expression \eqref{eq:subm00} for $M^0$.

By Proposition \ref{thm:aggr1}, $(M^0_t)_{t\in[0,T]}$ is a \cadlag $\F^1$-submartingale. Hence, $(M^0_{t\wedge\tau_*(z)})_{t\in[0,T]}$ is actually a \cadlag $\F^1$-martingale, indistinguishable from $(m^*_{t\wedge\tau_*(z)}(z))_{t\in[0,T]}$ thanks to \cite[Lemma VI.5.2]{RogersWilliams}. This proves that $(M^0_{t\wedge\tau_*(z)})_{t\in[0,T]}$ is an $\F^1$-martingale for {\em almost every} $z \in [0, 1]$, but the result extends to {\em all} $z\in[0,1)$ as follows: for any $z < 1$, there is $\hat z \in [z,1]$ for which the martingale condition holds; due to the monotonicity of $z \mapsto \tau_*(z)$, we have $\tau_*(z) \le \tau_*(\hat z)$, so the martingale condition holds until time $\tau_*(z)$ too. 

An analogous argument leads to the proof of the claim for $N^0$. 
\end{proof}

In the next proposition we determine the support of the generating processes for an optimal randomised pair $(\tau_*,\sigma_*)$. We introduce a notation that is needed for the next result and is used extensively in the Appendix: given a process $X\in \cL_b(\P)$ and a filtration $\G$, we denote by $\optional{X}^{\G}=(\optional{X}^{\G}_t)_{t\in[0,T]}$ its $\G$-optional projection under the measure $\P$.
\begin{proposition}\label{prop:support}
Let $(\xi^*,\zeta^*)\in\cAcirc_0(\F^1)\times\cAcirc_0(\F^2)$ be an optimal pair and recall the optional semimartingale processes $(\hat V^{*,1}_t)_{t\in[0,T]}$ and $(\hat V^{*,2}_t)_{t\in[0,T]}$ from Theorem \ref{thm:value}. Let us define 
\begin{align*}
Y^1_t\coloneqq\hat V^{*,1}_t-\optional{\big(}f_\cdot(1-\zeta^*_\cdot)\big)_t^{\F^1}-\optional{\big(}h_\cdot\Delta\zeta^*_\cdot\big)_t^{\F^1}\quad\text{and}\quad Y^2_t\coloneqq\hat V^{*,2}_t- \optional{\big(}g_\cdot(1-\xi^*_\cdot)\big)^{\F^2}_t-\optional{\big(}h_\cdot\Delta\xi^*_\cdot\big)^{\F^2}_t.
\end{align*}
Then, the following properties hold:
\begin{itemize}
\item[(i)] $\P\big(Y^1_t\le 0 \text{ and } Y^2_t\ge 0 \text{ for all $t\in[0,T]$}\big) = 1$.
\item[(ii)] 
We have
\begin{align}\label{eq:supp0}
\int_{[0,T]}Y^1_t\ud \xi^*_t=0\quad\text{and}\quad\int_{[0,T]}Y^2_t\ud \zeta^*_t=0,
\end{align}
or, equivalently,
\begin{align}\label{eq:supp1}
\int_{[0,T]}\ind_{\{Y^1_t<0\}}\ud \xi^*_t=0\quad\text{and}\quad\int_{[0,T]} \ind_{\{Y^2_t>0\}}\ud \zeta^*_t=0.
\end{align}
\end{itemize}
\end{proposition}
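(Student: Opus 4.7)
My plan is to prove (i) via the essential-infimum representation of $\hat V^{*,1}$ and the optional section theorem, and to prove (ii) by computing $\E[\int Y^1 \ud \xi^*]$ using the martingale/submartingale structure developed in Sections~\ref{subsec:aggregation}--\ref{subsec:structure}.

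For (i), I fix an arbitrary $\theta \in \cT_0(\F^1)$ and take $\tau = \theta$ in the essential-infimum representation \eqref{eq:hatv12}, obtaining
\[
\hat V^{*,1}_\theta \le \E\big[f_\theta(1-\zeta^*_\theta) + h_\theta \Delta \zeta^*_\theta \,\big|\, \cF^1_\theta\big] = \optional{\big(}f_\cdot(1-\zeta^*_\cdot)\big)_\theta^{\F^1} + \optional{\big(}h_\cdot \Delta \zeta^*_\cdot\big)_\theta^{\F^1},
\]
where the equality is the defining property of the $\F^1$-optional projection. Hence $Y^1_\theta \le 0$ $\P$\as Since $Y^1$ is $\F^1$-optional (as a combination of $\F^1$-optional processes), the optional section theorem forces $\{Y^1 > 0\}$ to be evanescent. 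The argument for $Y^2 \ge 0$ is symmetric, using the essential-supremum representation of $\hat V^{*,2}$.

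For (ii), I will show that $\E\big[\int_{[0,T]} Y^1_t \ud \xi^*_t\big] = 0$; combined with (i), this forces $\int Y^1 \ud \xi^* = 0$ $\P$\as I decompose $\hat V^{*,1} = M^0 - S^1$ as in the proof of Theorem \ref{thm:value}, where $S^1$ is the $\F^1$-optional projection of $\big(\int_{[0,t)} g_s \ud \zeta^*_s\big)_{t \in [0,T]}$. Using the path-wise change-of-variable $\int_{[0,T]} X_t \ud \xi^*_t = \int_0^1 X_{\bar\tau_*(z)} \ud z$ and Corollary \ref{cor:M_zero_mart} (giving $\E[M^0_{\bar\tau_*(z)}] = V^{*,1}(0)$ for every $z \in [0,1)$), the $M^0$-term contributes exactly $V^{*,1}(0)$. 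For the $S^1$-term, the optional-projection property (available because $\xi^*$ is $\F^1$-adapted) and Fubini, with $\mathrm{Leb}\{z:\, s < \bar\tau_*(z)\} = 1 - \xi^*_s$, yield
\[
\E\Big[\int_{[0,T]} S^1_t \ud \xi^*_t\Big] = \E\Big[\int_{[0,T]} g_s (1-\xi^*_s) \ud \zeta^*_s\Big].
\]
Substituting $V^{*,1}(0) = \E[\cP(\xi^*, \zeta^*)]$ via \eqref{eq:payoffxizeta} makes the $g$-integrals cancel, leaving
\[
\E\Big[\int_{[0,T]} \hat V^{*,1}_t \ud \xi^*_t\Big] = \E\Big[\int_{[0,T]} f_t(1-\zeta^*_t) \ud \xi^*_t + \int_{[0,T]} h_t \Delta \zeta^*_t \ud \xi^*_t\Big];
\]
a final application of the optional-projection property to the right-hand side identifies this with $\E\big[\int \big(\optional{(}f(1-\zeta^*))^{\F^1} + \optional{(}h\Delta\zeta^*)^{\F^1}\big) \ud \xi^*\big]$, hence $\E[\int Y^1 \ud \xi^*] = 0$. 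The equivalence of \eqref{eq:supp0} and \eqref{eq:supp1} is immediate from $Y^1 \le 0$ (resp.\ $Y^2 \ge 0$). The treatment of $Y^2$ is entirely analogous, based on $\hat V^{*,2}$, $N^0$, and $\bar\sigma_*(z)$.

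The main technical obstacle will be keeping track of boundary contributions at $t = T$ and $z = 1$ in the change-of-variable and Fubini steps. These resolve cleanly: $1 - \zeta^*_T = 0$ eliminates any spurious $f$-contribution at $T$, and $\{z = 1\}$ carries no Lebesgue mass, so the martingale property in Corollary \ref{cor:M_zero_mart} (which holds only for $z < 1$) suffices for integration in $z$.
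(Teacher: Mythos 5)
Your proposal is correct and follows essentially the same route as the paper: part (i) is the paper's argument verbatim (evaluate \eqref{eq:hatv12} at $\tau=\theta$, then use the optional section theorem), and part (ii) reproduces the paper's computation — the change of variables $\int_{[0,T]}X_t\,\ud\xi^*_t=\int_0^1 X_{\bar\tau_*(z)}\,\ud z$, Fubini with $\mathrm{Leb}\{z:s<\bar\tau_*(z)\}=1-\xi^*_s$, cancellation of the $g$-terms against $\E[\cP(\xi^*,\zeta^*)]$, and the identity $\E[\int X\,\ud\xi^*]=\E[\int \optional{X}^{\F^1}\,\ud\xi^*]$ — merely packaged through the decomposition $\hat V^{*,1}=M^0-S^1$ and Corollary \ref{cor:M_zero_mart} rather than through the martingales $m^*(z_n)$ from which that corollary is derived (they coincide along $\tau_*(z)$ by \eqref{eqn:M0}, so the content is identical). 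The only blemish is notational: $\E[M^0_{\bar\tau_*(z)}]$ equals $\E[V^{*,1}(0)]$, not $V^{*,1}(0)$.
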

\begin{proof}
Taking $\tau=\theta$ and $\sigma=\gamma$ on the right-hand side of \eqref{eq:hatv12}, we have $Y^1_\theta\le 0$ and $Y^2_\gamma\ge 0$ for arbitrary stopping times $\theta\in\cT_0(\F^1)$ and $\gamma\in\cT_0(\F^2)$. Since the process $Y^1$ and $Y^2$ are optional with respect to $\F^1$ and $\F^2$, respectively, then (i) holds by repeating arguments in the proof of \cite[Lemma VI.5.2]{RogersWilliams} with the set $F$ therein replaced by $\{(t, \omega):\ Y^1_t(\omega) > 0\}$ and $\{(t, \omega):\ Y^2_t(\omega) < 0\}$, respectively.

To prove (ii), let $(z_n)_{n\in\N}\subset[0,1]$ with $z_n\uparrow 1$ be such that $(m^*_t(z_n))_{t\in[0,T]}$ is a \cadlag $\F^1$-martingale and $(n^*_t(z_n))_{t\in[0,T]}$ is a \cadlag $\F^2$-martingale for each $n\in\N$. By definition of $\sigma_*(z)$ and $\tau_*(z)$ we have $\sigma_*(z_n)\le \sigma_*(z_{n+1})$ and $\tau_*(z_n)\le \tau_*(z_{n+1})$, with $\sigma_*(z_n),\tau_*(z_n)\uparrow \sigma_*(1),\tau_*(1)$ as defined in \eqref{eq:finaltime} (recall that we skip bars in the notation for the sake of readability).

Now we focus on the support of $\xi^*$ as the result for $\zeta^*$ can be obtained analogously. 
By optional sampling $\E[m^*_0(z_n)]=\E[m^*_{\tau_*(u)}(z_n)]$ for any $u\in[0,1]$. Hence, integrating over $[0,z_n]$ yields
\begin{align}\label{eq:supp}
\E\big[m^*_0(z_n)\big]=\frac{1}{z_n}\int_0^{z_n}\E\big[m^*_{\tau_*(u)}(z_n)\big]\ud u.
\end{align}

From the definition of $m^*_t(z)$ we have 
\begin{align}\label{eq:Em0}
\begin{aligned}
&\E[m^*_0(z_n)]=\E[\hat V^{*,1}_0]=\E[V^{*,1}(0)]\\
&=\E\Big[\int_{[0,T)}\big(f_t(1-\zeta^*_t)+h_t\Delta\zeta^*_t\big)\ud \xi^*_t+\int_{[0,T)}g_t(1-\xi^*_t)\ud \zeta^*_t+h_T\Delta\zeta^*_T\Delta\xi^*_T\Big].
\end{aligned}
\end{align}
It is important to notice that the expression is independent of $z_n$. 
For $u\le z_n$, using that $\tau_*(u)\le \tau_*(z_n)$ we obtain 
\begin{align*}
\begin{aligned}
\E[m^*_{\tau_*(u)}(z_n)]&=\E\big[\ind_{\{\sigma_*<\tau_*(u)\}}g_{\sigma_*}+\hat V^{*,1}_{\tau_*(u)}\big]\\
&=\E\Big[\int_{[0,\tau_*(u))}g_{t}\ud \zeta^*_t+\hat V^{*,1}_{\tau_*(u)}\ind_{\{\tau_*(u)<T\}}+h_{T}\Delta\zeta^*_T \ind_{\{\tau_*(u)=T\}}\Big],
\end{aligned}
\end{align*}
where for the second equality we integrated out the randomisation device of $\sigma_*$ and we used that $\hat V^{*,1}_T=\E[h_T\Delta\zeta^*_T|\cF^1_T]$, along with tower property. Integrating over $u\in[0,z_n]$ yields
\begin{align}\label{eq:Emtau}
\begin{aligned}
&\int_0^{z_n}\E[m^*_{\tau_*(u)}(z_n)]\ud u\\
&=\E\Big[\int_{[0,T)}(z_n-\xi^*_t)^+g_t\ud\zeta^*_t+\int_{[0,T\wedge\tau_*(z_n))}\hat V^{*,1}_{t}\ud \xi^*_t+h_T\Delta\zeta^*_T(z_n-\xi^*_{T-})^+\Big],
\end{aligned}
\end{align}
where we used the following formulae:
\begin{align*}
&\int_0^{z_n}\Big(\int_{[0,T)}\ind_{\{t<\tau_*(u)\}}g_t\ud \zeta^*_t\Big)\ud u=\int_{[0,T)}\Big(\int_0^{z_n}\ind_{\{\xi^*_t<u\}}\ud u\Big) g_t\ud \zeta^*_t=\int_{[0,T)}(z_n-\xi^*_t)^+g_t\ud\zeta^*_t,\\
&\int_0^{z_n}\ind_{\{\tau_*(u)=T\}}\ud u=\int_0^{z_n}\ind_{\{\xi^*_{T-}<u\}}\ud u=(z_n-\xi^*_{T-})^+.
\end{align*}

Finally, inserting \eqref{eq:Em0} and \eqref{eq:Emtau} into \eqref{eq:supp} and letting $n\to\infty$ yields
\begin{align}\label{eq:supp1a}
\begin{aligned}
&\E\Big[\int_{[0,T)}\big(f_t(1-\zeta^*_t)+h_t\Delta\zeta^*_t\big)\ud \xi^*_t+\int_{[0,T)}g_t(1-\xi^*_t)\ud \zeta^*_t+h_T\Delta\zeta^*_T\Delta\xi^*_T\Big]\\
&=\E\Big[\int_{[0,T)}(1-\xi^*_t)g_t\ud\zeta^*_t+\int_{[0,T)}\hat V^{*,1}_{t}\ud \xi^*_t+h_T\Delta\zeta^*_T\Delta\xi^*_{T}\Big],
\end{aligned}
\end{align}
where we used that 
\[
\int_{[0,T\wedge\tau_*(z_n))}\hat V^{*,1}_{t}\ud \xi^*_t\xrightarrow{n\to\infty}\int_{[0,T\wedge\tau_*(1))}\hat V^{*,1}_{t}\ud \xi^*_t=\int_{[0,T)}\hat V^{*,1}_{t}\ud \xi^*_t,
\]
and the final equality above holds because $\xi^*_t(\omega)=1$ for $t\ge \tau_*(1)(\omega)$.

From \eqref{eq:supp1a} and using optional projections and \cite[Thm.~VI.57]{DellacherieMeyer} we obtain
\begin{align*}
0&=\E\Big[\int_{[0,T)} \big(\hat V^{*,1}_{t}-f_t(1-\zeta^*_t)-h_t\Delta\zeta^*_t\big)\ud \xi^*_t\Big]\\
&=\E\Big[\int_{[0,T)} \big(\hat V^{*,1}_{t}-\big[\optional{\big(}f_\cdot(1-\zeta^*_\cdot)\big)_t^{\F^1}+\optional{\big(}h_\cdot\Delta\zeta^*_\cdot\big)_t^{\F^1}\big]\ud \xi^*_t\Big]=\E\Big[\int_{[0,T)} Y^1_t\ud \xi^*_t\Big].
\end{align*}
Since we know that $Y^1_t\le 0$ for all $t\in[0,T]$, $\P$-a.s.\ (item (i) above) then we conclude that the first equation in \eqref{eq:supp0} must hold. The other one can be obtained by analogous methods and using $Y^2_t\ge 0$ for all $t\in[0,T]$, $\P$-a.s.
\end{proof}

\begin{remark}
We can rephrase Proposition \ref{prop:support} using that the pair $(\tau_*,\sigma_*)$ is generated by $(\xi^*,\zeta^*)$ and therefore writing the processes $Y^1$ and $Y^2$ as
\begin{align*}
Y^1_t=\hat V^{*,1}_t- \optional{\big(} f_\cdot \ind_{\{\sigma_*>\,\cdot\}}\big)_t^{\F^1}- \optional{\big(} h_\cdot \ind_{\{\sigma_*=\,\cdot\}}\big)_t^{\F^1}\quad\text{and}\quad Y^2_t=\hat V^{*,2}_t- \optional{\big(}g_\cdot \ind_{\{\tau_*>\,\cdot\}}\big)^{\F^2}_t- \optional{\big(}h_\cdot \ind_{\{\tau_*=\,\cdot\}}\big)^{\F^2}_t.
\end{align*}

Moreover, adopting a terminology for local-times of \cadlag semimartingales (cf.\ \cite[Thm.\ IV.7.69]{protter2005stochastic}) we can informally state (ii) in the above proposition by saying that the measure associated to the process $t\mapsto\xi^*_t(\omega)$ is carried by the set $\{t\in[0,T]:Y^1_t(\omega)=0\}$, for $\P$-a.e.\ $\omega\in\Omega$. Analogously, the measure associated to the process $t\mapsto\zeta^*_t(\omega)$ is carried by the set $\{t\in[0,T]:Y^2_t(\omega)=0\}$ for $\P$-a.e.\ $\omega\in\Omega$. 

\end{remark}

Proposition \ref{prop:support} yields a characterisation of ``when'' the optimal randomised strategies should ``activate'' but it does not quantify the ``amount'' of stopping which is required (i.e., with what speed should the generating processes increase). This is a very difficult issue to address in general and the answer depends on the particular structure of the game. We address the question in the next corollary which should hold, in spirit, in a broader generality than its actual statement. The corollary was used in \cite[Ch.\ 6]{smith2024martingale} to construct equilibrium strategies in the game proposed by \cite[Sec.\ 6.2]{de2022value} and in the working paper \cite{DEAHP}. Moreover, it was implicitly used in the construction of the equilibrium strategies of \cite{DEG2020}. 

Let us make a preliminary observation that $\hat V^{*,1}_0=M^0_0$ and $\hat V^{*,1}_0=N^0_0$. 
\begin{corollary}\label{cor:support}
Assume that the processes
\begin{align}\label{eq:BVass}
\begin{aligned}
&t\mapsto \optional{\Big(}\int_{[0,\,\cdot)}g_s\ud \zeta^*_s\Big)^{\F^1}_{t\wedge\tau_*(z)} \quad\text{and}\quad t\mapsto \optional{\Big(}\int_{[0,\,\cdot)}f_s\ud \xi^*_s\Big)^{\F^2}_{t\wedge\sigma_*(z)}
\end{aligned}
\end{align}  
are of bounded variation and continuous for $z\in[0,1]$. Then, for $z\in[0,1)$ the processes $t\mapsto\hat V^{*,1}_{t\wedge \tau_*(z)}$ and $t\mapsto\hat V^{*,2}_{t\wedge \sigma_*(z)}$ are \cadlag with semimartingale decompositions given by 
\[
\hat V^{*,1}_{t\wedge\tau_*(z)} =\hat V^{*,1}_0+A^1_{t}+B^1_{t}\quad\text{and}\quad\hat V^{*,2}_{t\wedge\sigma_*(z)} =\hat V^{*,2}_0+A^2_{t}+B^2_{t},
\] 
where $A^{i}$ is the previsible bounded variation part and $B^i$ the martingale part, with 
\begin{align}
\begin{aligned}
&A^1_{t}=-\optional{\Big(}\int_{[0,\,\cdot)}g_s\ud \zeta^*_s\Big)^{\F^1}_{t\wedge\tau_*(z)} \quad\text{and}\quad B^1_{t}=M^0_{t\wedge\tau_*(z)}-M^0_{0},\\
&A^2_{t}=-\optional{\Big(}\int_{[0,\,\cdot)}f_s\ud \xi^*_s\Big)^{\F^2}_{t\wedge\sigma_*(z)} \quad\text{and}\quad B^2_{t}=N^0_{t\wedge\sigma_*(z)}-N^0_{0}.
\end{aligned}
\end{align}  
\end{corollary}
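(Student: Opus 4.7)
The plan is to read the decomposition straight off what has already been proved, rather than doing any new estimates. Recall from the proof of Theorem \ref{thm:value} (see equation \eqref{eq:hatV00}) that
\[
\hat V^{*,1}_t = M^0_t - S^1_t, \qquad S^1_t \coloneqq \optional{\Big(}\int_{[0,\,\cdot)}g_s\ud \zeta^*_s\Big)^{\F^1}_t,
\]
and analogously $\hat V^{*,2}_t = N^0_t - S^2_t$ with $S^2$ the $\F^2$-optional projection of $\int_{[0,\,\cdot)}f_s\ud \xi^*_s$. At $t=0$ both $S^1_0$ and $S^2_0$ vanish (the integration is over an empty set), so $\hat V^{*,1}_0 = M^0_0$ and $\hat V^{*,2}_0 = N^0_0$, justifying the initial condition in the claimed decomposition.

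Next I would stop everything at $\tau_*(z)$ (resp.\ $\sigma_*(z)$) and invoke Corollary \ref{cor:M_zero_mart} to conclude that $(M^0_{t\wedge\tau_*(z)})_{t\in[0,T]}$ is a genuine \cadlag $\F^1$-martingale, and likewise $(N^0_{t\wedge\sigma_*(z)})_{t\in[0,T]}$ is a \cadlag $\F^2$-martingale; these will play the role of the martingale parts $B^1$ and $B^2$. The standing hypothesis of the corollary says that $t\mapsto S^1_{t\wedge\tau_*(z)}$ and $t\mapsto S^2_{t\wedge\sigma_*(z)}$ are continuous and of bounded variation; being continuous and adapted to a right-continuous filtration, they are previsible, hence qualify as the finite-variation previsible parts $-A^1$ and $-A^2$.

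Putting the pieces together yields
\[
\hat V^{*,1}_{t\wedge\tau_*(z)} = \hat V^{*,1}_0 + \bigl(M^0_{t\wedge\tau_*(z)}-M^0_0\bigr) - S^1_{t\wedge\tau_*(z)},
\]
which is the decomposition $\hat V^{*,1}_0 + A^1_t + B^1_t$ with the $A^1,B^1$ stated in the corollary; the \cadlag property is inherited from the \cadlag-ness of $M^0_{\cdot\wedge\tau_*(z)}$ and the continuity of $S^1_{\cdot\wedge\tau_*(z)}$. Uniqueness of the Doob--Meyer decomposition for a special semi-martingale then guarantees that this is \emph{the} canonical decomposition. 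The argument for $\hat V^{*,2}$ is completely symmetric, replacing $(M^0,\F^1,\tau_*(z),g,\zeta^*)$ by $(N^0,\F^2,\sigma_*(z),f,\xi^*)$.

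There is no real obstacle here: all the heavy lifting was already done in Proposition \ref{thm:aggr1}, Theorem \ref{thm:value} and Corollary \ref{cor:M_zero_mart}. The only subtle point is that without the standing assumption \eqref{eq:BVass} one cannot a priori claim that $S^1$ (or $S^2$) has finite variation — the optional projection of a bounded-variation process need not itself be of bounded variation — which is precisely why the assumption is imposed; verifying \eqref{eq:BVass} in concrete examples (such as those in Section \ref{sec:examples}) is what really does the work.
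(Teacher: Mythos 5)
Your argument is correct and follows essentially the same route as the paper's proof: both read the decomposition off the identity $\hat V^{*,1}_{t\wedge\tau_*(z)} = M^0_{t\wedge\tau_*(z)} - S^1_{t\wedge\tau_*(z)}$, invoke Corollary \ref{cor:M_zero_mart} for the martingale part, use the continuity/bounded-variation hypothesis to make $S^i$ the previsible finite-variation part, and conclude by uniqueness of the decomposition of a special semimartingale. No gaps.
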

\begin{proof}
Let us start by noticing that for any $\theta\in\cT_0(\F^1)$ it holds
\[
\optional{\Big(}\int_{[0,\,\cdot)}g_s\ud \zeta^*_s\Big)^{\F^1}_{\theta}=\E\Big[\int_{[0,\theta)}g_s\ud \zeta^*_s\Big|\cF^1_{\theta}\Big]=\E\big[\ind_{\{\sigma_*<\theta\}}g_{\sigma_*}\big|\cF^1_{\theta}\big].
\]
By Corollary \ref{cor:M_zero_mart}, for any $z \in [0, 1)$, the process $(M^0_{t \wedge \tau_*(z)})_{t \in [0, T]}$ is a \cadlag $\F^1$-martingale and (see Eq. \eqref{eqn:M0})
\[
\begin{aligned}
M^0_{t\wedge\tau_*(z)}
=\optional{\Big(}\int_{[0,\,\cdot)}g_s\ud \zeta^*_s\Big)^{\F^1}_{t\wedge\tau_*(z)}+\hat V^{*,1}_{t\wedge\tau_*(z)}.
\end{aligned}
\]
We know from Theorem \ref{thm:value} that $t\mapsto \hat V^{*,1}_{t\wedge\tau_*(z)}$ is an $\F^1$-optional semi-martingale. The above equality proves that it is actually \cadlag, by the assumed continuity of the processes in \eqref{eq:BVass}. Rearranging terms we express $\hat V^{*,1}_{t\wedge\tau_*(z)}$ as
\[
\hat V^{*,1}_{t\wedge\tau_*(z)}=M^0_{0}+\big[M^0_{t\wedge\tau_*(z)}-M^0_{0}\big]-\optional{\Big(}\int_{[0,\,\cdot)}g_s\ud \zeta^*_s\Big)^{\F^1}_{t\wedge\tau_*(z)}.
\]
Since $\optional{\Big(}\int_{[0,\,\cdot)}g_s\ud \zeta^*_s\Big)^{\F^1}_{t\wedge\tau_*(z)}$ is previsible and of bounded variation, then the above formula determines uniquely the Doob-Meyer's decomposition of $\hat V^{*,1}_{t\wedge\tau_*(z)}$ by, e.g., \cite[Thm.\ III.1.2]{protter2005stochastic}.
The argument for $\hat V^{*,2}$ is analogous.
\end{proof}

In a nutshell, the corollary establishes a link between the optimal control $\zeta^*_t$ (resp.\ $\xi^*_t$) and the bounded variation part of the Doob-Meyer decomposition of $\hat V^{*,2}_t$ (resp.\ $\hat V^{*,1}_t$). When the latter is known (e.g., by PDE results in a Markovian framework) one may use the corollary to identify the optimal speed of increase for the generating process $\zeta^*$ (resp.\ $\xi^*$).

The reader is referred to Section \ref{sec:examples} to appreciate how the necessary conditions presented in this section enable us to make significant insights into two classes of games with asymmetric information. These insights pave the way for the solution of specific games by identifying objects to be modelled (player values, beliefs) as well as the structure of optimal strategies.

\section{Sufficient conditions for a saddle point}\label{sec:suffcond}

In this section we will formulate a verification result, i.e., a set of sufficient conditions for a saddle point and the value of the game. These conditions closely resemble the necessary conditions derived earlier in this paper. Indeed, 
upon close inspection, results of the previous section show that the sufficient conditions formulated below are also necessary. 
To facilitate such comparisons, we employ notations aligned with those used for the necessary conditions.  

The fact that our verification theorems provide conditions which are also necessary shows that these conditions are optimal, i.e., cannot be relaxed any further. 
This emphasises the completeness of our derivation of necessary conditions and the strength of the sufficient conditions. We present various equivalent sets of sufficient conditions, 
so that the reader may choose the most convenient one for their own specific application. 

\begin{remark} 
The assumptions for this section can be relaxed compared to the rest of the paper and we no longer require the ordering of the payoff processes $f\ge h\ge g$. 
\end{remark}

We start with a definition of $\cT_0$ sub- and supermartingale systems, analogues of ${\bf M}^\xi$ and ${\bf N}^\zeta$ from Section \ref{subsec:aggregation}. Fix a pair $(\hat\xi,\hat \zeta)\in \cAcirc_0(\F^1)\times\cAcirc_0(\F^2)$, an $\F^1$-progressively measurable process $(\cU^1_t)_{t \in [0, T]}$ and an $\F^2$-progressively measurable process $(\cU^2_t)_{t \in [0, T]}$. 
For any $(\xi,\zeta)\in \cAcirc_0(\F^1)\times\cAcirc_0(\F^2)$, consider the families $\hat{\bf M}^\xi\coloneqq\{\hat M^\xi(\theta),\theta\in\cT_0(\F^1)\}$ and $\hat{\bf N}^\zeta\coloneqq\{\hat N^\zeta(\gamma),\gamma\in\cT_0(\F^2)\}$ defined by
\begin{align}
\hat M^\xi(\theta) &= \E \Big[\! \int_{[0,\theta)}\!\! f_t (1\!-\!\hat \zeta_t)  \ud \xi_t\! +\! \int_{[0,\theta)}\!\!g_t (1\!-\!\xi_t) \ud \hat \zeta_t\! +\!\! \sum_{t \in [0,\theta)}\!\!h_t \Delta \xi_t \Delta \hat\zeta_t \Big| \cF^1_\theta \Big]\label{eqn:M_xi}\\
&\quad +\!(1\!-\!\xi_{\theta-}) \E[1\!-\!\hat \zeta_{\theta-}|\cF^1_\theta]\cU^1_\theta,\notag\\
\hat N^\zeta(\gamma)&= \E \Big[\! \int_{[0,\gamma)}\!\! f_t (1\!-\!\zeta_t)  \ud \hat\xi_t\! +\! \int_{[0,\gamma)}\!\!g_t (1\!-\!\hat\xi_t) \ud \zeta_t\! +\! \!\sum_{t \in [0,\gamma)}\!\! h_t \Delta \hat\xi_t \Delta \zeta_t \Big| \cF^2_\gamma \Big]\label{eqn:N_zeta}\\
&\quad +\!\E[1\!-\!\hat\xi_{\gamma-}|\cF^2_\gamma](1\!-\!\zeta_{\gamma-})\cU^2_\gamma.\notag
\end{align}

We start with a sequence of results which bear the strongest similarity to the necessary conditions developed in the previous section. We will follow those with stronger results that allow identification of the saddle point of the game -- the main aim is to replace arbitrary generating processes $(\xi, \zeta)$ with stopping times.

\begin{theorem}\label{thm:suff_saddle}
Let $(\cU^1_t)_{t \in [0, T]}$ and $(\cU^2_t)_{t \in [0, T]}$ be $\F^1$- and $\F^2$-progressively measurable process, respectively, and let $(\hat\xi,\hat \zeta)\in \cAcirc_0(\F^1)\times\cAcirc_0(\F^2)$. Assume that
\begin{enumerate}[(i)]
 \item $\hat{\bf M}^\xi$ is a $\cT_0(\F^1)$-submartingale system for any $\xi \in \cAcirc_0(\F^1)$,
 \item $\hat{\bf N}^\zeta$ is a $\cT_0(\F^2)$-supermartingale system for any $\zeta \in \cAcirc_0(\F^2)$,
 \item $\E[\Delta\hat\zeta_T | \cF^1_T]\, \cU^1_T = \E[\Delta\hat\zeta_T h_T | \cF^1_T]$ and $\E[\Delta\hat\xi_T | \cF^2_T]\, \cU^2_T = \E[\Delta\hat\xi_T h_T | \cF^2_T]$,
  \item $\E[\cU^1_0] = \E[\cU^2_0]$.
\end{enumerate}
Then the game has a value, i.e., $\overline V = \underline V = \E[\cU^1_0] = \E[\cU^2_0]$, see \eqref{eq:V}, and the randomised stopping times $(\hat \tau, \hat \sigma) \in \cT^R_0(\F^1) \times \cT^R_0(\F^2)$ generated by $(\hat \xi, \hat \zeta)$ form a saddle point of the game.
\end{theorem}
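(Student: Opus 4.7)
The plan is to turn the sub/supermartingale conditions on $\hat{\bf M}^\xi$ and $\hat{\bf N}^\zeta$ into the two saddle-point inequalities by evaluating these systems at the boundary times $\theta = 0$ and $\theta = T$ (and analogously $\gamma = 0, T$), and then using assumption (iv) to glue the two sides together.

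First I would compute the endpoint values. At $\theta = 0$, since $\xi_{0-} = \hat\zeta_{0-} = 0$ the integrals in \eqref{eqn:M_xi} vanish and one obtains $\hat M^\xi(0) = \cU^1_0$; similarly $\hat N^\zeta(0) = \cU^2_0$. At $\theta = T$ the key observation is that $\hat\zeta_T = 1$ and $\xi_T = 1$, so $\E[1-\hat\zeta_{T-}|\cF^1_T] = \E[\Delta\hat\zeta_T|\cF^1_T]$ and $1 - \xi_{T-} = \Delta\xi_T$. Assumption (iii) then yields
\begin{align*}
(1-\xi_{T-})\E[1-\hat\zeta_{T-}|\cF^1_T]\cU^1_T = \Delta\xi_T\,\E[\Delta\hat\zeta_T h_T|\cF^1_T] = \E[h_T\Delta\xi_T\Delta\hat\zeta_T|\cF^1_T],
\end{align*}
so the ``$t \in [0,T)$'' sum in \eqref{eqn:M_xi} is promoted to a sum over $[0,T]$ and $\hat M^\xi(T) = \E[\cP(\xi,\hat\zeta)|\cF^1_T]$ by the pathwise formula recalled in \eqref{eq:payoffxizeta}. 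Taking expectations gives $\E[\hat M^\xi(T)] = \E[\cP(\tau,\hat\sigma)]$ where $\tau$ is generated by $\xi$; analogously $\E[\hat N^\zeta(T)] = \E[\cP(\hat\tau,\sigma)]$.

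Next I would apply the (sub/super)martingale system property (in the weak form $\E[\hat M^\xi(T)] \ge \E[\hat M^\xi(0)]$, cf.\ Lemma \ref{lem:M}) to obtain, for every $\xi \in \cAcirc_0(\F^1)$ and every $\zeta \in \cAcirc_0(\F^2)$,
\begin{align*}
\E[\cP(\tau,\hat\sigma)] \ge \E[\cU^1_0], \qquad \E[\cP(\hat\tau,\sigma)] \le \E[\cU^2_0].
\end{align*}
Specialising the first to $\xi = \hat\xi$ and the second to $\zeta = \hat\zeta$ yields
\begin{align*}
\E[\cU^1_0] \le \E[\cP(\hat\tau,\hat\sigma)] \le \E[\cU^2_0],
\end{align*}
and assumption (iv) forces equalities throughout, giving the common value $\E[\cP(\hat\tau,\hat\sigma)] = \E[\cU^1_0] = \E[\cU^2_0]$. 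Substituting back, the two displayed inequalities become precisely the saddle-point condition
\begin{align*}
\E[\cP(\hat\tau,\sigma)] \le \E[\cP(\hat\tau,\hat\sigma)] \le \E[\cP(\tau,\hat\sigma)]
\end{align*}
for all $(\tau,\sigma) \in \cT^R_0(\F^1) \times \cT^R_0(\F^2)$, from which $\overline V = \underline V = \E[\cU^1_0]$ follows at once via the definitions in \eqref{eq:V}.

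The routine check — and the only place real care is needed — is the boundary computation showing $\hat M^\xi(T) = \E[\cP(\xi,\hat\zeta)|\cF^1_T]$, because the families \eqref{eqn:M_xi}--\eqref{eqn:N_zeta} only sum jumps on the half-open interval $[0,T)$ while \eqref{eq:payoffxizeta} uses the closed interval $[0,T]$; assumption (iii) is tailored to absorb this missing $t = T$ jump into the ``continuation value'' term $(1-\xi_{T-})\E[1-\hat\zeta_{T-}|\cF^1_T]\cU^1_T$. Everything else is bookkeeping, and the universality of the sub/super\-martingale assumptions over \emph{all} $\xi$ and \emph{all} $\zeta$ is what lets the same argument deliver both sides of the saddle inequality without any further regularity on $(\cU^1,\cU^2)$.
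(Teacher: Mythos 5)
Your proposal is correct and follows essentially the same route as the paper's proof: evaluate the sub/supermartingale systems at $0$ and $T$, use assumption (iii) to absorb the terminal jump $h_T\Delta\xi_T\Delta\hat\zeta_T$ into the continuation term, deduce $\E[\cP(\tau,\hat\sigma)]\ge\E[\cU^1_0]$ and $\E[\cP(\hat\tau,\sigma)]\le\E[\cU^2_0]$, and close the loop with (iv). The only cosmetic difference is that you invoke the unconditional form $\E[\hat M^\xi(T)]\ge\E[\hat M^\xi(0)]$ while the paper works with $\E[\hat M^\xi(T)\,|\,\cF^1_0]\ge\hat M^\xi(0)$ before taking expectations, which changes nothing.
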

\begin{proof}
Let $\xi \in \cAcirc_0(\F^1)$. By the submartingale property of $\hat{\bf M}^\xi$, we have $\E[\hat M^\xi(T) | \cF^1_0] \ge \hat M^\xi(0)$. Expanding the left- and right-hand sides from the definition \eqref{eqn:M_xi}, we have
\begin{equation}\label{eqn:subm_M}
\E \Big[\! \int_{[0,T)}\!\! f_t (1\!-\!\hat \zeta_t)  \ud \xi_t\! +\! \int_{[0,T)}\!\!g_t (1\!-\!\xi_t) \ud \hat \zeta_t\! +\!\! \sum_{t \in [0,T)}\!\!h_t \Delta \xi_t \Delta \hat\zeta_t \! +\!(1\!-\!\xi_{T-}) \E[1\!-\!\hat \zeta_{T-}|\cF^1_T] \cU^1_T \Big| \cF^1_0 \Big] \ge \cU^1_0.
\end{equation}
We note that $1- \xi_{T-} = \Delta \xi_T$ and $1-\zeta_{T-} = \Delta \zeta_T$, and use assumption (iii) of the theorem to simplify the last term on the left:
\[
\E \big[(1\!-\!\xi_{T-}) \E[1\!-\!\hat \zeta_{T-}|\cF^1_T]\, \cU^1_T \big| \cF^1_0 \big]
=
\E \big[\Delta \xi_T\, \E[\Delta  \hat \zeta_T | \cF^1_T]\, \cU^1_T \big| \cF^1_0 \big]
=
\E \big[\Delta \xi_T \Delta \hat \zeta_T h_T \big| \cF^1_0 \big].
\]
Inserting this into \eqref{eqn:subm_M} yields
\begin{equation}\label{eqn:U1}
\E[\cP(\xi, \hat\zeta) | \cF^1_0] \ge \cU^1_0, \qquad \text{for any $\xi \in \cAcirc_0(\F^1)$.}
\end{equation}
In an analogous way, we show
\begin{equation}\label{eqn:U2}
\E[\cP(\hat\xi, \zeta) | \cF^2_0] \le \cU^2_0, \qquad \text{for any $\zeta \in \cAcirc_0(\F^2)$.}
\end{equation}
Taking expectation on both sides of \eqref{eqn:U1}-\eqref{eqn:U2} with $(\xi, \zeta) =(\hat \xi, \hat \zeta)$, and applying condition (iv) of the theorem yields
\[
\E[\cP(\hat\xi, \hat\zeta)] \ge \E[\cU^1_0] = \E[\cU^2_0] \ge \E[\cP(\hat\xi, \hat \zeta)],
\]
so that $\E[\cP(\hat \xi, \hat \zeta)] = \E[\cU^1_0] = \E[\cU^2_0]$. For any $(\xi,\zeta)\in \cAcirc_0(\F^1)\times\cAcirc_0(\F^2)$, the inequalities \eqref{eqn:U1}-\eqref{eqn:U2} give
\[
\E[\cP(\hat \xi, \zeta)] \le \E[\cP(\hat \xi, \hat \zeta)] \le \E[\cP(\xi, \hat \zeta)],
\]
which demonstrates that the pair $(\hat \xi, \hat \zeta)$ generates a saddle point of the game. Consequently, the game has a value which equals $\E[\cP(\hat \xi, \hat \zeta)]$.
\end{proof}

As a corollary of the above proof we obtain the following result.

\begin{corollary}\label{lem:mart_syst}
Under the assumptions of Theorem \ref{thm:suff_saddle}, $\hat{\bf M}^{\hat \xi}$ is a $\cT_0(\F^1)$-martingale system and $\hat{\bf N}^{\hat\zeta}$ is a $\cT_0(\F^2)$-martingale system.
\end{corollary}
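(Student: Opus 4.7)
The plan is to exploit the fact that condition (i) of Theorem \ref{thm:suff_saddle}, specialised to $\xi=\hat\xi$, already gives a submartingale system. To upgrade it to a martingale system, by (the analogue of) Lemma \ref{lem:M}, it suffices to verify that $\theta\mapsto\E[\hat M^{\hat\xi}(\theta)]$ is constant on $\cT_0(\F^1)$. Since a submartingale system has nondecreasing expectation along the stopping time partial order, it is enough to show that $\E[\hat M^{\hat\xi}(0)]=\E[\hat M^{\hat\xi}(T)]$, and then the sandwich $\E[\hat M^{\hat\xi}(0)]\le \E[\hat M^{\hat\xi}(\theta)]\le \E[\hat M^{\hat\xi}(T)]$ forces equality for every $\theta\in\cT_0(\F^1)$.

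The two endpoint evaluations are essentially done inside the proof of Theorem \ref{thm:suff_saddle}. First, plugging $\theta=0$ into the definition \eqref{eqn:M_xi} with $\xi=\hat\xi$, the integrals and discrete sum over $[0,0)$ vanish and $(1-\hat\xi_{0-})\E[1-\hat\zeta_{0-}|\cF^1_0]=1$, so $\hat M^{\hat\xi}(0)=\cU^1_0$. Second, plugging $\theta=T$ and using $1-\hat\xi_{T-}=\Delta\hat\xi_T$, $1-\hat\zeta_{T-}=\Delta\hat\zeta_T$, together with condition (iii), the boundary term in \eqref{eqn:M_xi} simplifies as
\[
\Delta\hat\xi_T\,\E[\Delta\hat\zeta_T|\cF^1_T]\,\cU^1_T=\Delta\hat\xi_T\,\E[\Delta\hat\zeta_T h_T|\cF^1_T]=\E[\Delta\hat\xi_T\Delta\hat\zeta_T h_T|\cF^1_T],
\]
which completes the missing $t=T$ summand in \eqref{eq:payoffxizeta}; hence $\E[\hat M^{\hat\xi}(T)]=\E[\cP(\hat\xi,\hat\zeta)]$. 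The proof of Theorem \ref{thm:suff_saddle} established $\E[\cP(\hat\xi,\hat\zeta)]=\E[\cU^1_0]$, so $\E[\hat M^{\hat\xi}(0)]=\E[\hat M^{\hat\xi}(T)]$, and the martingale property of $\hat{\bf M}^{\hat\xi}$ follows.

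The argument for $\hat{\bf N}^{\hat\zeta}$ is entirely symmetric: assumption (ii) of Theorem \ref{thm:suff_saddle} at $\zeta=\hat\zeta$ yields a supermartingale system, condition (iii) applied to the second identity together with the $t=T$ jump yields $\E[\hat N^{\hat\zeta}(T)]=\E[\cP(\hat\xi,\hat\zeta)]$, and $\hat N^{\hat\zeta}(0)=\cU^2_0$ combines with $\E[\cP(\hat\xi,\hat\zeta)]=\E[\cU^2_0]$ to give constancy of $\theta\mapsto\E[\hat N^{\hat\zeta}(\theta)]$. There is no real obstacle here; the corollary is a direct bookkeeping consequence of the verification argument, essentially saying that the submartingale inequalities used to sandwich the value must be tight when evaluated at the candidate saddle point.
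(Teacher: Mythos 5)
Your proof is correct and follows essentially the same route as the paper: both arguments observe that the submartingale inequality for $\hat{\bf M}^{\hat\xi}$ is tight at the endpoints $\theta=0$ and $\theta=T$ (using condition (iii) to identify $\hat M^{\hat\xi}(T)$ with $\cP(\hat\xi,\hat\zeta)$ and the equality $\E[\cP(\hat\xi,\hat\zeta)]=\E[\cU^1_0]$ from the proof of Theorem \ref{thm:suff_saddle}), and then conclude by sandwiching. The paper phrases the endpoint equality conditionally on $\cF^1_0$ while you work with unconditional expectations and invoke Lemma \ref{lem:M}, but this is only a cosmetic difference.
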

\begin{proof}
It transpires from the proof of Theorem \ref{thm:suff_saddle} that \eqref{eqn:U1} holds with equality for $\xi = \hat \xi$. This consequently means that \eqref{eqn:subm_M} holds with equality which translates into $\E[\hat M^{\hat\xi}(T)|\cF^1_0] = \hat M^{\hat \xi}(0)$. When we recall that $\hat {\bf M}^{\hat \xi}$ is a $\cT_0(\F^1)$-submartingale system (by assumption (ii) of Theorm \ref{thm:suff_saddle}), we immediately obtain that it is a $\cT_0(\F^1)$-martingale system. A similar argument applies to $\hat {\bf N}^{\hat \zeta}$.
\end{proof}

\begin{remark}[\bf Necessity of sufficient conditions]
Notice first that nowhere in the proof of Theorem \ref{thm:suff_saddle} we require that $(\cU^1_t)_{t\in[0,T]}$ and $(\cU^2_t)_{t\in[0,T]}$ be stochastic processes. They can be replaced by $\cT_0(\F^1)$ and $\cT_0(\F^2)$ systems, and, indeed, by the families $V^{*,1}$ and $V^{*,2}$ in \eqref{eq:fV}. Then conditions (i) and (ii) are satisfied by the families ${\bf M}^\xi$ and ${\bf N}^\zeta$ introduced in \eqref{eq:Mxi} and \eqref{eq:Nzi} (cf.\ Proposition \ref{prop:subsupmg}). Condition (iii) is satisfied by Theorem \ref{thm:value}, see \eqref{eq:hatv12}.  Condition (iv) follows from Proposition \ref{prop:link} with $\lambda=0$ therein. This shows the necessity of the sufficient conditions in Theorem \ref{thm:suff_saddle}.
\end{remark}

We now study in more detail the analogy between processes $(\cU^1_t)_{t\in[0,T]}$ and $(\cU^2_t)_{t\in[0,T]}$ from Theorem \ref{thm:suff_saddle} and processes $(V^{*,1}_t)_{t\in[0,T]}$ and $(V^{*,2}_t)_{t\in[0,T]}$ from Section \ref{sec:neccond}. Notations and arguments will follow parallel tracks but there are sufficient differences to merit complete statement of results and their proofs. 

From now on, the pair $(\hat \xi,\hat \zeta)\in\cA_0^\circ(\F^1)\times \cA_0^\circ(\F^2)$ and processes $(\cU^1_t)_{t\in[0,T]}$ and $(\cU^2_t)_{t\in[0,T]}$ are those introduced in Theorem \ref{thm:suff_saddle}.
Let us start by defining sets
\begin{align*}
\hat\Gamma^1_\theta &= \{ \omega \in \Omega: (1-\hat \xi_{\theta-}(\omega))\, \E[1-\hat\zeta_{\theta-} | \cF^1_{\theta}] (\omega) > 0 \}, \qquad \theta \in \cT_0(\F^1),\\
\hat\Gamma^2_\gamma &= \{ \omega \in \Omega: (1-\hat\zeta_{\gamma-}(\omega))\, \E[1-\hat \xi_{\gamma-}|\cF^2_{\gamma}](\omega) > 0 \}, \qquad \gamma \in \cT_0(\F^2).
\end{align*}
In analogy to \eqref{eq:Pi}, for $\theta \in\cT_0(\F^1)$ and $\gamma\in\cT_0(\F^2)$ we define
\begin{equation}\label{eq:hat_Pi}
\begin{aligned}
\hat\Pi^{1}_\theta&\coloneqq\frac{1-\hat\zeta_{\theta-}}{\E[1-\hat\zeta_{\theta-}|\cF^1_\theta]},\qquad
\hat\Pi^{2}_\gamma&\coloneqq\frac{1-\hat\xi_{\gamma-}}{\E[1-\hat\xi_{\gamma-}|\cF^2_\gamma]}, 
\end{aligned}
\end{equation}
with the ratio equal to $1$ when the denominator is zero (see the explanations after \eqref{eq:Pi} concerning this convention). The next lemma is an analogue of Proposition \ref{thm:aggr2} for $(\hat \xi,\hat\zeta)$, $\cU^1$ and $\cU^2$.

\begin{lemma}\label{lem:U_value}
Under the assumptions of Theorem \ref{thm:suff_saddle}, we have
\begin{equation}\label{eqn:U_optim}
\begin{aligned}
&\ind_{\hat\Gam^1_\theta} \cU^{1}_\theta
=\ind_{\hat\Gam^1_\theta} \essinf_{\xi\in\cAcirc_\theta(\F^1)}J^{\hat\Pi^{1}_\theta}(\xi,\hat\zeta^{\theta}|\cF^1_\theta)
=\ind_{\hat\Gam^1_\theta} J^{\hat\Pi^{1}_\theta}(\hat\xi^{\theta},\hat\zeta^{\theta}|\cF^1_\theta),\quad\theta\in\cT_0(\F^1),\\
&\ind_{\hat\Gam^2_\gamma} \cU^2_\gamma
=\ind_{\hat\Gam^2_\gamma} \esssup_{\zeta\in\cAcirc_\gamma(\F^2)}J^{\hat\Pi^{2}_\gamma}(\hat\xi^{\gamma},\zeta|\cF^2_\gamma)
=\ind_{\hat\Gam^2_\gamma} J^{\hat\Pi^{2}_\gamma}(\hat\xi^{\gamma},\hat\zeta^{\gamma}|\cF^2_\gamma),\quad\gamma\in\cT_0(\F^2),
\end{aligned}
\end{equation}
where $\hat\xi^\gamma$ and $\hat \zeta^\theta$ are truncated controls as in Definition \ref{def:trunc}.
\end{lemma}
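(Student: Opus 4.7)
The strategy mirrors the one used for Proposition \ref{thm:aggr2}, exploiting the duality between the submartingale hypothesis (i) of Theorem \ref{thm:suff_saddle}, which controls arbitrary perturbations, and the martingale system property of $\hat{\bf M}^{\hat\xi}$ established in Corollary \ref{lem:mart_syst}, which pins down the equality. We treat the Player~1 claim; the Player~2 claim follows by a symmetric argument using $\hat{\bf N}^\zeta$.

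Fix $\theta\in\cT_0(\F^1)$ and $\xi\in\cAcirc_\theta(\F^1)$, and construct the concatenation
\[
\tilde\xi_t\coloneqq \hat\xi_t\ind_{[0,\theta)}(t)+\bigl[\hat\xi_{\theta-}+(1-\hat\xi_{\theta-})\xi_t\bigr]\ind_{[\theta,T]}(t),
\]
which belongs to $\cAcirc_0(\F^1)$ and satisfies $\tilde\xi_{\theta-}=\hat\xi_{\theta-}$ together with $\ud\tilde\xi_t=(1-\hat\xi_{\theta-})\ud\xi_t$ and $1-\tilde\xi_t=(1-\hat\xi_{\theta-})(1-\xi_t)$ on $[\theta,T]$. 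The first key step is to observe that assumption (iii) of Theorem \ref{thm:suff_saddle} yields $\hat M^{\tilde\xi}(T)=\E[\cP(\tilde\xi,\hat\zeta)\,|\,\cF^1_T]$: indeed, the prefactor $(1-\tilde\xi_{T-})\E[1-\hat\zeta_{T-}|\cF^1_T]\cU^1_T=\Delta\tilde\xi_T\E[\Delta\hat\zeta_T|\cF^1_T]\cU^1_T$ collapses, by (iii), to $\E[\Delta\tilde\xi_T\Delta\hat\zeta_T h_T|\cF^1_T]$, which supplies exactly the $t=T$ summand missing from the integrals on $[0,T)$.

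Next, applying the above identity on $[\theta,T]$ to $\tilde\xi$ and performing the changes of variables $\ud\tilde\xi=(1-\hat\xi_{\theta-})\ud\xi$ and $\ud\hat\zeta=(1-\hat\zeta_{\theta-})\ud\hat\zeta^\theta$, one obtains the pathwise decomposition
\[
\E[\cP(\tilde\xi,\hat\zeta)\,|\,\cF^1_\theta]
=\hat A_\theta+(1-\hat\xi_{\theta-})\,\E[1-\hat\zeta_{\theta-}|\cF^1_\theta]\,J^{\hat\Pi^1_\theta}(\xi,\hat\zeta^\theta\,|\,\cF^1_\theta),
\]
where $\hat A_\theta$ denotes the conditional expectation of the integrals of $f,g,h$ with respect to $(\hat\xi,\hat\zeta)$ on $[0,\theta)$ only, and therefore coincides with the $[0,\theta)$-part of $\hat M^{\tilde\xi}(\theta)=\hat M^{\hat\xi}(\theta)$ (recall $\tilde\xi=\hat\xi$ on $[0,\theta)$ and $\tilde\xi_{\theta-}=\hat\xi_{\theta-}$). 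Combining this with the submartingale inequality $\E[\hat M^{\tilde\xi}(T)\,|\,\cF^1_\theta]\ge \hat M^{\tilde\xi}(\theta)$ from assumption (i) and cancelling $\hat A_\theta$ on both sides yields
\[
(1-\hat\xi_{\theta-})\,\E[1-\hat\zeta_{\theta-}|\cF^1_\theta]\,\bigl(J^{\hat\Pi^1_\theta}(\xi,\hat\zeta^\theta\,|\,\cF^1_\theta)-\cU^1_\theta\bigr)\ge 0.
\]
Restricting to $\hat\Gamma^1_\theta$ and taking essential infimum over $\xi\in\cAcirc_\theta(\F^1)$ gives $\ind_{\hat\Gamma^1_\theta}\essinf_\xi J^{\hat\Pi^1_\theta}(\xi,\hat\zeta^\theta|\cF^1_\theta)\ge \ind_{\hat\Gamma^1_\theta}\cU^1_\theta$.

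For the reverse inequality, take $\xi=\hat\xi^\theta$, so that $\tilde\xi=\hat\xi$, and replace the submartingale inequality by the martingale identity $\E[\hat M^{\hat\xi}(T)\,|\,\cF^1_\theta]=\hat M^{\hat\xi}(\theta)$ granted by Corollary \ref{lem:mart_syst}. The same cancellation of $\hat A_\theta$ now produces the equality $\ind_{\hat\Gamma^1_\theta}J^{\hat\Pi^1_\theta}(\hat\xi^\theta,\hat\zeta^\theta\,|\,\cF^1_\theta)=\ind_{\hat\Gamma^1_\theta}\cU^1_\theta$, which simultaneously majorises the essinf and completes the chain of equalities in \eqref{eqn:U_optim}. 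The analogous statement for $\cU^2_\gamma$ is obtained by swapping the roles of the two players, replacing $\hat{\bf M}^\xi$ by $\hat{\bf N}^\zeta$, using the supermartingale assumption (ii) together with the second identity in (iii), and reversing inequalities. The main technical obstacle is the careful bookkeeping in the change-of-variables identity that relates $\cP(\tilde\xi,\hat\zeta)$ on $[\theta,T]$ to $(1-\hat\xi_{\theta-})(1-\hat\zeta_{\theta-})\cP(\xi,\hat\zeta^\theta)$, together with the verification that the factor $\E[1-\hat\zeta_{\theta-}|\cF^1_\theta]$ arising from conditional expectation assembles correctly into the change of measure $\hat\Pi^1_\theta$.
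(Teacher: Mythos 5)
Your proposal is correct and follows essentially the same route as the paper: you use the same concatenated control, condition (iii) to collapse the terminal term so that $\hat M^{\tilde\xi}(T)=\E[\cP(\tilde\xi,\hat\zeta)|\cF^1_T]$, the submartingale hypothesis for the inequality $J^{\hat\Pi^1_\theta}(\xi,\hat\zeta^\theta|\cF^1_\theta)\ge\cU^1_\theta$ on $\hat\Gamma^1_\theta$, and the martingale system property from Corollary \ref{lem:mart_syst} for the equality at $\xi=\hat\xi^\theta$. The only difference is the order in which the equality and the inequality are established, which is immaterial.
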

\begin{proof}
We prove the first sequence of equalities. The second one is analogous.

Fix $\theta \in \cT_0(\F^1)$. By Corollary \ref{lem:mart_syst}, $\hat{\bf M}^{\hat \xi}$ is a $\cT_0(\F^1)$-martingale system, so 
\begin{equation}\label{eqn:u1}
\hat M^{\hat\xi}(\theta) = \E[\hat M^{\hat \xi}(T) | \cF^1_\theta]. 
\end{equation}
Arguing as in the proof of Theorem \ref{thm:suff_saddle}, we have $\E[\hat M^{\hat \xi}(T) | \cF^1_\theta]= \E[\cP(\hat\xi, \hat\zeta)|\cF^1_\theta]$. Using the definition of $\hat M^{\hat\xi}(\theta)$ and cancelling identical terms on both sides of \eqref{eqn:u1}, we obtain
\begin{equation}\label{eqn:u2}
\begin{aligned}
&(1 - \hat\xi_{\theta-}) \E[1-\hat\zeta_{\theta-}|\cF^1_\theta]\, \cU^1_\theta \\
&= 
\E \Big[\! \int_{[\theta,T)}\!\! f_t (1\!-\!\hat \zeta_t)  \ud \hat\xi_t\! +\! \int_{[\theta,T)}\!\!g_t (1\!-\!\hat\xi_t) \ud \hat \zeta_t\! +\!\! \sum_{t \in [\theta,T]}\!\!h_t \Delta \hat\xi_t \Delta \hat\zeta_t \Big| \cF^1_\theta\Big]\\
&=
\E \Big[(1 - \hat\xi_{\theta-}) (1 - \hat\zeta_{\theta-}) \Big(\! \int_{[\theta,T)}\!\! f_t (1\!-\!\hat \zeta^\theta_t)  \ud \hat\xi^\theta_t\! +\! \int_{[\theta,T)}\!\!g_t (1\!-\!\hat\xi^\theta_t) \ud \hat \zeta^\theta_t\! +\!\! \sum_{t \in [\theta,T]}\!\!h_t \Delta \hat\xi^\theta_t \Delta \hat\zeta^\theta_t \Big) \Big| \cF^1_\theta\Big]\\
&=
(1 - \hat\xi_{\theta-}) \E[1 - \hat\zeta_{\theta-} | \cF^1_\theta]\E \Big[ \hat\Pi^1_\theta \Big(\! \int_{[\theta,T)}\!\! f_t (1\!-\!\hat \zeta^\theta_t)  \ud \hat\xi^\theta_t\! +\! \int_{[\theta,T)}\!\!g_t (1\!-\!\hat\xi^\theta_t) \ud \hat \zeta^\theta_t\! +\!\! \sum_{t \in [\theta,T]}\!\!h_t \Delta \hat\xi^\theta_t \Delta \hat\zeta^\theta_t \Big) \Big| \cF^1_\theta\Big]\\
&=
(1 - \hat\xi_{\theta-}) \E[1 - \hat\zeta_{\theta-} | \cF^1_\theta]\, J^{\hat \Pi^1_\theta}(\hat\xi^\theta, \hat \zeta^\theta|\cF^1_\theta).
\end{aligned}
\end{equation}
On the set $\hat\Gam^1_\theta$, the multiplicative factor on both sides of the inequality is positive, which implies
\begin{align}\label{eq:opthat}
\ind_{\hat\Gam^1_\theta} \cU^{1}_\theta
=\ind_{\hat\Gam^1_\theta} J^{\hat\Pi^{1}_\theta}(\hat\xi^{\theta},\hat\zeta^{\theta}|\cF^1_\theta).
\end{align}

To prove the remaining assertion, take any $\xi \in \cAcirc_\theta(\F^1)$ and define (cf. \eqref{eq:xitilde})
\[
\bar \xi_t = \hat \xi_t \ind_{[0, \theta)}(t) + (\hat \xi_{\theta-} + (1-\hat\xi_{\theta-})\xi_t) \ind_{[\theta, T]}(t).
\]
By the submartingale property of the family $\hat {\bf M}^{\bar\xi}$, we have $\hat M^{\bar\xi}(\theta) \le \E[\hat M^{\bar\xi}(T) | \cF^1_\theta]$. Arguing as above, this inequality implies
\[
(1 - \bar\xi_{\theta-}) \E[1-\hat\zeta_{\theta-}|\cF^1_\theta]\, \cU^1_\theta
\le
(1 - \bar\xi_{\theta-}) \E[1 - \hat\zeta_{\theta-} | \cF^1_\theta]\, J^{\hat \Pi^1_\theta}(\bar\xi^\theta, \hat \zeta^\theta|\cF^1_\theta).
\]
By the definition of $\bar\xi$, we have $\bar\xi_{\theta-} = \hat \xi_{\theta-}$ and $\bar\xi^\theta_t = \xi_t$ for $t \in [\theta, T]$, so we can conclude that
\[
\ind_{\hat\Gam^1_\theta} \cU^{1}_\theta
\le \ind_{\hat\Gam^1_\theta} J^{\hat\Pi^{1}_\theta}(\xi,\hat\zeta^{\theta}|\cF^1_\theta).
\]
By the arbitrariness of $\xi\in \cAcirc_\theta(\F^1)$, 
\[
\ind_{\hat\Gam^1_\theta} \cU^{1}_\theta
\le \essinf_{\xi \in \cAcirc_\theta(\F^1)} \ind_{\hat\Gam^1_\theta} J^{\hat\Pi^{1}_\theta}(\xi,\hat\zeta^{\theta}|\cF^1_\theta)
=
\ind_{\hat\Gam^1_\theta} \essinf_{\xi \in \cAcirc_\theta(\F^1)}  J^{\hat\Pi^{1}_\theta}(\xi,\hat\zeta^{\theta}|\cF^1_\theta).
\]
Combining the inequality above with \eqref{eq:opthat} completes the proof.
\end{proof}
Next we obtain an analogue of Proposition \ref{prop:link}. Recall the notations $\cF^{1,2}_t=\cF^1_t\cap\cF^2_t$ and $\F^{1,2} = \F^1 \wedge \F^2$.
\begin{corollary}\label{cor:u12}
Under the assumptions of Theorem \ref{thm:suff_saddle}, for $\lambda \in \cT_0(\F^{1,2})$ we have
\begin{align*}
&\E\big[(1-\hat \xi_{\lambda-})(1-\hat\zeta_{\lambda-})\, \cU^1_{\lambda} \big| \cF^{1,2}_\lambda\big]
=
\E\big[(1-\hat \xi_{\lambda-})(1-\hat\zeta_{\lambda-})\, \cU^2_{\lambda} \big| \cF^{1,2}_\lambda \big]\\
&=\E \Big[\! \int_{[\lambda,T)}\!\! f_t (1\!-\!\hat \zeta_t)  \ud \hat\xi_t\! +\! \int_{[\lambda,T)}\!\!g_t (1\!-\!\hat\xi_t) \ud \hat \zeta_t\! +\!\! \sum_{t \in [\lambda,T]}\!\!h_t \Delta \hat\xi_t \Delta \hat\zeta_t \Big| \cF^{1,2}_\lambda\Big]\eqqcolon \bar U(\lambda).
\end{align*}
\end{corollary}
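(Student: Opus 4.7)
The plan is to exploit equation \eqref{eqn:u2} derived in the proof of Lemma \ref{lem:U_value}, which provides an \emph{exact} integral representation of $(1-\hat\xi_{\theta-})\E[1-\hat\zeta_{\theta-}|\cF^1_\theta]\,\cU^1_\theta$, valid for any $\theta\in\cT_0(\F^1)$ (not merely on $\hat\Gam^1_\theta$), and the analogous identity for $\cU^2$. Conditioning on $\cF^{1,2}_\lambda$ and using the tower property collapses the nested conditional expectations, yielding the claim.

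First I would observe that since $\F^{1,2}\subseteq \F^1\cap\F^2$, any $\lambda\in\cT_0(\F^{1,2})$ belongs simultaneously to $\cT_0(\F^1)$ and $\cT_0(\F^2)$. Applying \eqref{eqn:u2} with $\theta=\lambda$ gives
\[
(1-\hat\xi_{\lambda-})\,\E[1-\hat\zeta_{\lambda-}|\cF^1_\lambda]\,\cU^1_\lambda
= \E\Big[\!\int_{[\lambda,T)}\!\!f_t(1-\hat\zeta_t)\ud\hat\xi_t + \!\int_{[\lambda,T)}\!\!g_t(1-\hat\xi_t)\ud\hat\zeta_t + \!\!\!\sum_{t\in[\lambda,T]}\!\!h_t\Delta\hat\xi_t\Delta\hat\zeta_t\Big|\cF^1_\lambda\Big],
\]
where I use that condition (iii) of Theorem \ref{thm:suff_saddle} converts the terminal contribution (coming from $\Delta\hat\xi_T\E[\Delta\hat\zeta_T|\cF^1_T]\cU^1_T$) into $\E[\Delta\hat\xi_T\Delta\hat\zeta_T h_T|\cF^1_T]$, allowing the sum in the integral to be taken over the closed interval $[\lambda,T]$. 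Running the same argument on the $\cT_0(\F^2)$-martingale system $\hat{\bf N}^{\hat\zeta}$ (Corollary \ref{lem:mart_syst}) with $\gamma=\lambda$ yields the symmetric identity
\[
\E[1-\hat\xi_{\lambda-}|\cF^2_\lambda](1-\hat\zeta_{\lambda-})\,\cU^2_\lambda
= \E\Big[\!\int_{[\lambda,T)}\!\!f_t(1-\hat\zeta_t)\ud\hat\xi_t + \!\int_{[\lambda,T)}\!\!g_t(1-\hat\xi_t)\ud\hat\zeta_t + \!\!\!\sum_{t\in[\lambda,T]}\!\!h_t\Delta\hat\xi_t\Delta\hat\zeta_t\Big|\cF^2_\lambda\Big].
\]

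Next I would condition both identities on $\cF^{1,2}_\lambda$. For the first one, note that $(1-\hat\xi_{\lambda-})\cU^1_\lambda$ is $\cF^1_\lambda$-measurable, so pulling it inside the inner conditional expectation and then applying the tower property via $\cF^{1,2}_\lambda\subseteq\cF^1_\lambda$ gives
\[
\E\big[(1-\hat\xi_{\lambda-})\E[1-\hat\zeta_{\lambda-}|\cF^1_\lambda]\,\cU^1_\lambda \,\big|\, \cF^{1,2}_\lambda\big]
= \E\big[(1-\hat\xi_{\lambda-})(1-\hat\zeta_{\lambda-})\,\cU^1_\lambda \,\big|\, \cF^{1,2}_\lambda\big].
\]
The same manipulation on the right-hand side, using $\cF^{1,2}_\lambda\subseteq\cF^1_\lambda$, produces $\bar U(\lambda)$. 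The completely symmetric calculation applied to the $\cU^2$-identity (now using $\cF^{1,2}_\lambda\subseteq\cF^2_\lambda$ and the $\cF^2_\lambda$-measurability of $(1-\hat\zeta_{\lambda-})\cU^2_\lambda$) produces $\E[(1-\hat\xi_{\lambda-})(1-\hat\zeta_{\lambda-})\cU^2_\lambda|\cF^{1,2}_\lambda]=\bar U(\lambda)$, and the two outer expressions are thereby identified with each other and with $\bar U(\lambda)$.

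There is essentially no obstacle here: the martingale system structure established in Corollary \ref{lem:mart_syst} already encodes the content of \eqref{eq:link}, and the only care needed is keeping track of which filtration each object is measurable with respect to in order to apply the tower property cleanly. The treatment of the jump term at $T$ (where the truncated controls jump to $1$) is handled precisely by hypothesis (iii) of Theorem \ref{thm:suff_saddle}, which is what allows rewriting $\E[\cP(\hat\xi,\hat\zeta)|\cF^i_\lambda]$ in the displayed form with the sum running over $[\lambda,T]$.
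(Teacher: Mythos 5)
Your proof is correct and follows essentially the same route as the paper's: both start from the first equality in \eqref{eqn:u2} applied at $\theta=\lambda$ (together with its symmetric analogue for $\cU^2$ via the martingale system $\hat{\bf N}^{\hat\zeta}$), then condition on $\cF^{1,2}_\lambda$ and use the tower property with the $\cF^1_\lambda$-measurability of $(1-\hat\xi_{\lambda-})\cU^1_\lambda$ and the $\cF^2_\lambda$-measurability of $(1-\hat\zeta_{\lambda-})\cU^2_\lambda$. No gaps.
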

\begin{proof}
From the first equality in \eqref{eqn:u2} with $\theta = \lambda$ and an analogous derivation for $\cU^2$ we have
\begin{align*}
 &(1 - \hat\xi_{\lambda-}) \E[1 - \hat\zeta_{\lambda-} | \cF^1_\lambda]\, \cU^1_\lambda 
 = \E \Big[\! \int_{[\lambda,T)}\!\! f_t (1\!-\!\hat \zeta_t)  \ud \hat\xi_t\! +\! \int_{[\lambda,T)}\!\!g_t (1\!-\!\hat\xi_t) \ud \hat \zeta_t\! +\!\! \sum_{t \in [\lambda,T]}\!\!h_t \Delta \hat\xi_t \Delta \hat\zeta_t \Big| \cF^1_\lambda\Big],\\
 &(1 - \hat\zeta_{\lambda-}) \E[1 - \hat\xi_{\lambda-} | \cF^2_\lambda]\, \cU^2_\lambda 
 = \E \Big[\! \int_{[\lambda,T)}\!\! f_t (1\!-\!\hat \zeta_t)  \ud \hat\xi_t\! +\! \int_{[\lambda,T)}\!\!g_t (1\!-\!\hat\xi_t) \ud \hat \zeta_t\! +\!\! \sum_{t \in [\lambda,T]}\!\!h_t \Delta \hat\xi_t \Delta \hat\zeta_t \Big| \cF^2_\lambda\Big].
\end{align*}
As the expressions under the conditional expectations on the right-hand sides are identical, the right-hand sides are identical once conditioned on $\cF^{1,2}_\lambda$ by the tower property. Hence,
\[
\E \big[ (1 - \hat\xi_{\lambda-}) \E[1 - \hat\zeta_{\lambda-} | \cF^1_\lambda]\, \cU^1_\lambda \big| \cF^{1,2}_\lambda \big]
=
\E \big[ (1 - \hat\zeta_{\lambda-}) \E[1 - \hat\xi_{\lambda-} | \cF^2_\lambda]\, \cU^2_\lambda \big| \cF^{1,2}_\lambda \big].
\]
To conclude by the tower property, it is sufficient to recall that $\hat\xi_{\lambda-}$, $\cU^1_\lambda$ are $\cF^1_\lambda$-measurable and $\hat\zeta_{\lambda-}$, $\cU^2_\lambda$ are $\cF^2_\lambda$-measurable.
\end{proof}

We turn our attention to the \emph{ex-ante} value of the game and its relationship to $\cU^1$ and $\cU^2$. The next proposition is an analogue of Corollary \ref{corr:link}.

\begin{proposition}\label{prop:U_exante}
Under the assumptions of Theorem \ref{thm:suff_saddle}, for any $\lambda \in \cT_0(\F^1 \wedge \F^2)$ let
\[
\Pi_\lambda \coloneqq \frac{(1-\hat\xi_{\lambda-})(1-\hat\zeta_{\lambda-})}{\E[(1-\hat\xi_{\lambda-})(1-\hat\zeta_{\lambda-}) |\cF^{1,2}_\lambda] }\in\cR(\cF^{1,2}_\lambda),
\]
with the convention $0/0=1$ as in \eqref{eq:Pi}. 

Define $U(\lambda) \coloneqq J^{\Pi_\lambda}(\hat \xi^\lambda, \hat \zeta^\lambda|\cF^{1,2}_\lambda)$. Then
\begin{equation}\label{eqn:U_exante1}
\begin{aligned}
U(\lambda)&=\essinf_{\xi \in \cAcirc_{\lambda}(\F^1)} \esssup_{\zeta \in \cAcirc_{\lambda}(\F^2)} J^{\Pi_\lambda}(\xi, \zeta|\cF^{1,2}_\lambda)
=
\esssup_{\zeta \in \cAcirc_{\lambda}(\F^2)} \essinf_{\xi \in \cAcirc_{\lambda}(\F^1)} J^{\Pi_\lambda}(\xi, \zeta|\cF^{1,2}_\lambda),
\end{aligned}
\end{equation}
on the event $\{\E[(1-\hat\xi_{\lambda-})(1-\hat\zeta_{\lambda-}) |\cF^{1,2}_\lambda] > 0\}$ and it holds, for $i=1,2$,
\begin{align}\label{eqn:U_exante2}
\begin{aligned}
\bar U(\lambda) &=\E[(1-\hat\xi_{\lambda-})(1-\hat\zeta_{\lambda-}) |\cF^{1,2}_\lambda]U(\lambda)\\
&= \E[(1-\hat\xi_{\lambda-})(1-\hat\zeta_{\lambda-}) |\cF^{1,2}_\lambda] \E\big[\Pi_\lambda\, \cU^i_\lambda \big|  \cF^{1,2}_\lambda \big]\\
&=\E[(1-\hat\xi_{\lambda-})(1-\hat\zeta_{\lambda-}) |\cF^{1,2}_\lambda]  \E^{\Pi_\lambda}\big[ \cU^i_\lambda \big|  \cF^{1,2}_\lambda\big].
\end{aligned}
\end{align}
\end{proposition}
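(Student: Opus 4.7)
The proof will mirror the structure of Corollary \ref{corr:link}, with Lemma \ref{lem:U_value} and Corollary \ref{cor:u12} taking the roles played there by Proposition \ref{thm:aggr2} and Proposition \ref{prop:link}. I would begin with a purely pathwise computation. The relations $(1-\hat\xi_{\lambda-})(1-\hat\xi^\lambda_t)=1-\hat\xi_t$, $(1-\hat\xi_{\lambda-})\,\mathrm d\hat\xi^\lambda_t=\mathrm d\hat\xi_t$ (and their analogues for $\hat\zeta$), together with $(1-\hat\xi_{\lambda-})\Delta\hat\xi^\lambda_t=\Delta\hat\xi_t$, yield the identity
\[
(1-\hat\xi_{\lambda-})(1-\hat\zeta_{\lambda-})\,\cP(\hat\xi^\lambda,\hat\zeta^\lambda)=\!\int_{[\lambda,T)}\!\!f_t(1-\hat\zeta_t)\,\mathrm d\hat\xi_t+\!\int_{[\lambda,T)}\!\!g_t(1-\hat\xi_t)\,\mathrm d\hat\zeta_t+\!\!\sum_{t\in[\lambda,T]}\!\!h_t\Delta\hat\xi_t\Delta\hat\zeta_t.
\]
Applying $\E[\cdot|\cF^{1,2}_\lambda]$ to both sides and using the definition of $\Pi_\lambda$ together with that of $U(\lambda)=J^{\Pi_\lambda}(\hat\xi^\lambda,\hat\zeta^\lambda|\cF^{1,2}_\lambda)$ gives $\bar U(\lambda)=\E[(1-\hat\xi_{\lambda-})(1-\hat\zeta_{\lambda-})|\cF^{1,2}_\lambda]\,U(\lambda)$, which is the first equality of \eqref{eqn:U_exante2}.

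Next I would derive the third equality. Starting from Corollary \ref{cor:u12}, for $i=1,2$,
\[
\bar U(\lambda)=\E\big[(1-\hat\xi_{\lambda-})(1-\hat\zeta_{\lambda-})\,\cU^i_\lambda\big|\cF^{1,2}_\lambda\big]=\E[(1-\hat\xi_{\lambda-})(1-\hat\zeta_{\lambda-})|\cF^{1,2}_\lambda]\,\E^{\Pi_\lambda}[\cU^i_\lambda|\cF^{1,2}_\lambda],
\]
where the second equality is the elementary change-of-measure identity $\E[\Pi_\lambda X|\cF^{1,2}_\lambda]=\E^{\Pi_\lambda}[X|\cF^{1,2}_\lambda]$, valid since $\Pi_\lambda\in\cR(\cF^{1,2}_\lambda)$. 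Comparing this with the first equality of \eqref{eqn:U_exante2} yields $U(\lambda)=\E^{\Pi_\lambda}[\cU^i_\lambda|\cF^{1,2}_\lambda]$ on the event $\{\E[(1-\hat\xi_{\lambda-})(1-\hat\zeta_{\lambda-})|\cF^{1,2}_\lambda]>0\}$, which is the middle equality.

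For the saddle identity \eqref{eqn:U_exante1}, I take Player 1's perspective. By Lemma \ref{lem:U_value}, on $\hat\Gam^1_\lambda$,
\[
(1-\hat\xi_{\lambda-})\,\E[1-\hat\zeta_{\lambda-}|\cF^1_\lambda]\,\cU^1_\lambda=\essinf_{\xi\in\cAcirc_\lambda(\F^1)}\E\big[(1-\hat\xi_{\lambda-})(1-\hat\zeta_{\lambda-})\cP(\xi,\hat\zeta^\lambda)\big|\cF^1_\lambda\big],
\]
where the prefactor $1-\hat\xi_{\lambda-}\ge 0$ was pulled inside and then into the essinf. Applying $\E[\cdot|\cF^{1,2}_\lambda]$, interchanging essinf with conditional expectation via a downward-directedness argument analogous to Lemma \ref{lem:ud} (which holds because concatenation of $\F^1$-adapted generating processes remains $\F^1$-adapted), and factoring out $\E[(1-\hat\xi_{\lambda-})(1-\hat\zeta_{\lambda-})|\cF^{1,2}_\lambda]$ gives
\[
\bar U(\lambda)=\E[(1-\hat\xi_{\lambda-})(1-\hat\zeta_{\lambda-})|\cF^{1,2}_\lambda]\,\essinf_{\xi\in\cAcirc_\lambda(\F^1)} J^{\Pi_\lambda}(\xi,\hat\zeta^\lambda|\cF^{1,2}_\lambda).
\]
Combined with the first equality of \eqref{eqn:U_exante2}, this yields $U(\lambda)=\essinf_\xi J^{\Pi_\lambda}(\xi,\hat\zeta^\lambda|\cF^{1,2}_\lambda)\le\esssup_\zeta\essinf_\xi J^{\Pi_\lambda}(\xi,\zeta|\cF^{1,2}_\lambda)$ on the prescribed event. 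The symmetric argument from Player 2's side gives $U(\lambda)=\esssup_\zeta J^{\Pi_\lambda}(\hat\xi^\lambda,\zeta|\cF^{1,2}_\lambda)\ge\essinf_\xi\esssup_\zeta J^{\Pi_\lambda}(\xi,\zeta|\cF^{1,2}_\lambda)$, and the trivial inequality $\essinf\esssup\ge\esssup\essinf$ closes the chain.

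The principal technical point is the interchange of essinf with the $\cF^{1,2}_\lambda$-conditional expectation in the paragraph above; this relies on reproving the downward-directedness of $\{J^{\Pi_\lambda}(\xi,\hat\zeta^\lambda|\cF^{1,2}_\lambda):\xi\in\cAcirc_\lambda(\F^1)\}$ by the same concatenation construction as in Appendix \ref{app:lemud}. The only subtlety is that the conditioning $\sigma$-algebra is $\cF^{1,2}_\lambda\subset\cF^1_\lambda$, but this poses no obstacle since $\cAcirc_\lambda(\F^1)$ is closed under pathwise minima of controls restricted to disjoint $\cF^1_\lambda$-measurable events.
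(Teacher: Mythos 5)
Your proof is correct and follows the same overall architecture as the paper's: relate $\bar U(\lambda)$ to $\E[(1-\hat\xi_{\lambda-})(1-\hat\zeta_{\lambda-})|\cF^{1,2}_\lambda]\,J^{\Pi_\lambda}(\cdot,\cdot|\cF^{1,2}_\lambda)$ via the truncated-control identities and Corollary \ref{cor:u12}, show the inner optimisation is attained at $\hat\xi^\lambda$ (resp.\ $\hat\zeta^\lambda$), and close with the trivial minimax inequality. The one place where you diverge is the step you yourself flag as the principal technical point: you pass through Lemma \ref{lem:U_value} at the level of $\cF^1_\lambda$ and then interchange $\essinf$ with $\E[\cdot|\cF^{1,2}_\lambda]$ via a directedness/monotone-convergence argument. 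The paper avoids this interchange entirely: it works termwise, observing that for \emph{every} $\xi\in\cAcirc_\lambda(\F^1)$ one has $\bar U(\lambda)\le \E[(1-\hat\xi_{\lambda-})(1-\hat\zeta_{\lambda-})|\cF^{1,2}_\lambda]\,J^{\Pi_\lambda}(\xi,\hat\zeta^\lambda|\cF^{1,2}_\lambda)$ (obtained by conditioning the $\cF^1_\lambda$-inequality from the proof of Lemma \ref{lem:U_value} on $\cF^{1,2}_\lambda$), with equality at $\xi=\hat\xi^\lambda$; a lower bound that is attained \emph{is} the essential infimum, so no minimising sequence is needed. Your route works — the gluing set $A=\{Y_{\xi^1}\le Y_{\xi^2}\}$ is $\cF^1_\lambda$-measurable, the concatenated control stays in $\cAcirc_\lambda(\F^1)$, and the directedness argument of Appendix \ref{app:lemud} does not use the ordering $f\ge h\ge g$ (which is dropped in Section \ref{sec:suffcond}) — but it buys nothing here and adds machinery. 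One small rigor point to repair: Lemma \ref{lem:U_value} identifies $\cU^1_\lambda$ only on $\hat\Gam^1_\lambda$, which is $\cF^1_\lambda$- but not $\cF^{1,2}_\lambda$-measurable, so you cannot directly apply $\E[\cdot|\cF^{1,2}_\lambda]$ to an identity stated only on that event. You should first note that, once both sides are multiplied by $(1-\hat\xi_{\lambda-})\E[1-\hat\zeta_{\lambda-}|\cF^1_\lambda]$, the identity extends to all of $\Omega$, since on $(\hat\Gam^1_\lambda)^c$ both sides vanish by \eqref{eq:impl}; only then is the conditioning legitimate.
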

\begin{proof}
Fix $\lambda \in \cT_0(\F^{1,2})$. 
Arguing as in the proof of Lemma \ref{lem:U_value} (cf.\ \eqref{eqn:u2}), for any $\xi \in \cAcirc_\lambda(\F^1)$ we have
\begin{align*}
&(1 - \hat\xi_{\lambda-}) \E[1-\hat\zeta_{\lambda-}|\cF^1_\lambda]\, \cU^1_\lambda \\
&\le
\E \Big[(1 - \hat\xi_{\lambda-}) (1 - \hat\zeta_{\lambda-}) \Big(\! \int_{[\lambda,T)}\!\! f_t (1\!-\!\hat \zeta^\lambda_t)  \ud \xi_t\! +\! \int_{[\lambda,T)}\!\!g_t (1\!-\!\xi_t) \ud \hat \zeta^\lambda_t\! +\!\! \sum_{t \in [\lambda,T]}\!\!h_t \Delta \xi_t \Delta \hat\zeta^\lambda_t \Big) \Big| \cF^1_\lambda\Big],
\end{align*}
with the equality for $\xi = \hat \xi^\lambda$. On both sides we take conditional expectations with respect to $\cF^{1,2}_\lambda$ and note that the left-hand side equals $\bar U(\lambda)$. Then, for any $\xi\in\cAcirc_\lambda(\F^1)$,
\begin{align*}
\bar U(\lambda) 
&\le 
\E \Big[(1 - \hat\xi_{\lambda-}) (1 - \hat\zeta_{\lambda-}) \Big(\! \int_{[\lambda,T)}\!\! f_t (1\!-\!\hat \zeta^\lambda_t)  \ud \xi_t\! +\! \int_{[\lambda,T)}\!\!g_t (1\!-\!\xi_t) \ud \hat \zeta^\lambda_t\! +\!\! \sum_{t \in [\lambda,T]}\!\!h_t \Delta \xi_t \Delta \hat\zeta^\lambda_t \Big) \Big| \cF^{1,2}_\lambda\Big]\\
&=
\E\big[(1-\hat \xi_{\lambda-})(1-\hat\zeta_{\lambda-}) \big| \cF^{1,2}_\lambda\big] J^{\Pi_\lambda}(\xi, \hat\zeta^\lambda|\cF^{1,2}_\lambda).
\end{align*}
Since equality holds for $\xi = \hat \xi^\lambda$, we deduce
\begin{align*}
\bar U(\lambda) 
&= \E\big[(1-\hat \xi_{\lambda-})(1-\hat\zeta_{\lambda-}) \big| \cF^{1,2}_\lambda\big] \essinf_{\xi \in \cAcirc_\lambda(\F^1)} J^{\Pi_\lambda}(\xi, \hat\zeta^\lambda|\cF^{1,2}_\lambda)\\
&\le \E\big[(1-\hat \xi_{\lambda-})(1-\hat\zeta_{\lambda-}) \big| \cF^{1,2}_\lambda\big] \esssup_{\zeta \in \cAcirc_\lambda(\F^2)} \essinf_{\xi \in \cAcirc_\lambda(\F^1)} J^{\Pi_\lambda}(\xi, \zeta|\cF^{1,2}_\lambda).
\end{align*}
Analogously, we show that 
\[
\bar U(\lambda)\ge \E\big[(1-\hat \xi_{\lambda-})(1-\hat\zeta_{\lambda-}) \big| \cF^{1,2}_\lambda\big] J^{\Pi_\lambda}(\hat\xi^\lambda, \zeta|\cF^{1,2}_\lambda),
\]
for all $\zeta\in\cAcirc_\lambda(\F^2)$ and it holds with equality for $\zeta=\hat\zeta^\lambda$. It then follows
\begin{align*}
\bar U(\lambda) &= \E\big[(1-\hat \xi_{\lambda-})(1-\hat\zeta_{\lambda-}) \big| \cF^{1,2}_\lambda\big] \esssup_{\zeta \in \cAcirc_\lambda(\F^2)}  J^{\Pi_\lambda}(\hat\xi^\lambda, \zeta|\cF^{1,2}_\lambda)\\
&\ge \E\big[(1-\hat \xi_{\lambda-})(1-\hat\zeta_{\lambda-}) \big| \cF^{1,2}_\lambda\big] \essinf_{\xi\in\cA^\circ_\lambda(\F^1)}\esssup_{\zeta \in \cAcirc_\lambda(\F^2)}  J^{\Pi_\lambda}(\xi, \zeta|\cF^{1,2}_\lambda).
\end{align*}
Combining the two inequalities we prove that the order of $\esssup$ and $\essinf$ can be swapped. Moreover, using that the inequalities above hold with equality for the pair $(\hat \xi^\lambda,\hat \zeta^\lambda)$ we deduce \eqref{eqn:U_exante1} and the first equality in \eqref{eqn:U_exante2}.
The second and third equalities in \eqref{eqn:U_exante2} hold by the definition of $\Pi_\lambda$ and Corollary \ref{cor:u12}:
\begin{align*}
\bar U(\lambda)&=\E\big[(1-\hat \xi_{\lambda-})(1-\hat\zeta_{\lambda-})\, \cU^i_{\lambda} \big| \cF^{1,2}_\lambda\big]\\
&=\E\big[(1-\hat \xi_{\lambda-})(1-\hat\zeta_{\lambda-}) \big| \cF^{1,2}_\lambda\big]\E\big[\Pi_\lambda\, \cU^i_{\lambda}\big| \cF^{1,2}_\lambda \big]=\E\big[(1-\hat \xi_{\lambda-})(1-\hat\zeta_{\lambda-}) \big| \cF^{1,2}_\lambda\big]\E^{\Pi_\lambda}\big[ \cU^i_{\lambda}\big| \cF^{1,2}_\lambda \big],
\end{align*}
upon recalling the convention $0/0=1$ for $\Pi_\lambda$ and noticing that the equalities hold trivially with zero value on the event $\big\{\E\big[(1-\hat \xi_{\lambda-})(1-\hat\zeta_{\lambda-}) \big| \cF^{1,2}_\lambda \big]=0\big\}$ (cf.\ also \eqref{eq:impl}).
\end{proof}

We now develop results in the same vein as Theorem \ref{thm:suff_saddle} but under a different set of conditions.
\begin{theorem}\label{thm:suff_saddle_stop}
Let $\cU^1_0$ be $\cF^1_0$-measurable and $\cU^2_0$ be $\cF^2_0$-measurable random variables and $(\hat\xi,\hat \zeta)\in \cAcirc_0(\F^1)\times\cAcirc_0(\F^2)$. Assume that
\begin{enumerate}[(i)]
 \item for any $\tau \in \cT_0(\F^1)$, we have
\begin{equation}\label{eqn:s1}
\E\Big[f_\tau(1-\hat\zeta_\tau) + \int_{[0, \tau)} g_s \ud \hat\zeta_s + h_\tau \Delta \hat\zeta_\tau \Big| \cF^1_0 \Big] \ge \cU^1_0,
\end{equation}
 \item for any $\sigma \in \cT_0(\F^2)$, we have
\begin{equation}\label{eqn:s2}
\E\Big[\int_{[0, \sigma)} f_s \ud \hat\xi_s + g_\sigma (1-\hat\xi_\sigma) + h_\sigma \Delta \hat\xi_\sigma\Big| \cF^2_0  \Big] \le \cU^2_0,
\end{equation}
  \item $\E[\cU^1_0] = \E[\cU^2_0]$.
\end{enumerate}
Then the game has a value, i.e., $\overline V = \underline V = \E[\cU^1_0] = \E[\cU^2_0]$, and the randomised stopping times $(\hat \tau, \hat \sigma) \in \cT^R_0(\F^1) \times \cT^R_0(\F^2)$ generated by $(\hat \xi, \hat \zeta)$ form a saddle point of the game.
\end{theorem}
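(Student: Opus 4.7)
The plan is to recognise conditions (i)--(ii) as pure-stopping optimality bounds on $\E[\cP(\cdot,\cdot)\mid\cF^1_0]$ and $\E[\cP(\cdot,\cdot)\mid\cF^2_0]$, to lift them to randomised replies by integrating over the randomisation devices, and then to read off the saddle point directly using (iii). First I would recast (i)--(ii) in terms of the payoff functional $\cP$. Taking $\xi_t=\ind_{\{t\ge\tau\}}$ in the conditional analogue of \eqref{eq:payoffxizeta} (equivalently, in \eqref{eq:J} with $\Pi\equiv 1$), and using that $\hat\zeta_T=1$ so that $f_T(1-\hat\zeta_T)=0$, condition (i) is equivalent to
\[
\E[\cP(\tau,\hat\sigma)\mid\cF^1_0]\ \ge\ \cU^1_0\qquad\text{for every }\tau\in\cT_0(\F^1),
\]
where $\hat\sigma\in\cT^R_0(\F^2)$ is generated by $\hat\zeta$. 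In the same way, (ii) reads $\E[\cP(\hat\tau,\sigma)\mid\cF^2_0]\le \cU^2_0$ for every $\sigma\in\cT_0(\F^2)$, with $\hat\tau$ generated by $\hat\xi$.

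Next I would extend these inequalities from $\cT_0(\F^i)$ to $\cT^R_0(\F^i)$. Fix $\xi\in\cAcirc_0(\F^1)$ generating $\tau\in\cT^R_0(\F^1)$ via a device $Z_1\sim U([0,1])$ independent of $\cF_T$. For each $z\in[0,1]$, $\tau(\xi,z)=\inf\{t:\xi_t>z\}$ is in $\cT_0(\F^1)$, so taking unconditional expectations in the reformulated (i) gives $\E[\cP(\tau(\xi,z),\hat\sigma)]\ge \E[\cU^1_0]$. Integrating in $z\in[0,1]$ and invoking the same Fubini-type identity used inside the proof of Lemma \ref{lem:pure} (which rests on the independence of $Z_1$ from $\cF_T$ and from Player~2's device), I obtain
\[
\E[\cP(\tau,\hat\sigma)]\ =\ \int_0^1\E[\cP(\tau(\xi,z),\hat\sigma)]\,\ud z\ \ge\ \E[\cU^1_0].
\]
The symmetric argument, applied to $\sigma\in\cT^R_0(\F^2)$ with generating process $\zeta$ and condition (ii), yields $\E[\cP(\hat\tau,\sigma)]\le \E[\cU^2_0]$.

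Finally, specialising these two extended inequalities to $\tau=\hat\tau$ and $\sigma=\hat\sigma$ respectively and combining with (iii) forces
\[
\E[\cP(\hat\tau,\hat\sigma)]\ \ge\ \E[\cU^1_0]\ =\ \E[\cU^2_0]\ \ge\ \E[\cP(\hat\tau,\hat\sigma)],
\]
hence $\E[\cP(\hat\tau,\hat\sigma)]=\E[\cU^1_0]=\E[\cU^2_0]$. Re-injecting this common value back into the two extended bounds gives $\E[\cP(\hat\tau,\sigma)]\le \E[\cP(\hat\tau,\hat\sigma)]\le \E[\cP(\tau,\hat\sigma)]$ for every $(\tau,\sigma)\in\cT^R_0(\F^1)\times\cT^R_0(\F^2)$, which is the saddle-point inequality. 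Taking $\esssup_\sigma$ and $\essinf_\tau$ on the two sides then sandwiches $\underline V$ and $\overline V$ between $\E[\cP(\hat\tau,\hat\sigma)]$ and itself, so $\overline V=\underline V=\E[\cU^1_0]$, as claimed.

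The only genuinely non-trivial step is the Fubini passage in the middle paragraph: one needs joint $(z,\omega)$-measurability of $\cP(\tau(\xi,z),\hat\sigma)(\omega)$ together with the independence of $Z_1$ from the $\sigma$-algebra generated by $\hat\sigma$, $\hat\zeta$ and the payoff processes. Both are supplied by Definition \ref{def:rst} and the $\cL_b(\P)$-boundedness of $f,g,h$ (the boundedness legitimises the interchange of $\int_0^1$ with the expectation), and the identical step is already carried out inside the proof of Lemma \ref{lem:pure}, so the rest of the argument is essentially an accounting exercise.
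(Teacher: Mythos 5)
Your proposal is correct and follows essentially the same route as the paper's proof: rewrite (i)--(ii) as pure-stopping-time bounds on the payoff functional, disintegrate a randomised reply over its device via $\tau(\xi,z)$, integrate the resulting family of inequalities in $z$, and close the sandwich with (iii). The only (harmless) difference is that you take unconditional expectations before integrating in $z$, which sidesteps the $z$-dependent null-set issue that the paper instead resolves by a separate Fubini argument on the $\cF^1_0$-measurable set where the conditional inequality would fail.
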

\begin{proof}
In order to apply arguments from the proof of Theorem \ref{thm:suff_saddle}, we need to prove \eqref{eqn:subm_M} with $\E[1-\hat \zeta_{T-}|\cF^1_T]\cU^1_T$ replaced by $h_T\Delta\hat \zeta_T$ (under expectation) and the equivalent condition arising for the supermartingale system $\hat{\bf N}^\zeta$. Those two inequalities imply \eqref{eqn:U1}-\eqref{eqn:U2}, from which it not hard to deduce all claims in the theorem. 

We will only provide details for \eqref{eqn:subm_M}, because arguments for $\hat{\bf N}^\zeta$ are analogous. 
Take any $\xi \in \cAcirc_0(\F^1)$ and let $\tau(z) = \inf\{t \ge 0: \xi_t > z\} \in \cT(\F^1)$. We have
\begin{equation}\label{eq:suff2}
\E \Big[\! \int_{[0,T)}\!\! f_t (1\!-\!\hat \zeta_t)  \ud \xi_t\! +\! \int_{[0,T)}\!\!g_t (1\!-\!\xi_t) \ud \hat \zeta_t\! +\!\! \sum_{t \in [0,T)}\!\!h_t \Delta \xi_t \Delta \hat\zeta_t \! +\! h_T \Delta\xi_{T} \Delta\hat \zeta_{T}\Big| \cF^1_0 \Big]\!=\!\int_0^1\!\! \hat m^\xi(z)\ud z,
\end{equation}
where, as in Proposition \ref{prop:meas}, $(z,\omega)\mapsto\hat m^\xi(z,\omega)$ is a $\cB([0,T])\times\cF^1_0$-measurable function such that for each $z\in[0,1]$, $\P$-a.s.
\[
\hat m^\xi(z)=\E \Big[ \indd{\tau(z) < T} \big(f_{\tau(z)}(1\!-\!\hat\zeta_{\tau(z)})\!+\! h_{\tau(z)} \Delta \hat\zeta_{\tau(z)}\big)\!+\!\int_{[0, \tau(z))} g_s \ud \hat\zeta_s\!+\!\indd{\tau(z) = T} h_T \Delta \hat \zeta_{T}  \Big| \cF^1_0 \Big].
\]
We recombine the indicator functions for the jump terms in the process $\hat \zeta$ and use $\hat\zeta_T = 1$ to drop the indicator function in the term involving $f_{\tau(z)}$ to obtain
\[
\hat m^\xi(z) = \E \Big[ f_{\tau(z)}(1-\hat\zeta_{\tau(z)}) + \int_{[0, \tau(z))} g_s \ud \hat\zeta_s + h_{\tau(z)} \Delta \hat \zeta_{\tau(z)} \Big| \cF^1_0 \Big]. 
\]
From \eqref{eqn:s1}, we further have $\hat m^\xi(z) \ge \cU^1_0$, $\P$\as Since $\cU^1_0$ does not depend on $z$, we claim that $\int_0^1\hat m^\xi(z)\ud z\ge \cU^1_0$, $\P$\as; this is not immediate as the set of measure zero in the inequality $\hat m^\xi(z) \ge \cU^1_0$ depends on $z$. However, taking $A = \{\int_0^1\hat m^\xi(z)\ud z < \cU^1_0 \}$, we have  $A \in\cF^1_0$ and
\[
\E\Big[\ind_A\int_0^1\hat m^\xi(z)\ud z\Big]=\int_0^1\E\big[\ind_A\hat m^\xi(z)\big]\ud z\ge\int_0^1\E\big[\ind_A\cU^1_0\big]\ud z=\E\big[\ind_A\cU^1_0\big], 
\]
where the first equality is by Fubini's theorem. This shows that $\P(A) = 0$ and $\int_0^1\hat m^\xi(z)\ud z\ge \cU^1_0$, $\P$-a.s. Combining the latter with \eqref{eq:suff2} we obtain \eqref{eqn:subm_M} with $\E[\Delta\hat \zeta_T|\cF^1_T]\cU^1_T$ replaced by $\E[h_T\Delta\hat \zeta_T|\cF^1_T]$ as required. Analogous arguments with the use of \eqref{eqn:s2} yield the desired result also for $\cU^2_0$.
\end{proof}
The above theorem does not employ candidate value processes $\cU^1$ and $\cU^2$, so it does not suggest an approach for finding an equilibrium. In the next theorem, we relax conditions (i) and (ii) from Theorem \ref{thm:suff_saddle} to hold only for systems $\hat{\bf M}^{0}$ and $\hat{\bf N}^{0}$ which are defined in \eqref{eqn:M_xi} and \eqref{eqn:N_zeta} with $\xi_t=\zeta_t = \indd{t = T}$. That is, we consider only
\begin{align*}
\hat M^0(\theta) &= \E \Big[\int_{[0, \theta)} g_t \ud \hat\zeta_t \Big| \cF^1_\theta \Big] + \E\big[1-\hat\zeta_{\theta-}\big| \cF^1_\theta\big] \cU^1_\theta,\\
\hat N^0(\gamma) &= \E \Big[\int_{[0, \gamma)} f_t \ud \hat\xi_t \Big| \cF^2_\gamma \Big] + \E\big[1-\hat\xi_{\gamma-}\big| \cF^2_\gamma\big] \cU^2_\gamma,
\end{align*}
for $(\theta, \gamma) \in \cT_0(\F^1) \times \cT_0(\F^2)$. The price to pay for such relaxation is to add conditions (iii) and (iv) which are the analogue in this context of the necessary condition (i) in Proposition \ref{prop:support}.

\begin{theorem}\label{thm:suff_saddle_mart}
Let $(\cU^1_t)_{t \in [0, T]}$ and $(\cU^2_t)_{t \in [0, T]}$ be $\F^1$- and $\F^2$-progressively measurable processes, respectively, and let $(\hat\xi,\hat \zeta)\in \cAcirc_0(\F^1)\times\cAcirc_0(\F^2)$. Assume that
\begin{enumerate}[(i)]
\item $\hat{\bf M}^{0}$ is a $\cT_0(\F^1)$-submartingale system,
\item $\hat{\bf N}^{0}$ is a $\cT_0(\F^2)$-supermartingale system,
\item $\optional{\big(}f_\cdot(1-\hat\zeta_\cdot)\big)_t^{\F^1}+\optional{\big(}h_\cdot\Delta\hat\zeta_\cdot\big)_t^{\F^1} \ge \optional{(}1-\hat\zeta_{\cdot-})_t^{\F^1}\, \cU^1_t$ for all $t \in [0,T]$, $\P$-a.s.,
\item $\optional{\big(}g_\cdot(1-\hat\xi_\cdot)\big)^{\F^2}_t+\optional{\big(}h_\cdot\Delta\hat\xi_\cdot\big)^{\F^2}_t \le \optional{(}1-\hat\xi_{\cdot-})_t^{\F^2}\, \cU^2_t$ for all $t \in [0,T]$, $\P$-a.s.,
\item $\E[\cU^1_0] = \E[\cU^2_0]$.
\end{enumerate}
Then the game has a value, i.e., $\overline V = \underline V = \E[\cU^1_0] = \E[\cU^2_0]$, and the randomised stopping times $(\hat \tau, \hat \sigma) \in \cT^R_0(\F^1) \times \cT^R_0(\F^2)$ generated by $(\hat \xi, \hat \zeta)$ form a saddle point of the game.
\end{theorem}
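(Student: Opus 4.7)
The plan is to reduce Theorem \ref{thm:suff_saddle_mart} to Theorem \ref{thm:suff_saddle_stop}, which only requires verifying inequalities \eqref{eqn:s1}--\eqref{eqn:s2} and the expectation equality $\E[\cU^1_0]=\E[\cU^2_0]$. Condition (v) of Theorem \ref{thm:suff_saddle_mart} is exactly condition (iii) of Theorem \ref{thm:suff_saddle_stop}, so only \eqref{eqn:s1} and \eqref{eqn:s2} need attention. By symmetry I describe only the Player 1 side.

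First I note the trivial but crucial identity $\hat M^0(0) = \cU^1_0$: the integral term in \eqref{eqn:M_xi} (with $\xi_t=\indd{t=T}$ so only the second and third terms survive) vanishes at $\theta=0$, and $\hat\zeta_{0-}=0$ since $\hat\zeta\in\cAcirc_0(\F^2)$, while $\cU^1_0$ is $\cF^1_0$-measurable by progressive measurability. Fix any $\tau\in\cT_0(\F^1)$. The submartingale property of the system $\hat{\bf M}^{0}$ applied between $0$ and $\tau$ (as in Lemma \ref{lem:M} and the proof of Proposition \ref{thm:aggr1}) gives
\[
\cU^1_0 \;=\; \hat M^0(0) \;\le\; \E\big[\hat M^0(\tau)\,\big|\,\cF^1_0\big]
\;=\; \E\Big[\int_{[0,\tau)} g_t \ud \hat\zeta_t + \E[1-\hat\zeta_{\tau-}\,|\,\cF^1_\tau]\,\cU^1_\tau\,\Big|\,\cF^1_0\Big].
\]

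Next I use condition (iii) of Theorem \ref{thm:suff_saddle_mart} to bound $\E[1-\hat\zeta_{\tau-}\,|\,\cF^1_\tau]\,\cU^1_\tau$ from above by the stopping payoff. The standard identity $\optional{X}^{\F^1}_\tau = \E[X_\tau|\cF^1_\tau]$ for bounded measurable processes $X$ (applied to $X_t = f_t(1-\hat\zeta_t)$, $X_t=h_t\Delta\hat\zeta_t$ and $X_t=1-\hat\zeta_{t-}$), combined with the $\P$-a.s.\ pointwise inequality in (iii), yields
\[
\E\big[1-\hat\zeta_{\tau-}\,\big|\,\cF^1_\tau\big]\,\cU^1_\tau \;\le\; \E\big[f_\tau(1-\hat\zeta_\tau) + h_\tau\Delta\hat\zeta_\tau\,\big|\,\cF^1_\tau\big].
\]
Plugging this into the previous inequality and applying the tower property gives exactly \eqref{eqn:s1}:
\[
\cU^1_0 \;\le\; \E\Big[f_\tau(1-\hat\zeta_\tau) + \int_{[0,\tau)} g_t \ud \hat\zeta_t + h_\tau\Delta\hat\zeta_\tau\,\Big|\,\cF^1_0\Big].
\]
The analogous argument for Player 2, using the supermartingale property of $\hat{\bf N}^{0}$ and condition (iv), produces \eqref{eqn:s2}. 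Theorem \ref{thm:suff_saddle_stop} now delivers the conclusion.

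The only non-routine point is justifying the use of condition (iii) at the stopping time $\tau$: one must be careful that the $\P$-a.s.\ pointwise inequality between optional projections lifts to an almost-sure inequality between random variables $\optional{X}^{\F^1}_\tau$ at any stopping time $\tau$. This follows from the standard fact that two optional processes which agree (or satisfy an inequality) on $[0,T]\times\Omega$ outside an evanescent set agree (satisfy the inequality) at every $\F^1$-stopping time, together with the identification $\optional{X}^{\F^1}_\tau = \E[X_\tau|\cF^1_\tau]$ for integrable processes. Once this is in hand, no further technical obstacles arise.
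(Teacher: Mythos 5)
Your proof is correct and its core estimate is identical to the paper's: both hinge on applying the submartingale property of $\hat{\bf M}^0$ between $0$ and a pure stopping time $\tau$, then replacing $\E[1-\hat\zeta_{\tau-}|\cF^1_\tau]\,\cU^1_\tau$ by $\E[f_\tau(1-\hat\zeta_\tau)+h_\tau\Delta\hat\zeta_\tau|\cF^1_\tau]$ via condition (iii) and the identity $\optional{X}^{\F^1}_\tau=\E[X_\tau|\cF^1_\tau]$ (cf.\ the chain of inequalities in \eqref{eq:suff4}, which the paper carries out for $\tau(z)=\inf\{t:\xi_t>z\}$). The only difference is organizational: you observe that the resulting per-stopping-time inequality is precisely hypothesis \eqref{eqn:s1} of Theorem \ref{thm:suff_saddle_stop} and invoke that theorem, whereas the paper re-derives inline the Fubini/measurability argument that passes from pure to randomised stopping times; your routing is a legitimate and slightly cleaner modularization, and you correctly flag (and justify) the one delicate point, namely lifting the pointwise inequality in (iii) to an arbitrary stopping time.
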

\begin{proof}
We will closely follow ideas of the proof of Theorem \ref{thm:suff_saddle}. We will prove \eqref{eqn:U1} and skip analogous arguments for \eqref{eqn:U2}. The rest of the proof follows similarly as the proof of Theorem \ref{thm:suff_saddle}.

Take any $\xi \in \cAcirc_0(\F^1)$ and set $\tau(z) = \inf\{t \ge 0: \xi_t > z\} \in \cT(\F^1)$. As in \eqref{eq:suff2} we have
\begin{equation}\label{eq:suff3}
\begin{aligned}
\E \Big[\! \int_{[0,T)}\!\! f_t (1\!-\!\hat \zeta_t)  \ud \xi_t\! +\! \int_{[0,T)}\!\!g_t (1\!-\!\xi_t) \ud \hat \zeta_t\! +\!\! \sum_{t \in [0,T)}\!\!h_t \Delta \xi_t \Delta \hat\zeta_t \! +\! h_T \Delta\xi_{T} \Delta\zeta_{T}  \Big| \cF^1_0 \Big]=\int_0^1\hat m^\xi(z)\ud z 
\end{aligned}
\end{equation}
where $(z,\omega)\mapsto \hat m^\xi(z,\omega)$ is $\cB([0,T])\times\cF^1_0$-measurable and for each $z\in[0,1]$ we have, $\P$-a.s.,
\[
\hat m(z)=\E \Big[ f_{\tau(z)}(1-\hat\zeta_{\tau(z)}) + \int_{[0, \tau(z))} g_s \ud \hat\zeta_s + h_{\tau(z)} \Delta \hat \zeta_{\tau(z)} \Big| \cF^1_0 \Big].
\]
By the definition of the optional projection and the tower property of conditional expectation, for any $z \in (0,1)$, we have 
\begin{align}\label{eq:suff4}
\begin{aligned}
&\E \Big[ f_{\tau(z)}(1-\hat\zeta_{\tau(z)}) + \int_{[0, \tau(z))} g_s \ud \hat\zeta_s + h_{\tau(z)} \Delta \hat \zeta_{\tau(z)} \Big| \cF^1_0 \Big]\\
&=
\E \Big[ \int_{[0, \tau(z))} g_s \ud \hat\zeta_s \Big| \cF^1_0 \Big] + \E \big[ \E \big[ f_{\tau(z)}(1-\hat\zeta_{\tau(z)}) + h_{\tau(z)} \Delta \hat \zeta_{\tau(z)} \big| \cF^1_{\tau(z)} \big]\big| \cF^1_0 \big]\\
&=
\E \Big[ \int_{[0, \tau(z))} g_s \ud \hat\zeta_s \Big| \cF^1_0 \Big] + \E \big[ \optional{\big(}f_\cdot(1-\hat\zeta_{\cdot-})\big)_{\tau(z)}^{\F^1}+\optional{\big(}h_\cdot\Delta\hat\zeta_\cdot\big)_{\tau(z)}^{\F^1} \big| \cF^1_0 \big]\\
&\ge
\E \Big[ \int_{[0, \tau(z))} g_s \ud \hat\zeta_s +\optional{\big(}1-\hat\zeta_{\cdot-}\big)^{\F^1}_{\tau(z)}  \cU^1_{\tau(z)}\Big| \cF^1_0\Big]\\
&=
\E \Big[ \int_{[0, \tau(z))} g_s \ud \hat\zeta_s +\E\big[1-\hat\zeta_{\tau(z)-}\big|\cF^1_{\tau(z)}\big]\,  \cU^1_{\tau(z)}\Big| \cF^1_0\Big]
\ge \cU^1_0,
\end{aligned}
\end{align}
where the first inequality is by (iii) and the last one by the submartingale property (i) and the fact that $\hat\zeta_{0-} = 0$. Analogously to the proof of Theorem \ref{thm:suff_saddle_stop} we deduce $\int_0^1\hat m^\xi(z)\ud z\ge \cU^1_0$. In summary, combining the latter with  \eqref{eq:suff3} we have proved
\[
\E \Big[\! \int_{[0,T)}\!\! f_t (1\!-\!\hat \zeta_t)  \ud \xi_t\! +\! \int_{[0,T)}\!\!g_t (1\!-\!\xi_t) \ud \hat \zeta_t\! +\!\! \sum_{t \in [0,T)}\!\!h_t \Delta \xi_t \Delta \hat\zeta_t \! +\!(1\!-\!\xi_{T-}) \E[1\!-\!\hat \zeta_{T-}|\cF^1_T] h_T \Big| \cF^1_0 \Big] \ge \cU^1_0.
\]
That is, we have \eqref{eqn:U1}. The remaining arguments in the proof of Theorem \ref{thm:suff_saddle} apply and yield identical statement of the existence of value and the pair $(\hat\xi, \hat\zeta)$ generating a saddle point of the game.
\end{proof}

\begin{remark}[{\bf A link to full information games}] 
Theorem \ref{thm:suff_saddle_mart} provides parallels between the asymmetric information framework of this paper and the classical theory of zero-sum stopping games of \cite{lepeltier1984}. In the classical setting the game has one value process $\cU$ which must satisfy $f_t \ge \cU_t \ge g_t$, which is represented by conditions (iii) and (iv). For a candidate saddle point $(\hat\tau, \hat \sigma)$, in the setting of \cite{lepeltier1984} the process $\cU_{t \wedge \hat \sigma}$ must be a supermartingale while the process $\cU_{t \wedge \hat \tau}$ must be a submartingale. Those properties are analogous to conditions (i) and (ii) in our theorem, respectively. Condition (v) is unique to the asymmetric setting, providing {\em the only} link between the candidate value processes $\cU^1$ and $\cU^2$.
\end{remark}


\section{Applications to two classes of games}\label{sec:examples}
The setting introduced above is sufficiently general to cover nearly all known examples in the literature on Dynkin games with partial and asymmetric information. Here we consider two benchmark examples that illustrate how to write more explicit formulae for the players' subjective views and players' equilibrium values introduced in the previous sections. 

In both examples, the underlying assumption is that both players know the structure of the game, in the sense that they know which processes and random variables are involved, although they may not observe their realisations. Moreover, both players know that there is an asymmetry of information and each player knows what type of information their opponent has access to.

The first example, in Section \ref{sec:partial}, is borrowed from \cite{Grun2013} which, however, only considers a Markovian setting. We discuss the non-Markovian version of the game, whose in-depth study can be found in the PhD thesis \cite{smith2024martingale}. The second example in Section \ref{sec:partdyn} is borrowed from \cite{DEG2020}, where a verification theorem is formulated and then used to solve a particular case of the general problem. Our analysis provides rigorous mathematical foundations for the verification theorem in \cite{DEG2020} which, otherwise, was the result of an educated guess.

\subsection{Partially observed scenarios}\label{sec:partial}
Let $(\Omega,\cF,\P)=(\Omega^0\times\Omega^1,\cF^0\times\cF^1,\P^0\times\P^1)$ be a product probability space. Let $(\cH_t)_{t\in[0,T]}$ be a filtration on $(\Omega^0,\cF^0)$, with $\cH_0=\{\Omega^0,\varnothing\}$, and denote by $\F^2=(\cF^2_t)_{t\in[0,T]}$ the $\P^0\times\P^1$-completion of the filtration $(\cH_t\times\{\Omega^1,\varnothing\})_{t\in[0,T]}$ (see Appendix \ref{sec:optionalproj} for more details). 
The space $(\Omega^1,\cF^1,\P^1)$ hosts a random variable $\cJ$ taking values $0$ and $1$ with $\P(\cJ=1)=\P^1(\cJ=1)=\pi$. The analysis for $\cJ$ with any finite number of values is analogous. By construction $\cJ$, considered as a r.v. on $(\Omega, \cF, \P)$, is independent of $\cF^2_T$.  Define a filtration $\F^1$ as $\cF^1_t=\cF^2_t\vee\sigma(\cJ)$ and notice that $\cF^1_T \ne \cF$ as the probability space must also carry the randomisation devices for both players. 

Let $f^0,f^1,g^0,g^1,h^0,h^1\in\cL_b(\P)$ be $\F^2$-adapted and such that $f^j_t\ge h^j_t\ge g^j_t$, for all $t\in[0,T]$, $\P$-a.s., for $j=0,1$.
We set
\begin{align}\label{eq:decomp0}
\begin{split}
&f_t=f^\cJ_t\coloneqq f^0_t1_{\{\cJ=0\}}+f^1_t1_{\{\cJ=1\}},\\
&g_t=g^\cJ_t\coloneqq g^0_t1_{\{\cJ=0\}}+g^1_t1_{\{\cJ=1\}},\\
&h_t=h^\cJ_t\coloneqq h^0_t1_{\{\cJ=0\}}+h^1_t1_{\{\cJ=1\}}.
\end{split}
\end{align} 
Since $\cF^1_t\supsetneq \cF^2_t$, we assume that Player 1 (minimiser) is {\em fully} informed whereas Player 2 (maximiser) is {\em partially} informed because she cannot observe directly $\cJ$.

Thanks to Lemma \ref{lem:suffcdual2}, any strategy $\xi\in\cAcirc_0(\F^1)$ of the fully informed player decomposes as
\begin{align}\label{eq:decomp1}
\xi_t=\xi^0_t \ind_{\{\cJ=0\}}+\xi^1_t \ind_{\{\cJ=1\}},
\end{align}
with $\xi^j\in\cAcirc_0(\F^2)$ for $j=0,1$, whereas $\zeta \in \cAcirc_0(\F^2)$. Notice that for $\theta\in\cT_0(\F^1)$ the same decomposition leads to
$\theta=\theta_0 \ind_{\{\cJ=0\}}+\theta_1 \ind_{\{\cJ=1\}}$,
with $\theta_0,\theta_1\in\cT_0(\F^2)$ (see, also, \cite[Corollary~3.2]{de2022value}). 
That motivates treating the informed player as having two types/incarnations, the \emph{incarnation $0$} and the \emph{incarnation $1$}, potentially collaborating with each other.
\smallskip

\textbf{Subjective views.} Since the above decomposition holds for an optimal $(\xi^*,\zeta^*)\in \cAcirc_0(\F^1)\times\cAcirc_0(\F^2)$, then the processes defined in \eqref{eq:Pi} read (recall the convention $0/0=1$)
\begin{align*}
&\Pi^{*,1}_\theta = 1,\quad\theta\in\cT_0(\F^1),\\
&\Pi^{*,2}_\gamma
=\frac{(1-\xi^{*,0}_{\gamma-})}{\pi(1-\xi^{*,1}_{\gamma-})+(1-\pi)(1-\xi^{*,0}_{\gamma-})}\ind_{\{\cJ=0\}}\\
&\qquad\quad+\frac{(1-\xi^{*,1}_{\gamma-})}{\pi(1-\xi^{*,1}_{\gamma-})+(1-\pi)(1-\xi^{*,0}_{\gamma-})}\ind_{\{\cJ=1\}},\quad\gamma\in\cT_0(\F^2).
\end{align*}

These processes as well as equilibrium values have an intuitive representation as long as the game is \emph{still played}, by which we mean on the events 
\[
\{\zeta^*_{\theta-} < 1\} = \{\sigma^* \ge \theta\},
\quad\text{and}\quad 
\{\xi^{*,1}_{\gamma-} \wedge\xi^{*,0}_{\gamma-}<1\} = \{\P(\tau^* \ge \gamma | \cF^2_{\gamma})>0\}.
\]
For $\gamma\in\cT_0(\F^2)$, on the event $\{\xi^{*,1}_{\gamma-} \wedge\xi^{*,0}_{\gamma-}<1\}$, the expression for $\Pi^{*,2}_\gamma$ simplifies to
\[
\Pi^{*,2}_\gamma = \frac{1-p_\gamma}{1-\pi}\ind_{\{\cJ=0\}}+\frac{p_\gamma}{\pi}\ind_{\{\cJ=1\}},
\]
where
\begin{equation}\label{eq:ppos}
 p_\gamma = \frac{\pi(1-\xi^{*,1}_{\gamma-})}{\pi(1-\xi^{*,1}_{\gamma-})+(1-\pi)(1-\xi^{*,0}_{\gamma-})}.
\end{equation}
The random variable $p_\gamma$ has a meaning of a \emph{belief} of the partially informed player: it is indeed easy to verify that
\begin{equation}\label{eqn:belief}
p_\gamma = \frac{\P(\cJ = 1, \tau_* \ge \gamma | \cF^2_\gamma)}{\P(\tau_* \ge \gamma| \cF^2_\gamma)} \quad \text{on $\{\xi^{*,1}_{\gamma-}\wedge\xi^{*,0}_{\gamma-}<1\}$},
\end{equation}
where $\tau_*$ is the randomised stopping time generated by $(\xi^*_t)_{t\in[0,T]}$. We also note that the conditional probability distribution $(1-p_\gamma, p_\gamma)$ of $\cJ$ is absolutely continuous with respect to the initial distribution $(1-\pi, \pi)$ with the Radon-Nikodym density given by $\Pi^{*,2}_\gamma$. 
\smallskip

\textbf{Equilibrium value processes.} Given a stopping time $\gamma\in\cT_0(\F^2)$ we introduce the conditional expected payoffs in each state of $\cJ$, i.e., for $i=0,1$,
\begin{align}\label{eq:Lpayoff}
L^i(\xi^i,\zeta | \cF^2_\gamma)\coloneqq\E\Big[\int_{[\gamma,T)}f^i_t(1-\zeta_t)\ud \xi^i_t+\int_{[\gamma,T)}g^i_t(1-\xi^i_t)\ud \zeta_t+\sum_{t\in[\gamma,T]}h^i_t\Delta\zeta_t\Delta\xi^i_t\Big|\cF^2_\gamma\Big].
\end{align}
Recalling that $\zeta^{*; \theta}$ and $\xi^{*,i; \gamma}$ denote the truncation of strategies $\zeta^*$ and $\xi^{*, i}$ at stopping times $(\theta,\gamma) \in \cT_0(\F^1) \times \cT_0(\F^2)$, we rewrite the formulae for the equilibrium values $V^{*,1}$ and $V^{*,2}$ (cf., \eqref{eq:V1V2}) using the above notation $L^i$. In particular, using Lemma \ref{lem:rv_decomposition} in the third equality below we obtain on $\{\zeta^*_{\theta-}<1\}$
\begin{align*}
V^{*,1}(\theta)&=\essinf_{\xi\in\cAcirc_{\theta}(\F^1)}\E\Big[\int_{[\theta,T)}f^\cJ_t(1-\zeta^{*;\theta}_t)\ud \xi_t+\int_{[\theta,T)}g^\cJ_t(1-\xi_t)\ud \zeta^{*;\theta}_t+\sum_{t\in[\theta,T]}h^\cJ_t\Delta\zeta^{*;\theta}_t\Delta\xi_t\Big|\cF^1_\theta \Big]\\
&=\essinf_{\xi\in\cAcirc_{\theta}(\F^1)}\E\Big[\sum_{i=0}^1 \indd{\cJ=i} \Big(\int_{[\theta_i,T)}f^i_t(1-\zeta^{*;\theta_i}_t)\ud \xi^i_t+\int_{[\theta_i,T)}g^i_t(1-\xi^i_t)\ud \zeta^{*;\theta_i}_t\\
&\hspace{265pt}+\sum_{t\in[\theta_i,T]}h^i_t\Delta\zeta^{*;\theta_i}_t\Delta\xi^i_t\Big)\Big|\cF^1_\theta \Big]\\
&=\ind_{\{\cJ=0\}}\essinf_{\xi^0\in\cAcirc_{\theta_0}(\F^2)}L^0(\xi^0,\zeta^{*;\theta_0}|\cF^2_{\theta_0})+\ind_{\{\cJ=1\}}\essinf_{\xi^1\in\cAcirc_{\theta_1}(\F^2)}L^1(\xi^1,\zeta^{*;\theta_1}|\cF^2_{\theta_1}).
\end{align*}
From the expression for $\Pi^{*,2}_\gamma$ we get, on $\{\xi^{*,1}_{\gamma-}\wedge\xi^{*,0}_{\gamma-}<1\}$,
\begin{align*}
V^{*,2}(\gamma)&=\esssup_{\zeta\in\cAcirc_\gamma(\F^2)}J^{\Pi^{*,2}}(\xi^{*; \gamma},\zeta|\cF^2_\gamma)
=\esssup_{\zeta\in\cAcirc_\gamma(\F^2)}\Big(p_\gamma L^1(\xi^{*,1;\gamma},\zeta|\cF^2_{\gamma})+(1-p_\gamma) L^0(\xi^{*,0;\gamma},\zeta|\cF^2_{\gamma})\Big),
\end{align*}
where the second equality holds thanks to independence of $\cF^2_T$ and $\sigma(\cJ)$.

The derived formulae for $V^{*,1}(\theta)$ and $V^{*,2}(\gamma)$ motivate the introduction of a new notation: 
\[
U^i(\theta_i) \coloneqq \essinf_{\xi^i\in\cAcirc_{\theta_i}(\F^2)}L^i(\xi^i,\zeta^{*;\theta_i}|\cF^2_{\theta_i}),\ i=0,1, \quad \text{and}\quad V(\gamma) = V^{*,2}(\gamma).
\]
Notice that these objects are well-defined on the whole $\Omega$ but hold a meaning related to the game only on the events $\{\zeta^*_{\theta_i-}<1\}$ and $\{\xi^{*,0}_{\gamma-}\wedge\xi^{*,1}_{\gamma-}<1\}$, respectively.
Thus, $U^i(\theta_i)$ is the value of the game at time $\theta_i$ for the $i$-th incarnation of the informed player, while $V(\gamma)$ is the value of the game at time $\gamma$ for the uninformed player. For simplicity, it is convenient to denote 
\[
\langle \pi, \phi \rangle = \pi \phi^1 + (1-\pi) \phi^0,\quad\text{for any $\phi \in \R^2$.}
\]
Then, by Theorem \ref{thm:value}, there are $\F^2$-optional processes $(U^0_t)_{t\in[0,T]}$, $(U^1_t)_{t\in[0,T]}$ and $(V_t)_{t\in[0,T]}$ such that 
\begin{equation}\label{eqn:ui}
(1-\zeta^*_{\theta_i-}) U^i_{\theta_i} = (1-\zeta^*_{\theta_i-}) U^i(\theta_i),
\end{equation}
for any $\F^2$-stopping time $\theta_i$, and
\begin{equation}\label{eqn:V_process}
\langle \pi, (1-\xi^{*}_{\gamma-})\rangle V_{\gamma} = \langle \pi, (1-\xi^{*}_{\gamma-})\rangle V(\gamma),
\end{equation}
for any $\F^2$-stopping time $\gamma$. 
Thanks to \eqref{eq:hatv12}, on $\{\zeta^*_{\theta_i-}<1\}$, we identify the process $(U^i_t)_{t\in[0,T]}$ with the value process of the following optimal stopping problem: 
\begin{equation}\label{eq:Uit}
U^i_{\theta_i} = \essinf_{\tau\in\cT_{\theta_i}(\F^2)}
\E\Big[f^i_{\tau}(1-\zeta^{*; \theta_i}_{\tau}) +\int_{[\theta_i,\tau)}g^i_t\ud \zeta^{*; \theta_i}_t + h^i_{\tau}\Delta\zeta^{*; \theta_i}_{\tau}\Big|\cF^2_{\theta_i} \Big]
= \essinf_{\tau\in\cT_{\theta_i}(\F^2)} L^i(\tau,\zeta^{*;\theta_i}|\cF^2_{\theta_i}),
\end{equation}
where in the final expression we slightly abuse the notation and write $L^i(\tau,\zeta^{*;\theta_i}|\cF^2_{\theta_i})$ for $L^i(\xi,\zeta^{*;\theta_i}|\cF^2_{\theta_i})$ with $\xi_t = \ind_{\{t \ge \tau\}}$. Similarly, we write on $\{\xi^{*,0}_{\gamma-}\wedge\xi^{*,1}_{\gamma-}<1\}$
\begin{align}\label{eq:Vg}
\begin{aligned}
V_\gamma 
&= \esssup_{\sigma\in\cT_\gamma(\F^2)} \E \Big[ \int_{[\gamma,\sigma)} \big\langle p_t, f_t\ud \xi^{*;\gamma}_t\big\rangle + \big\langle p_\sigma, g_\sigma(1-\xi^{*;\gamma}_\sigma) + h_\sigma \Delta \xi^{*;\gamma}_\sigma \big\rangle  \Big|\cF^2_\gamma\Big]\\
&= \esssup_{\sigma\in\cT_\gamma(\F^2)} \Big(p_\gamma L^1(\xi^{*,1;\gamma},\sigma|\cF^2_{\gamma})+(1-p_\gamma) L^0(\xi^{*,0;\gamma},\sigma|\cF^2_{\gamma})\Big),
\end{aligned}
\end{align}
where $\langle p_t, f_t\ud \xi^{*;\gamma}_t\rangle=p_t f^1_t\ud \xi^{*,1;\gamma}_t+(1-p_t)f^0_t \ud \xi^{*,1;\gamma}_t$ (and analogously for the other terms) and $L^i(\xi^{*,i;\gamma},\sigma|\cF^2_{\gamma})$ stands for $L^i(\xi^{*,i;\gamma},\zeta|\cF^2_{\gamma})$ with $\zeta= \ind_{\{t\ge\sigma\}}$. 
\smallskip

\textbf{Relationship between players' equilibrium values and the role of the belief process.} From the second statement in Remark \ref{rem:3.13}, we have the relationship
\[
(1-\zeta^*_{\gamma-})\E\big[(1-\xi^*_{\gamma-})V^{*,1}(\gamma)|\cF^2_\gamma\big]=\E\big[1-\xi^*_{\gamma-}|\cF^2_\gamma\big](1-\zeta^*_{\gamma-})V^{*,2}(\gamma)
\]
for any $\gamma \in \cT(\F^2)$. Noticing that $\E\big[1-\xi^*_{\gamma-}|\cF^2_\gamma\big] = \langle \pi, 1-\xi^*_{\gamma-} \rangle$, the right-hand side reads
\[
\langle \pi, 1-\xi^*_{\gamma-} \rangle (1-\zeta^*_{\gamma-})V^{*,2}(\gamma) = \langle \pi, 1-\xi^*_{\gamma-} \rangle (1-\zeta^*_{\gamma-})V_\gamma
\]
with the equality justified by \eqref{eqn:V_process}. For the left-hand side, we write
\begin{align*}
&(1-\zeta^*_{\gamma-})\E\big[(1-\xi^*_{\gamma-})V^{*,1}(\gamma)\big|\cF^2_\gamma\big]\\
&=
\E\big[(1-\zeta^*_{\gamma-}) \big(\indd{\cJ=1} (1-\xi^{*,1}_{\gamma-}) U^1(\gamma) + \indd{\cJ=0} (1-\xi^{*,0}_{\gamma-}) U^0(\gamma) \big) \big|\cF^2_\gamma \big]\\
&=
(1-\zeta^*_{\gamma-}) (1-\xi^{*,1}_{\gamma-}) U^1_\gamma \E\big[\indd{\cJ=1} \big|\cF^2_\gamma \big] 
+ (1-\zeta^*_{\gamma-}) (1-\xi^{*,0}_{\gamma-}) U^0_\gamma \E\big[\indd{\cJ=0} \big|\cF^2_\gamma \big]\\
&=
(1-\zeta^*_{\gamma-}) (1-\xi^{*,1}_{\gamma-}) U^1_\gamma \pi
+ (1-\zeta^*_{\gamma-}) (1-\xi^{*,0}_{\gamma-}) U^0_\gamma (1-\pi)\\
&=
\langle \pi, 1-\xi^*_{\gamma-} \rangle (1-\zeta^*_{\gamma-}) \big( p_\gamma U^1_\gamma + (1-p_\gamma)U^0_\gamma\big),
\end{align*}
where the decomposition \eqref{eq:decomp1} of $\xi^*$ was used in the first inequality, \eqref{eqn:ui} in the second equality, and the last equality follows from the definition of $p_\gamma$. In conclusion, we have
\[
\langle \pi, 1-\xi^*_{\gamma-} \rangle (1-\zeta^*_{\gamma-}) \big( p_\gamma U^1_\gamma + (1-p_\gamma)U^0_\gamma\big)
=
\langle \pi, 1-\xi^*_{\gamma-} \rangle (1-\zeta^*_{\gamma-})V_\gamma,
\]
which is best viewed as the equality
\begin{equation}\label{eqn:p_representation}
\langle p_\gamma,  U_\gamma \rangle = V_\gamma \qquad \text{on the set $\Gamma^2_\gamma$,}
\end{equation}
where $\Gamma^2_\gamma$ is defined in Proposition \ref{thm:aggr2}. This set takes an explicit form here:
\begin{align}\label{eq:gamma2expl}
\Gamma^2_\gamma = \{ \xi^{*,1}_{\gamma-} \wedge \xi^{*,0}_{\gamma-} < 1 \text{ and } \zeta^*_{\gamma-} < 1\}.
\end{align}
That lends a natural interpretation of \eqref{eqn:p_representation} as being true as long as the optimally played game has positive probability to still being played at time $\gamma$. 
\smallskip

\textbf{Martingale characterisation.} Take $\xi = (\xi^0, \xi^1) \in \cAcirc_0(\F^1)$ and $\zeta \in \cAcirc_0(\F^2)$, and recall the $\F^1$-optional submartingale $(M^\xi_t)_{t\in[0,T]}$ and the $\F^2$-optional supermartingale $(N^\zeta_t)_{t\in[0,T]}$ from Proposition \ref{prop:subsupmg}. They take the following form in this model: for any $\theta = (\theta^1, \theta^2) \in \cT_0(\F^1)$ and $\gamma \in \cT_0(\F^2)$
\begin{align*}
M^\xi_\theta &= \sum_{i=0}^1 \indd{\cJ = i} \Big( \int_{[0, \theta_i)} \big[(1-\zeta^*_t) f^i_t + \Delta\zeta^*_t h^i_t\big] \ud\xi^i_t + \int_{[0, \theta_i)} (1-\xi^i_t) g^i_t \ud\zeta^*_t + (1-\xi^i_{\theta_i-})(1-\zeta^*_{\theta_i-}) U^i_{\theta_i} \Big)\\
&=: \indd{\cJ = 0} M^{\xi^0; 0}_{\theta_0} + \indd{\cJ = 1} M^{\xi^1; 1}_{\theta_1},\\
N^\zeta_\gamma &= \sum_{i=0}^1 \pi_i \Big( \int_{[0, \gamma)} \big[(1-\zeta_t) f^i_t + \Delta\zeta_t h^i_t\big] \ud\xi^{*,i}_t + \int_{[0, \gamma)} (1-\xi^{*,i}_t) g^i_t \ud\zeta_t + (1-\xi^{*, i}_{\gamma-})(1-\zeta_{\gamma-}) V_\gamma \Big),
\end{align*}
where $(M^{\xi; i}_t)_{t\in[0,T]}$, $i=0, 1$, are $\F^2$-optional submartingales. When $\xi$ and $\zeta$ are chosen optimally, the processes $(M^{\xi^{*,0};0}_t)_{t\in[0,T]}$, $(M^{\xi^{*,1};1}_t)_{t\in[0,T]}$ and $(N^{\zeta^*}_t)_{t\in[0,T]}$ become \cadlag $\F^2$-martingales by Proposition \ref{thm:aggr2}. On the other hand, when $\xi$ and $\zeta$ are taken equal to zero, the above processes take the form: for $t \in [0, T]$,
\begin{equation}\label{eqn:6.8}
\begin{aligned}
M^{0;i}_{t} &=  \int_{[0, t)} g^i_s \ud \zeta^*_s + (1-\zeta^*_{t-}) U^i_{t}, \qquad i=0,1,\\
N^0_t &= \sum_{i=0}^1 \pi_i \Big( \int_{[0, t)} f^i_s \ud \xi^{*,i}_s + (1-\xi^{*, i}_{t-}) V_t \Big).
\end{aligned}
\end{equation}
Proposition \ref{thm:aggr1} asserts that $(M^{0; i}_t)$ is a \cadlag $\F^2$-submartingale and $(N^0_t)$ is a \cadlag $\F^2$-super\-mar\-tin\-gale. These can be shown to be martingales up to the ``last optimal stopping time'' for the respective player in the following way. We first recall the notation \eqref{eq:taubar} and adapt it to the present setting as
\[
\bar\tau_{*}^i(z)=\inf\{t\in[0,T]:\xi^{*,i}_t>z\}\quad\text{and}\quad \bar\sigma_*(z)=\inf\{t\in[0,T]:\zeta^*_t>z\},
\]
for $z \in [0,1)$. Corollary \ref{cor:M_zero_mart} then yields that $(M^{0; i}_{t \wedge \bar\tau^i_*(z)})$ is an $\F^2$-martingale for any $z \in [0, 1)$ and $(N^{0}_{t \wedge \bar\sigma_*(z)})$ is an $\F^2$-martingale for any $z \in [0, 1)$. If any of the generating processes $\xi^{*,i}$ or $\zeta^*$ has a jump at $T$, the respective processes are martingales on the full interval $[0, T]$.  
\smallskip

\textbf{Support of optimal strategies.} In line with the statement of Proposition \ref{prop:support}, given an optimal pair $(\xi^*,\zeta^*)\in\cA^\circ_0(\F^1)\times \cA^\circ_0(\F^2)$, define
\begin{align*}
Y^1_t &= \sum_{i=0}^1 \indd{\cJ=i} \big((1-\zeta^*_{t-}) U^i_t - f^i_t(1-\zeta^*_t) - h^i_t \Delta \zeta^*_t \big) =: \indd{\cJ=0} Z^0_t + \indd{\cJ=1} Z^1_t,\\
Y^2_t &=  \langle \pi, 1-\xi^{*}_{t-}\rangle V_t - \langle \pi, g_t(1-\xi^{*}_t) + h_t \Delta\xi^{*}_t \rangle,
\end{align*}
where we interpret $g_t(1-\xi^{*}_t) + h_t \Delta\xi^{*}_t$ as a vector with entries $g^i_t(1-\xi^{*,i}_t) + h^i_t \Delta\xi^{*,i}_t$, $i=0,1$.
These processes have a more convenient representation as:
\begin{align*}
Z^i_t &= (U^i_t - f^i_t)(1-\zeta^*_t) + (U^i_t - h^i_t) \Delta \zeta^*_t, \quad i=0,1,\\
Y^2_t &= \sum_{i=0}^1 \pi_i \big((V_t - g^i_t)(1-\xi^{*,i}_t) + (V_t - h^i_t) \Delta\xi^{*,i}_t \big).
\end{align*}

\begin{corollary} \label{cor:acting}
We have $Z^0_t \le 0$, $Z^1_t \le 0$ and $Y^2_t \ge 0$ for all $t \in [0, T]$, $\P$\as Moreover, 
\begin{align*}
\int_{[0,T]} Z^0_t \ud \xi^{*,0}_t + \int_{[0,T]} Z^1_t \ud \xi^{*,1}_t = 0, \quad\text{and}\quad \int_{[0,T]} Y^2_t \ud \zeta^*_t = 0.
\end{align*}
\end{corollary}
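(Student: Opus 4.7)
The plan is to deduce the corollary from Proposition \ref{prop:support} after identifying its $Y^1$ and $Y^2$ with the objects named in the corollary. Since $\zeta^*$ is $\F^2$-adapted one has $\E[1-\zeta^*_{\theta-}|\cF^1_\theta] = 1-\zeta^*_{\theta-}$; combined with the decomposition $V^{*,1}(\theta) = \indd{\cJ=0} U^0(\theta_0) + \indd{\cJ=1} U^1(\theta_1)$ and \eqref{eqn:ui}, the uniqueness of the optional aggregator in Theorem \ref{thm:value} (two $\F^1$-optional processes agreeing at every $\F^1$-stopping time are indistinguishable) identifies
\[
\hat V^{*,1}_t = (1-\zeta^*_{t-})\bigl(\indd{\cJ=0} U^0_t + \indd{\cJ=1} U^1_t\bigr).
\]
Because $f^\cJ_t(1-\zeta^*_t)$ and $h^\cJ_t\Delta\zeta^*_t$ are $\F^1$-adapted, their $\F^1$-optional projections coincide with themselves, so after applying $(1-\zeta^*_{t-}) = (1-\zeta^*_t) + \Delta\zeta^*_t$ the process $Y^1$ of Proposition \ref{prop:support} becomes $\indd{\cJ=0} Z^0_t + \indd{\cJ=1} Z^1_t$ as claimed. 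For Player 2, $\hat V^{*,2}_t = \langle \pi, 1-\xi^*_{t-}\rangle V_t$ via \eqref{eqn:V_process} and the identity $\E[1-\xi^*_{\gamma-}|\cF^2_\gamma] = \langle \pi, 1-\xi^*_{\gamma-}\rangle$, while the independence of $\cJ$ from $\cF^2_T$ makes the $\F^2$-optional projections of $g^\cJ_t(1-\xi^*_t)$ and $h^\cJ_t\Delta\xi^*_t$ equal to $\langle \pi, g_t(1-\xi^*_t)\rangle$ and $\langle \pi, h_t\Delta\xi^*_t\rangle$; therefore the $Y^2$ of Proposition \ref{prop:support} matches the $Y^2$ defined in the corollary.

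With these identifications, Proposition \ref{prop:support}(i) directly gives $Y^2_t \ge 0$ and, from $Y^1_t \le 0$, the inequality $Z^i_t \le 0$ on the event $\{\cJ=i\}$ for $i=0,1$. To upgrade to $Z^i_t \le 0$ on all of $\Omega$ note that each $Z^i_t$ is $\cF^2_T$-measurable (because $U^i$, $f^i$, $h^i$ and $\zeta^*$ are $\F^2$-adapted), so $\{Z^i_t > 0\} \in \cF^2_T$ is independent of $\{\cJ=i\}$ and $\P(Z^i_t > 0)\P(\cJ=i) = 0$ forces $\P(Z^i_t > 0) = 0$ (the degenerate cases $\pi\in\{0,1\}$ reduce the game to a single incarnation and are trivial).

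For the integral identities, Proposition \ref{prop:support}(ii) gives $\int_{[0,T]} Y^1_t \ud \xi^*_t = 0$ $\P$-a.s., which via $\ud \xi^*_t = \indd{\cJ=0}\ud \xi^{*,0}_t + \indd{\cJ=1}\ud \xi^{*,1}_t$ yields
\[
\indd{\cJ=0}\int_{[0,T]} Z^0_t \ud \xi^{*,0}_t + \indd{\cJ=1}\int_{[0,T]} Z^1_t \ud \xi^{*,1}_t = 0,\qquad \P\text{-a.s.}
\]
Each integral is $\cF^2_T$-measurable, so the same independence argument shows that the two terms vanish $\P$-a.s.\ separately; adding them gives the first stated identity. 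The equation $\int_{[0,T]} Y^2_t \ud \zeta^*_t = 0$ is the second half of Proposition \ref{prop:support}(ii) read verbatim. The only delicate step is the first-paragraph identification of $\hat V^{*,1}$ as an \emph{optional process} (and not merely as a family of random variables at stopping times); once that is in hand, the rest is algebra of optional projections together with the independence of $\cJ$ from $\cF^2_T$.
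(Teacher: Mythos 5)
Your proposal is correct and follows the route the paper intends: Corollary \ref{cor:acting} is the specialisation of Proposition \ref{prop:support} obtained by identifying its abstract $Y^1,Y^2$ with the concrete expressions of Section \ref{sec:partial}, via the optional aggregation $\hat V^{*,1}_t=(1-\zeta^*_{t-})(\ind_{\{\cJ=0\}}U^0_t+\ind_{\{\cJ=1\}}U^1_t)$, $\hat V^{*,2}_t=\langle\pi,1-\xi^*_{t-}\rangle V_t$ and the explicit computation of the optional projections. Your use of the independence of $\cJ$ from $\cF^2_T$ to upgrade $Z^i_t\le 0$ (and the vanishing of each integral) from the event $\{\cJ=i\}$ to all of $\Omega$ is exactly the detail the paper leaves implicit, and it is handled correctly.
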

To better appreciate the conclusions of the above corollary, it is useful to assume that $(\xi^{*}_t)_{t\in[0,T]}$ and $(\zeta^*_t)_{t\in[0,T]}$ do not jump simultaneously for $t < T$. This is to be expected if $f^i_t> h^i_t > g^i_t$ for every $t \in [0, T)$: intuitively, a simultaneous jump at time $t\in[0,T)$ corresponds to the players stopping simultaneously at time $t$ with some probability; that yields a payoff $h^i_t$; each player would then prefer to delay her own jump, in order to score the more preferable payoff $f^i_t$ (for the maximiser) or $g^i_t$ (for the minimiser). 
In the absence of simultaneous jumps, the statement of Corollary \ref{cor:acting} can be rewritten in a more intuitive way as:
\begin{equation}\label{eqn:act1}
\begin{aligned}
&\int_{[0,T)} (U^i_t - f^i_t)(1-\zeta^*_t) \ud \xi^{*,i}_t = 0,\\
&(U^i_T - h^i_T)\Delta \zeta^*_T \Delta \xi^{*, i}_T = 0, 
\end{aligned}
\end{equation}
for $i=0,1$, and
\begin{equation}\label{eqn:act2}
\begin{aligned}
&\int_{[0,T)} \langle \pi, (1-\xi^{*}_{t-})\rangle \big( V_t - \langle p_{t}, g_t \rangle \big) \ud \zeta^*_t = 0,\\
&\langle \pi, (1-\xi^{*}_{T-})\rangle \big(V_T - \langle p_T, h_T \rangle\big) \Delta \zeta^*_T = 0.
\end{aligned}
\end{equation}
Equation \eqref{eqn:act2} shows that the uninformed player acts only when her value process $(V_t)_{t\in[0,T]}$ coincides with the {\em believed} payoff $\langle p_t, g_t \rangle$. Moreover, if the game has not ended by time $T$, it must be $V_T=\langle p_T, h_T \rangle$. Similarly, equation \eqref{eqn:act1} shows that the $i$-th incarnation of the informed player acts when either her value process $(U^i_t)_{t\in[0,T]}$ coincides with her payoff $(f^i_t)_{t\in[0,T]}$, or at the terminal time $T$.

We now proceed to show an interesting feature of the solution of games with asymmetric information: the equilibrium value process of the opponent determines a player's behaviour. To this end, we refer the reader to Corollary \ref{cor:support} and proceed to define objects appearing there. Let $\bar U^i_t = (1-\zeta^{*}_{t-}) U^i_t$, $i=0,1$, and $\bar V_t = \langle p_t, (1-\xi^{*}_{t-}) \rangle V_t$. The optional projections in \eqref{eq:BVass} are related to the processes
\[
t \mapsto \int_{[0, t)} g^i_s \ud \zeta^*_s, \quad i=0,1, \qquad \text{and} \qquad t \mapsto \int_{[0,t)} \langle p_t, f_s \ud \xi^{*}_s\rangle,
\]
and assumed to be continuous. According to Corollary \ref{cor:support}, the previsible bounded variation process in the semimartingale decomposition of each of the processes $(\bar U^i_{t \wedge \tau_*(z)})$ is equal to $-\int_{[0, t \wedge \tau_*(z))} g^i_s \ud \zeta^*_s$ for any $z \in [0,1)$, i.e., value processes of each incarnation of the informed player determine the optimal strategy of the uninformed player. Analogously, the previsible bounded variation process in the semimartingale decomposition of the value process $(\bar V_{t \wedge \sigma_*(z)})$ of the uninformed player equals $-\int_{[0,t \wedge \sigma_*(z))} \langle p_t, f_s \ud \xi^{*}_s \rangle$. In full generality, this may not be sufficient to find strategies of both incarnations of the informed player -- particularly because processes $U^0$, $U^1$ and $V$ are not known explicitly and, if they were, there may be multiple ways of choosing $(\xi^*,\zeta^*)$ so as to satisfy the requirements of Corollary \ref{cor:support}. However, this information can be combined with the supports of those strategies, see \eqref{eqn:act1}, to identify the incarnation/player who needs to act at a given time $t$. 
\smallskip

\textbf{Sufficient conditions.} We first rewrite assumptions of Theorem \ref{thm:suff_saddle_mart} explicitly in the form of the next corollary. \begin{corollary}\label{cor:saddle_mart}
Let $(\hat U^0_t)_{t\in[0,T]}$, $(\hat U^1_t)_{t\in[0,T]}$, $(\hat V_t)_{t\in[0,T]}$ be $\F^2$-progressively measurable processes, let $\hat \xi^0, \hat \xi^1, \hat\zeta \in \cAcirc_0(\F^2)$ and denote by $\hat p_t$ an analogue of \eqref{eq:ppos} with $\hat\xi^j$ in place of $\xi^{*,j}$. Set
\begin{align*}
\hat M^{0;i}_{t} &=  \int_{[0, t)} g^i_s \ud \hat\zeta_s + (1-\hat\zeta_{t-}) \hat U^i_{t},\\
\hat N^0_t &= \sum_{i=0}^1 \pi_i \Big( \int_{[0, t)} f^i_s \ud \hat\xi^{i}_s + (1-\hat\xi^{i}_{t-}) \hat V_t \Big) = \int_{[0, t)} \langle \pi, f_s \ud \hat\xi_s \rangle + \langle \pi, 1-\hat\xi_{t-} \rangle \hat V_t.
\end{align*}

Assume that
\begin{enumerate}[(i)]
\item the process $(\hat M^{0;i}_{t})_{t\in[0,T]}$ is an $\F^2$-submartingale for $i=0,1$,
\item the process $(\hat N^0_t)_{t\in[0,T]}$ is an $\F^2$-supermartingale,
\item for $i=0,1$, it holds $\P$-a.s.,
\[
f^i_t+(h^i_t-f^i_t)\frac{\Delta\hat\zeta_t}{1-\hat\zeta_{t-}} \ge\hat  U^i_t,\quad \text{for all $t \in [0, T]$ such that $\hat \zeta_{t-}<1$}, 
\]
\item it holds $\P$-a.s.,
\[
\langle \hat p_t, g_t\rangle +\frac{\langle \pi,(h_t-g_t)\Delta\hat\xi_t\rangle }{\langle \pi, 1-\hat\xi_{t-}\rangle } \le\hat V_t,\quad\text{for all $t \in [0, T]$ such that $\langle \pi, 1-\hat\xi_{t-}\rangle>0$ },
\] 
\item $\hat V_0 = \langle \pi, \hat U_0 \rangle$.
\end{enumerate}
Then the value of the game equals $\hat V_0$ and a saddle point is given by $((\hat\xi^0, \hat\xi^1), \hat \zeta)$.
\end{corollary}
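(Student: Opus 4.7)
My plan is to apply Theorem \ref{thm:suff_saddle_mart} with the explicit candidate processes
\[
\cU^1_t \coloneqq \indd{\cJ=0}\hat U^0_t + \indd{\cJ=1}\hat U^1_t,\qquad \cU^2_t \coloneqq \hat V_t,
\]
and the generating pair $\hat\xi_t \coloneqq \hat\xi^0_t\indd{\cJ=0}+\hat\xi^1_t\indd{\cJ=1} \in \cAcirc_0(\F^1)$, $\hat\zeta \in \cAcirc_0(\F^2)$ as given. Progressive measurability is immediate because $\cJ$ is $\cF^1_0$-measurable and the processes $\hat U^i$ are $\F^2$-progressively measurable (hence $\F^1$-progressively measurable), while $\cU^2$ is $\F^2$-progressively measurable by hypothesis. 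It then suffices to verify the five conditions of Theorem \ref{thm:suff_saddle_mart}.

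For conditions (i) and (ii), I would first rewrite $\hat M^0(\theta)$ and $\hat N^0(\gamma)$ in this setting. Decomposing $\theta \in \cT_0(\F^1)$ as $\theta = \theta_0\indd{\cJ=0}+\theta_1\indd{\cJ=1}$ with $\theta_i \in \cT_0(\F^2)$ (cf.\ Lemma \ref{lem:suffcdual2}), and using that $g^i$ and $\hat\zeta$ are $\F^2$-adapted, one obtains $\hat M^0(\theta) = \indd{\cJ=0}\hat M^{0;0}_{\theta_0}+\indd{\cJ=1}\hat M^{0;1}_{\theta_1}$. The $\cT_0(\F^1)$-submartingale property of $\hat{\bf M}^0$ thus follows from the $\F^2$-submartingale property of each $\hat M^{0;i}$, i.e.\ from assumption (i) of the corollary. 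Analogously, using the independence of $\cJ$ and $\cF^2_T$ to integrate out $\cJ$ when conditioning on $\cF^2_\gamma$, one gets $\hat N^0(\gamma) = \hat N^0_\gamma$, whose $\cT_0(\F^2)$-supermartingale property is precisely assumption (ii).

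The main technical step is conditions (iii) and (iv), which require pathwise manipulation of $\F^1$- and $\F^2$-optional projections. Since $\cJ$ is $\cF^1_0$-measurable and $f^i, h^i, \hat\zeta$ are $\F^2$-adapted, the $\F^1$-optional projection acts as the identity and condition (iii) of Theorem \ref{thm:suff_saddle_mart} reduces, separately on $\{\cJ=0\}$ and $\{\cJ=1\}$, to
\[
f^i_t(1-\hat\zeta_t)+h^i_t\Delta\hat\zeta_t \ge (1-\hat\zeta_{t-})\hat U^i_t.
\]
This is trivially satisfied on $\{\hat\zeta_{t-}=1\}$ and, after dividing by $1-\hat\zeta_{t-}$ and using $(1-\hat\zeta_t)/(1-\hat\zeta_{t-}) = 1-\Delta\hat\zeta_t/(1-\hat\zeta_{t-})$, is equivalent to assumption (iii) on $\{\hat\zeta_{t-}<1\}$. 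For (iv), the $\F^2$-optional projection integrates out $\cJ$ and inserts factors $\langle\pi,\cdot\rangle$; the identity $\langle\pi, g_t(1-\hat\xi_{t-})\rangle/\langle\pi, 1-\hat\xi_{t-}\rangle = \langle \hat p_t, g_t\rangle$ on the set $\{\langle\pi,1-\hat\xi_{t-}\rangle>0\}$ then reduces (iv) to assumption (iv), while on the complementary set both sides of (iv) vanish.

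Condition (v) is immediate: taking expectation of the pathwise identity $\hat V_0 = \langle \pi, \hat U_0\rangle$ gives $\E[\cU^2_0] = \E[\hat V_0] = (1-\pi)\E[\hat U^0_0]+\pi\E[\hat U^1_0] = \E[\cU^1_0]$. Theorem \ref{thm:suff_saddle_mart} then yields that $(\hat\xi,\hat\zeta)$ generates a saddle point and the value of the game equals $\E[\hat V_0] = \hat V_0$, the last equality because $\hat V_0$ is $\cF^2_0$-measurable and the setting of Section \ref{sec:partial} makes $\cF^2_0$ trivial. I anticipate the main obstacle to be the clean book-keeping across the events $\{\hat\zeta_{t-}=1\}$ and $\{\langle\pi,1-\hat\xi_{t-}\rangle=0\}$, to ensure that the optional-projection inequalities (iii) and (iv) of Theorem \ref{thm:suff_saddle_mart} hold for \emph{all} $t\in[0,T]$ and not merely on the active part of the game, which is where assumptions (iii) and (iv) of the corollary are formulated.
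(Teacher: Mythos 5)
Your proposal is correct and follows exactly the route the paper intends: the paper justifies Corollary \ref{cor:saddle_mart} by the one-line remark that ``their sufficiency is justified by Theorem \ref{thm:suff_saddle_mart}'', and your verification of its five hypotheses — via the decomposition $\hat\xi=\hat\xi^0\ind_{\{\cJ=0\}}+\hat\xi^1\ind_{\{\cJ=1\}}$, $\cU^1_t=\ind_{\{\cJ=0\}}\hat U^0_t+\ind_{\{\cJ=1\}}\hat U^1_t$, the independence of $\cJ$ from $\cF^2_T$ to compute the $\F^2$-optional projections as $\langle\pi,\cdot\rangle$-averages, and the algebraic reduction of the projection inequalities on $\{\hat\zeta_{t-}<1\}$ and $\{\langle\pi,1-\hat\xi_{t-}\rangle>0\}$ — is the computation the paper leaves implicit (and carries out in its ``Martingale characterisation'' and ``Support of optimal strategies'' paragraphs for the necessary direction). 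No gaps.
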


We draw the attention of the reader to the fact that the above conditions are both \emph{necessary and sufficient}. Their sufficiency is justified by Theorem \ref{thm:suff_saddle_mart}. The necessity of (i)-(ii) is shown in Proposition \ref{thm:aggr1}. The necessity of (iii) and (iv) is by Corollary \ref{cor:acting}. Condition (v) is a consequence of \eqref{eqn:p_representation} with $\gamma = 0$ upon noticing that $p_0 = \pi$. By Corollary \ref{cor:acting}, one can see that the process $(\hat\xi^i_t)$ increases only when the inequality in (iii) is an equality. Similarly, the process $(\hat \zeta_t)$ increases only when there is equality in (iv). 

We leave the adaptation of Theorem \ref{thm:suff_saddle} to the reader and rephrase only Theorem \ref{thm:suff_saddle_stop}.
\begin{corollary}\label{cor:saddle_stop}
Let $\hat U^0_0, \hat U^1_0, \hat V_0 \in \R$ and $\hat \xi^0, \hat \xi^1, \hat\zeta \in \cAcirc_0(\F^2)$. Assume that
\begin{enumerate}[(i)]
 \item for any $i=0,1$ and any $\tau_i \in \cT(\F^2)$, we have
\begin{equation}\label{eqn:s3}
\E\Big[f^i_{\tau_i}(1-\hat\zeta_{\tau_i}) + \int_{[0, {\tau_i})} g^i_s \ud \hat\zeta_s + h^i_{\tau_i} \Delta \hat\zeta_{\tau_i} \Big] \ge \hat U^i_0,
\end{equation}
 \item for any $\sigma \in \cT(\F^2)$, we have
\begin{equation}\label{eqn:s4}
\E\Big[\int_{[0, \sigma)} \langle \pi, f_s \ud \hat\xi_s \rangle + \langle \pi, g_\sigma (1-\hat\xi_\sigma) + h_\sigma \Delta \hat\xi_\sigma\rangle \Big] \le \hat V_0,
\end{equation}
\item $\langle \pi, \hat U_0 \rangle = \hat V_0$.
\end{enumerate}
Then the value of the game is $\hat V_0$ and a saddle point is given by $((\hat\xi^0, \hat\xi^1), \hat \zeta)$.
\end{corollary}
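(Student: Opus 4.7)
The strategy is to derive Corollary \ref{cor:saddle_stop} as a direct application of Theorem \ref{thm:suff_saddle_stop}, exploiting the product structure of $(\Omega, \cF, \P)$ and the decompositions of $\F^1$-measurable objects induced by the random variable $\cJ$. I set
\[
\cU^1_0 = \hat U^0_0 \ind_{\{\cJ=0\}} + \hat U^1_0 \ind_{\{\cJ=1\}}, \qquad \cU^2_0 = \hat V_0,
\]
noting that $\cU^1_0$ is $\cF^1_0$-measurable (since $\sigma(\cJ) \subseteq \cF^1_0$) and $\cU^2_0$ is trivially $\cF^2_0$-measurable (as $\cF^2_0$ is $\P$-trivial). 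I also take the candidate saddle point to be generated by $\hat\xi = \hat\xi^0 \ind_{\{\cJ=0\}} + \hat\xi^1 \ind_{\{\cJ=1\}} \in \cAcirc_0(\F^1)$ and $\hat\zeta \in \cAcirc_0(\F^2)$.

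To verify condition (i) of Theorem \ref{thm:suff_saddle_stop}, any $\tau \in \cT_0(\F^1)$ admits a decomposition $\tau = \tau_0 \ind_{\{\cJ=0\}} + \tau_1 \ind_{\{\cJ=1\}}$ with $\tau_0, \tau_1 \in \cT_0(\F^2)$, as recalled after \eqref{eq:decomp1}. Since $\hat\zeta \in \cAcirc_0(\F^2)$, on $\{\cJ = i\}$ one has $f_\tau = f^i_{\tau_i}$, $\hat\zeta_\tau = \hat\zeta_{\tau_i}$, $\int_{[0,\tau)} g_s \ud \hat\zeta_s = \int_{[0,\tau_i)} g^i_s \ud \hat\zeta_s$ and $h_\tau \Delta \hat\zeta_\tau = h^i_{\tau_i} \Delta \hat\zeta_{\tau_i}$, so the integrand is $\cF^2_T$-measurable on each event $\{\cJ=i\}$. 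Since $\cJ$ is independent of $\cF^2_T$ and $\cF^2_0$ is trivial, the $\cF^1_0$-conditional expectation restricted to $\{\cJ=i\}$ reduces to the unconditional expectation, and assumption (i) of the corollary (applied to both $\tau_0$ and $\tau_1$) yields exactly
\[
\E\Big[f_\tau(1-\hat\zeta_\tau) + \int_{[0,\tau)} g_s \ud \hat\zeta_s + h_\tau \Delta\hat\zeta_\tau \Big|\cF^1_0\Big] \ge \cU^1_0.
\]

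For condition (ii) of Theorem \ref{thm:suff_saddle_stop}, let $\sigma \in \cT_0(\F^2)$ and use the decomposition of $\hat\xi$ and of the payoffs to write $\int_{[0,\sigma)} f_s \ud \hat\xi_s + g_\sigma(1-\hat\xi_\sigma) + h_\sigma \Delta\hat\xi_\sigma = \sum_{i=0}^1 \ind_{\{\cJ=i\}} X^i_\sigma$ where each $X^i_\sigma$ is $\cF^2_T$-measurable. Taking unconditional expectations (since $\cF^2_0$ is trivial) and using $\P(\cJ=i) = \pi_i$ together with the independence of $\cJ$ from $\cF^2_T$, I obtain $\sum_i \pi_i \E[X^i_\sigma] = \E[\int \langle \pi, f_s \ud \hat\xi_s\rangle + \langle \pi, g_\sigma(1-\hat\xi_\sigma) + h_\sigma \Delta\hat\xi_\sigma\rangle]$, and assumption (ii) of the corollary gives $\le \hat V_0 = \cU^2_0$. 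Condition (iii) of Theorem \ref{thm:suff_saddle_stop} is immediate: $\E[\cU^1_0] = \pi \hat U^1_0 + (1-\pi) \hat U^0_0 = \langle \pi, \hat U_0\rangle = \hat V_0 = \E[\cU^2_0]$ by assumption (iii) of the corollary.

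All hypotheses of Theorem \ref{thm:suff_saddle_stop} being verified, its conclusion yields immediately that the game has value $\E[\cU^2_0] = \hat V_0$ and that $(\hat\tau, \hat\sigma)$ generated by $((\hat\xi^0, \hat\xi^1), \hat\zeta)$ is a saddle point. The only step requiring care is the measurability bookkeeping in (i)--(ii): the $\F^1$-stopping times and $\F^1$-generating processes must be decomposed into their $\F^2$-components indexed by $\cJ$, after which the independence of $\cJ$ and $\cF^2_T$ does all the work in passing from $\cF^1_0$-conditional expectations to plain expectations.
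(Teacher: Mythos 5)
Your proposal is correct and follows exactly the route the paper intends: Corollary \ref{cor:saddle_stop} is stated as a direct rephrasing of Theorem \ref{thm:suff_saddle_stop}, and your choices $\cU^1_0 = \hat U^0_0 \ind_{\{\cJ=0\}} + \hat U^1_0 \ind_{\{\cJ=1\}}$, $\cU^2_0 = \hat V_0$, together with the decomposition of $\F^1$-stopping times and generating processes into their $\F^2$-components indexed by $\cJ$ (Lemma \ref{lem:rv_decomposition} with $\theta=0$ and the triviality of $\cF^2_0$), verify all three hypotheses of that theorem. No gaps.
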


\begin{remark}
It is not difficult to generalise this discussion to the case of two random variables $\cJ$, $\cK$, taking values in two finite sets $J$, $K$, and to payoff processes $f^{j,k},g^{j,k},h^{j,k}$, $(j,k)\in J\times K$. We could assume that all processes are adapted to a filtration $\mathbb{M}=(\cM_t)_{t\in[0,T]}$, that Player 1 has access to the filtration $\cF^1_t=\cM_t\vee\sigma(\cJ)$ and Player 2 has access to the filtration $\cF^2_t=\cM_t\vee\sigma(\cK)$. All the considerations made above would continue to hold, up to using a heavier notation.
\end{remark}

\subsection{Partially observed dynamics}\label{sec:partdyn}
Let $(\Omega,\cF,\P)$ be a probability space supporting a Brownian motion $(W_t)_{t\in[0,T]}$, players' randomisation devices and an independent random variable $\cJ\in\{0,1\}$ with $\pi=\P(\cJ=1)$. The probability space is assumed to have a product structure that allows us to apply results of Appendix \ref{sec:optionalproj}, see also Subsection \ref{sec:partial} for a similar construction. Denote by $\F^W\coloneqq(\cF^W_t)_{t\in[0,T]}$ the filtration generated by $(W_t)_{t\in[0,T]}$ augmented with $\P$-null sets. Set $\cF_t=\sigma(\cJ)\vee\cF^W_t$, so $\F$ also satisfies the usual conditions. Let $\mu_0,\mu_1$ and $\sigma$ be sufficiently regular functions so that $(X_t)_{t\in[0,T]}$ is the unique $\F$-adapted solution of the SDE on $\R$,
\[
X_t=x+\int_0^t\mu_\cJ(X_s)\ud s+\int_0^t\sigma(X_s)\ud W_s,\quad t\in[0,T].
\] 
The existence of a strong solution means that there is a measurable map $\Gamma:[0,T]\times C([0,T])\times \{0,1\}\to \R$ such that 
\begin{align}\label{eq:Xmap}
X_t(\omega)=\Gamma(t,W_{\cdot\wedge t}(\omega),\cJ(\omega)),\quad \text{for $(t,\omega)\in[0,T]\times\Omega$,}
\end{align}
see, e.g., \cite[Ch. IV, Thm. 3.2]{ikeda}.

Sometimes we denote $X=X^\cJ$ in order to emphasise the role played by $\cJ$ in determining the dynamics of $X$. In particular, for $j=0,1$, on $\{\cJ=j\}$ we have $X=X^j$, where $X^j$ is the solution of the SDE above with $\mu_\cJ$ replaced by $\mu_j$. Then, 
\begin{align}\label{eq:Xj}
X^j_t(\omega)=\Gamma(t,W_{\cdot\wedge t}(\omega),j)\eqqcolon \Gamma^j(t,W_{\cdot\wedge t}(\omega)),\quad\text{for $(t,\omega)\in[0,T]\times\Omega$.}
\end{align}
We assume that the $\P$-augmentation of the filtration $\F^{X^j}$ generated by $(X^j_t)_{t\in[0,T]}$ equals $\F^W$ -- this is guaranteed when $\sigma$ is locally uniformly non-degenerate. 

In this framework, Player 2, dubbed the \emph{uninformed player}, only observes the process $(X_t)_{t\in[0,T]}$, which translates formally into the filtration $\cF^2_t=\sigma(X_s,s\le t)$ (also augmented with $\P$-null sets). Player 1, called the \emph{informed player}, additionally observes $\cJ$, i.e., has access to the filtration $\cF^1_t=\cF^2_t\vee\sigma(\cJ) = \cF_t$, where the last equality follows from the assumption that the filtration generated by each $(X^j_t)_{t\in[0,T]}$ equals $\F^W$.

Any admissible strategy $\xi\in\cAcirc_0(\F^1)$ for the informed player decomposes as (cf.\ Lemma \ref{lem:suffcdual2}) $\xi_t=\xi^0_t \ind_{\{\cJ=0\}}+\xi^1_t \ind_{\{\cJ=1\}}$, with $\xi^j\in\cAcirc_0(\F^W)$ for $j=0,1$. Similarly, a stopping time $\theta\in\cT_0(\F^1)$ decomposes as $\theta=\theta_0 \ind_{\{\cJ=0\}}+\theta_1 \ind_{\{\cJ=1\}}$ with $\theta_0,\theta_1\in\cT_0(\F^W)$. By the right continuity of $(\xi^j_t)_{t\in[0,T]}$ there is a measurable map $\Xi^j:[0,T]\times C([0,T])\to \R$ such that $\xi^j_t(\omega)=\Xi^j(t,W_{\cdot\wedge t}(\omega))$ for $(t,\omega)\in[0,T]\times\Omega$. Due to the equality $\F^W = \F^{X^j}$, there is a measurable map $\tilde \Xi^j:[0,T]\times C([0,T])\to \R$ such that $\xi^j_t(\omega)=\tilde \Xi^j(t,X^j_{\cdot\wedge t}(\omega))$. We further have
\begin{equation}\label{eqn:tildexi_def}
\indd{\cJ=j} \xi^j_t= \indd{\cJ=j}\tilde \Xi^j(t,X^j_{\cdot\wedge t}) = \indd{\cJ=j} \tilde \Xi^j(t,X_{\cdot\wedge t}) = \indd{\cJ=j} \tilde \xi^j_t,
\end{equation}
where $\tilde \xi^j_t \coloneqq \tilde \Xi^j(t,X_{\cdot\wedge t})$, so $(\tilde \xi^j_t)_{t\in[0,T]}$ can be computed based on the observation of the process $(X_t)_{t\in[0,T]}$. In other words, $\tilde\xi^j \in \cAcirc_0(\F^2)$,
which will prove useful in the derivation of $\Pi^{*,2}$ and the value $V^{*,2}$ of Player 2. The processes $\tilde \xi^j$, $j=0,1$, can be interpreted as the result of the uninformed player's calculation, while observing the process $X$ under the {\em conviction} that $\cJ = j$. Similarly, for $j=0,1$ we can introduce $\tilde\theta_j\in\cT_0(\F^2)$ such that $\indd{\cJ=j}\theta_j=\indd{\cJ=j}\tilde\theta_j$. The construction of $\tilde \theta_j$ is similar to the construction of $\tilde \xi^j$ by considering the process $t\mapsto \indd{\theta_j \le t} \in \cAcirc_0(\F^W)$.

To derive a useful relationship between $\xi^j$ and $\tilde \xi^j$, we define probability measures $\P^0$ and $\P^1$ by
\begin{align}\label{eq:Girs1}
\frac{\ud \P^0}{\ud \P}=\frac{1}{1-\pi}\ind_{\{\cJ=0\}}\quad\text{and}\quad \frac{\ud \P^1}{\ud \P}=\frac{1}{\pi}\ind_{\{\cJ=1\}},
\end{align}
and recall the following formulae for the conditional expectation under the change of measure: for a $\P$-integrable random variable $Y$
\begin{align}\label{eq:Girs2}
\begin{aligned}
\E^0[Y|\cF^2_\gamma]&=\frac{(1-\pi)^{-1}\E[\ind_{\{\cJ=0\}}Y|\cF^2_\gamma]}{(1-\pi)^{-1}\E[\ind_{\{\cJ=0\}}|\cF^2_\gamma]}=\frac{1}{1-\psi_\gamma}\E\big[Y\ind_{\{\cJ=0\}}\big|\cF^2_\gamma\big],\\
\E^1[Y|\cF^2_\gamma]&=\frac{\pi^{-1}\E[\ind_{\{\cJ=1\}}Y|\cF^2_\gamma]}{\pi^{-1}\E[\ind_{\{\cJ=0\}}|\cF^2_\gamma]}=\frac{1}{\psi_\gamma}\E\big[Y\ind_{\{\cJ=1\}}\big|\cF^2_\gamma\big],
\end{aligned}
\end{align}
where $\psi_t \coloneqq \P(\cJ=1|\cF^2_t)=\E[\cJ|\cF^2_t]$ is the co-called posterior process, i.e., Player 2's best estimate of the value of $\cJ$ based upon the observation of the process $X$. This allows us to obtain the following identities:
\begin{equation}\label{eqn:tildexi_id}
\begin{aligned}
\psi_t \tilde \xi^1_t &=\E\big[\ind_{\{\cJ=1\}}\tilde \xi^1_t\big|\cF^2_t\big]=\E\big[\ind_{\{\cJ=1\}} \xi^1_t\big|\cF^2_t\big]=\psi_t\E^1\big[\xi^1_t\big|\cF^2_t\big],\\
(1-\psi_t) \tilde \xi^0_t &=\E\big[\ind_{\{\cJ=0\}}\tilde \xi^0_t\big|\cF^2_t\big]=\E\big[\ind_{\{\cJ=0\}} \xi^0_t\big|\cF^2_t\big]=(1-\psi_t)\E^0\big[\xi^0_t\big|\cF^2_t\big].
\end{aligned}
\end{equation}
Hence we have the relationship
\begin{equation}\label{eqn:tildexi}
\tilde \xi^j_t=\E^j\big[\xi^j_t\big|\cF^2_t\big]\quad \text{for $j=0,1$.}
\end{equation}

Before specifying the payoff functions, we make some considerations about the structure of the belief process for the uninformed player.
Much of the analysis in this section repeats steps from the study in Section \ref{sec:partial} and we therefore omit some details. 
\smallskip

\textbf{Belief process.} In this framework direct observation of the process $X$ is informative about the nature of the true drift and the belief process for the uninformed player takes a different form compared to the previous example (where each player could only learn from the actions of her opponent -- or rather the lack thereof). Given an optimal pair $(\xi^*,\zeta^*)\in \cAcirc_0(\F^1)\times\cAcirc_0(\F^2)$ and using $\indd{\cJ=j}\xi^j_t=\indd{\cJ=j}\tilde\xi^j_t$ with \eqref{eqn:tildexi}, the processes defined in \eqref{eq:Pi} read (recall the convention $0/0=1$)
\begin{align*}
\begin{aligned}
&\Pi^{*,1}_\theta = 1,\quad\theta\in\cT_0(\F^1),\\
&\Pi^{*,2}_\gamma=\frac{(1-\tilde\xi^{*,0}_{\gamma-})}{\psi_{\gamma}(1-\tilde\xi^{*,1}_{\gamma-})+(1-\psi_{\gamma})(1-\tilde\xi^{*,0}_{\gamma-})}\ind_{\{\cJ=0\}}\\
&\qquad\quad+\frac{(1-\tilde\xi^{*,1}_{\gamma-})}{\psi_{\gamma}(1-\tilde\xi^{*,1}_{\gamma-})+(1-\psi_{\gamma})(1-\tilde\xi^{*,0}_{\gamma-})}\ind_{\{\cJ=1\}}\quad\gamma\in\cT_0(\F^2).
\end{aligned}
\end{align*}
The dynamics of the process $(\psi_t)_{t\in[0,T]}$ can be computed explicitly and the two dimensional process $(X_t,\psi_t)_{t\in[0,T]}$ is ruled, under the measure $\P$, by the SDE, for $t\in[0,T]$
\begin{align}\label{eq:SDEXpsi}
\begin{aligned}
X_t&=x+\int_0^t\big(\mu_0(X_s)(1-\psi_s)+\mu_1(X_s)\psi_s\big)\ud s+\int_0^t \sigma(X_s)\ud B_s,\\
\psi_t&=\psi_0+\int_0^tw(\psi_s)\psi_s(1-\psi_s)\ud B_s,
\end{aligned}
\end{align}
where $w(z)=(\mu_1(z)-\mu_0(z))/\sigma(z)$ is the signal-to-noise ratio and $(B_t)_{t\in[0,T]}$ is a $(\F^2,\P)$-Brownian motion (the so-called innovation process) defined as:
\[
B_t\coloneqq\int_0^t\frac{\ud X_s}{\sigma(X_s)}-\int_0^t\frac{\mu_0(X_s)+(\mu_1(X_s)-\mu_0(X_s))\psi_s}{\sigma(X_s)}\ud s.
\]

In line with the previous section we now introduce a {\em belief process}, which features both the learning from observation of $X$ (via the process $\psi$) and learning from the actions of the informed player. That is, setting
\begin{align}\label{eq:pt}
p_\gamma=\frac{\psi_\gamma(1-\tilde\xi^{*,1}_{\gamma-})}{\psi_{\gamma}(1-\tilde\xi^{*,1}_{\gamma-})+(1-\psi_{\gamma})(1-\tilde\xi^{*,0}_{\gamma-})},\quad\gamma\in\cT_0(\F^2),
\end{align}
we express $\Pi^{*,2}_\gamma$ on $\{\tilde\xi^{*,0}_{\gamma-}\wedge\tilde\xi^{*,1}_{\gamma-}<1\}$ as
\[
\Pi^{*,2}_\gamma = \frac{1-p_\gamma}{1-\psi_\gamma}\ind_{\{\cJ=0\}}+\frac{p_\gamma}{\psi_\gamma}\ind_{\{\cJ=1\}},\quad\gamma\in\cT_0(\F^2).
\]
Notice that $p_0=\psi_0=\P(\cJ=1)=\pi$ and we have a clear interpretation of the belief process $(p_t)_{t\in[0,T]}$ as the simple calculations below demonstrate: letting $Z\sim U([0,1])$ be Player 1's randomisation device, we have on the event $\{\tilde\xi^{*,0}_{\gamma-}\wedge\tilde\xi^{*,1}_{\gamma-}<1\}$
\begin{align*}
\frac{\P\big(\cJ=1,\tau_*\ge \gamma\big|\cF^2_\gamma\big)}{\P\big(\tau_*\ge \gamma\big|\cF^2_\gamma\big)}
&=\frac{\P\big(\cJ=1,\xi^{*,1}_{\gamma-} \le  Z\big|\cF^2_\gamma\big)}{\P\big(\cJ=1,\xi^{*,1}_{\gamma-} \le  Z\big|\cF^2_\gamma\big)+\P\big(\cJ=0,\xi^{*,0}_{\gamma-} \le  Z\big|\cF^2_\gamma\big)}\\
&=\frac{\E\big[\ind_{\{\cJ=1\}}(1-\xi^{*,1}_{\gamma-})\big|\cF^2_\gamma\big]}{\E\big[\ind_{\{\cJ=1\}}(1-\xi^{*,1}_{\gamma-})\big|\cF^2_\gamma\big]+\E\big[\ind_{\{\cJ=0\}}(1-\xi^{*,0}_{\gamma-})\big|\cF^2_\gamma\big]}=p_\gamma,
\end{align*}
where the second equality holds because $Z$ is independent of $\cF^2_\gamma$ and the final one holds because of \eqref{eqn:tildexi}. The above shows that the process $(p_t)_{t\in[0,T]}$ is Player 2's posterior probability of $\cJ=1$ based on the observation of $(X_t)_{t\in[0,T]}$ and on the fact that the game has not ended prior to time $t\in[0,T]$. 
\medskip

\textbf{Equilibrium value processes.} We assume that the payoff processes $(f_t,g_t,h_t)_{t\in[0,T]}$ are defined as functions of the underlying process $X$. More precisely, given measurable functions $f, g, h:[0,T]\times\R^d\to\R$, with $f\ge h\ge g$, we set $f_t\coloneqq f(t,X_t)$, $g_t\coloneqq g(t,X_t)$ and $h_t\coloneqq h(t,X_t)$. Clearly the triplet $(f_t,g_t,h_t)_{t\in[0,T]}$ is $\F^2$-adapted and we assume each process to be in $\cL_b(\P)$. 

Recalling the notation $X=X^\cJ$ along with the fact that on the event $\{\cJ=j\}$ the process $X=X^j$ follows the dynamics $\ud X^j_t=\mu_j(X^j_t)\ud t+\sigma(X^j_t)\ud W_t$, we can formally cast the current problem in a similar (but not identical) fashion as the problem in Section \ref{sec:partial}. That is, we set $f^j_t\coloneqq f(t,X^j_t)$, $g^j_t\coloneqq g(t,X^j_t)$ and $h^j_t\coloneqq h(t,X^j_t)$. It follows from \eqref{eq:Xj} that $X^j$ is independent of $\cJ$, so the same holds for $f^j,g^j,h^j$. We must also notice that for $\zeta^*\in\cAcirc_0(\F^2)$ there is a measurable map $\Lambda:[0,T]\times C([0,T])\to\R$ such that
\begin{equation}\label{eq:zeta*dec}
\begin{aligned}
\zeta^*_t(\omega)&=\Lambda(t,X_{\cdot\wedge t}(\omega))=\sum_{j=0}^1\ind_{\{\cJ=j\}}(\omega)\Lambda(t,X^j_{\cdot\wedge t}(\omega))\\
&=\sum_{j=0}^1\ind_{\{\cJ=j\}}(\omega)\Lambda\big(t,\Gamma^j(t,W_{\cdot\wedge t}(\omega))\big)\eqqcolon\sum_{j=0}^1\ind_{\{\cJ=j\}}(\omega)\zeta^{*,j}_t(\omega),
\end{aligned}
\end{equation}
where clearly $\zeta^{*,0}$ and $\zeta^{*,1}$ are $\F^W$-adapted and independent of $\cJ$.

\begin{remark}
Notice that the decomposition in \eqref{eq:zeta*dec} should not be interpreted as saying that the uninformed player selects $\zeta^{*,0}$ or $\zeta^{*,1}$ depending on whether $\cJ=0$ or $\cJ=1$. The uninformed player chooses $\zeta^*$ (i.e., the map $\Lambda(\cdot,\cdot)$) but the informed player ``knows'' the value of $\cJ$. Therefore, the informed player knows the law of the realised strategy $(\zeta^{*,j}_t)_{t\in[0,T]}$, whereas the uninformed player can only estimate it as a mixture of the laws of $\zeta^{*,0}$ and $\zeta^{*,1}$. 
\end{remark}

Recalling the decompositions $\xi_t=\xi^0_t\ind_{\{\cJ=0\}}+\xi^1_t\ind_{\{\cJ=1\}}$, $\theta_t=\theta_0\ind_{\{\cJ=0\}}+\theta_1\ind_{\{\cJ=1\}}$ and that $\F^1=\F$, we have on $\{\zeta^{*,0}_{\theta_0-}\wedge\zeta^{*,1}_{\theta_1-}<1\}$
\begin{align*}
V^{*,1}(\theta)
&=\essinf_{\xi\in\cAcirc_{\theta}(\F)}\E\Big[\int_{[\theta,T)}f^\cJ_t(1-\zeta^{*;\theta}_t)\ud \xi_t+\int_{[\theta,T)}g^\cJ_t(1-\xi_t)\ud \zeta^{*;\theta}_t+\sum_{t\in[\theta,T]}h^\cJ_t\Delta\zeta^{*;\theta}_t\Delta\xi_t\Big|\cF_\theta \Big]\\
&=\essinf_{\xi\in\cAcirc_{\theta}(\F)}\sum_{j=0}^1 \indd{\cJ=j} \E\Big[\int_{[\theta_j,T)}f^j_t(1-\zeta^{*,j;\theta_j}_t)\ud \xi^j_t+\int_{[\theta_j,T)}g^j_t(1-\xi^j_t)\ud \zeta^{*,j;\theta_j}_t\\
&\hspace{265pt}+\sum_{t\in[\theta_j,T]}h^j_t\Delta\zeta^{*,j;\theta_j}_t\Delta\xi^j_t\Big|\cF_\theta \Big].
\end{align*}
Since $f^j,g^j,h^j,\xi^j,\zeta^{*,j}$ are $\F^W$-adapted, $\theta_j$ is a $\F^W$-stopping time and $\cJ$ is independent of $\cF^W_T$, we are allowed to use Lemma \ref{lem:rv_decomposition} to obtain the following representation of $V^{*,1}(\theta)$ on the event $\{\zeta^{*,0}_{\theta_0-}\wedge\zeta^{*,1}_{\theta_1-}<1\}$:
\begin{align*}
V^{*,1}(\theta)=\ind_{\{\cJ=0\}}\essinf_{\xi^0\in\cAcirc_{\theta_0}(\F^W)}L^0(\xi^0,\zeta^{*,0;\theta_0}|\cF^W_{\theta_0})+\ind_{\{\cJ=1\}}\essinf_{\xi^1\in\cAcirc_{\theta_1}(\F^W)}L^1(\xi^1,\zeta^{*,1;\theta_1}|\cF^W_{\theta_1}),
\end{align*}
 with the same notation as in \eqref{eq:Lpayoff}.

For the uninformed player calculations are slightly different. For $\gamma \in\cT_0(\F^2)$ and $\zeta\in\cAcirc_\gamma(\F^2)$, 
\begin{align*}
&J^{\Pi^{*,2}_\gamma}\big(\xi^{*;\gamma},\zeta\big|\cF^2_\gamma\big)\\
&=\E\Big[\Pi^{*,2}_\gamma\Big(\int_{[\gamma,T)}f^\cJ_t(1-\zeta_t)\ud \xi^{*;\gamma}_t+\int_{[\gamma,T)}g^\cJ_t(1-\xi^{*;\gamma}_t)\ud \zeta_t+\sum_{t\in[\theta,T]}h^\cJ_t\Delta\zeta_t\Delta\xi^{*;\gamma}_t\Big)\Big|\cF^2_\gamma\Big]\\
&=(1-p_\gamma)\E^0\Big[\int_{[\gamma,T)}f^0_t(1-\zeta_t)\ud \xi^{*,0;\gamma}_t+\int_{[\gamma,T)}g^0_t(1-\xi^{*,0;\gamma}_t)\ud \zeta_t+\sum_{t\in[\theta,T]}h^0_t\Delta\zeta_t\Delta\xi^{*,0;\gamma}_t\Big|\cF^2_\gamma\Big]\\
&\quad+p_\gamma\E^1\Big[\int_{[\gamma,T)}f^1_t(1-\zeta_t)\ud \xi^{*,1;\gamma}_t+\int_{[\gamma,T)}g^1_t(1-\xi^{*,1;\gamma}_t)\ud \zeta_t+\sum_{t\in[\theta,T]}h^1_t\Delta\zeta_t\Delta\xi^{*,1;\gamma}_t\Big|\cF^2_\gamma\Big]\\
&\eqqcolon (1-p_\gamma)\bar L^0\big(\xi^{*,0;\gamma},\zeta\big|\cF^2_\gamma\big)+p_\gamma\bar L^1\big(\xi^{*,1;\gamma},\zeta\big|\cF^2_\gamma\big),
\end{align*}
on the event $\{\tilde\xi^{*,0}_{\gamma-}\wedge\tilde\xi^{*,1}_{\gamma-}<1\}$, where $\P^0$ and $\P^1$ are probability measures defined in \eqref{eq:Girs1} and we used \eqref{eq:Girs2}. It is also worth noticing that $\bar L^j(\xi^{*,j;\gamma},\zeta|\cF^2_\gamma)=\bar L^j(\tilde\xi^{*,j;\gamma},\zeta|\cF^2_\gamma)$, $j=0,1$, due to the identity $\xi^{*,j}=\tilde\xi^{*,j}$ under $\P^j$.

In conclusion, we obtain, for $(\theta,\gamma)\in\cT_0(\F^1)\times\cT_0(\F^2)$,
\begin{align*}
V^{*,1}(\theta)&=\ind_{\{\cJ=0\}}\essinf_{\xi^0\in\cAcirc_{\theta_0}(\F^W)}L^0(\xi^0,\zeta^{*,0;\theta_0}|\cF^W_{\theta_0})+\ind_{\{\cJ=1\}}\essinf_{\xi^1\in\cAcirc_{\theta_1}(\F^W)}L^1(\xi^1,\zeta^{*,1;\theta_1}|\cF^W_{\theta_1}),\\
&\eqqcolon \ind_{\{\cJ=0\}} U^0(\theta_0)+\ind_{\{\cJ=1\}} U^1(\theta_1),\\
V^{*,2}(\gamma)
&=\esssup_{\zeta\in\cAcirc_\gamma(\F^2)}\Big((1-p_\gamma) \bar L^0\big(\xi^{*,0;\gamma},\zeta\big|\cF^2_{\gamma}\big) + p_\gamma \bar L^1\big(\xi^{*,1;\gamma},\zeta\big|\cF^2_{\gamma}\big)\Big)\eqqcolon V(\gamma),
\end{align*}
on $\{\zeta^{*,0}_{\theta_0-}\wedge\zeta^{*,1}_{\theta_1-}<1\}$ and $\{\tilde\xi^{*,0}_{\gamma-}\wedge\tilde\xi^{*,1}_{\gamma-}<1\}$, respectively.

In order to apply Theorem \ref{thm:value} in this context, it is convenient to notice that for $\gamma\in\cT_0(\F^2)$ 
\begin{align}\label{eq:projF2}
\begin{aligned}
\E\big[1-\xi^*_{\gamma-}\big|\cF^2_\gamma\big]&=\E\big[(1-\xi^{*,0}_{\gamma-})\ind_{\{\cJ=0\}}+(1-\xi^{*,1}_{\gamma-})\ind_{\{\cJ=1\}}\big|\cF^2_\gamma\big]\\
&=(1-\tilde \xi^{*,0}_{\gamma-})(1-\psi_\gamma)+(1-\tilde \xi^{*,1}_{\gamma-})\psi_\gamma,
\end{aligned}
\end{align}
by the decomposition $\xi^*=\xi^{*,0}\ind_{\{\cJ=0\}}+\xi^{*,1}\ind_{\{\cJ=1\}}$ and the identity \eqref{eqn:tildexi_id}. Similarly, using \eqref{eq:zeta*dec} we have for $\theta\in\cT_0(\F^1)$
\begin{align*}
\E\big[1-\zeta^*_{\theta-}\big|\cF^1_\theta\big]&=(1-\zeta^{*,0}_{\theta_0-})\ind_{\{\cJ=0\}}+(1-\zeta^{*,1}_{\theta_1-})\ind_{\{\cJ=1\}}.
\end{align*}
Then, invoking Theorem \ref{thm:value} there are $\F^W$-optional processes $(U^0_t)_{t\in[0,T]}$, $(U^1_t)_{t\in[0,T]}$ and an $\F^2$-optional process $(V_t)_{t\in[0,T]}$ such that for any $\F^W$-stopping time $\theta_i$ and any $\F^2$-stopping time $\gamma$
\begin{align*}
(1-\zeta^{*,i}_{\theta_i-}) U^i_{\theta_i} = (1-\zeta^{*,i}_{\theta_i-}) U^i(\theta_i)\quad \text{and}\quad \langle \psi_\gamma, (1-\tilde\xi^{*}_{\gamma-})\rangle V_{\gamma} = \langle \psi_\gamma, (1-\tilde\xi^{*}_{\gamma-})\rangle V(\gamma),
\end{align*}
where we recall that $\langle \psi_\gamma, \phi \rangle = \psi_\gamma \phi^1 + (1-\psi_\gamma) \phi^0$ for any $\phi \in \R^2$. 
Moreover, representations as optimal stopping problems similar to those in \eqref{eq:Uit} and \eqref{eq:Vg} continue to hold: that is
\begin{align}\label{eq:OSprocess}
\begin{aligned}
U^j_{\theta_j}&=\essinf_{\tau\in\cT_{\theta_j}(\F^W)}L^j\big(\tau,\zeta^{*,j;\theta_j}\big|\cF^W_{\theta_j}\big),\quad \text{on $\{\zeta^{*,j}_{\theta_j-}<1\}$,}\\
V_\gamma&=\esssup_{\sigma\in\cT_\gamma(\F^2)}\Big(p_\gamma \bar L^1\big(\xi^{*,1;\gamma},\sigma\big|\cF^2_{\gamma}\big)+(1-p_\gamma) \bar L^0\big(\xi^{*,0;\gamma},\sigma\big|\cF^2_{\gamma}\big)\Big),\quad \text{on $\{\tilde\xi^{*,0}_{\gamma-}\wedge\tilde\xi^{*,1}_{\gamma-}<1\}$},
\end{aligned}
\end{align}
where $L^j\big(\tau,\zeta^{*,j;\theta_j}\big|\cF^W_{\theta_j}\big)$ stands for $L^j\big(\xi,\zeta^{*,j;\theta_j}\big|\cF^W_{\theta_j}\big)$ with $\xi_t=\ind_{\{t\ge \tau\}}$ and $\bar L^j\big(\xi^{*,j;\gamma},\sigma\big|\cF^2_{\gamma}\big)$ stands for $\bar L^j\big(\xi^{*,j;\gamma},\zeta\big|\cF^2_{\gamma}\big)$ with $\zeta_t=\ind_{\{t\ge \sigma\}}$.

Define
\begin{equation}\label{eqn:Uj}
\widetilde U^j_t\coloneqq\E^j\big[U^j_t\big|\cF^2_t\big],\quad \text{for $j=0,1$.}
\end{equation}
Observe that
\begin{equation}\label{eq:VUUbar}
\begin{aligned}
\E\big[\ind_{\{\cJ=1\}}\widetilde U^1_t\big|\cF^2_t\big] &= \psi_t \widetilde U^1_t =\psi_t\E^1\big[U^1_t\big|\cF^2_t\big] = \E\big[\ind_{\{\cJ=1\}} U^1_t\big|\cF^2_t\big],\\
\E\big[\ind_{\{\cJ=0\}}\widetilde U^0_t\big|\cF^2_t\big] &= (1-\psi_t) \widetilde U^0_t = (1-\psi_t)\E^0\big[U^0_t\big|\cF^2_t\big] = \E\big[\ind_{\{\cJ=0\}} U^0_t\big|\cF^2_t\big],
\end{aligned}
\end{equation}
where we used \eqref{eqn:Uj} in the centre equalities. We will use these identities in deriving the relationship between values of the informed and uninformed players.
\begin{remark}
A more explicit representation of $\widetilde U^j$ can be provided when $\zeta^{*,j}_t < 1$ for all $t \in [0, T)$. Since the Brownian filtration completed with $\P$-null sets is continuous and $\F^{X^j} = \F^W$, Theorem \ref{thm:value} implies that $(U^0_t)_{t\in[0,T)}$ and $(U^1_t)_{t\in[0,T)}$ are left-continuous. Hence, 
\begin{align}\label{eq:UPsi}
U^j_t=\chi^j(t,X^j_{\cdot\wedge t}),\quad t\in[0,T],
\end{align}
for some measurable map $\chi^j:[0,T]\times C([0,T])\to\R$ which can be chosen so as to guarantee left-continuity of $[0, T) \ni t\mapsto\chi^j(t,x(\cdot\wedge t))$ for any $x(\cdot)\in C([0,T])$; the left-continuity at $T$ is not needed as this is only a single point, so the mapping can be extended to cover it in a measurable way. 

This has an important consequence allowing us to construct processes $\widetilde U^j_t$ explicitly as $\widetilde U^j_t\coloneqq \chi^j(t,X_{\cdot\wedge t})$ for $j=0,1$, instead of using the definition in \eqref{eqn:Uj}. Indeed, the processes $(\widetilde U^j_t)_{t\in[0,T]}$ are $\F^2$-adapted and left-continuous everywhere apart from $T$ (thus optional) and the following identities trivially hold
\begin{align}\label{eq:UUbar}
\ind_{\{\cJ=0\}}\widetilde U^0_t=\ind_{\{\cJ=0\}} U^0_t\quad\text{and}\quad\ind_{\{\cJ=1\}}\widetilde U^1_t=\ind_{\{\cJ=1\}} U^1_t.
\end{align}
We also observe that taking conditional expectations in \eqref{eq:UUbar} we get
\begin{align*}
\psi_t \widetilde U^1_t &=\E\big[\ind_{\{\cJ=1\}}\widetilde U^1_t\big|\cF^2_t\big]=\E\big[\ind_{\{\cJ=1\}} U^1_t\big|\cF^2_t\big]=\psi_t\E^1\big[U^1_t\big|\cF^2_t\big],\\
(1-\psi_t) \widetilde U^0_t &=\E\big[\ind_{\{\cJ=0\}}\widetilde U^0_t\big|\cF^2_t\big]=\E\big[\ind_{\{\cJ=0\}} U^0_t\big|\cF^2_t\big]=(1-\psi_t)\E^0\big[U^0_t\big|\cF^2_t\big],
\end{align*}
whence the relationship $\widetilde U^j_t=\E^j\big[U^j_t\big|\cF^2_t\big]$ for $j=0,1$ as in \eqref{eqn:Uj}.

This construction provides an intuitive interpretation of $\widetilde U^j$. This is the value process of the first optimal stopping problem in \eqref{eq:OSprocess} perceived by the uninformed player who only observes $(X_t)_{t\in[0,T]}$ but believes that $\cJ = j$ (irrespective of the true value of this random variable). 
\end{remark}

\smallskip

{\bf Relationship between the equilibrium value processes and the role of the belief process}. From the second statement in Remark \ref{rem:3.13}, we have 
\[
(1-\zeta^*_{\gamma-})\E\big[(1-\xi^*_{\gamma-})V^{*,1}(\gamma)|\cF^2_\gamma\big]=\E\big[1-\xi^*_{\gamma-}|\cF^2_\gamma\big](1-\zeta^*_{\gamma-})V^{*,2}(\gamma)
\]
for any $\gamma \in \cT(\F^2)$. Recalling \eqref{eq:projF2}, the right-hand side reads
\[
\langle \psi_\gamma, 1-\tilde \xi^*_{\gamma-} \rangle (1-\zeta^*_{\gamma-})V^{*,2}(\gamma) = \langle \psi_\gamma, 1-\tilde \xi^*_{\gamma-} \rangle (1-\zeta^*_{\gamma-})V_\gamma.
\]
For the left-hand side, recalling \eqref{eqn:tildexi_def} and \eqref{eqn:Uj} we have
\begin{equation}\label{eq:boh}
\begin{aligned}
&(1-\zeta^*_{\gamma-})\E\big[(1-\xi^{*}_{\gamma-})V^{*,1}(\gamma)\big|\cF^2_\gamma\big]\\
&=\E\big[(1-\zeta^*_{\gamma-})\big(\indd{\cJ=0} (1-\xi^{*,0}_{\gamma-})U^{0}(\gamma) + \indd{\cJ=1} (1-\xi^{*,1}_{\gamma-})U^{1}(\gamma)\big) \big|\cF^2_\gamma\big]\\
&=\E\big[(1-\zeta^*_{\gamma-})\big(\indd{\cJ=0} (1-\tilde\xi^{*,0}_{\gamma-})U^{0}_\gamma + \indd{\cJ=1} (1-\tilde\xi^{*,1}_{\gamma-})U^{1}_\gamma\big) \big|\cF^2_\gamma\big]\\
&=(1-\zeta^*_{\gamma-})(1-\tilde\xi^{*,0}_{\gamma-})\E\big[\indd{\cJ=0}U^{0}_\gamma|\cF^2_\gamma\big] + (1-\zeta^*_{\gamma-})(1-\tilde\xi^{*,1}_{\gamma-})\E\big[\indd{\cJ=1} U^{1}_\gamma |\cF^2_\gamma\big]\\
&=(1-\zeta^*_{\gamma-})(1-\tilde\xi^{*,0}_{\gamma-})(1-\psi_\gamma)\E^0\big[U^{0}_\gamma|\cF^2_\gamma\big] + (1-\zeta^*_{\gamma-})(1-\tilde\xi^{*,1}_{\gamma-})\psi_\gamma\E^1\big[U^{1}_\gamma |\cF^2_\gamma\big]\\
&=(1-\zeta^*_{\gamma-})\langle\psi_\gamma,1-\xi^*_{\gamma-}\rangle\Big((1-p_\gamma)\widetilde U^{0}_\gamma+p_\gamma\widetilde U^{1}_\gamma\Big),
\end{aligned}
\end{equation}
where in the final expression we recalled the form of $p_\gamma$ from \eqref{eq:pt}. 
Combining the above expressions we obtain
\[
(1-\zeta^*_{\gamma-})\langle\psi_\gamma,1-\tilde\xi^*_{\gamma-}\rangle\Big((1-p_{\gamma})\widetilde U^{0}_\gamma+p_{\gamma}\widetilde U^{1}_\gamma\Big)=(1-\zeta^*_{\gamma-})\langle \psi_\gamma, 1-\tilde\xi^*_{\gamma-} \rangle V_\gamma,
\]
which reads more neatly as
\[
\langle p_\gamma,\widetilde U_\gamma\rangle=V_\gamma,\quad \text{on the set $\Gamma^2_\gamma$},
\]
where $\Gamma^2_\gamma = \{ \tilde \xi^{*,1}_\gamma \wedge \tilde\xi^{*,0}_\gamma < 1  \text{ and } \zeta^*_\gamma < 1\}$, cf. \eqref{eq:gamma2expl}.
The main difference with the setting from Section \ref{sec:partial} is that therein the posterior of the uninformed player is only updated via the actions of the more informed player. Here instead, the sole observation of the underlying process $X$ already yields some posterior information $\psi$ about $\cJ$. 
\medskip

{\bf Martingale characterisation}. Take $\xi \in \cAcirc_0(\F^1)$ with decomposition $\xi=\ind_{\{\cJ=0\}}\xi^0+\ind_{\{\cJ=1\}}\xi^1$, where $(\xi^0, \xi^1)\in\cAcirc_0(\F^W)$, and $\zeta \in \cAcirc_0(\F^2)$. Recall the $\F^1$-optional submartingale $(M^\xi_t)_{t\in[0,T]}$ and the $\F^2$-optional supermartingale $(N^\zeta_t)_{t\in[0,T]}$ from Proposition \ref{prop:subsupmg}. The former reads as follows: for any $\theta = (\theta^0, \theta^1) \in \cT_0(\F^1)$ 
\begin{align*}
M^\xi_\theta &= \sum_{i=0}^1 \indd{\cJ = i} \Big( \int_{[0, \theta_i)}\! \big[(1\!-\!\zeta^{*,i}_t) f^i_t\! +\! \Delta\zeta^{*,i}_t h^i_t\big] \ud\xi^i_t\! +\! \int_{[0, \theta_i)} (1\!-\!\xi^i_t) g^i_t \ud\zeta^{*,i}_t\! +\! (1\!-\!\xi^i_{\theta_i-})(1\!-\!\zeta^{*,i}_{\theta_i-}) U^i_{\theta_i} \Big)\\
&=: \indd{\cJ = 0} M^{\xi^0; 0}_{\theta_0} + \indd{\cJ = 1} M^{\xi^1; 1}_{\theta_1},
\end{align*}
where $(M^{\xi; i}_t)_{t\in[0,T]}$, $i=0, 1$, are $\F^1$-optional submartingales. 

The derivation of the expression for $(N^\zeta_t)_{t\in[0,T]}$ deserves more detail. Notice that $f_t\coloneqq f(t,X_t)$, $g_t\coloneqq g(t,X_t)$, $h_t\coloneqq h(t,X_t)$ are $\cF^2_t$-measurable for all $t\in[0,T]$. Then for $\gamma \in \cT_0(\F^2)$
\begin{align*}
&\E \Big[\! \int_{[0,\gamma)}\!\! \big[f_t (1\!-\!\zeta_t) +h_t\Delta\zeta_t\big] \ud \xi^*_t\Big| \cF^2_\gamma \Big]\\
&=\sum_{j=0}^1\E \Big[\ind_{\{\cJ=j\}} \int_{[0,\gamma)}\!\! \big[f_t (1\!-\!\zeta_t) +h_t\Delta\zeta_t\big]  \ud \xi^{*,j}_t\Big| \cF^2_\gamma \Big]\\
&=\sum_{j=0}^1\E \Big[\ind_{\{\cJ=j\}} \Big| \cF^2_\gamma \Big]\int_{[0,\gamma)}\!\! \big[f_t (1\!-\!\zeta_t) +h_t\Delta\zeta_t\big]  \ud \tilde \xi^{*,j}_t\\
&=\psi_\gamma\int_{[0,\gamma)}\!\! \big[f_t (1\!-\!\zeta_t) +h_t\Delta\zeta_t\big]  \ud \tilde \xi^{*,1}_t+(1-\psi_\gamma)\int_{[0,\gamma)}\!\! \big[f_t (1\!-\!\zeta_t) +h_t\Delta\zeta_t\big] \ud \tilde \xi^{*,0}_t.
\end{align*}
Analogous calculations yield
\[
\E\Big[\int_{[0,\gamma)}\!\!g_t (1\!-\!\tilde\xi^*_t) \ud \zeta_t\Big| \cF^2_\gamma \Big]=\psi_\gamma\int_{[0,\gamma)}\!\!g_t (1\!-\!\tilde\xi^{*,1}_t) \ud \zeta_t+(1-\psi_\gamma)\int_{[0,\gamma)}\!\!g_t (1\!-\!\tilde\xi^{*,0}_t) \ud \zeta_t.
\]
Combining those expressions with \eqref{eq:projF2} we obtain
\begin{align*}
N^\zeta_\gamma &= (1-\psi_\gamma)\Big(\int_{[0,\gamma)}\!\! \big[f_t (1\!-\!\zeta_t) +h_t\Delta\zeta_t\big] \ud \tilde \xi^{*,0}_t+\int_{[0,\gamma)}\!\!g_t (1\!-\!\tilde\xi^{*,0}_t) \ud \zeta_t\Big)\\
&\quad+\psi_\gamma\Big(\int_{[0,\gamma)}\!\! \big[f_t (1\!-\!\zeta_t) +h_t\Delta\zeta_t\big]  \ud \tilde\xi^{*,1}_t+\int_{[0,\gamma)}\!\!g_t (1\!-\!\tilde\xi^{*,1}_t) \ud \zeta_t\Big)+\!(1\!-\!\zeta_{\gamma-})\langle\psi_\gamma,1\!-\!\tilde\xi^*_{\gamma-}\rangle V_\gamma.
\end{align*}
It is also worth noticing that by an application of It\^o's formula
\begin{align*}
N^\zeta_\gamma &= \int_{[0,\gamma)}\!\! (1-\psi_t)\big[f_t (1\!-\!\zeta_t) +h_t\Delta\zeta_t\big] \ud \tilde\xi^{*,0}_t+\int_{[0,\gamma)}\!\!(1-\psi_t)g_t (1\!-\!\tilde\xi^{*,0}_t) \ud \zeta_t\\
&\quad+\int_{[0,\gamma)}\!\! \psi_t\big[f_t (1\!-\!\zeta_t) +h_t\Delta\zeta_t\big]  \ud \tilde\xi^{*,1}_t+\int_{[0,\gamma)}\!\!\psi_t g_t (1\!-\!\tilde\xi^{*,1}_t) \ud \zeta_t\!+\!(1\!-\!\zeta_{\gamma-})\langle\psi_\gamma,1\!-\!\tilde\xi^*_{\gamma-}\rangle V_\gamma\\
&\quad+\int_0^\gamma\Big(\int_{[0,t)}\!\! \big[f_s (1\!-\!\zeta_s) +h_s\Delta\zeta_s\big] \ud(\tilde\xi^{*,1}_s\!-\! \tilde\xi^{*,0}_s)\!+\!\int_{[0,t)}\!\!g_s (\tilde\xi^{*,0}_s\!-\!\tilde\xi^{*,1}_s) \ud \zeta_s\Big)\ud \psi_t.
\end{align*}
Since the integral in the last line is an $\F^2$-martingale and $N^\zeta$ is an $\F^2$-optional supermartingale, we deduce that 
\begin{align*} 
\widetilde N^\zeta_\gamma &\coloneqq \int_{[0,\gamma)}\!\! (1-\psi_t)\big[f_t (1\!-\!\zeta_t) +h_t\Delta\zeta_t\big] \ud \tilde\xi^{*,0}_t+\int_{[0,\gamma)}\!\!(1-\psi_t)g_t (1\!-\!\tilde\xi^{*,0}_t) \ud \zeta_t\\
&\quad+\int_{[0,\gamma)}\!\! \psi_t\big[f_t (1\!-\!\zeta_t) +h_t\Delta\zeta_t\big]  \ud \tilde\xi^{*,1}_t+\int_{[0,\gamma)}\!\!\psi_t g_t (1\!-\!\tilde\xi^{*,1}_t) \ud \zeta_t\!+\!(1\!-\!\zeta_{\gamma-})\langle\psi_\gamma,1\!-\!\tilde\xi^*_{\gamma-}\rangle V_\gamma
\end{align*}
is an $\F^2$-optional supermartingale. 

When $\xi$ and $\zeta$ are chosen optimally, the processes $(M^{\xi^{*,0};0}_t)_{t\in[0,T]}$, $(M^{\xi^{*,1};1}_t)_{t\in[0,T]}$ become \cadlag $\F^W$-martingales and $(N^{\zeta^*}_t)_{t\in[0,T]}$, $(\widetilde N^{\zeta^*}_t)_{t\in[0,T]}$ become \cadlag $\F^2$-martingales by Proposition \ref{thm:aggr2}. 
Instead, when $\xi$ and $\zeta$ are taken equal to zero, the above processes take the form: for $t \in [0, T]$,
\begin{equation}\label{eq:MNN}
\begin{aligned}
M^{0;i}_{t} &=  \int_{[0, t)} g^i_s \ud \zeta^{*,i}_s + (1-\zeta^{*,i}_{t-}) U^i_{t}, \qquad i=0,1,\\
N^0_t &= (1-\psi_t)\int_{[0,t)}\!\! f_s \ud \tilde\xi^{*,0}_s\!+\!\psi_t\int_{[0,t)}\!\! f_s \ud \tilde\xi^{*,1}_s+\langle\psi_t,1\!-\!\tilde\xi^*_{t-}\rangle V_t,\\
\widetilde N^0_t &= \int_{[0,t)}\!\! (1-\psi_s)f_s \ud \tilde\xi^{*,0}_s\!+\!\int_{[0,t)}\!\!\psi_s f_s \ud \tilde\xi^{*,1}_s+\langle\psi_t,1\!-\!\tilde\xi^*_{t-}\rangle V_t.
\end{aligned}
\end{equation}
Proposition \ref{thm:aggr1} asserts that $(M^{0; i}_t)_{t\in[0,T]}$ is a \cadlag $\F^W$-submartingale and $(N^0_t)_{t\in[0,T]}$ (hence also $(\widetilde N^0_t)_{t\in[0,T]}$) is a \cadlag $\F^2$-supermartingale. These can be shown to be martingales up to $\bar \tau^i_*(z)$ and $\bar \sigma_*(z)$, respectively, for any $z\in[0,1)$ by the same arguments as those employed in Section \ref{sec:partial}. 
\medskip

{\bf Support of optimal strategies}. In this paragraph, some minor changes, compared to the case with partially observed scenarios arise, due to the replacement of the prior $\pi$ with the posterior process $\psi_t$. In this spirit, it is worth noticing that since $g(t,X_t)$ is $\cF^2_t$-measurable
\begin{align*}
\optional{\big(}g_\cdot(1-\xi^*_\cdot)\big)^{\F^2}_t&=\E\big[g(t,X_t)\big(\ind_{\{\cJ=0\}}(1-\xi^{*,0}_t)+\ind_{\{\cJ=1\}}(1-\xi^{*,1}_t)\big)\big|\cF^2_t\big]\\
&=g(t,X_t)\E\big[\big(\ind_{\{\cJ=0\}}(1-\tilde\xi^{*,0}_t)+\ind_{\{\cJ=1\}}(1-\tilde\xi^{*,1}_t)\big)\big|\cF^2_t\big]=g(t,X_t)\langle\psi_t,1-\tilde\xi^*_t\rangle,
\end{align*}
where we used \eqref{eq:projF2}. Analogously,
$\optional{(}h_\cdot\Delta\xi^*_\cdot)^{\F^2}_t=h(t,X_t)\langle\psi_t,\Delta\tilde\xi^*_t\rangle$.
Then, in preparation for a statement of Proposition \ref{prop:support} in this setting, let $g_t=g(t,X_t)$, $h_t=h(t,X_t)$ and 
\begin{align*}
Y^1_t &= \sum_{i=0}^1 \indd{\cJ=i} \big((1-\zeta^{*,i}_{t-}) U^i_t - f^i_t(1-\zeta^{*,i}_t) - h^i_t \Delta \zeta^{*,i}_t \big) =: \indd{\cJ=0} Z^0_t + \indd{\cJ=1} Z^1_t,\\
Y^2_t &=  \langle \psi_t, 1-\tilde\xi^{*}_{t-}\rangle V_t - g_t\langle \psi_t, 1-\tilde\xi^{*}_t\rangle - h_t\langle\psi_t, \Delta\tilde\xi^{*}_t \rangle.
\end{align*}
Corollary \ref{cor:acting} holds in the same form and similar considerations as in the paragraph following it apply to the present case with obvious notational changes. 
\begin{corollary} \label{cor:acting2}
We have $Z^0_t \le 0$, $Z^1_t \le 0$ and $Y^2_t \ge 0$ for all $t \in [0, T]$, $\P$\as Moreover,
\begin{equation*}
\int_{[0,T]} Z^0_t \ud \xi^{*,0}_t+\int_{[0,T]} Z^1_t \ud \xi^{*,1}_t = 0 \quad \text{and}\quad \int_{[0,T]} Y^2_t \ud \zeta^*_t = 0.
\end{equation*}
\end{corollary}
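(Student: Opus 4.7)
The plan is to reduce the statement to a direct application of Proposition \ref{prop:support} by identifying the processes $Y^1, Y^2$ defined there with those appearing in the corollary (up to indistinguishability). First I unpack $Y^1$. Since $\F^1=\F$, the $\F^1$-optional projections of $f_\cdot(1-\zeta^*_\cdot)$ and $h_\cdot\Delta\zeta^*_\cdot$ are trivial (both processes are $\F$-adapted and \cadlag), so $Y^1_t = \hat V^{*,1}_t - f_t(1-\zeta^*_t) - h_t\Delta\zeta^*_t$. For $\hat V^{*,1}$ I will combine the decomposition $V^{*,1}(\theta)=\indd{\cJ=0}U^0(\theta_0)+\indd{\cJ=1}U^1(\theta_1)$ established earlier in this subsection with $\zeta^*=\indd{\cJ=0}\zeta^{*,0}+\indd{\cJ=1}\zeta^{*,1}$ and the aggregation identity $(1-\zeta^{*,j}_{\theta_j-})U^j(\theta_j)=(1-\zeta^{*,j}_{\theta_j-})U^j_{\theta_j}$ to deduce that, at every $\theta\in\cT_0(\F^1)$,
\[
\hat V^{*,1}_\theta = \indd{\cJ=0}(1-\zeta^{*,0}_{\theta_0-})U^0_{\theta_0} + \indd{\cJ=1}(1-\zeta^{*,1}_{\theta_1-})U^1_{\theta_1}.
\]
Both sides are $\F^1$-optional and coincide at arbitrary stopping times, hence they are indistinguishable. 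Substituting the decompositions of $f_t$, $h_t$, $\zeta^*_t$ then produces $Y^1_t = \indd{\cJ=0}Z^0_t + \indd{\cJ=1}Z^1_t$ as in the statement.

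Next I turn to $Y^2$. Theorem \ref{thm:value} together with \eqref{eq:projF2} and $V^{*,2}(\gamma)=V(\gamma)$ yields $\hat V^{*,2}_t = \langle\psi_t,1-\tilde\xi^*_{t-}\rangle V_t$ up to indistinguishability. For the two $\F^2$-optional projections, the $\cF^2_\gamma$-measurability of $g_\gamma,h_\gamma$ combined with a minor variant of \eqref{eq:projF2} (with $\xi^*_\gamma$, resp.\ $\Delta\xi^*_\gamma$, in place of $\xi^*_{\gamma-}$) gives
\[
\optional{\big(}g_\cdot(1-\xi^*_\cdot)\big)^{\F^2}_t = g_t\langle\psi_t,1-\tilde\xi^*_t\rangle, \qquad \optional{\big(}h_\cdot\Delta\xi^*_\cdot\big)^{\F^2}_t = h_t\langle\psi_t,\Delta\tilde\xi^*_t\rangle,
\]
and subtracting these from $\hat V^{*,2}$ reproduces exactly the $Y^2_t$ of the corollary.

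With these identifications in place, Proposition \ref{prop:support} applies directly. Item (i) there gives $Y^1_t\le 0$ and $Y^2_t\ge 0$ for all $t\in[0,T]$ simultaneously, $\P$-a.s. The inequality for $Y^1$ reads $\indd{\cJ=j}Z^j_t\le 0$ for $j=0,1$; since $Z^j$ is $\F^W$-adapted, the independence of $\cJ$ from $\F^W$ combined with $\P(\cJ=j)>0$ upgrades this to $Z^0_t\le 0$ and $Z^1_t\le 0$ for all $t$, $\P$-a.s. (the degenerate cases $\pi\in\{0,1\}$ reduce to the full-information setting and are trivial). Finally, item (ii) of Proposition \ref{prop:support} gives $\int_{[0,T]}Y^1_t\ud\xi^*_t=0$; using $\xi^*=\indd{\cJ=0}\xi^{*,0}+\indd{\cJ=1}\xi^{*,1}$ and the analogous decomposition of $Y^1$, this transfers immediately to $\int Z^0_t\ud\xi^{*,0}_t+\int Z^1_t\ud\xi^{*,1}_t=0$, while $\int Y^2_t\ud\zeta^*_t=0$ is inherited verbatim. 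The main (and modest) obstacle is the careful bookkeeping of the aggregation step for $\hat V^{*,1}$ and $\hat V^{*,2}$, ensuring that the identifications hold indistinguishably rather than only at fixed stopping times; no new ideas beyond those already developed in Section \ref{sec:partial} and Proposition \ref{prop:support} are needed.
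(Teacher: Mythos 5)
Your proposal is correct and follows exactly the route the paper intends: the corollary is stated as a direct specialisation of Proposition \ref{prop:support}, and you supply precisely the identifications of $\hat V^{*,1}$, $\hat V^{*,2}$ and the optional projections that the surrounding text of Section \ref{sec:partdyn} sets up. The only step the paper leaves entirely implicit — upgrading $\indd{\cJ=j}Z^j_t\le 0$ and $\indd{\cJ=j}\int Z^j_t\,\ud\xi^{*,j}_t=0$ to statements without the indicators via independence of $\cJ$ from $\F^W$ — is handled correctly in your argument.
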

The statement of Corollary \ref{cor:acting2} can be rewritten in a more intuitive way under the ansatz that no simultaneous jump of the generating processes occurs for $t<T$. That is: 
\begin{equation*}
\begin{aligned}
&\int_{[0,T)} (U^i_t - f^i_t)(1-\zeta^{*,i}_t) \ud \xi^{*,i}_t = 0,\\
&(U^i_T - h^i_T)\Delta \zeta^{*,i}_T \Delta \xi^{*, i}_T = 0, 
\end{aligned}
\end{equation*}
for $i=0,1$, and
\begin{equation*}
\begin{aligned}
&\int_{[0,T)} \langle \psi_t, (1-\tilde\xi^{*}_{t-})\rangle \big( V_t - g_t \big) \ud \zeta^*_t = 0,\\
&\langle \psi_T, (1-\tilde\xi^{*}_{T-})\rangle \big(V_T - h_T \big) \Delta \zeta^*_T = 0.
\end{aligned}
\end{equation*}
The formulae above convey the intuitive idea that the $i$-th incarnation of the informed player should only stop (with some probability) when the corresponding value process $U^i$ equals the stopping payoff $f^i$. Instead, the uninformed player should only stop (with some probability) when the value process $V$ equals the stopping payoff $g$.
\medskip

\textbf{Sufficient conditions.} An analogue of Corollary \ref{cor:saddle_mart} holds with non-obvious changes to the notation and assumptions. Recall that $f^j_t=f(t,X^j_t)$, $g^j_t=g(t,X^j_t)$, $h^j_t=h(t,X^j_t)$ and $f_t=f(t,X_t)$, $g_t=g(t,X_t)$, $h_t=h(t,X_t)$ with $X=X^\cJ$. Let $\varUpsilon$ be the class of measurable maps $\Phi:[0,T] \times C([0, T]) \to [0,1]$ such that $(\Phi(t, X_{\cdot \wedge t}))_{t \in [0, T]}$ belongs to $\cAcirc_0(\F^2)$ whereas $(\Phi(t, X^0_{\cdot \wedge t}))_{t \in [0, T]}$ and $(\Phi(t, X^1_{\cdot \wedge t}))_{t \in [0, T]}$ belong to $\cAcirc_0(\F^W)$.
Then $\varUpsilon$ is the family of maps that determine admissible strategies according to \eqref{eqn:tildexi_def} and \eqref{eq:zeta*dec}. 

\begin{corollary}\label{cor:saddle_mart2}
Let $(U^0_t)_{t\in[0,T]}$, $(U^1_t)_{t\in[0,T]}$ be $\F^W$-progressively measurable and $(V_t)_{t\in[0,T]}$ be $\F^2$-progressively measurable. Let $\hat \Xi^0, \hat \Xi^1, \hat\Lambda \in \varUpsilon$. Define $\hat\xi^i_t = \hat \Xi^i(t, X_{\cdot \wedge t})$ and $\hat\zeta^i_t = \hat\Lambda(t, X^i_{\cdot \wedge t})$ for $i=0,1$; hence, $\hat\xi^i \in \cAcirc_0(\F^2)$ and $\hat \zeta^i \in \cAcirc_0(\F^W)$. Set, for $t\in[0,T]$,
\begin{align*}
\hat M^{0;i}_{t} &=  \int_{[0, t)} g(s,X^i_s) \ud \hat\zeta^i_s + (1-\hat\zeta^i_{t-}) U^i_{t},\\
\hat N^0_t&=\int_{[0, t)} f(s,X_s)\langle\psi_s, \ud \hat\xi_s\rangle + \langle \psi_t, 1-\hat\xi_{t-} \rangle V_t .
\end{align*}

Assume that
\begin{enumerate}[(i)]
\item the process $(\hat M^{0;i}_{t})_{t\in[0,T]}$ is an $\F^W$-submartingale for $i=0,1$,
\item the process $(\hat N^0_t)_{t\in[0,T]}$ is an $\F^2$-supermartingale,
\item for $i=0,1$, it holds $\P$-a.s.,
\[
f(t,X^i_t)+\frac{(h-f)(t,X^i_t)\Delta\hat\zeta^i_t}{1-\hat\zeta^i_{t-}} \ge U^i_t,\quad\text{for all $t \in [0, T]$ such that $\hat\zeta^i_{t-}<1$}, 
\]
\item it holds $\P$-a.s., 
\[
g(t,X_t)+\frac{(h-g)(t,X_t)\langle\psi_t,\Delta\hat \xi_t\rangle}{\langle \psi_t, 1-\hat\xi_{t-}\rangle } \le V_t,\quad\text{for all $t \in [0, T]$ such that $\langle \psi_t, 1-\hat\xi_{t-}\rangle>0$,}
\] 
\item $V_0 = \E[\ind_{\{\cJ=0\}}U^0_0+\ind_{\{\cJ=1\}}U^1_0]$.
\end{enumerate}
Then the value of the game equals $V_0$ and a saddle point is given by 
\begin{equation}\label{eqn:partial_equil}
(\xi^{*,0}_t,\xi^{*,1}_t)_{t\in[0,T]}=\big(\hat \Xi^0(t, X^0_{\cdot \wedge t}),\hat \Xi^1(t, X^1_{\cdot \wedge t})\big)_{t \in [0,T]} \quad \text{and}\quad (\zeta^*_t)_{t\in[0,T]}=\big(\hat\Lambda(t, X_{\cdot \wedge t})\big)_{t \in [0, T]}.
 \end{equation}
\end{corollary}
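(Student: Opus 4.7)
The plan is to apply Theorem \ref{thm:suff_saddle_mart} with the choices
\[
\cU^1_t \coloneqq \ind_{\{\cJ=0\}} U^0_t + \ind_{\{\cJ=1\}} U^1_t,\qquad \cU^2_t\coloneqq V_t,\qquad \hat\zeta_t\coloneqq \hat\Lambda(t,X_{\cdot\wedge t})\in\cAcirc_0(\F^2),
\]
and, playing the role of the theorem's $\hat\xi\in\cAcirc_0(\F^1)$,
\[
\bar\xi_t\coloneqq \ind_{\{\cJ=0\}}\hat\Xi^0(t,X^0_{\cdot\wedge t})+\ind_{\{\cJ=1\}}\hat\Xi^1(t,X^1_{\cdot\wedge t}).
\]
Note that on $\{\cJ=i\}$ we have $X=X^i$, so $\hat\zeta_t=\hat\zeta^i_t$, $f_t=f^i_t$, $g_t=g^i_t$, $h_t=h^i_t$, and $\bar\xi_t=\hat\xi^i_t$. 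Moreover, any $\theta\in\cT_0(\F^1)$ disintegrates as $\theta=\theta_0\ind_{\{\cJ=0\}}+\theta_1\ind_{\{\cJ=1\}}$ with $\theta_i\in\cT_0(\F^W)$ (cf.\ Lemma \ref{lem:suffcdual2}). I would then verify hypotheses (i)--(v) of Theorem \ref{thm:suff_saddle_mart} for these objects.

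For (i), using the identifications above, the quantity $\hat M^0(\theta)$ of Theorem \ref{thm:suff_saddle_mart} coincides on $\{\cJ=i\}$ with the corollary's $\hat M^{0;i}_{\theta_i}$. Since $\cJ$ is independent of $\F^W$, the measure $\P^i=\P(\cdot\mid \cJ=i)$ restricts to $\P$ on $\F^W$, so hypothesis (i) of the corollary gives that each $\hat M^{0;i}$ is an $\F^W$-submartingale under $\P^i$; for $\theta\le\sigma$ in $\cT_0(\F^1)$ this yields
\[
\E[\hat M^0(\sigma)-\hat M^0(\theta)]=\sum_{i=0}^1 \pi_i\,\E^i\big[\hat M^{0;i}_{\sigma_i}-\hat M^{0;i}_{\theta_i}\big]\ge 0.
\]
For (ii), conditioning $\int_{[0,\gamma)} f_t\,\ud\bar\xi_t$ on $\cF^2_\gamma$ and using \eqref{eq:projF2} together with $\E[\ind_{\{\cJ=i\}}\mid \cF^2_\gamma]=\psi_\gamma^{(i)}$ yields
\[
\hat N^0(\gamma)=(1-\psi_\gamma)\!\!\int_{[0,\gamma)}\!\! f_t\,\ud\hat\xi^0_t+\psi_\gamma\!\!\int_{[0,\gamma)}\!\! f_t\,\ud\hat\xi^1_t+\langle\psi_\gamma,1-\hat\xi_{\gamma-}\rangle V_\gamma,
\]
which aggregates into the process $N^0_t$ of \eqref{eq:MNN}. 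By the Itô identity derived just before \eqref{eq:MNN}, the difference $N^0_t-\hat N^0_t$ (with the corollary's $\hat N^0_t$) is an $\F^2$-martingale; hence hypothesis (ii) of the corollary transfers to the required $\cT_0(\F^2)$-supermartingale system property.

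For (iii), all relevant processes are $\F^1$-adapted so the optional projections are trivial, and on $\{\cJ=i\}$ the inequality becomes $f^i_t(1-\hat\zeta^i_t)+h^i_t\Delta\hat\zeta^i_t\ge (1-\hat\zeta^i_{t-})U^i_t$; dividing by $1-\hat\zeta^i_{t-}$ on $\{\hat\zeta^i_{t-}<1\}$ and rearranging recovers (iii) of the corollary. For (iv), analogous filtering computations give $\optional{(1-\bar\xi_{\cdot-})}^{\F^2}_t=\langle\psi_t,1-\hat\xi_{t-}\rangle$, $\optional{(g_\cdot(1-\bar\xi_\cdot))}^{\F^2}_t=g_t\langle\psi_t,1-\hat\xi_t\rangle$, and $\optional{(h_\cdot\Delta\bar\xi_\cdot)}^{\F^2}_t=h_t\langle\psi_t,\Delta\hat\xi_t\rangle$; dividing by $\langle\psi_t,1-\hat\xi_{t-}\rangle$ on the positivity set and using $\langle\psi_t,1-\hat\xi_t\rangle=\langle\psi_t,1-\hat\xi_{t-}\rangle-\langle\psi_t,\Delta\hat\xi_t\rangle$ yields (iv) of the corollary. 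Finally, (v) of the theorem reads $\E[\cU^1_0]=\E[\cU^2_0]$, i.e.\ $\pi U^1_0+(1-\pi)U^0_0=V_0$, which is exactly (v) of the corollary. Theorem \ref{thm:suff_saddle_mart} then certifies that the game has value $V_0$ and that $(\bar\xi,\hat\zeta)$ generates a saddle point, which after re-expressing $\bar\xi$ via its components on $\{\cJ=i\}$ gives \eqref{eqn:partial_equil}. The main obstacle is the bookkeeping between the three avatars of the informed player's strategy -- the $\F^1$-adapted $\bar\xi$, the $\F^W$-adapted $\xi^{*,i}$, and the $\F^2$-adapted $\hat\xi^i=\tilde\xi^{*,i}$ -- and in particular the observation that the corollary's $\hat N^0_t$ (with $\psi_s$ inside the integral) differs from the aggregation of Theorem \ref{thm:suff_saddle_mart}'s $\hat N^0(\gamma)$ (with $\psi_\gamma$ outside the integral) only by an $\F^2$-martingale, so that their supermartingale properties are equivalent.
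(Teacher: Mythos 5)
Your proposal is correct and follows exactly the route the paper intends: the corollary is stated as a direct application of Theorem \ref{thm:suff_saddle_mart} with $\cU^1_t=\ind_{\{\cJ=0\}}U^0_t+\ind_{\{\cJ=1\}}U^1_t$, $\cU^2_t=V_t$, the $\{\cJ=i\}$-wise disintegration handling condition (i), the optional-projection identities already computed in Section \ref{sec:partdyn} handling (iii)--(iv), and the observation that the theorem's $\hat N^0(\gamma)$ (the $N^0$ form of \eqref{eq:MNN}) and the corollary's $\hat N^0_t$ (the $\widetilde N^0$ form) differ by the $\F^2$-martingale $\int_0^\cdot(\cdots)\ud\psi_t$ handling (ii). The only cosmetic slip is citing Lemma \ref{lem:suffcdual2} for the decomposition of $\F^1$-stopping times, where Lemma \ref{lem:decomp_tau} (or the in-text discussion of Section \ref{sec:partdyn}) is the relevant reference.
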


We defined players' strategies via maps in $\varUpsilon$ because each strategy appears in two guises depending on the player inspecting it. The assumptions of the corollary view the strategies through the lens of the opponent: the uninformed player considers $\hat \xi^0$ (resp.\ $\hat \xi^1$) which arises when the player {\em pretends} that $\cJ=0$ (resp.\ $\cJ=1$) but observes only the process $X$ -- these are the counterparts of $\tilde \xi^0, \tilde \xi^1$ in the first part of this subsection discussing the necessary conditions; the informed player instead is able to separate the strategy of the uninformed player based on the observation of $\cJ$, see \eqref{eq:zeta*dec}. The saddle point \eqref{eqn:partial_equil} requires inserting appropriate processes into the maps $\hat\Xi^0$, $\hat\Xi^1$ and $\hat\Lambda$ depending on the players' filtrations. The overall idea, which is common in game theory, is that both players know the equilibrium maps $\hat\Xi^0$, $\hat\Xi^1$ and $\hat\Lambda$ but the uninformed player can only compute the realised trajectories of the increasing processes conditional upon the observed filtration.

Notice that the choice of the process $\hat N^0$ in the statement above is motivated by $\widetilde N^0$ in \eqref{eq:MNN} and the equivalence between $N^0$ and $\widetilde N^0$ therein, up to a martingale process. We avoid repeating also Corollary \ref{cor:saddle_stop}, which holds in analogous fashion. 
\smallskip
 
\subsection{A heuristic derivation of PDE systems}\label{sec:PDE}
Corollaries \ref{cor:saddle_mart} and \ref{cor:saddle_mart2} suggest a practical approach to the actual construction of the equilibrium payoffs. Although it is unclear how to formulate a rigorous statement, we want to discuss here some natural ideas that hopefully can provide useful tools for practical solution of specific problems. We start with the problem presented in Section \ref{sec:partdyn} and conclude with the game from Section \ref{sec:partial}. 

\textbf{Partially observed dynamics.}
Let $\hat p_t$ be defined as in \eqref{eq:pt} but with $\tilde \xi^*$ replaced by $\hat \xi$ from Corollary \ref{cor:saddle_mart2}. If we postulate that $U^j_t=u^j(t,\hat p_t,X^j_t)$ for $j=0,1$ and $V_t=v(t,\hat p_t,X_t)$ for suitable functions $u^j$ and $v$ to be determined, then we can connect conditions (i)--(v) from the corollary above to an analytical problem. With a small loss of generality, let us restrict our attention to a situation where the processes $\hat \xi^{i},\hat \zeta^{i}$, $i=0.1$, are continuous. This is not overly restrictive for a characterisation of the equilibrium payoffs, thanks to approximation arguments as those exploited in, e.g., \cite{TouziVieille2002} and \cite[Sec.\ 5]{de2022value}; however, we cannot expect, in general, to find equilibrium strategies with continuous paths. 

Conditions (iii) and (iv) from the corollary translate into: for $i=0,1$, 
\begin{align*}
u^i(t,\pi,x)\le f(t,x)\quad \text{and}\quad v(t,\pi,x)\ge g(t,x)\quad\text{for all $(t,\pi,x)\in[0,T]\times(0,1)\times\R$}.
\end{align*}
These conditions suggest the players' stopping regions. In particular, the two incarnations of the informed player should only stop in the regions
\[
\cS^i\coloneqq \{(t,\pi,x):u^i(t,\pi,x)=f(t,x)\},\quad i=0,1,
\]
whereas the uninformed player should stop in the region
\[
\cS\coloneqq\{(t,\pi,x):v(t,\pi,x)=g(t,x)\}.
\]
Let $\cL_{X,\psi}$ be the infinitesimal generator of the pair $(X_t,\psi_t)$ defined in \eqref{eq:SDEXpsi}. For $i=0,1$, let $\cL^i_{X,\psi}$ be the infinitesimal generator of the process $(X^i_t,\psi_t)$. The analytical counterpart of (i) can be deduced by the following equations: for $i=0,1$
\begin{align}\label{eq:VIi}
\begin{aligned}
&\partial_t u^i(t,\pi,x)+\cL^i_{X,\psi}u^i(t,\pi,x)\ge 0,\ \ \ \text{on}\ \{v>g\},\\
&\partial_\pi u^i(t,\pi,x)=0,\quad\text{on $\cS^0\cup\cS^1$},
\end{aligned}
\end{align}
which need to be understood in an appropriate sense (e.g., in the viscosity sense) and which we derived using $\ud \hat \zeta^i_t=0$ if $(t,\hat p_t,X^i_t)\in \{v>g\}$, due to Corollary \ref{cor:acting2}. Notice that the first inequality above is the result of the diffusive dynamics of the pair $(\psi_t,X^i_t)_{t\in[0,T]}$, whereas the second condition takes care of the bounded variation component of the optimal dynamics for the belief process (cf.\ \eqref{eq:pt}). The latter, arises in equilibrium  only due to optimal actions of the two incarnations of the informed player. Since there is no action in the set $\{u^0<f\}\cap\{u^1<f\}$, then $\ud \hat p_t$ is proportional to $\ud \psi_t$ (i.e., purely diffusive) whenever $(t,\hat p_t,X_t)\in\{u^0<f\}\cap\{u^1<f\}$. That is why we need the second equation to hold in $\cS^0\cup\cS^1$ only. In particular, the equation says that the equilibrium payoffs of the two incarnations of the informed player are not affected by (optimal) changes in the belief process of the uninformed player.
 
Analogous arguments translate condition (ii) into the inequality
\begin{align}\label{eq:VIv}
\partial_t v(t,\pi,x)+\cL_{X,\psi}v(t,\pi,x)\le 0,\quad \text{on}\ \{u^0<f\}\cap\{u^1<f\}.
\end{align}
Here we do not need a condition on the derivative $\partial_\pi v$ because, as argued above, in equilibrium the belief process $(\hat p_t)_{t\in[0,T]}$ of the uninformed player is purely diffusive whenever $(t,\hat p_t, X_t)\in \{u^0<f\}\cap\{u^1<f\}$. The martingale characterisation also suggests that both inequalities \eqref{eq:VIi} and \eqref{eq:VIv} become strict equalities on the set $\{u^0<f\}\cap\{u^1<f\}\cap \{v>g\}$. That is, 
\begin{align}\label{eq:VIim}
\begin{aligned}
\partial_t u^i(t,\pi,x)+\cL^i_{X,\psi}u^i(t,\pi,x)&= 0,\\
\partial_t v(t,\pi,x)+\cL_{X,\psi}v(t,\pi,x)&= 0,
\end{aligned}
\end{align}
for $(t,\pi,x)\in\{u^0<f\}\cap\{u^1<f\}\cap \{v>g\}$. Finally, condition (v) in Corollary \ref{cor:saddle_mart2} connects the functions $u^0,u^1$ and $v$ via the formula
\[
v(t,\pi,x)=\pi u^1(t,\pi,x)+(1-\pi) u^0(t,\pi,x).
\] 
Precisely, condition (v) only give the above link for $t=0$ and given an initial point $(\pi, x)$ but due to the Markovian structure of the problem, the game can be started at any time $t$ and from any configuration $(\pi, x)$ justifying the above statement.

Players' strategies in Corollary \ref{cor:saddle_mart2} are defined using maps $\Xi^0, \Xi^1$ and $\Lambda$. These maps are evaluated on `wrong' processes in conditions (i)-(v) in order to imply the saddle point assertion of \eqref{eqn:partial_equil}. This feature is clearly visible in the equations above. In \eqref{eq:VIi}, the underlying dynamics is given by $X^i$ and the inequality must hold on $\{v > g\}$, i.e., on the inaction set for the uninformed player pretending to observe $X^i$, hence, when $\hat\zeta^i$ from Corollary \ref{cor:saddle_mart2} does not grow. Similarly, \eqref{eq:VIv} holds when neither $\hat\xi^0$ nor $\hat \xi^1$ act.
 
We conclude by noticing that the system above is precisely the one conjectured in the ve\-ri\-fi\-cation theorem formulated in \cite{DEG2020}, thus providing the theoretical foundation for such theorem. 
\medskip

\textbf{Partially observed scenarios.} Analogous arguments may be developed in the framework of Section \ref{sec:partial}. This is a useful exercise because it leads to a different type of variational problem than the one studied by Gr\"un \cite{Grun2013} in a Markovian formulation of the game with partially observed scenarios. Indeed, Gr\"un obtains a single variational inequality for the value of the uninformed player whereas we obtain a system of variational inequalities which is close in spirit to those found in the PDE literature on nonzero-sum Dynkin games (e.g., \cite{bensoussan1974}). This is in line with our overall approach to the study of the problem.

In the framework of Section \ref{sec:partial} let us now restrict our attention to the case of a diffusive underlying dynamics $(X_t)_{t\in[0,T]}$ in $\R^d$ (fully known to both players) with the infinitesimal generator denoted by $\cL_X$. Assume that there are measurable functions $f^i,g^i,h^i:[0,T]\times\R^d\to \R$ such that $f^i_t=f^i(t,X_t)$, $g^i_t=g^i(t,X_t)$ and $h^i_t=h^i(t,X_t)$ for $i=0,1$. Since we are only interested in heuristics, in order to convey the main ideas we postulate again  that the generating processes $\hat \xi^0$, $\hat \xi^1$ and $\hat \zeta$ from Corollary \ref{cor:saddle_mart} are continuous. Recall that $\hat p_t$ is defined as in \eqref{eq:ppos} but with $\hat \xi$ in place of $\xi^*$.

If we look at equilibrium values as deterministic functions of the state dynamics $(\hat p_t,X_t)_{t\in[0,T]}$, we must determine functions $v,u^0,u^1:[0,T]\times[0,1]\times\R^d\to \R$ such that $V_t=v(t,\hat p_t,X_t)$, $U^j_t=u^j(t,\hat p_t,X_t)$, $j=0,1$. Recall that the belief process $\hat p_t$ in this framework only moves as a result of the informed player's stopping strategy in equilibrium. By the assumed continuity of the generating processes, conditions (iii) and (iv) in Corollary \ref{cor:saddle_mart} translate into
\[
u^i(t,\pi,x)\le f^i(t,x)\quad\text{and}\quad v(t,\pi,x)\ge \pi g^1(t,x)+(1-\pi) g^0(t,x),
\]  
for $(t,\pi,x)\in[0,T]\times[0,1]\times\R^d$ and $i=0,1$. The `flat-off conditions' in Corollary \ref{cor:acting} help us identify the stopping regions for the two players. In particular, we have that the two incarnations of the informed player should stop in the sets
\[
\cS^i\coloneqq\{(t,\pi,x): u^i(t,\pi,x)=f^i(t,x)\},\quad i=0,1.
\]
Instead, the uninformed player should stop in the set
\[
\cS\coloneqq\{(t,\pi,x): v(t,\pi,x)=\pi g^1(t,x)+(1-\pi)g^0(t,x)\}.
\]
Denoting $\cC=([0,T]\times[0,1]\times\R^d)\setminus\cS$, the submartingale condition (i) in Corollary \ref{cor:saddle_mart} translates into
\begin{align*}
&\partial_t u^i(t,\pi,x)+\cL_X u^i(t,\pi,x)\ge 0,\ \ \text{on}\ \ \cC,\\
&\partial_\pi u^i(t,\pi,x)= 0,\ \ \text{on}\ \ \cS^0\cup\cS^1, 
\end{align*}
where the second equation accounts for the fact that a change in the uninformed player's belief should not affect the informed player's equilibrium payoff. Analogously, the supermartingale condition in (ii) of Corollary \ref{cor:saddle_mart} reads in analytical terms as 
\[
\partial_t v(t,\pi,x)+\cL_X v(t,\pi,x)\le 0\ \ \ \ \text{on}\ \ \{u^0<f^0\}\cap\{u^1<f^1\}.
\]
Finally, by the martingale characterisation we get
\[
\partial_t u^i(t,\pi,x)+\cL_{X}u^i(t,\pi,x)= 0\ \ \text{and}\ \ \partial_t v(t,\pi,x)+\cL_{X}v(t,\pi,x)= 0,
\]
for $(t,\pi,x)\in\{u^0<f\}\cap\{u^1<f\}\cap \cC$, $i=0,1$, and condition (v) in Corollary \ref{cor:saddle_mart} reads as
\[
v(t,\pi,x)=\pi u^1(t,\pi,x)+(1-\pi)u^0(t,\pi,x),
\]
where, again, we refer to the Markovianity of the framework to allow for arbitrary $t, \pi, x$ instead of a fixed initial point in (v).

\appendix
\section{Review of aggregation results}\label{app:aggr} 

We recall useful concepts from \cite{elkaroui1981} (see paragraph 2.11). Throughout the section, $\G$ is a filtration of sub-$\sigma$-algebras of $\cF$, unless otherwise specified.
\begin{definition}\label{def:supsubsys}
A family ${\bf X}\coloneqq\{X(\theta),\theta\in\cT_0(\G)\}$ is a $\cT_0(\G)$-system if $X(\theta)$ is $\cG_\theta$-measurable for every $\theta\in\cT_0(\G)$ and $X(\theta_1)=X(\theta_2)$ on $\{\theta_1=\theta_2\}$ for any $\theta_1,\theta_2\in\cT_0(\G)$.

{\bf(a)} A $\cT_0(\G)$-system ${\bf X}$ is a $\cT_0(\G)$-(super/sub)martingale system if
\begin{itemize}
\item[ (i)] $\E[|X(\theta)|]<\infty$ for all $\theta\in\cT_0(\G)$,
\item[(ii)] For any $\theta_1,\theta_2\in\cT_0(\G)$ with $\theta_1\le\theta_2$, it holds
\begin{align*}
&\E[X(\theta_2)|\cG_{\theta_1}]=X(\theta_1),\quad\text{$\P$-a.s {\em (for a martingale system)}}\\
&\E[X(\theta_2)|\cG_{\theta_1}]\le X(\theta_1),\quad\text{$\P$-a.s {\em (for a supermartingale system)}}\\
&\E[X(\theta_2)|\cG_{\theta_1}]\ge X(\theta_1),\quad\text{$\P$-a.s {\em (for a submartingale system)}}
\end{align*}
\end{itemize}

{\bf(b)} A $\cT_0(\G)$-system ${\bf X}$ is right/left-continuous in expectation if for any decreasing/increasing sequence $(\theta_n)_{n\in\N}\subset\cT_0(\G)$ converging to $\theta\in\cT_0(\G)$ we have
\[
\lim_{n\to\infty}\E[X(\theta_n)]=\E[X(\theta)].
\]

{\bf(c)} A $\cT_0(\G)$-system ${\bf X}$ is of class $(D)$ if the family $\{X(\theta),\theta\in\cT_0(\G)\}$ is uniformly integrable.

{\bf(d)} A $\G$-optional process $(X_t)_{t\in[0,T]}$ aggregates the $\cT_0(\G)$-system $\{X(\theta),\theta\in\cT_0(\G)\}$ if 
\[
\P(X(\theta)=X_\theta)=1,\quad\text{for all $\theta\in\cT_0(\G)$}.
\]
\end{definition}

We are also going to need the following aggregation result which combines \cite[Prop.\ 2.14]{elkaroui1981} and arguments from \cite[Thm.\ I.3.13]{karatzas1998brownian}. 
\begin{proposition}\label{prop:aggr}
Let ${\bf X}\coloneqq\{X(\theta),\theta\in\cT_0(\G)\}$ be a $\cT_0(\G)$-(super/sub)martingale system which is also right-continuous in expectation and of class $(D)$. There exists a \cadlag (super/sub)martingale $(X_t)_{t\in[0,T]}$ of class $(D)$ that aggregates ${\bf X}$.
\end{proposition}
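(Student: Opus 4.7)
The plan is to first construct a candidate \cadlag (super/sub)martingale by applying Doob's classical regularisation to the subfamily $(X(t))_{t\in[0,T]}$ indexed by deterministic times, and then to extend the identification from deterministic times to arbitrary $\G$-stopping times.

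\emph{Step 1 (Deterministic subfamily is a (super/sub)martingale).} Verify that $(X(t))_{t\in[0,T]}$ is a (super/sub)martingale for $\G$ in the usual sense. Adaptedness is immediate from the system property ($X(t)$ is $\cG_t$-measurable); integrability follows from class $(D)$; the (super/sub)martingale inequality is just condition (ii) of Definition \ref{def:supsubsys} specialised to deterministic $\theta_1 = s \le t = \theta_2$. Moreover $t\mapsto \E[X(t)]$ is right-continuous by the right-continuity in expectation of ${\bf X}$ applied to sequences $t_n\downarrow t$ of deterministic times.

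\emph{Step 2 (Regularisation along rationals).} By Doob's upcrossing inequality applied to $(X(q))_{q\in\bQ\cap[0,T]}$ (see, e.g., \cite[Thm.~I.3.13]{karatzas1998brownian}), for $\P$-a.e. $\omega$ the map $q\mapsto X(q)(\omega)$ admits finite left and right limits at every $t\in[0,T]$. Define
\[
X_t \coloneqq \lim_{\substack{q\downarrow t\\ q\in\bQ}} X(q),\quad t\in[0,T),\qquad X_T\coloneqq X(T).
\]
Right-continuity of $\G$ guarantees $\G$-adaptedness, and class $(D)$ combined with right-continuity of $t\mapsto\E[X(t)]$ yields $X_t = X(t)$ $\P$-a.s. for every deterministic $t\in[0,T]$. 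Standard arguments then show that $(X_t)_{t\in[0,T]}$ is itself a \cadlag (super/sub)martingale of class $(D)$.

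\emph{Step 3 (Extension to stopping times — the main obstacle).} The nontrivial part is to prove $X_\theta = X(\theta)$ $\P$-a.s. for every $\theta\in\cT_0(\G)$. Following the classical recipe, approximate $\theta$ from above by
\[
\theta_n \coloneqq \tfrac{1}{2^n}\lceil 2^n\theta\rceil \wedge T,
\]
so that $\theta_n\downarrow \theta$ and each $\theta_n$ takes values in a finite set of dyadic rationals $\{q^n_1,\ldots,q^n_{k_n},T\}$. Partitioning $\Omega$ according to the values of $\theta_n$ and invoking the system property together with Step 2 yields $X_{\theta_n} = X(\theta_n)$ $\P$-a.s. for every $n$. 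Taking expectations, right-continuity in expectation of ${\bf X}$ gives $\E[X(\theta_n)]\to\E[X(\theta)]$, while the class $(D)$ property together with right-continuity of the paths of $(X_t)$ gives $\E[X_{\theta_n}]\to\E[X_\theta]$. Combining these limits with the (super/sub)martingale inequalities applied between $\theta$ and $\theta_n$ on both sides of the equality $X_{\theta_n}=X(\theta_n)$ squeezes $\E[X_\theta]$ and $\E[X(\theta)]$ together; since the (super/sub)martingale inequality between $X_\theta$ and $X(\theta)$ goes in only one direction a priori, equality of expectations upgrades it to $\P$-a.s. equality.

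The delicate point here is the correct pairing of the two one-sided inequalities (one inherited from the system ${\bf X}$, one from the aggregating process $(X_t)$): right-continuity in expectation is precisely the ingredient that makes the squeeze sharp. The submartingale case follows by the same argument with signs reversed, and the martingale case by applying both simultaneously.
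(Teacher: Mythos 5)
Your proposal is correct and is essentially the paper's own argument: the paper simply cites \cite[Prop.\ 2.14]{elkaroui1981} together with \cite[Thm.\ I.3.13]{karatzas1998brownian}, and your Steps 1--3 (Doob regularisation along rationals, identification at deterministic times via right-continuity in expectation, then dyadic approximation of stopping times from above combined with the one-sided system inequality and equality of expectations) are precisely the content of those cited results. No gaps.
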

\begin{proof}
The proof of \cite[Prop.\ 2.14]{elkaroui1981} can be immediately adapted to the case of a family ${\bf X}$ of class $(D)$, yielding a right-continuous super/sub martingale $(X_t)_{t\in[0,T]}$ of class $(D)$ that aggregates ${\bf X}$. We apply arguments from \cite[Thm.\ I.3.13]{karatzas1998brownian} (which uses Prop.\ 3.14 therein) to show that the process $(X_t)_{t\in[0,T]}$ has \cadlag trajectories with probability $1$.
\end{proof}

Any martingale system ${\bf X}$ of class $(D)$ is trivially continuous in expectation, which leads to a useful corollary. 
\begin{corollary}\label{cor:aggr}
If ${\bf X}\coloneqq\{X(\theta),\theta\in\cT_0(\G)\}$ is a $\cT_0(\G)$-martingale system of class $(D)$, then there exists a c\`adl\`ag martingale $(X_t)_{t\in[0,T]}$ of class $(D)$ that aggregates ${\bf X}$.
\end{corollary}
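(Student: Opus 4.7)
The proof will be an essentially immediate application of Proposition \ref{prop:aggr}. My plan is to verify that the hypotheses of that proposition are inherited automatically in the martingale case, obtain a \cadlag aggregator, and then upgrade the super/submartingale property to the martingale property.

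First I would observe that any $\cT_0(\G)$-martingale system is in particular a $\cT_0(\G)$-super\-mar\-tin\-gale system (and also a submartingale system): indeed, the integrability condition (i) in Definition \ref{def:supsubsys}(a) is exactly the same, while the equality in (ii) trivially implies both inequalities. Hence condition (a) of Proposition \ref{prop:aggr} is satisfied and the class $(D)$ assumption is given directly.

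The only remaining hypothesis of Proposition \ref{prop:aggr} is right-continuity in expectation. This is the step where the martingale structure is crucial, and it is the only point where any calculation is needed. The key remark is that $\theta \mapsto \E[X(\theta)]$ is constant on $\cT_0(\G)$: applying the martingale identity with $\theta_1 = 0$ and $\theta_2 = \theta$ gives $\E[X(\theta)\mid\cG_0] = X(0)$ and hence $\E[X(\theta)] = \E[X(0)]$ for every $\theta \in \cT_0(\G)$. Consequently, for any sequence $(\theta_n)_{n\in\N}\subset\cT_0(\G)$ decreasing to $\theta \in \cT_0(\G)$ we have $\lim_{n\to\infty}\E[X(\theta_n)] = \E[X(0)] = \E[X(\theta)]$, so condition (b) of Definition \ref{def:supsubsys} is automatic.

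With all hypotheses of Proposition \ref{prop:aggr} verified, I would apply it to obtain a \cadlag supermartingale $(X_t)_{t\in[0,T]}$ of class $(D)$ that aggregates $\mathbf{X}$. It then remains to upgrade this to the martingale property, which is straightforward: for any $0 \le s \le t \le T$, viewing $s$ and $t$ as constant stopping times in $\cT_0(\G)$, the aggregation property yields $X_s = X(s)$ and $X_t = X(t)$ $\P$\as, and the martingale property of the system gives $\E[X_t \mid \cG_s] = \E[X(t)\mid\cG_s] = X(s) = X_s$ $\P$\as I do not anticipate any substantive obstacle: the only subtlety is merely to spell out that continuity in expectation is free in the martingale case, and that the aggregator inherits the martingale identity from the system through the deterministic-times specialisation.
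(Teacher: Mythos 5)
Your proposal is correct and follows exactly the route the paper intends: the paper's justification is precisely the remark preceding the corollary that a class $(D)$ martingale system is trivially (right-)continuous in expectation because $\E[X(\theta)]\equiv\E[X(0)]$, after which Proposition \ref{prop:aggr} applies. Your final step upgrading the aggregated supermartingale to a martingale via deterministic times is a fine way to make explicit what the paper leaves implicit.
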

Finally, we recall a standard result from martingale theory.
\begin{lemma}\label{lem:M}
A $\cT_0(\G)$-system ${\bf X}$ is a supermartingale system if and only if
\begin{align}\label{eq:iffM}
\E[X(\tau)]\le \E[X(\sigma)],\quad\text{for every pair $\tau,\sigma\in\cT_0(\G)$, $\tau \ge \sigma$}.
\end{align}
Moreover, ${\bf X}$ is a martingale system if and only if \eqref{eq:iffM} holds with equality.
\end{lemma}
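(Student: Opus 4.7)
The ``only if'' direction is immediate: if ${\bf X}$ is a supermartingale system and $\sigma\le\tau$, then $\E[X(\tau)\mid\cG_\sigma]\le X(\sigma)$ by Definition \ref{def:supsubsys}(a), so taking expectations on both sides yields \eqref{eq:iffM}. The martingale case works identically, producing equality.

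For the ``if'' direction in the supermartingale case, the plan is the classical splitting trick. Fix $\theta_1,\theta_2\in\cT_0(\G)$ with $\theta_1\le\theta_2$ and an arbitrary set $A\in\cG_{\theta_1}$. Define
\[
\tilde\theta\coloneqq\theta_1\ind_{A}+\theta_2\ind_{A^c}.
\]
First I would verify that $\tilde\theta\in\cT_0(\G)$: for each $t\in[0,T]$,
\[
\{\tilde\theta\le t\}=\big(A\cap\{\theta_1\le t\}\big)\cup\big(A^c\cap\{\theta_2\le t\}\big),
\]
and both sets lie in $\cG_t$, using $A\cap\{\theta_1\le t\}\in\cG_t$ (by definition of $\cG_{\theta_1}$) together with $A^c\cap\{\theta_2\le t\}=A^c\cap\{\theta_1\le t\}\cap\{\theta_2\le t\}\in\cG_t$ (since $\{\theta_2\le t\}\subset\{\theta_1\le t\}$ and $A^c\cap\{\theta_1\le t\}\in\cG_t$). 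By construction $\theta_1\le\tilde\theta\le\theta_2$.

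Next, I would use the $\cT_0(\G)$-system property of ${\bf X}$, which gives $X(\tilde\theta)=X(\theta_1)$ on $\{\tilde\theta=\theta_1\}\supset A$ and $X(\tilde\theta)=X(\theta_2)$ on $\{\tilde\theta=\theta_2\}\supset A^c$, so
\[
X(\tilde\theta)=X(\theta_1)\ind_A+X(\theta_2)\ind_{A^c}.
\]
Applying \eqref{eq:iffM} twice (with the pair $(\tilde\theta,\theta_2)$) yields
\[
\E[X(\theta_2)\ind_A]+\E[X(\theta_2)\ind_{A^c}]=\E[X(\theta_2)]\le\E[X(\tilde\theta)]=\E[X(\theta_1)\ind_A]+\E[X(\theta_2)\ind_{A^c}],
\]
whence $\E[X(\theta_2)\ind_A]\le\E[X(\theta_1)\ind_A]$. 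Since $A\in\cG_{\theta_1}$ is arbitrary and $X(\theta_1)$ is $\cG_{\theta_1}$-measurable, this is exactly $\E[X(\theta_2)\mid\cG_{\theta_1}]\le X(\theta_1)$, proving the supermartingale property.

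For the martingale version, if \eqref{eq:iffM} holds with equality then applying the supermartingale direction to both ${\bf X}$ and $-{\bf X}$ gives $\E[X(\theta_2)\mid\cG_{\theta_1}]\le X(\theta_1)$ and $\E[X(\theta_2)\mid\cG_{\theta_1}]\ge X(\theta_1)$, so ${\bf X}$ is a martingale system. The only subtlety worth flagging is the measurability check that $\tilde\theta$ is a stopping time, but this is routine thanks to the ordering $\theta_1\le\theta_2$; there is no substantial obstacle in the argument.
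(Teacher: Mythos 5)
Your proof is correct and follows essentially the same route as the paper's: the splitting construction $\tilde\theta=\theta_1\ind_A+\theta_2\ind_{A^c}$ for $A\in\cG_{\theta_1}$, combined with the $\cT_0(\G)$-system property and \eqref{eq:iffM}, is exactly the argument given there. Your additional verification that $\tilde\theta$ is a stopping time and the reduction of the martingale case to $\pm{\bf X}$ are routine details the paper leaves implicit.
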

\begin{proof}
The only if part of the claim is obvious. Now assume \eqref{eq:iffM} holds. Take $\sigma,\tau\in\cT_0(\G)$ with $\sigma\le \tau$ and $A\in\cG_\sigma$. Set $\theta=\sigma \ind_A+\tau \ind_{A^c}$ so that $\theta\in\cT_0(\G)$ with $\sigma\le \theta\le \tau$. By \eqref{eq:iffM} and the fact that ${\bf X}$ is a $\cT_0(\G)$-system we have
\[
\E[X(\tau)]\le \E[X(\theta)]=\E[X(\sigma)\ind_{A}+X(\tau)\ind_{A^c}]\implies \E[\E[X(\tau)|\cG_\sigma]1_A]=\E[X(\tau)1_A]\le \E[X(\sigma)1_A].
\]
By the arbitrariness of $A\in\cG_\sigma$ we conclude that $\E[X(\tau)|\cG_\sigma]\le X(\sigma)$.
\end{proof}

\section{Upward and downward directed families}\label{subsec:updownd}

We recall that a family of non-negative random variables $\Upsilon$ is {closed under pairwise maximisation} if $X,Y\in\Upsilon\implies X\vee Y\in\Upsilon$. This also implies that the family is upward-directed, i.e., $X,Y\in\Upsilon\implies\exists Z\in\Upsilon$ such that $Z\ge X\vee Y$ -- we use the two notions interchangeably. If the family $\Upsilon$ is closed under pairwise maximisation, then $\esssup\{X:X\in\Upsilon\}=\lim_{n\to\infty}X_{n}$, where $(X_n)_{n\in\N}\subset\Upsilon$ is a non-decreasing sequence, see \cite[Thm.~A.3]{Karatzas1998}. Clearly the property extends to families of random variables bounded from below by a real-valued random variable. An analogue definition for downward-directed families holds in the case of random variables bounded from above.

Given a $\cT_0(\G)$-system ${\bf X}=\{X(\theta),\theta\in\cT_0(\G)\}$ satisfying $\E[\esssup_{\theta\in\cT_0(\G)}|X(\theta)|]<\infty$ and a filtration $\H\subset\G$, fix an arbitrary $\sigma\in\cT_0(\H)$. It is a well-known fact in the optimal stopping theory that 
the family
\[
\big\{\E[X(\tau)|\cH_\sigma],\tau\in \cT_\sigma(\G)\big\}
\]
is both upward-directed and downward-directed. Therefore, there are sequences $(\tau_n)_{n\in\N},(\tau^k)_{k\in\N}\subset \cT_\sigma(\G)$ such that 
\begin{align}\label{eq:updownd}
\begin{aligned}
\essinf_{\tau\in \cT_\sigma(\G)}\E[X(\tau)|\cH_\sigma]=\lim_{n\to\infty}\E[X(\tau_n)|\cH_\sigma]\quad\text{and}\quad\esssup_{\tau\in\cT_\sigma(\G)}\E[X(\tau)|\cH_\sigma]=\lim_{k\to\infty}\E[X(\tau^k)|\cH_\sigma],
\end{aligned}
\end{align}
with both limits being monotone, the first one from above and the second one from below. 

As a consequence, for any $\rho\le\sigma$, $\rho,\sigma\in\cT_0(\H)$ we have by the monotone convergence theorem and the tower property of conditional expectation
\begin{align*}
\E\big[\essinf_{\tau\in\cT_\sigma(\G)}\E[X(\tau)|\cH_\sigma]\big|\cH_\rho\big]=\lim_{n\to\infty}\E[X(\tau_n)|\cH_\rho]\ge \essinf_{\tau\in\cT_\sigma(\G)}\E[X(\tau)|\cH_\rho].
\end{align*}
That, combined with the obvious inequality
\[
\E\big[\essinf_{\tau\in\cT_\sigma(\G)}\E[X(\tau)|\cH_\sigma]\big|\cH_\rho\big]\le \essinf_{\tau\in\cT_\sigma(\G)}\E[X(\tau)|\cH_\rho],
\]
yields
\begin{align}\label{eq:commute}
\begin{aligned}
&\E\big[\essinf_{\tau\in\cT_\sigma({\G})}\E[X(\tau)|\cH_\sigma]\big|\cH_\rho\big]= \essinf_{\tau\in\cT_\sigma({\G})}\E[X(\tau)|\cH_\rho],\\
&\E\big[\esssup_{\tau\in\cT_\sigma({\G})}\E[X(\tau)|\cH_\sigma]\big|\cH_\rho\big]= \esssup_{\tau\in\cT_\sigma({\G})}\E[X(\tau)|\cH_\rho],
\end{aligned}
\end{align}
where the second equality is obtained by analogous arguments as the first.


\section{Remaining proofs}

\subsection{Proof of Lemma \ref{lem:ud}}\label{app:lemud}
Given $\xi^1,\xi^2\in\cAcirc_\theta(\F^1)$ we want to show that there is $\xi^3\in\cAcirc_\theta(\F^1)$ such that $J^{\Pi^{*,1}_\theta}(\xi^3,\zeta^{*;\theta}|\cF^1_\theta)=\min\{J^{\Pi^{*,1}_\theta}(\xi^1,\zeta^{*;\theta}|\cF^1_\theta),J^{\Pi^{*,1}_\theta}(\xi^2,\zeta^{*;\theta}|\cF^1_\theta)\}$. From \eqref{eq:J} we know that 
\[
A\coloneqq\{\omega\in\Omega:J^{\Pi^{*,1}_\theta}(\xi^1,\zeta^{*;\theta}|\cF^1_\theta)(\omega)\le J^{\Pi^{*,1}_\theta}(\xi^2,\zeta^{*;\theta}|\cF^1_\theta)(\omega)\}\in\cF^1_\theta.
\]
Define $\xi^3_{\theta-}=0$ and $\xi^3_t\coloneqq\xi^1_t 1_A+\xi^2_t \ind_{A^c}$ for $t\in[\theta,T]$. It is easy to check that $\xi^3\in\cAcirc_\theta(\F^1)$ and using \eqref{eq:J}
\begin{align*}
\begin{aligned}
J^{\Pi^{*,1}_\theta}(\xi^3,\zeta^{*;\theta}|\cF^1_\theta)&=\E^{\Pi^{*,1}_\theta}\Big[\int_{[\theta,T)}f_t(1-\zeta^{*;\theta}_t)\ud \xi^3_t+\int_{[\theta,T)}g_t(1-\xi^3_t)\ud \zeta^{*;\theta}_t+\sum_{t\in[\theta,T]}h_t\Delta\zeta^{*;\theta}_t\Delta\xi^3_t\Big|\cF^1_\theta \Big]\\
&=1_A\E^{\Pi^{*,1}_\theta}\Big[\int_{[\theta,T)}f_t(1-\zeta^{*;\theta}_t)\ud \xi^1_t+\int_{[\theta,T)}g_t(1-\xi^1_t)\ud \zeta^{*;\theta}_t+\sum_{t\in[\theta,T]}h_t\Delta\zeta^{*;\theta}_t\Delta\xi^1_t\Big|\cF^1_\theta \Big]\\
&\quad+\ind_{A^c}\E^{\Pi^{*,1}_\theta}\Big[\int_{[\theta,T)}f_t(1-\zeta^{*;\theta}_t)\ud \xi^2_t+\int_{[\theta,T)}g_t(1-\xi^2_t)\ud \zeta^{*;\theta}_t+\sum_{t\in[\theta,T]}h_t\Delta\zeta^{*;\theta}_t\Delta\xi^2_t\Big|\cF^1_\theta \Big]\\
&=1_AJ^{\Pi^{*,1}_\theta}(\xi^1,\zeta^{*;\theta}|\cF^1_\theta)+\ind_{A^c}J^{\Pi^{*,1}_\theta}(\xi^2,\zeta^{*;\theta}|\cF^1_\theta)\\
&=\min\{J^{\Pi^{*,1}_\theta}(\xi^1,\zeta^{*;\theta}|\cF^1_\theta),J^{\Pi^{*,1}_\theta}(\xi^2,\zeta^{*;\theta}|\cF^1_\theta)\}.
\end{aligned}
\end{align*}
That proves that the family is downward-directed.  Hence, there is a minimising sequence using similar arguments as in Section \ref{subsec:updownd}.

The second part of the lemma is analogous.\hfill$\square$

\subsection{Proof of Proposition \ref{prop:meas}}\label{app:propmeas}
The proof requires the following auxiliary measurability result.
\begin{lemma}\label{lem:meas}
Let $X:[0,1]\times\Omega \to \R$ be $\cB([0,1]) \times \cF$-measurable, either right- or left-continuous in the first variable and satisfying the integrability condition $\E[\sup_{z \in [0,1]} |X_z|] < \infty$. 

For any complete $\sigma$-algebra $\cG\subseteq\cF$, the process $\{\E[X_z|\cG],\ z\in[0,1]\}$ admits a $\cB([0,1]) \times \cG$-me\-a\-su\-ra\-ble modification, in the sense that there is a $\cB([0,1]) \times \cG$-measurable function $Y$ such that, for each $z \in [0,1]$,
\[
Y_z = \E[ X_z | \cG], \quad \P-a.s.
\]
\end{lemma}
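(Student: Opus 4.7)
My plan is to approximate $X$ in the $z$-variable by piecewise constant processes that take only countably many ``$\omega$-values'', so that the conditional expectation can be handled at a countable set of times and joint measurability is preserved when one reassembles the pieces. Assume without loss of generality that $z\mapsto X_z$ is right-continuous (the left-continuous case is symmetric). For $n\in\N$, set $z^n_k=k2^{-n}$, $k=0,\ldots,2^n$, and define the step approximation
\[
X^n_z(\omega) := \sum_{k=0}^{2^n-1} X_{z^n_k}(\omega)\, \mathbf{1}_{[z^n_k,\, z^n_{k+1})}(z) + X_1(\omega)\,\mathbf{1}_{\{1\}}(z).
\]
By right-continuity, $X^n_z(\omega)\to X_z(\omega)$ as $n\to\infty$ for every $(z,\omega)$, and $|X^n_z|\le \sup_{w\in[0,1]}|X_w|\in L^1(\P)$.

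For each pair $(n,k)$, select a $\cG$-measurable version $\Upsilon^n_k$ of $\E[X_{z^n_k}\mid \cG]$; such versions exist because $\cG$ is complete. Then set
\[
Y^n_z(\omega) := \sum_{k=0}^{2^n-1} \Upsilon^n_k(\omega)\, \mathbf{1}_{[z^n_k,\, z^n_{k+1})}(z) + \Upsilon^n_{2^n}(\omega)\,\mathbf{1}_{\{1\}}(z).
\]
This is manifestly $\cB([0,1])\times\cG$-measurable, being a finite sum of products of Borel and $\cG$-measurable functions. Moreover, by construction $Y^n_z = \E[X^n_z\mid\cG]$ $\P$-a.s.\ for every $z\in[0,1]$.

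Fix $z\in[0,1]$. The conditional dominated convergence theorem applied with dominator $\sup_{w}|X_w|$ gives $\E[X^n_z\mid\cG]\to\E[X_z\mid\cG]$ $\P$-a.s.\ as $n\to\infty$. Therefore $Y^n_z\to\E[X_z\mid\cG]$ $\P$-a.s.\ for each $z$. Defining
\[
Y_z(\omega) := \limsup_{n\to\infty} Y^n_z(\omega),
\]
produces a $\cB([0,1])\times\cG$-measurable function (lim sups of jointly measurable functions are jointly measurable) such that, for every $z\in[0,1]$, $Y_z=\E[X_z\mid\cG]$ $\P$-a.s. This is precisely the modification claimed.

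The main obstacle is the subtle ``measurability trap'': for each individual $z$, the random variable $\E[X_z\mid\cG]$ is only pinned down up to a $\P$-null set that may depend on $z$, so one cannot simply select pointwise versions and expect joint measurability to follow. The resolution above is to commit only to countably many choices of versions at the dyadic points $z^n_k$ (where arbitrary selections are harmless), and then to use the regularity (right- or left-continuity) of $X$ to pass to the limit via conditional dominated convergence; the lim sup is taken rather than the limit to obtain a jointly measurable function defined everywhere, which then agrees with the desired conditional expectation $\P$-a.s.\ for each $z$.
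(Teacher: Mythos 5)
Your overall strategy is the same as the paper's (dyadic step approximations in $z$, fixed versions of the conditional expectation at the countably many grid points, conditional dominated convergence, then a $\limsup$ to get a jointly measurable everywhere-defined function), but there is a concrete error in the direction of the sampling. Having assumed $z\mapsto X_z$ right-continuous, you set $X^n_z = X_{z^n_k}$ for $z\in[z^n_k,z^n_{k+1})$, i.e.\ you sample at the \emph{left} endpoint of each dyadic interval, and then claim that $X^n_z\to X_z$ ``by right-continuity.'' This is false: for non-dyadic $z$ the grid points $z^n_{k_n(z)}=\lfloor 2^n z\rfloor 2^{-n}$ increase to $z$ from below, and right-continuity gives no control over limits from the left. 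For instance $X_z=\ind_{[c,1]}(z)$ with $c$ irrational is right-continuous, yet $X^n_c = X_{\lfloor 2^n c\rfloor 2^{-n}} = 0$ for every $n$ while $X_c=1$. The fix is the one the paper uses: under right-continuity, sample at the \emph{right} endpoint, $X^n_z = X_{z^n_{k+1}}$ on $[z^n_k,z^n_{k+1})$, so that the sampling points decrease to $z$ from above and the error is bounded by $\sup_{0\le\lambda\le 2^{-n}}|X_{(z+\lambda)\wedge 1}-X_z|$, to which conditional dominated convergence applies. (Symmetrically, your left-endpoint formula is the correct one for the left-continuous case, so in effect you have proved the case you did not assume.) With the endpoints corrected, the rest of your argument goes through and coincides with the paper's proof; the remaining points are cosmetic — a $\cG$-measurable version of $\E[X_{z^n_k}\mid\cG]$ exists by definition of conditional expectation, without appeal to completeness, which is instead what lets you modify $Y$ on the exceptional null sets at the end.
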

\begin{proof}
Assume that $z\mapsto X_z$ is right-continuous. Let us define 
\[
Y^n_z\coloneqq\sum_{k=0}^{2^n-1}\ind_{[\frac{k}{2^n},\frac{k+1}{2^n})}(z)\E[X_{\frac{k+1}{2^n}}|\cG]+\indd{1}(z)\E[X_1|\cG].
\] 
It is clear that $Y^n_z(\omega)$ is uniquely defined for all $(z,\omega)\in[0,1]\times\Omega_n$, for some $\Omega_n \in\cF$ with $\P(\Omega_n)=1$. Letting $\Omega_0\coloneqq\cap_{n\in\N}\Omega_n$, the sequence $\{Y^n_z(\omega), n\in\N\}$ is defined for all $(z,\omega)\in[0,1]\times\Omega_0$ and $\P(\Omega_0)=1$. Moreover, for any $a\in\R$
\begin{align*}
\{(z,\omega):Y^n_z(\omega)>a\}=\bigcup_{k=0}^{2^n}\big[\tfrac{k}{2^n},\tfrac{k+1}{2^n}\big)\times\{\omega: \E[X_{\frac{k+1}{2^n}}|\cG](\omega)>a\}\in\cB([0,1])\times\cG.
\end{align*} 
Defining $Y^+_z(\omega)\coloneqq \limsup_{n\to\infty}Y^n_z(\omega)$ and $Y^-_z(\omega)\coloneqq \liminf_{n\to\infty}Y^n_z(\omega)$ for $(z,\omega)\in[0,1]\times\Omega_0$, it is clear that $Y^\pm$ is $\cB([0,1])\times\cG$-measurable. It remains to show that for each $z\in[0,1]$, $Y^+_z=Y^-_z=\E[X_z|\cG]$, $\P$-a.s.

For every $z\in[0,1]$, 
\begin{align*}
\big|Y^n_z-\E[X_z|\cG]\big|&=\Big|\sum_{k=0}^{2^n-1}\E\big[X_{\frac{k+1}{2^n}}-X_z\big|\cG\big]\ind_{[\frac{k}{2^n},\frac{k+1}{2^n})}(z)\Big|\le \E\big[\sup_{0\le \lambda\le 1/2^n}\big|X_{(z+\lambda)\wedge 1}-X_{z}\big|\big|\cG\big].
\end{align*}
Let $n\to\infty$. By the right continuity of $X_z$ we have
\[
\lim_{n\to\infty}\sup_{0\le \lambda\le 1/2^n}\big|X_{(z+\lambda)\wedge 1}(\omega)-X_{z}(\omega)\big|=0,\quad \text{for all $\omega\in\Omega$}.
\] 
Thanks to boundedness of $X_z$ we can use the conditional version of the Dominated Convergence Theorem to pass the limit inside expectation. Thus, 
\[
\lim_{n\to\infty}\big|Y^n_z-\E[X_z|\cG]\big|=0,\quad\P-a.s.
\]
The latter implies $Y^+_z=Y^-_z=\E[X_z|\cG]$, $\P$-a.s., as needed. Then, setting $Y_z(\omega)\coloneqq Y^+_z(\omega)$ for $(z,\omega)\in[0,1]\times\Omega_0$ and $Y_z(\omega)=0$ for $(z,\omega)\in[0,1]\times(\Omega\setminus\Omega_0)$ concludes the proof because $\P(\Omega\setminus\Omega_0)=0$ and $\cG$ is complete.

If $z\mapsto X_z$ is left-continuous, the same proof as above but with
\[Y^n_z\coloneqq\ind_{0}(z) \E[X_{0}|\cG] + \sum_{k=0}^{2^n-1}\ind_{(\frac{k}{2^n},\frac{k+1}{2^n}]}(z)\E[X_{\frac{k}{2^n}}|\cG]\] yields the desired result.
\end{proof}

\begin{proof}[{\bf Proof of Proposition \ref{prop:meas}}]
We only show the full argument for $M^*_\theta$ and $m^*(\theta; z)$ as the one for $N^*_\gamma$ and $n^*(\gamma; z)$ is analogous. 
In the proof, when we refer to joint measurability in $(z, \omega)$, without further specifying, we mean the measurability with respect to the $\sigma$-algebra $\cB([0,1])\times\cF^1_\theta$ (notice that $\cF^1_\theta$ is complete as required by Lemma \ref{lem:meas}). 

It is clear that the term $\ind_{\{\theta\le \tau_*(z)\}}\hat V^{*,1}_\theta$ is jointly measurable in $(z, \omega)$ by the measurability of $\tau_*(z)$. Observe that $z \mapsto \tau_*(z)$ is non-decreasing and right continuous (cf., \cite[Ch.~0, Lemma~4.8]{revuzyor}). Since $\ind_{\{\tau_*(z)<\theta\}} f_{\tau_*(z)}\ind_{\{\tau_*(z)<\sigma_*\}}$ is $\cB([0,1])\times\cF$-measurable and right-continuous with respect to $z$, Lemma \ref{lem:meas} yields that the conditional expectation
\[
\E\Big[\ind_{\{\tau_*(z)<\theta\}} f_{\tau_*(z)}\ind_{\{\tau_*(z)<\sigma_*\}}\Big|\cF^1_\theta\Big](\omega)
\]
admits a jointly $(z,\omega)$-measurable modification $m^1_\theta(z,\omega)$. The map $(z, \omega) \mapsto \ind_{\{\sigma_*<\theta\}}\ind_{\{\sigma_*<\tau_*(z)\}}g_{\sigma_*}$ is $\cB([0,1])\times\cF$-measurable but neither left nor right-continuous with respect to $z$. However, $\{\sigma_*<\tau_*(z)\} = \bigcap_{\epsilon > 0} \{\sigma_* + \epsilon \le \tau_*(z)\}$ and $\indd{\sigma_* + \epsilon \le \tau_*(z)}$ is right-continuous in $z$, so using Lemma \ref{lem:meas} again, we have that
\[
\E\Big[\ind_{\{\sigma_*<\theta\}}\ind_{\{\sigma_* + \epsilon \le \tau_*(z)\}}g_{\sigma_*}\Big|\cF^1_\theta\Big]
\]
admits a jointly $(z, \omega)$-measurable modification $m^{2,\eps}_\theta(z,\omega)$. The dominated convergence theorem yields
\begin{equation}\label{eq:secondterm}
\E\Big[\ind_{\{\sigma_*<\theta\}}\ind_{\{\sigma_* < \tau_*(z)\}}g_{\sigma_*}\Big|\cF^1_\theta\Big] = \lim_{\epsilon \downarrow 0} \E\Big[\ind_{\{\sigma_*<\theta\}}\ind_{\{\sigma_* + \epsilon \le \tau_*(z)\}}g_{\sigma_*}\Big|\cF^1_\theta\Big].
\end{equation}
Thus, the limit $m^{2}_\theta(z,\omega)\coloneqq\lim_{\eps\to 0}m^{2,\eps}_\theta(z,\omega)$ exists and it is a jointly $(z,\omega)$-measurable modification of the expression on the left-hand side of \eqref{eq:secondterm}, as a pointwise limit of measurable functions. Finally, we notice that 
\[
\ind_{\{\tau_*(z)<\theta\}}\E\Big[h_{\tau_*(z)}\ind_{\{\tau_*(z)=\sigma_*\}}\Big|\cF^1_\theta\Big]
=\ind_{\{\tau_*(z)<\theta\}}\Big(\E\Big[h_{\sigma_*}\ind_{\{\tau_*(z)\ge \sigma_*\}}\Big|\cF^1_\theta\Big]-\E\Big[h_{\sigma_*}\ind_{\{\tau_*(z)>\sigma_*\}}\Big|\cF^1_\theta\Big]\Big),
\] 
and each one of the two terms on the right-hand side admits a jointly $(z, \omega)$-measurable modification by Lemma \ref{lem:meas} and the arguments above.

Combining the results from the paragraph we obtain existence of the jointly measurable modification $(z,\omega)\mapsto m^*(\theta;z)(\omega)$ for 
\[
\E\Big[\ind_{\{\tau_*(z)<\theta\}}\Big(f_{\tau_*(z)}\ind_{\{\tau_*(z)<\sigma_*\}}+h_{\tau_*(z)}\ind_{\{\tau_*(z)=\sigma_*\}}\Big)+\ind_{\{\sigma_*<\theta\}}\ind_{\{\sigma_* < \tau_*(z)\}}g_{\sigma_*}\Big|\cF^1_\theta\Big]+\ind_{\{\theta\le \tau_*(z)\}}\hat V^{*,1}_\theta.
\]
This proves the measurability of functions $m^*(\theta; \cdot)$ and $n^*(\gamma; \cdot)$ in (ii).

In order to justify (i), it is sufficient to show that $\E[\ind_A M^*_\theta] = \E\big[\ind_A \int_0^1 m^*(\theta; z) dz\big]$ for $A\in\cF^1_\theta$. Take an arbitrary $A\in\cF^1_\theta$ and write
\begin{equation*}
\begin{aligned}
&\E\big[\ind_A \int_0^1 m^*(\theta;z)\ud z\big]
=
\int_0^1\E\big[\ind_A m^*(\theta;z)\big]\ud z\\
&=
\int_0^1\!\!\E\Big[\ind_{A}\E\Big[\ind_{\{\tau_*(z)<\theta\}}\big(f_{\tau_*(z)}\ind_{\{\tau_*(z)<\sigma_*\}}\!+\!h_{\tau_*(z)}\ind_{\{\tau_*(z)=\sigma_*\}}\big)\\
&\qquad\qquad\qquad+\!\ind_{\{\sigma_*<\theta\}}\ind_{\{\sigma_* < \tau_*(z)\}}g_{\sigma_*}\Big|\cF^1_\theta\Big]\!+\!\ind_{\{\theta\le \tau_*(z)\}}\hat V^{*,1}_\theta\Big)\Big]\ud z\\
&=
\int_0^1\!\!\E\Big[\ind_{A}\Big(\ind_{\{\tau_*(z)<\theta\}}\big(f_{\tau_*(z)}\ind_{\{\tau_*(z)<\sigma_*\}}\!+\!h_{\tau_*(z)}\ind_{\{\tau_*(z)=\sigma_*\}}\big)\\
&\qquad\qquad\qquad+\!\ind_{\{\sigma_*<\theta\}}\ind_{\{\sigma_* < \tau_*(z)\}}g_{\sigma_*}\!+\!\ind_{\{\theta\le \tau_*(z)\}}\hat V^{*,1}_\theta\Big)\Big]\ud z,
\end{aligned}
\end{equation*}
where the first equality is by Fubini's theorem (which holds by joint measurability of $(z,\omega)\mapsto m^*(\theta;z)(\omega)$), the second equality is by (ii) and the third one is by tower property. On the other hand,
\begin{equation*}
\begin{aligned}
&\E[\ind_A M^*_\theta]\\
&=\E\Big[\ind_{A}\E\Big[\ind_{\{\tau_*<\theta\}}\big(f_{\tau_*}\ind_{\{\tau_*<\sigma_*\}}+h_{\tau_*}\ind_{\{\tau_*=\sigma_*\}}\big)+\ind_{\{\sigma_*<\theta\}}\ind_{\{\sigma_* < \tau_*\}}g_{\sigma_*}\Big|\cF^1_\theta\Big]+\ind_A\ind_{\{\theta\le \tau_*\}}\hat V^{*,1}_\theta\Big]\\
&=\E\Big[\ind_{A}\Big(\ind_{\{\tau_*<\theta\}}\big(f_{\tau_*}\ind_{\{\tau_*<\sigma_*\}}+h_{\tau_*}\ind_{\{\tau_*=\sigma_*\}}\big)+\ind_{\{\sigma_*<\theta\}}\ind_{\{\sigma_* < \tau_*\}}g_{\sigma_*}+\ind_{\{\theta\le \tau_*\}}\hat V^{*,1}_\theta\Big)\Big]\\
&=\int_0^1\E\Big[\ind_{A}\Big(\ind_{\{\tau_*(z)<\theta\}}\big(f_{\tau_*(z)}\ind_{\{\tau_*(z)<\sigma_*\}}\!+\!h_{\tau_*(z)}\ind_{\{\tau_*(z)=\sigma_*\}}\big)\\
&\qquad\qquad\qquad+\!\ind_{\{\sigma_*<\theta\}}\ind_{\{\sigma_* < \tau_*(z)\}}g_{\sigma_*}\!+\!\ind_{\{\theta\le \tau_*(z)\}}\hat V^{*,1}_\theta\Big)\Big] \ud z,
\end{aligned}
\end{equation*}
where the second equality is by tower property and the third one by Fubini's theorem, which is justified by the joint measurability in $(z,\omega)$ of the expression under the expectation. This concludes the proof of (i).
\end{proof}

\section{Some decompositions of processes and stopping times}\label{sec:optionalproj}
In this section we obtain a handy decomposition of stochastic processes for the study of examples from Section \ref{sec:examples}. We believe most of these results to be well-known from the general theory of stochastic processes but we are unable to provide precise references for them.

Let $\G\subset\F$ be a right-continuous filtration completed with $\P$-null sets. Given a process $X\in \cL_b(\P)$, we denote by $\optional{X}^{\G}=(\optional{X}^{\G}_t)_{t\in[0,T]}$ its $\G$-optional projection under the measure $\P$, i.e., the unique $\G$-optional process such that $\optional{X}^{\G}_\tau \ind_{\{\tau<\infty\}}=\E\big[X_\tau \ind_{\{\tau<\infty\}}\big|\cG_\tau\big]$, $\P\as$, for any stopping time $\tau\in\cT_0(\G)$. We recall that the optional projection $\optional{\!A}^\G$ of a {\em non-decreasing} process $A$ is a submartingale because
\[
\optional{\!A}^\G_s=\E[A_s|\cG_s]\le \E[A_t|\cG_s]=\E[\E[A_t|\cG_t]|\cG_s]=\E[\optional{\!A}^\G_t|\cG_s],\quad\text{ for } s\le t.
\] 
The optional projection of a bounded variation process is the difference of two submartingales hence, in particular, a semi-martingale. 

In the next lemma we take $\F=\G\vee\sigma(\Theta)$, where $\Theta$ is a random variable on $(\Omega,\cF,\P)$ taking values in some measurable space $(E,\cE)$. Thats is, $\F$ is the initial enlargement of the filtration $\G$ by $\Theta$. Notice that we do not assume independence of $\Theta$ from $\G$.
\begin{lemma}\label{lem:decomp}
Any $\F$-optional process $(A_t)_{t\in[0,T]}$ can be written as $A_t(\omega)=\tilde A(t,\omega,\Theta(\omega))$, where $\tilde A$ is $\cB([0,T])\times\cF\times\cE$-measurable function and, for every fixed $z \in E$, the process $(t, \omega) \mapsto \tilde A(t, \omega, z)$ is $\G$-optional. 
\end{lemma}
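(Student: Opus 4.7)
The strategy is a functional monotone class argument on the optional $\sigma$-algebra $\cO(\F)$ of the initial enlargement $\F=\G\vee\sigma(\Theta)$. Let $\cH$ denote the collection of bounded $\F$-optional processes $A$ for which there exists a $\cB([0,T])\otimes\cF\otimes\cE$-measurable map $\tilde A$ such that $A_t(\omega)=\tilde A(t,\omega,\Theta(\omega))$ and, for every $z\in E$, the process $(t,\omega)\mapsto\tilde A(t,\omega,z)$ is $\G$-optional. The goal is to show that $\cH$ contains all bounded $\cO(\F)$-measurable processes; truncation then extends this to all of $\cL_b(\P)$ (and, with minor changes, to general $\F$-optional processes).

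The key ingredient is a decomposition for $\F$-stopping times: for any $\tau\in\cT_0(\F)$ there exists a $\cF\otimes\cE$-measurable map $\hat\tau:\Omega\times E\to[0,T]$ such that, for each $z\in E$, $\hat\tau(\cdot,z)$ is a $\G$-stopping time, and $\tau(\omega)=\hat\tau(\omega,\Theta(\omega))$. I would derive this by applying Doob's functional representation theorem to the $\cF_t$-measurable indicators $\ind_{\{\tau\le t\}}$: since $\cF_t=\cG_t\vee\sigma(\Theta)$, for each $t$ there is a $\cG_t\otimes\cE$-measurable function $\psi_t$ with $\ind_{\{\tau\le t\}}(\omega)=\psi_t(\omega,\Theta(\omega))$. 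Taking rational $t$ and using a right-continuous modification in $t$ (possible because $\F$ is right-continuous, so $\{\tau\le t\}$ is right-continuous in $t$), I would set $\hat\tau(\omega,z)\coloneqq\inf\{t\in\bQ\cap[0,T]:\psi_t(\omega,z)=1\}$ (with the convention $\inf\varnothing=T$). For each fixed $z$, $\hat\tau(\cdot,z)$ is then a $\G$-stopping time because $\{\hat\tau(\cdot,z)\le t\}=\bigcap_{s\in\bQ\cap(t,T]}\{\psi_s(\cdot,z)=1\}\in\cG_{t+}=\cG_t$, and the identity $\tau(\omega)=\hat\tau(\omega,\Theta(\omega))$ holds $\P$\as; a $\P$-null modification preserves the desired properties.

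Once this is in hand, the generators of $\cO(\F)$, namely processes of the form $A_t=\ind_{[0,\tau]}(t)$ for $\tau\in\cT_0(\F)$, lie in $\cH$: define $\tilde A(t,\omega,z)=\ind_{\{t\le\hat\tau(\omega,z)\}}$, which is jointly measurable and, for each $z$, is the indicator of the graph of a $\G$-stopping time and hence $\G$-optional. The same argument applies to products $Y\ind_{[\tau,T]}$ with $Y\in \cF_\tau$ bounded, once one decomposes $Y(\omega)=\hat Y(\omega,\Theta(\omega))$ with $\hat Y(\cdot,z)$ being $\cG_{\hat\tau(\cdot,z)}$-measurable (again by Doob's functional representation applied at the stopping time $\tau$).

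The closure properties of $\cH$ are straightforward: it is a vector space, and if $A^n\in\cH$ with $|A^n|\le K$ and $A^n\uparrow A$, then setting $\tilde A(t,\omega,z)\coloneqq\limsup_n \tilde A^n(t,\omega,z)$ produces a $\cB([0,T])\otimes\cF\otimes\cE$-measurable function whose $z$-sections are $\G$-optional as pointwise limits of $\G$-optional processes, and $\tilde A(t,\omega,\Theta(\omega))=A_t(\omega)$. The functional monotone class theorem then delivers $\cH\supseteq\{$ bounded $\cO(\F)$-measurable processes $\}$, completing the proof.

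The main obstacle is the construction of $\hat\tau$ with the \emph{everywhere} (in $z$) stopping-time property together with joint $(\omega,z)$-measurability; this is where the right-continuity of $\G$ is crucial and where one must be careful with exceptional $\P$-null sets that may depend on $z$. Bypassing regular conditional probabilities (which the authors deliberately avoid, cf.\ the remark after \eqref{eq:impl}) forces the use of Doob's representation together with the countable rational-grid approximation described above, rather than a measurable-selection argument.
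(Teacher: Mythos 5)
Your proof follows the same architecture as the paper's: reduce everything to a decomposition of $\F$-stopping times, verify the representation on generators of the optional $\sigma$-algebra, and close up with a functional monotone class argument, taking $\tilde A=\limsup_n\tilde A^n$ for the monotone-limit step. The one genuine difference is that the paper simply cites an adaptation of \cite{esmaeeli2018} for the stopping-time decomposition, whereas you construct $\hat\tau$ by hand from the Doob--Dynkin representation of the indicators $\ind_{\{\tau\le t\}}$ over a rational grid; this is a legitimate, self-contained substitute. Two points in your construction need repair, though neither is fatal. First, the identity $\{\hat\tau(\cdot,z)\le t\}=\bigcap_{s\in\bQ\cap(t,T]}\{\psi_s(\cdot,z)=1\}$ presupposes that $s\mapsto\psi_s(\cdot,z)$ is non-decreasing; this holds on the diagonal $z=\Theta(\omega)$ but not automatically for other $z$, since each $\psi_s$ is produced by a separate application of the representation theorem. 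Either monotonise by replacing $\psi_s$ with $\sup_{r\in\bQ\cap[0,s]}\psi_r$ (which leaves the diagonal unchanged), or use the correct identity $\{\hat\tau(\cdot,z)\le t\}=\bigcap_{n}\bigcup_{s\in\bQ\cap[0,t+1/n)}\{\psi_s(\cdot,z)=1\}\in\cG_{t+}=\cG_t$. Second, the processes $\ind_{[0,\tau]}$ generate only the previsible $\sigma$-algebra, not the optional one, so calling them ``the generators of $\cO(\F)$'' is inaccurate; your argument survives because you also treat $[\tau,T]$, and in fact the stochastic intervals $[\tau,T]$ alone form a generating $\pi$-system (this is the paper's choice), so you can drop both the $[0,\tau]$ case and the unjustified --- and unnecessary --- claim that an $\cF_\tau$-measurable $Y$ decomposes with $\cG_{\hat\tau(\cdot,z)}$-measurable $z$-sections. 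Finally, the worry about $\P$-null modifications is moot: the monotone-class representation of $\cF_t$-measurable variables as $\psi_t(\omega,\Theta(\omega))$ holds for every $\omega$, so $\tau(\omega)=\hat\tau(\omega,\Theta(\omega))$ holds exactly.
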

\begin{proof}
By an adaptation of Prop.~3.3 and Cor.~3.4 in \cite{esmaeeli2018} to a general measurable space $(E, \cE)$ instead of $(\R, \cB(\R))$, for any $\F$-stopping time $\tau$, there is a measurable function $\tau':\Omega \times E \to [0, T]$ such that $\tau(\omega) = \tau'(\omega, \Theta(\omega))$ and $\tau'(\omega, z)$ is a $\G$-stopping time for any $z \in E$. This implies that the statement of the lemma holds true for processes $A_t = \ind_{\{t \ge \tau \}}$, where $\tau$ is an $\F$-stopping time. 

Let $\cC$ be the class of $\cF_T$-measurable (not necessarily optional) processes $(A_t)_{t\in[0,T]}$ that satisfy $A_t(\omega)=\tilde A(t,\omega,\Theta(\omega))$ for a measurable $\tilde A$ such that $(t, \omega) \mapsto \tilde A(t, \omega, z)$ is a $\G$-optional process for any $z \in E$. From the previous paragraph, $\cC$ contains constants and processes of the form $A_t = \ind_{\{t \ge \tau \}}$, where $\tau$ is a $\F$-stopping time. Recall that $(\{\tau\le t\},\tau\in\cT(\F),t\ge 0)$ forms a $\pi$-system that generates the $\F$-optional $\sigma$-algebra (\cite[Thm.~IV.64\roundbrackets{c}]{DellacherieMeyerA}). We claim that for any bounded sequence $\{(A^n_t(\omega))_{t\in[0,T]},n\in\N\}\subset \cC$ such that $A^n_t(\omega)\uparrow A_t(\omega)$ for all $(t,\omega)$ as $n\to\infty$ we also have $A_t(\omega)=\tilde A(t,\omega,\Theta(\omega))$ for a measurable function $\tilde A(t,\omega,z)$ such that $(t,\omega)\mapsto \tilde A(t,\omega,z)$ is $\G$-optional for any $z\in E$. Then, a monotone class theorem (\cite[Thm.\ 3.14]{williams1991probability}) guarantees that $\cC$, which is also a vector space, contains every bounded $\F$-optional process. An extension to unbounded processes is immediate by truncation and taking limits.

Let us now verify the claim about the monotone convergence of a sequence $\{(A^n_t)_{t\in[0,T]},n\in\N\} \subset \cC$. For $(A^n_t(\omega))_{t\in[0,T]}\in\cC$, the function $(t,\omega,z)\mapsto\tilde A^n(t,\omega,z)$ is $\cB([0,T])\times\cF\times\cE$-measurable and satisfies: $(t,\omega)\mapsto \tilde A^n(t,\omega,z)$ is $\G$-optional for every $z\in E$. Define $\tilde A(t,\omega,z)\coloneqq \limsup_{n\to\infty}\tilde A^n(t,\omega,z)$ so that $\tilde A$ is $\cB([0,T])\times\cF\times\cE$-measurable and $(t,\omega)\mapsto \tilde A_t(t,\omega,z)$ is $\G$-optional for every $z\in E$.
We have
\[
A_t(\omega)=\lim_{n\to\infty}A^n_t(\omega)=\limsup_{n\to\infty} A^n_t(\omega)=\limsup_{n\to\infty} \tilde A^n(t,\omega,\Theta(\omega))=\tilde A(t,\omega,\Theta(\omega)).
\]
Hence $(A_t(\omega))_{t\in[0,T]}\in\cC$ as needed.
\end{proof}

When we specify a bit more the structure of the probability space $(\Omega,\cF,\P)$ we are able to obtain finer properties of the representation $A_t(\omega)=\tilde A(t,\omega,\Theta(\omega))$ than in the lemma above. Assume that 
\begin{align}\label{eq:prodspace}
(\Omega,\cF,\P)=(\Omega^0\times\Omega^1,\cF^0\times\cF^1,\P^0\times\P^1).
\end{align}
Given a filtration $\G^0=(\cG^0_t)_{t\in[0,T]}$ on $\cF^0$ satisfying usual conditions let $\G$ be the $\P^0\times\P^1$-completion of $(\cG^0_t\times \{\Omega^1,\varnothing\})_{t\in[0,T]}$. Let $\Theta:(\Omega^1,\cF^1)\to (E,\cE)$ be measurable and let $\Sigma(\Theta)$ be the $\sigma$-algebra $(\{\Omega^0,\varnothing\}\times \Theta^{-1}(A), A\in\cE)$ in $\cF^0\times\cF^1$. Set $\F=\G\vee\Sigma(\Theta)$, denote by $(\G^0)^o$ the $\G^0$-optional $\sigma$-algebra on $[0,T]\times\Omega^0$ and let $(\F)^o$ be the $\F$-optional $\sigma$-algebra on $[0,T]\times\Omega$. In this context, $\sigma(\Theta)\coloneqq\{\Theta^{-1}(H),H\in\cE\}$, with $\Theta^{-1}(H)=\{\omega_1:\Theta(\omega_1)\in H\}$, is the $\sigma$-algebra generated by $\Theta$ on $\Omega^1$ (see, e.g., \cite[p.\ 76]{Halmos}). Moreover, sets of the form $\{(t, \omega_0): t \ge \tau(\omega_0)\}$ for $\G^0$-stopping times $\tau$ generate the $\G^0$-optional $\sigma$-algebra $(\G^0)^o$ on $[0,T]\times\Omega^0$ (see \cite[Thm.~IV.64\roundbrackets{c}]{DellacherieMeyerA}). 
This setting will apply to the following 3 lemmas.

\begin{lemma}\label{lem:decomp_tau}
Assume that $E$ is countable. Any $\F$-stopping time $\tau$ has the representation $\tau(\omega) = \tau'(\omega_0, \Theta(\omega_1))$ for a $\G^0 \times \cE$-measurable function $\tau'$ such that $\omega_0 \mapsto \tau'(\omega_0, z)$ is a $\G^0$-stopping time for any $z \in E$.
\end{lemma}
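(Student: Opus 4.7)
My plan is to exploit the countability of $E$ by decomposing $\tau$ along the partition of $\Omega^1$ into the sets $B_z := \Theta^{-1}(\{z\})$, $z \in E$. The key observation is that $\sigma(\Theta)$ is the $\sigma$-algebra generated by the countable partition $\{\Omega^0 \times B_z\}_{z \in E}$, which should allow every $\cF_t$-set to be sliced into a countable disjoint union of ``rectangles'' $A^z \times B_z$ with $A^z \in \cG^0_t$.

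The first step is to establish the following decomposition: for every $C \in \cF_t = \cG_t \vee \Sigma(\Theta)$ there exist sets $A^z \in \cG^0_t$, $z \in E$, and a $\P$-null $N \in \cF$ with $C \triangle N = \bigcup_{z \in E}(A^z \times B_z)$. The class of $C$'s admitting such a decomposition contains the generators of $\cF_t$ (the rectangles $A \times \Omega^1$ with $A \in \cG^0_t$, and the atoms $\Omega^0 \times B_z$) and is closed under complements and countable unions precisely because $\{B_z\}_{z \in E}$ is countable: this allows the rearrangements $\bigcap_n \bigcup_z (A^{z,n} \times B_z) = \bigcup_z \bigcap_n (A^{z,n} \times B_z)$ and $\big(\bigcup_z (A^z \times B_z)\big)^c = \bigcup_z ((A^z)^c \times B_z)$ (the latter modulo swallowing the $\P$-null remainder into $N$). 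The class is therefore a $\sigma$-algebra containing the generators of $\cF_t$, hence equals $\cF_t$.

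Applying this decomposition to $\{\tau \le q\}$ for each rational $q \in \bQ \cap [0,T]$ produces sets $A^z_q \in \cG^0_q$ and a $\P$-null $N_q$ with $\{\tau \le q\} \triangle N_q = \bigcup_z (A^z_q \times B_z)$. Replacing $A^z_q$ with $\bigcup_{q' \in \bQ,\, q' \le q} A^z_{q'} \in \cG^0_q$ I may assume monotonicity in $q$, and setting $N := \bigcup_{q \in \bQ \cap [0,T]} N_q$ -- still $\P$-null by countability of $\bQ$ -- all these decompositions hold simultaneously outside the single null set $N$. I then define
\[
\tau'(\omega_0, z) := \inf\{q \in \bQ \cap [0, T]: \omega_0 \in A^z_q\} \wedge T.
\]
The right-continuity of $\G^0$ (inherited from that of $\G$) yields $\{\tau'(\cdot, z) \le t\} = \bigcap_{q \in \bQ,\, q > t} A^z_q \in \cG^0_t$, so $\omega_0 \mapsto \tau'(\omega_0, z)$ is a $\G^0$-stopping time for every $z \in E$, and joint $\cG^0_T \otimes \cE$-measurability of $\tau'$ is automatic from the countability of $E$. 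For $\omega \notin N$, I obtain $\tau(\omega) = \inf\{q \in \bQ: \tau(\omega) \le q\} = \inf\{q \in \bQ: \omega_0 \in A^{\Theta(\omega_1)}_q\} = \tau'(\omega_0, \Theta(\omega_1))$, and the equality may be extended to every $\omega \in \Omega$ by modifying $\tau'(\cdot, z)$ on a $\cG^0_0$-measurable $\P^0$-null set, which disrupts neither the stopping-time property nor the measurability.

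The main obstacle I anticipate is the careful book-keeping of the $\P$-null exceptional sets that proliferate at each stage of the construction (one per $q \in \bQ$, one per $z \in E$, and implicitly in the closure arguments of the monotone class step); ensuring they can all be absorbed into a single $\P$-null set without compromising measurability is the crux of the argument, and it is precisely the countability of $E$ and of $\bQ$ that makes this possible.
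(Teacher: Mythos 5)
Your argument is correct in substance but follows a genuinely different route from the paper. The paper first proves, via the Monotone Class Theorem applied to functions of the form $\varphi_0(\omega_0)\ind_B(\omega_1)$, that \emph{every} $\cF_t$-measurable function factors as $\varphi'(\omega_0,\Theta(\omega_1))$ with $\varphi'$ being $\cG^0_t\times\cE$-measurable; it then obtains the stopping-time property of the sections indirectly, by writing the representation of $\ind_{\{\tau\le t\}}$ for each fixed $t$, pushing the resulting set identity forward under $(\omega_0,\omega_1)\mapsto(\omega_0,\Theta(\omega_1))$ (after normalising so that $\P^1(\Theta=z)>0$ for all $z$, which is where countability enters), and invoking Fubini for the $z$-sections. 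You instead exploit countability from the outset: you decompose every $\cF_t$-set over the countable partition $\{\Omega^0\times B_z\}_{z\in E}$ into a disjoint union of rectangles $A^z\times B_z$ (up to a null set), and then build $\tau'$ \emph{constructively} as a rational d\'ebut of the monotonised slices of $\{\tau\le q\}$, with right-continuity of $\G^0$ delivering the stopping-time property of each section directly. Your version is more hands-on, avoids the surjectivity/positive-mass normalisation of $\Theta$, and is more honest about the completion: the paper's identification $\cF_t=\cG^0_t\times\sigma(\Theta)$ silently ignores the $\P$-null sets that the completed $\cG_t$ introduces, whereas you track them explicitly. The trade-off is that the paper's first step is independent of the cardinality of $E$, while your partition argument is intrinsically tied to countability.

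One point to fix: your closing claim that the identity $\tau(\omega)=\tau'(\omega_0,\Theta(\omega_1))$ can be upgraded from $\P$-a.s.\ to \emph{every} $\omega$ by modifying $\tau'(\cdot,z)$ on a $\P^0$-null set is not attainable in general. On the exceptional set $N$ the value $\tau(\omega_0,\omega_1)$ may genuinely depend on $\omega_1$ beyond $\Theta(\omega_1)$ (two points $(\omega_0,\omega_1)$, $(\omega_0,\omega_1')$ with $\Theta(\omega_1)=\Theta(\omega_1')$ but different values of $\tau$), so no redefinition of $\tau'$ can match both. The correct conclusion of your construction is the $\P$-a.s.\ representation, or equivalently exact equality after replacing $\tau$ by an indistinguishable stopping time — which is all that is used downstream, and is also all that the paper's own proof actually establishes once the completion is accounted for.
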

\begin{proof}
We start with an auxiliary result. Let $\varphi:\Omega \to \R$ be of the form $\varphi(\omega_0, \omega_1) = \varphi_0(\omega_0) \ind_{B}(\omega_1)$, where $B \in \sigma(\Theta)$, i.e., $B = \Theta^{-1}(H)$ for some $H \in \cE$ (see, e.g., \cite[p.\ 76]{Halmos}) and $\varphi_0$ is $\cG^0_t$-measurable random variable. Then 
\begin{equation}\label{eqn:simplePhi}
\varphi(\omega) = \varphi'(\omega_0, \Theta(\omega_1))                                                                                                                                                                                                                                                                                                                                                                                                                                                                                                                                                                                      
\end{equation}
for a $\cG^0_t \times \cE$-measurable function $\varphi'$. Functions $\varphi$ as in \eqref{eqn:simplePhi} form a vector space, which is closed under monotone limits and it contains constants and indicator functions of a $\pi$-system that generates $\cG^0\times\sigma(\Theta)$. Hence, the representation \eqref{eqn:simplePhi} extends to any $\cF_t$-measurable function $\varphi$ thanks to the Monotone Class Theorem (\cite[Thm.\ 3.14]{williams1991probability}) and noticing that $\cF_t=\cG^0_t\times\sigma(\Theta)$ in the setting of the lemma. 

Without loss of generality, we assume that $\P^1(\Theta = z) > 0$ for any $z \in E$; this will simplify notation in the arguments below.
Take an $\F$-stopping time $\tau$. It is $\cF_T$-measurable, so, using the first part of the proof, it has the representation $\tau(\omega) = \tau'(\omega_0, \Theta(\omega_1))$ for a $\cG^0_T \times \cE$-measurable function $\tau'$. We will show that $\omega_0 \mapsto \tau'(\omega_0, z)$ is a $\G^0$-stopping time for any $z \in E$. To this end, fix $t \in [0, T]$. The function $\varphi \coloneqq \ind_{\{\tau \le t\}}$ is $\cF_t$-measurable, so by the arguments in the first paragraph of the proof there is a $\cG^0_t \times \cE$-measurable function $\varphi'$ that satisfies \eqref{eqn:simplePhi}. Hence, we have the equality
\[
\{ (\omega_0, \omega_1) \in \Omega: \tau'(\omega_0, \Theta(\omega_1)) \le t \} = \{ (\omega_0, \omega_1) \in \Omega: \varphi'(\omega_0, \Theta(\omega_1)) = 1 \}.
\]
By applying the map $(\omega_0, \omega_1) \mapsto (\omega_0, \Theta(\omega_1))$ and recalling that $\Theta$ is a surjective map, the above equality yields
\[
\{ (\omega_0, z) \in \Omega^0 \times E: \tau'(\omega_0, z) \le t \} = \{ (\omega_0, z) \in \Omega^0 \times E: \varphi'(\omega_0, z) = 1 \}.
\]
The set on the right-hand side is $\cG^0_t \times \cE$-measurable by the construction of $\varphi'$. So is the set on the left-hand side and, by Fubini's theorem, $z$-sections of this set are $\cG^0_t$-measurable, i.e.,
\[
\{\omega_0 \in \Omega^0: \tau'(\omega_0, z) \le t \} \in \cG^0_t
\]
for any $z \in E$. By the arbitrariness of $t$, we conclude that $\tau'(\cdot, z)$ is a $\G^0$-stopping time for any $z$. We finish by commenting why we could exclude from the above analysis those $z \in E$ with $\P^1(\Theta = z) = 0$: for such $z$, we set $\tau'(\cdot, z) = 0$.
\end{proof}

\begin{lemma}\label{cor:decomp}
We have $(\G^0)^o \times \sigma(\Theta)\subseteq(\F)^o$. Any $A: [0, T] \times \Omega \mapsto \R$ which is $(\G^0)^o \times \sigma(\Theta)$-measurable can be written as $A_t(\omega_0,\omega_1)=\tilde A(t,\omega_0,\Theta(\omega_1))$, where $\tilde A$ is $(\G^0)^o\times\cE$-measurable function. Moreover, if $E$ is countable then the representation holds for any $\F$-optional process $(A_t)_{t\in[0,T]}$.
\end{lemma}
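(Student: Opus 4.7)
The plan is to establish the three claims in the order they appear, beginning with the inclusion of $\sigma$-algebras and then bootstrapping to the representation results via a monotone-class argument.

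For the inclusion $(\G^0)^o \times \sigma(\Theta) \subseteq (\F)^o$, I would use that $(\G^0)^o$ is generated by the $\pi$-system of stochastic intervals $[\![\tau, \infty)\!]$ with $\tau \in \cT(\G^0)$ (cf.\ \cite[Thm.\ IV.64(c)]{DellacherieMeyerA}), while $\sigma(\Theta) \subseteq \cF_0$. For any $\tau \in \cT(\G^0)$, the lifted map $\tilde\tau(\omega_0,\omega_1) \coloneqq \tau(\omega_0)$ is an $\F$-stopping time (because $\G \subseteq \F$), and any $B \in \sigma(\Theta)$ is $\cF_0$-measurable. Hence the rectangle $[\![\tilde\tau,\infty)\!] \cap ([0,T]\times \Omega^0 \times B)$ is in $(\F)^o$. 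Closure of $(\F)^o$ under countable unions and the monotone class theorem extend the inclusion to the full $\sigma$-algebra generated by such rectangles.

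For the representation on $(\G^0)^o \times \sigma(\Theta)$-measurable processes, the strategy is again monotone class. For a generating rectangle $G \times B$ with $G \in (\G^0)^o$ and $B = \Theta^{-1}(H)$ with $H \in \cE$, set
\[
\tilde A(t,\omega_0,z) \coloneqq \ind_G(t,\omega_0)\,\ind_H(z),
\]
which is $(\G^0)^o \times \cE$-measurable and obviously satisfies $\ind_{G \times B}(t,\omega_0,\omega_1) = \tilde A(t,\omega_0,\Theta(\omega_1))$. The class of processes admitting such a representation is a vector space closed under bounded monotone limits and contains this $\pi$-system, so the Monotone Class Theorem gives the claim for all $(\G^0)^o \times \sigma(\Theta)$-measurable processes (first bounded, then general via truncation).

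For the last statement, when $E$ is countable, the key is to combine Lemma \ref{lem:decomp_tau} with the fact that $(\F)^o$ is generated by the $\pi$-system $\{[\![\tau,\infty)\!] : \tau \in \cT(\F)\}$. Given such $\tau$, Lemma \ref{lem:decomp_tau} produces $\tau'$ with $\tau(\omega_0,\omega_1) = \tau'(\omega_0, \Theta(\omega_1))$ and $\tau'(\cdot,z) \in \cT(\G^0)$ for each $z \in E$. Then $\tilde A(t,\omega_0,z) \coloneqq \ind_{\{t \ge \tau'(\omega_0,z)\}}$ is $(\G^0)^o$-measurable for every fixed $z$, and since $E$ is countable and $\cE = 2^E$, a standard section argument (a subset of $[0,T] \times \Omega^0 \times E$ is $(\G^0)^o \times \cE$-measurable iff all $z$-sections are $(\G^0)^o$-measurable) upgrades this to $(\G^0)^o \times \cE$-measurability of $\tilde A$. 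Extension from these generators to all $\F$-optional processes is again by a monotone-class argument.

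The main technical obstacle I anticipate is the third part: ensuring the \emph{joint} $(\G^0)^o \times \cE$-measurability of $\tilde A$ built from $\F$-stopping times, which is not automatic from Lemma \ref{lem:decomp} alone (that lemma gives only $\G$-optionality in $(t,\omega)$ for each $z$, not the sharper $(\G^0)^o$-measurability needed here). The countability of $E$ is essential precisely because it lets us reduce joint measurability to sectionwise measurability; without it, one would typically need extra regularity (e.g.\ Polish structure on $E$) and a measurable selection argument to construct $\tau'$ in a jointly measurable way. Lemma \ref{lem:decomp_tau} bypasses this difficulty exactly in the countable setting.
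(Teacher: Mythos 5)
Your proposal is correct and follows essentially the same route as the paper: generators of the optional $\sigma$-algebras via stopping times, Lemma \ref{lem:decomp_tau} for the decomposition $\tau(\omega)=\tau'(\omega_0,\Theta(\omega_1))$, countability of $E$ to pass from sectionwise to joint measurability, and monotone class arguments to extend. The only (cosmetic) difference is in the last step: the paper proves the $\sigma$-algebra inclusion $(\F)^o\subseteq(\G^0)^o\times\sigma(\Theta)$ and then reuses the second assertion, whereas you build $\tilde A$ directly on the generators $[\![\tau,\infty)\!]$ of $(\F)^o$ and run a second monotone class argument — both are valid and of equal strength.
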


\begin{proof}
First we prove the representation of $(\G^0)^o \times \sigma(\Theta)$-measurable process $A$.
For any $\G^0$-stopping time $\tau$ and any $B = \Theta^{-1}(H)$, where $H \in \cE$, let us first consider processes of the form 
\begin{align}\label{eq:simpleA}
A_t(\omega) = \ind_{\{\tau(\omega_0) \le t\}} \ind_{B}(\omega_{1}).
\end{align}
Then we have $A_t(\omega)=\tilde A(t, \omega_0, \Theta(\omega_1))$ with $\tilde A(t,\omega_0,z) = \ind_{\{\tau(\omega_{0}) \le t\}} \ind_{H}(z)$. 
Since processes of the form \eqref{eq:simpleA} generate $(\G^0)^o \times \sigma(\Theta)$, using the Monotone Class Theorem (\cite[Thm.\ 3.14]{williams1991probability}) the representation $A_t(\omega)=\tilde A(t,\omega_0,\Theta(\omega_1))$ extends to all bounded functions $A: [0, T] \times \Omega \mapsto \R$ which are measurable with respect to $(\G^0)^o \times \sigma(\Theta)$. Moreover, each process of the form \eqref{eq:simpleA} is certainly $(\F)^o$-measurable and 
then we also obtain $(\G^0)^o \times \sigma(\Theta)\subseteq(\F)^o$.

For the last statement we need to show that when $E$ is countable, then the $\F$-optional $\sigma$-algebra $(\F)^o$ coincides with $(\G^0)^o \times \sigma(\Theta)$. The inclusion $(\G^0)^o \times \sigma(\Theta)\subseteq(\F)^o$ has already been proved. For the reverse inclusion, 
we recall that the $\F$-optional $\sigma$-algebra is generated by sets of the form $\{(t, \omega) \in [0, T] \times \Omega: \tau(\omega) \le t\}$, where $\tau$ is an $\F$-stopping time (see \cite[Thm.\ IV.64\roundbrackets{c}]{DellacherieMeyerA}). It remains to show that such sets belong to $(\G^0)^o \times \sigma(\Theta)$. To this end, we fix an $\F$-stopping time $\tau$. By Lemma \ref{lem:decomp_tau}, we have
$\tau(\omega)=\tau'(\omega_0,\Theta(\omega_1))$ with $\tau'(\cdot,z)$ a $\G^0$-stopping time for each $z\in E$. Hence,
\begin{align*}
\begin{aligned}
&\big\{(t,\omega)\in[0,T]\times\Omega:\tau(\omega)\le t\big\}=\big\{(t,\omega)\in[0,T]\times\Omega:\tau'(\omega_0,\Theta(\omega_1))\le t\big\}\\
&=\bigcup_{z\in E}\Big(\big\{(t,\omega)\in[0,T]\times\Omega:\tau'(\omega_0,z)\le t\big\}\cap\{(t,\omega)\in[0,T]\times\Omega:\Theta(\omega_1)=z\}\Big)\\
&=\bigcup_{z\in E}\Big[\Big(\big\{(t,\omega_0)\in[0,T]\times\Omega^0:\tau'(\omega_0,z)\le t\big\}\times\Omega^1\Big)\cap\Big([0,T]\times\Omega^0\times\{\omega_1\in\Omega^1:\Theta(\omega_1)=z\}\Big)\Big].
\end{aligned}
\end{align*}
Since 
\begin{align*}
\begin{aligned}
\big\{(t,\omega_0)\in[0,T]\times\Omega^0:\tau'(\omega_0,z)\le t\big\}\times\Omega^1&\in (\G^0)^o \times \sigma(\Theta)\\
[0,T]\times\Omega^0\times\{\omega_1\in\Omega^1:\Theta(\omega_1)=z\}&\in (\G^0)^o \times \sigma(\Theta),
\end{aligned}
\end{align*}
we have
\[
\big\{(t,\omega)\in[0,T]\times\Omega:\tau(\omega)\le t\big\}\in (\G^0)^o \times \sigma(\Theta),
\]
which concludes the proof that $(\F)^o\subseteq(\G^0)^o \times \sigma(\Theta)$.
\end{proof}

\begin{lemma}\label{lem:suffcdual2}
Consider the setting of Lemma \ref{cor:decomp} and assume that $\Theta$ takes at most countably many values $(\theta_i)_{i\in\N}$. Assume further that $(A_t)_{t\in[0,T]}$ is \cadlag. Then, the decomposition from Lemma \ref{cor:decomp} takes the form
\begin{equation}\label{eqn:decomp_countable}
A_t(\omega_0, \omega_1) = \sum_{i=1}^\infty \ind_{\{\Theta(\omega_1) = \theta_i\}} A^i_t(\omega_0), \qquad t \in [0,T],
\end{equation}
for \cadlag $\G^0$-optional processes $(A^i_t)_{t\in[0,T]}$, $i\in\N$. Moreover, the equality 
\begin{align}\label{eq:dualsimple}
\optional{\!A}^\G_{\tau-}=\E\big[A_{\tau-}\big|\cG_\tau\big]
\end{align}
holds for any $\G$-stopping time $\tau$.
\end{lemma}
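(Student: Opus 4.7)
The plan is to first establish the decomposition \eqref{eqn:decomp_countable} as a direct consequence of Lemma \ref{cor:decomp}, and then use the product structure of the probability space to show that the $\G$-optional projection commutes with left limits for processes of this special form, from which \eqref{eq:dualsimple} follows.

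For the decomposition, by the second statement of Lemma \ref{cor:decomp} applied to the $\F$-optional process $(A_t)_{t\in[0,T]}$, there is a $(\G^0)^o\times\cE$-measurable function $\tilde A$ such that $A_t(\omega_0,\omega_1)=\tilde A(t,\omega_0,\Theta(\omega_1))$. Setting $A^i_t(\omega_0):=\tilde A(t,\omega_0,\theta_i)$ produces a $\G^0$-optional process for each $i\in\N$, and since the events $B_i:=\{\Theta=\theta_i\}$ partition $\Omega^1$, we have $A_t(\omega_0,\omega_1)=\sum_i\ind_{B_i}(\omega_1)A^i_t(\omega_0)$. The \cadlag  property of each $A^i$ is obtained by enlarging if necessary the $\P^0$-null set of exceptional trajectories: for each $i$ with $\P^1(B_i)>0$, Fubini's theorem guarantees that, for $\P^0$-a.e.\ $\omega_0$, the path $t\mapsto A^i_t(\omega_0)$ agrees with $t\mapsto A_t(\omega_0,\omega_1)$ for some $\omega_1\in B_i$ and is therefore \cadlag; we then modify $A^i$ on a $\P^0$-null set (without losing $\G^0$-optionality thanks to the completeness of $\G^0$) to make all paths \cadlag. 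For indices $i$ with $\P^1(B_i)=0$ we simply set $A^i\equiv 0$; this affects $A$ only on a $\P$-null set.

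For the identity \eqref{eq:dualsimple}, the key observation is that the $\G$-optional projection of $A$ is itself \cadlag and its left limit can be computed term by term. Since $\cG_t$ contains only information about the first coordinate $\omega_0$ and $B_i$ depends only on $\omega_1$, the product structure $\P=\P^0\times\P^1$ gives $\E[\ind_{B_i}\mid\cG_t]=\P^1(B_i)$. Combined with the $\cG_t$-measurability of $A^i_t$, this yields
\[
\optional{\!A}^\G_t=\E[A_t\mid\cG_t]=\sum_i\P^1(B_i)A^i_t,
\]
where the interchange of sum and conditional expectation is justified by monotone convergence after splitting $A^i_t$ into positive and negative parts and using the integrability of $A_t$. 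As each $A^i$ is \cadlag  and the coefficients $\P^1(B_i)$ sum to one, $\optional{\!A}^\G$ is \cadlag  and $\optional{\!A}^\G_{t-}=\sum_i\P^1(B_i)A^i_{t-}$, so at any $\G$-stopping time $\tau$,
\[
\optional{\!A}^\G_{\tau-}=\sum_i\P^1(B_i)A^i_{\tau-}.
\]
On the other hand, using the disjoint decomposition $A_{\tau-}=\sum_i\ind_{B_i}A^i_{\tau-}$ (valid because each $A^i$ is \cadlag) and again the product structure, which makes $\cG_\tau$ independent of $\sigma(\Theta)$,
\[
\E[A_{\tau-}\mid\cG_\tau]=\sum_i A^i_{\tau-}\,\E[\ind_{B_i}\mid\cG_\tau]=\sum_i\P^1(B_i)A^i_{\tau-},
\]
which matches the expression above.

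The main technical obstacle is the rigorous justification of the independence of $\cG_\tau$ and $\sigma(\Theta)$ for a general $\G$-stopping time $\tau$: while independence at deterministic times is immediate from $\P=\P^0\times\P^1$, transferring it to $\cG_\tau$ requires observing that every set in $\cG_\tau$ is (up to completion) of the form $C\times\Omega^1$ with $C\in\cG^0_T$, so its intersection with $B_i=\Omega^0\times\Theta^{-1}(\{\theta_i\})$ has probability $\P^0(C)\P^1(\Theta=\theta_i)$. A secondary (mild) point is handling the infinite sum under conditional expectation, which is resolved by the disjointness of the $B_i$ for $A_{\tau-}$ and by the integrability of $A_{\tau-}$ (following from $A\in\cL_b(\P)$, which underpins the definition of optional projection recalled at the start of Appendix \ref{sec:optionalproj}).
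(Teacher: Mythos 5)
Your proposal is correct and follows essentially the same route as the paper: the decomposition and the c\`adl\`ag property of the $A^i$ via the measure-zero argument on $\{\Theta=\theta_i\}$ with $\P^1(B_i)>0$, and then the identification $\optional{\!A}^\G = \sum_i\P^1(B_i)A^i$ followed by passing to left limits and using independence of $\cG_\tau$ from $\sigma(\Theta)$ (the paper conditions on $\cG_T$ first and applies the tower property, which is the same computation). The only point worth tightening is that to take left limits pathwise you need $\optional{\!A}^\G$ and $\sum_i\P^1(B_i)A^i$ to be indistinguishable, not merely modifications; this follows because your identity holds at every $\G$-stopping time and two optional processes agreeing at all stopping times are indistinguishable, which is exactly how the paper phrases it.
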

\begin{proof}
The decomposition \eqref{eqn:decomp_countable} is immediate from Lemma \ref{cor:decomp} with $\G^0$-optional processes $(A^i_t)_{t\in[0,T]}$, $i\in\N$. It remains to prove that such processes are \cadlag. If $\P^1(\Theta = \theta_i) = 0$ for some index $i\in\N$, then the choice of $(A^i_t)_{t\in[0,T]}$ does not play any role and we can set it equal to $0$ (we are using here that there are at most countably many such events). With no loss of generality we assume $\theta_i\neq\theta_j$ for $i\neq j$ so that $(\{\Theta=\theta_j\},j\in\N)$ is a partition on $\Omega_1$. Thus, for any $i\in\N$ such that $\P^1(\Theta = \theta_i) > 0$, the $\P^0 \times \P^1$ measure of the set 
\[
\{ (\omega_0, \omega_1):\ \Theta(\omega_1) = \theta_i\text{ and } t \mapsto A_t(\omega_0, \omega_1) \text{ is not \cadlag\!\!} \}
\]
is zero because $(A_t)_{t\in[0,T]}$ is \cadlag. By the decomposition \eqref{eqn:decomp_countable}, the above set reads equivalently as
\[
\{ (\omega_0, \omega_1):\ \Theta(\omega_1) = \theta_i\text{ and } t \mapsto A^i_t(\omega_0) \text{ is not \cadlag\!\!} \}.
\]
Since $\P^1(\Theta=\theta_i) > 0$, we must have $\P^0(t \mapsto A^i_t(\omega_0) \text{ is not \cadlag\!\!}) = 0$, as claimed.

Let $\tau$ be a $\G$-stopping time. By the construction of $\G$ in the paragraph above Lemma \ref{lem:decomp_tau}, $\tau$ is $\P$\as equal to
$(\omega_0, \omega_1) \mapsto \tau^0(\omega_0)$, where $\tau^0$ is a $\G^0$-stopping time. We apply the decomposition \eqref{eqn:decomp_countable}
\begin{align}\label{eq:cond}
\E[ A_{\tau-} | \cG_T] = \E\Big[ \sum_{i=1}^\infty \ind_{\{\Theta(\omega_1) = \theta_i\}} A^i_{\tau^0-}(\omega_0) \Big| \cG_T\Big] = \sum_{i=1}^\infty \P(\Theta = \theta_i) A^i_{\tau-},
\end{align}
where the last equality follows from the fact that $\cG_T=\cG^0_T\times\{\Omega_1,\varnothing\}$ and by construction $\Theta$ is independent of $\cG^0_T$; therefore, taking conditional expectation with respect to $\cG_T$ is equivalent to integrating out $\omega_1$, see \cite[Lemma 4.1]{baldi2017stochastic}.

Using analogous arguments, for any $\G$-stopping time $\tau$ we have
\begin{equation}\label{eq:Aopt}
\begin{aligned}
\optional{\!A}^\G_{\tau} = \E\Big[\sum_{i=1}^\infty \ind_{\{\Theta= \theta_i\}} A^i_\tau\Big|\cG_\tau\Big]= \sum_{i=1}^\infty \P(\Theta = \theta_i) A^i_{\tau},
\end{aligned}
\end{equation}
where we implicitly extended $A^i_t(\omega_0)$ to the product space $\Omega_0 \times \Omega_1$ in a trivial manner. The identity in \eqref{eq:Aopt} means that the processes 
\[
t\mapsto \optional{\!A}^\G_{t}\quad\text{ and }\quad t\mapsto \sum_{i=1}^\infty \P(\Theta = \theta_i) A^i_{t}
\] 
are indistinguishable. We recall that $(A^i_t)$, $i\in\N$, are \cadlag, so for any $\G$-stopping time $\tau$
\[
\optional{\!A}^\G_{\tau-} = \sum_{i=1}^\infty \P(\Theta = \theta_i) A^i_{\tau-}=\E[ A_{\tau-} | \cG_T],
\]
where the last equality is by \eqref{eq:cond}. Then \eqref{eq:dualsimple} holds by further conditioning with respect to $\cG_{\tau}$.
\end{proof}


\section{Technical results for partially observed scenarios}\label{app:pobs}
In this section we develop useful results for the analysis performed in Section \ref{sec:partial}. We recall that the random variable $\cJ$ takes at most a countable number of values, for simplicity, a subset of $\N$.

\begin{lemma}\label{lem:sigmas}
Let $\cH\coloneqq\cG\vee\sigma(\cJ)$ with $\cJ$ not necessarily independent of $\cG$. Then, for any $\cH$-measurable $X$ there is a function $f:\Omega\times\N\to\R$ such that $\omega\mapsto f(\omega,j)$ is $\cG$-measurable for each $j\in\N$ and $X(\omega)=f(\omega,\cJ(\omega))$. 
\end{lemma}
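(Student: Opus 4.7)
The plan is to apply the functional monotone class theorem. Let $\mathcal{K}$ denote the class of bounded $\cH$-measurable functions $X:\Omega\to\R$ for which there exists a function $f:\Omega\times\N\to\R$ such that $\omega\mapsto f(\omega,j)$ is $\cG$-measurable for every $j\in\N$ and $X(\omega)=f(\omega,\cJ(\omega))$ pointwise. Clearly $\mathcal{K}$ is a vector space containing constants (take $f$ independent of $\omega$ and $j$). Stability under bounded monotone limits is the first thing I would verify: given a uniformly bounded sequence $X_n\uparrow X$ with associated representatives $f_n$, set $f(\omega,j)\coloneqq\limsup_{n\to\infty}f_n(\omega,j)$. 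For each fixed $j$ this is $\cG$-measurable in $\omega$ as a limsup of $\cG$-measurable functions, and the identity $f(\omega,\cJ(\omega))=\limsup_{n}f_n(\omega,\cJ(\omega))=X(\omega)$ holds pointwise.

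Next, I would exhibit the representation on a $\pi$-system generating $\cH$. Because $\cJ$ has countable range, the collection $\mathcal{P}\coloneqq\{G\cap\{\cJ\in S\}:G\in\cG,\,S\subseteq\N\}$ is a $\pi$-system generating $\cH=\cG\vee\sigma(\cJ)$. For $X=\ind_{G\cap\{\cJ\in S\}}$ choose $f(\omega,j)=\ind_G(\omega)\,\ind_S(j)$; for each $j\in\N$ the function $f(\cdot,j)$ is either $\ind_G$ or $0$, hence $\cG$-measurable, and $f(\omega,\cJ(\omega))=\ind_G(\omega)\ind_{\{\cJ(\omega)\in S\}}=X(\omega)$. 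The functional monotone class theorem \cite[Thm.\ 3.14]{williams1991probability} then yields that $\mathcal{K}$ contains all bounded $\cH$-measurable functions.

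Finally, for a general $\cH$-measurable $X$, I would decompose $X=X^+-X^-$, apply the previous step to each truncation $X^{\pm}\wedge n$ to obtain representatives $f^{\pm}_n(\omega,j)$ that can be chosen non-decreasing in $n$ (by replacing $f^{\pm}_n$ with $\max_{k\le n}f^{\pm}_k$, which preserves $\cG$-measurability in $\omega$ for each $j$), and set $f^{\pm}(\omega,j)\coloneqq\sup_{n}f^{\pm}_n(\omega,j)$. Then $f\coloneqq f^+-f^-$ is well-defined (pointwise finite because $X$ is real-valued), $\cG$-measurable in $\omega$ for each $j$, and satisfies $f(\omega,\cJ(\omega))=X(\omega)$.

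\textbf{Main obstacle.} The only subtle point is to ensure that the limits in the monotone-class step and in the final truncation step preserve $\cG$-measurability of $f(\cdot,j)$ \emph{for every individual} $j\in\N$, not merely along the diagonal $j=\cJ(\omega)$. The countability of the range of $\cJ$ is precisely what allows one to manipulate the representatives slice-by-slice over $j$ without losing measurability; were the range uncountable, the choice of a jointly measurable version of $f$ would require an additional regularity assumption (e.g.\ a Polish state space for $\cJ$ and a regular conditional distribution argument), which is not needed here.
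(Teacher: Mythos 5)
Your proposal is correct and follows essentially the same route as the paper: a functional monotone class argument on the class of representable functions, verified on a $\pi$-system generating $\cG\vee\sigma(\cJ)$, with closure under bounded monotone limits via the $\limsup$ of the representatives. The only cosmetic point is that in your final truncation step $f^+-f^-$ could be $\infty-\infty$ off the diagonal $j=\cJ(\omega)$, which is harmlessly fixed by setting $f=0$ on the ($\cG$-measurable, diagonal-avoiding) set where either supremum is infinite.
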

\begin{proof}
Let $\Lambda$ be the class of functions $f:\Omega\times\N\to\R$ such that $\omega\mapsto f(\omega,j)$ is $\cG$-measurable for each $j\in\N$ and denote
\[
\Sigma\coloneqq\{X: \text{$X$ is $\cH$-measurable, } X(\omega)=f(\omega,\cJ(\omega))\ \text{for some}\ f\in\Lambda\}.
\]
The class $\Sigma$ is a monotone class (cf., the proof of Lemma \ref{lem:decomp}) that contains random variables of the form $X(\omega)= \ind_{G}(\omega)\ind_{\{\cJ=j\}}(\omega)$ for $G\in\cG$ and $j\in\N$. Since sets of the form $G\cap\{\cJ=j\}$ are a $\pi$-system that generates $\cH$, then $\Sigma$ contains all $\cH$-measurable functions. 
\end{proof}
Recall the notation for the symmetric difference of two sets $A\triangle B=(A\setminus B)\cup (B\setminus A)$.
\begin{lemma}\label{cor:sigmas}
With the notation of Lemma \ref{lem:sigmas} we have $H\in \cH$ if and only if there is a collection of sets $(G^H_i)_{i\in\N}\subset\cG$ such that $H\cap\{\cJ=j\}=G^H_j\cap{\{\cJ=j\}}$ for all $j\in\N$. 

If $\cG$ is independent of $\sigma(\cJ)$ and $(L^H_i)_{i\in\N}\subset\cG$ is another collection of sets such that for all $j\in\N$, $H\cap\{\cJ=j\}=L^H_j\cap{\{\cJ=j\}}$, then 
\[
\P\Big(\bigcup_{j\in\N}(G^H_j\triangle L^H_j)\Big)=0.
\]
\end{lemma}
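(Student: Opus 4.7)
\textbf{Proof proposal for Corollary \ref{cor:sigmas}.} The plan is to reduce the ``only if'' direction of the equivalence to Lemma \ref{lem:sigmas} applied to the indicator $X=\ind_H$, while the ``if'' direction is a direct verification. The uniqueness claim under independence will follow from a set-theoretic inclusion combined with the factorisation of $\P$ on the product $\cG\otimes\sigma(\cJ)$.

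For the ``only if'' direction, take $H\in\cH$ and apply Lemma \ref{lem:sigmas} to the $\cH$-measurable random variable $X=\ind_H$. This yields a function $f:\Omega\times\N\to\R$ with $\omega\mapsto f(\omega,j)$ being $\cG$-measurable for each $j\in\N$ and $\ind_H(\omega)=f(\omega,\cJ(\omega))$. Since $\ind_H$ is $\{0,1\}$-valued, we may (and do) replace $f(\omega,j)$ by $\ind_{\{f(\omega,j)\ge 1/2\}}$ without affecting the identity (on the set $\{\cJ=j\}$, $f(\cdot,j)$ already takes values in $\{0,1\}$; off that set the change is harmless). Set $G^H_j\coloneqq\{\omega\in\Omega: f(\omega,j)=1\}\in\cG$. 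Then for every $j\in\N$,
\[
H\cap\{\cJ=j\}=\{\omega:\ind_H(\omega)=1,\ \cJ(\omega)=j\}=\{\omega:f(\omega,j)=1,\ \cJ(\omega)=j\}=G^H_j\cap\{\cJ=j\}.
\]
Conversely, if a collection $(G^H_j)_{j\in\N}\subset\cG$ with this property exists, then $H=\bigcup_{j\in\N}\big(G^H_j\cap\{\cJ=j\}\big)$, which is a countable union of sets in $\cG\vee\sigma(\cJ)=\cH$, hence $H\in\cH$.

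For the uniqueness assertion, fix $j\in\N$ and suppose $\omega\in G^H_j\triangle L^H_j$. If $\cJ(\omega)=j$, then the equalities $H\cap\{\cJ=j\}=G^H_j\cap\{\cJ=j\}=L^H_j\cap\{\cJ=j\}$ force $\omega$ to belong simultaneously to both $G^H_j$ and $L^H_j$ (or to neither), contradicting $\omega\in G^H_j\triangle L^H_j$. Hence
\[
\big(G^H_j\triangle L^H_j\big)\cap\{\cJ=j\}=\varnothing.
\]
Since $G^H_j\triangle L^H_j\in\cG$ and, by hypothesis, $\cG$ is independent of $\sigma(\cJ)$,
\[
0=\P\big((G^H_j\triangle L^H_j)\cap\{\cJ=j\}\big)=\P(G^H_j\triangle L^H_j)\,\P(\cJ=j).
\]
Thus $\P(G^H_j\triangle L^H_j)=0$ for every $j$ with $\P(\cJ=j)>0$. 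For indices $j$ with $\P(\cJ=j)=0$, the set $\{\cJ=j\}$ is a $\P$-null set and, up to modifying $G^H_j$ and $L^H_j$ on this null set (using that $\cG$ is complete), we may assume $G^H_j=L^H_j=\varnothing$, giving $\P(G^H_j\triangle L^H_j)=0$ in this case too. Countable subadditivity then yields
\[
\P\Big(\bigcup_{j\in\N}(G^H_j\triangle L^H_j)\Big)\le\sum_{j\in\N}\P(G^H_j\triangle L^H_j)=0,
\]
as required. The only mildly delicate point is the handling of degenerate atoms $\{\cJ=j\}$ of zero probability, which is addressed by working with $\P$-null-set augmentations of $\cG$ and choosing canonical null representatives; otherwise the argument is essentially a direct application of Lemma \ref{lem:sigmas} and the product structure of $\P$ on $\cG\otimes\sigma(\cJ)$.
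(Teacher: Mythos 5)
Your proof is correct and follows essentially the same route as the paper: apply Lemma \ref{lem:sigmas} to $X=\ind_H$ to extract the sets $G^H_j$, observe the converse is a countable union argument, and derive uniqueness from $(G^H_j\triangle L^H_j)\cap\{\cJ=j\}=\varnothing$ together with the independence factorisation $\P(G^H_j\triangle L^H_j)\,\P(\cJ=j)=0$. The only divergence is your explicit treatment of atoms with $\P(\cJ=j)=0$, which the paper's proof silently ignores (its implication $\P(G^H_j\setminus L^H_j)\P(\cJ=j)=0\Rightarrow\P(G^H_j\setminus L^H_j)=0$ presumes $\P(\cJ=j)>0$); your "modify the sets" fix is not really available since $G^H_j,L^H_j$ are given, but this degenerate case is a defect of the lemma's statement rather than of your argument.
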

\begin{proof}
The `if' implication in the first statement is trivial. For the `only if' implication we take $X=1_H$, so that by Lemma \ref{lem:sigmas} there is $f_H:\Omega\times\N\to\R$ such that $f_H(\cdot,j)$ is $\cG$-measurable for each $j$ and $X(\omega)=f_H(\omega,\cJ(\omega))$ for all $\omega\in\Omega$. Then, setting 
$G^H_j\coloneqq\{\omega\in\Omega: f_H(\omega,j)=1\}$, $j\in\N$,
we have
\[	
X(\omega)=\sum_{j\in\N}\ind_{H\cap\{\cJ=j\}}(\omega)\quad\text{and}\quad X(\omega)=\sum_{j\in\N}f_H(\omega,j)\ind_{\{\cJ=j\}}(\omega)=\sum_{j\in\N}\ind_{G^H_j\cap\{\cJ=j\}}(\omega),
\]
because $\{\cJ=j\}$, $j\in\N$, is a partition of $\Omega$. This completes the proof of the first statement.

Let us now prove the uniqueness. By assumption, we have $G^H_j\cap\{\cJ=j\}=L^H_j\cap\{\cJ=j\}$, for all $j\in\N$.
Then, $\big(G^H_j\setminus L^H_j\big)\cap\{\cJ=j\}=\varnothing$, for all $j\in\N$. By the independence of $\cG$ and $\cJ$ we deduce
\[
0=\P\Big(\big(G^H_j\setminus L^H_j\big)\cap\{\cJ=j\}\Big)=\P\big(G^H_j\setminus L^H_j\big)\P(\cJ=j)\implies \P\big(G^H_j\setminus L^H_j\big)=0,
\]
and analogously 
\[
0=\P\Big(\big(L^H_j\setminus G^H_j\big)\cap\{\cJ=j\}\Big)=\P\big(L^H_j\setminus G^H_j\big)\P(\cJ=j)\implies \P\big(L^H_j\setminus G^H_j\big)=0,
\]
for all $j\in\N$. Then the second claim holds.
\end{proof}

By an application of the above results we obtain the next two facts which are fundamental for our interpretation of the model in Section \ref{sec:partial}. In what follows we use the notation for filtrations $\F^1$ and $\F^2$ introduced in Section \ref{sec:partial}.
\begin{lemma}\label{lem:set_decomposition}
Let $\theta\in\cT_0(\F^1)$ and recall the decomposition $\theta=\sum_{j\in\N}\theta_j\ind_{\{\cJ=j\}}$, where $\theta_j\in\cT_0(\F^2)$ for each $j\in\N$. Then for any $A\in\cF^1_\theta$, there are $F_j\in\cF^2_{\theta_j}$, $j \in \N$, such that
\[
A\cap\{\cJ=j\}=F_j\cap\{\cJ=j\},\qquad j\in\N.
\]
\end{lemma}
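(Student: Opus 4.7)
The plan is to construct the candidate $F_j$ by gluing together ``local'' witnesses on a countable dense time grid, and then use the uniqueness part of Corollary~\ref{cor:sigmas} to upgrade approximate $\cF^2_s$-measurability to exact $\cF^2_s$-measurability via the $\P$-completeness of the filtration.

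First I would exploit the defining property of $\cF^1_\theta$: since $A\in\cF^1_\theta$, the set $A\cap\{\theta\le t\}$ lies in $\cF^1_t=\cF^2_t\vee\sigma(\cJ)$ for every $t\in[0,T]$. Applying Corollary~\ref{cor:sigmas} with $\cH=\cF^1_t$ and $\cG=\cF^2_t$, I obtain sets $G^t_j\in\cF^2_t$ such that $A\cap\{\theta\le t\}\cap\{\cJ=j\}=G^t_j\cap\{\cJ=j\}$. Since $\theta=\theta_j$ on $\{\cJ=j\}$ and $\{\theta_j\le t\}\in\cF^2_t$, the modified sets $F^t_j\coloneqq G^t_j\cap\{\theta_j\le t\}\in\cF^2_t$ still satisfy $F^t_j\cap\{\cJ=j\}=A\cap\{\theta_j\le t\}\cap\{\cJ=j\}$. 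Then I fix a countable dense subset $D\subset[0,T]$ containing $T$ and set $F_j\coloneqq\bigcup_{t\in D}F^t_j\in\cF^2_T$. Taking unions and using that $T\in D$ gives immediately $F_j\cap\{\cJ=j\}=A\cap\{\cJ=j\}$, as required.

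The main obstacle is verifying that $F_j\in\cF^2_{\theta_j}$, i.e.\ that $F_j\cap\{\theta_j\le s\}\in\cF^2_s$ for every $s\in[0,T]$; the difficulty is that for $t>s$ the set $F^t_j\cap\{\theta_j\le s\}$ is only \emph{a priori} in $\cF^2_t$. To overcome this, apply Corollary~\ref{cor:sigmas} once more, this time to $A\cap\{\theta\le s\}\in\cF^1_s$, producing $K^s_j\in\cF^2_s$ with $A\cap\{\theta\le s\}\cap\{\cJ=j\}=K^s_j\cap\{\cJ=j\}$, and hence $K^s_j\cap\{\theta_j\le s\}\in\cF^2_s$ is another witness for the decomposition of $A\cap\{\theta\le s\}$ on $\{\cJ=j\}$, now computed in $\cF^2_T$. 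Since $\cJ$ is independent of $\cF^2_T$ by construction of the probability space in Section~\ref{sec:partial}, the uniqueness statement in Corollary~\ref{cor:sigmas} forces the symmetric difference of $F_j\cap\{\theta_j\le s\}$ and $K^s_j\cap\{\theta_j\le s\}$ to be $\P$-null. As $\cF^2_s$ is completed with all $\P$-null sets, this identifies $F_j\cap\{\theta_j\le s\}$ as an element of $\cF^2_s$, and the arbitrariness of $s$ completes the argument.
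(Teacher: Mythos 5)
Your argument is correct and follows essentially the same route as the paper: apply Corollary~\ref{cor:sigmas} to $A\cap\{\theta\le t\}$ at each fixed time, then use the uniqueness statement (via independence of $\cF^2_T$ from $\cJ$) together with completeness of $\F^2$ to upgrade the candidate to an element of $\cF^2_{\theta_j}$. The only difference is cosmetic: your union over a countable dense grid is superfluous, since the single witness at $t=T$ already satisfies $F_j\cap\{\cJ=j\}=A\cap\{\cJ=j\}$ (because $\theta\le T$ always), which is exactly what the paper does.
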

\begin{proof}
Since $A\in\cF^1_\theta$, we have $A\in\cF^1_T$ with $A\cap\{\theta\le t\}\in\cF^1_t$ for all $t\ge 0$. Lemma \ref{cor:sigmas} guarantees that for every $j \in \N$ and every $t \ge 0$, there is $F^t_j\in\cF^2_t$ such that
\[
\big(A\cap\{\theta\le t\}\big)\cap\{\cJ=j\}=\big(A\cap\{\theta_j\le t\}\big)\cap\{\cJ=j\}= F^t_j\cap\{\cJ=j\}.
\]
Since all our stopping times are bounded by $T$, the above equation implies
\begin{align*}
A\cap\{\cJ=j\}&=\big(A\cap\{\theta \le T\}\big)\cap\{\cJ=j\}\\
&=\big(A\cap\{\theta_j \le T\}\big)\cap\{\cJ=j\}= F_j\cap\{\cJ=j\},\quad\forall j\in\N
\end{align*}
with $F_j\in\cF^2_T$. However, the equations above yield for any $t\ge 0$
\begin{align*}
F_j\cap\{\theta_j\le t\}\cap\{\cJ=j\}=A\cap\{\theta_j\le t\}\cap\{\cJ=j\}=F^t_j\cap\{\cJ=j\}.
\end{align*}
The uniqueness result in Lemma \ref{cor:sigmas} guarantees that the symmetric difference $\big(F_j\cap\{\theta_j\le t\}\big)\triangle F^t_j$ is a $\P$-null set. Hence, $F_j\cap\{\theta_j\le t\} \in \cF^2_t$ by the completeness of the filtration $\F^2$ and, since the inclusion holds for any $t \ge 0$, we have $F_j \in \cF^2_{\theta_j}$.
\end{proof}

\begin{lemma}\label{lem:rv_decomposition}
Assume that $Z$ is $\cF^2_T$-measurable and integrable. Then, for any $\theta \in \cT_0(\F^1)$, recalling the decomposition $\theta=\sum_{j\in\N}\theta_j\ind_{\{\cJ=j\}}$ for $\theta_j\in\cT_0(\F^2)$, we have
\begin{equation}\label{eqn:rv_decomp}
\E[Z | \cF^1_\theta] = \sum_{j \in \N} \E[Z|\cF^2_{\theta_j}] \ind_{\{\cJ=j\}}.
\end{equation}
Consequently, for any $\cF^1_T$-measurable and integrable $\hat Z$ with the decomposition $\hat Z = \sum_{j \in \N} \indd{\cJ=j} Z_j$ and $\cF^2_T$-measurable $Z_j$, we have
\[
\E[\hat Z | \cF^1_\theta] = \sum_{j \in \N} \E[Z_j|\cF^2_{\theta_j}] \ind_{\{\cJ=j\}}.
\]
\end{lemma}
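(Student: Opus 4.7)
Denote the right-hand side of \eqref{eqn:rv_decomp} by $Y \coloneqq \sum_{j\in\N}\E[Z|\cF^2_{\theta_j}]\ind_{\{\cJ=j\}}$. The plan is to verify that $Y$ is an admissible candidate for $\E[Z|\cF^1_\theta]$, i.e., it is $\cF^1_\theta$-measurable and satisfies $\E[Z \ind_A] = \E[Y \ind_A]$ for every $A \in \cF^1_\theta$. The two main ingredients are Lemma \ref{lem:set_decomposition} (which gives the structure of $\cF^1_\theta$-measurable sets) and the fact recorded at the start of Section \ref{sec:partial} that $\cJ$ is independent of $\cF^2_T$.

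For measurability, fix $j\in\N$ and let $B \in \cF^2_{\theta_j}$ be arbitrary. On the event $\{\cJ = j\}$ we have $\theta = \theta_j$, so for every $t \in [0,T]$
\[
B \cap \{\cJ=j\} \cap \{\theta \le t\} = \big(B \cap \{\theta_j \le t\}\big) \cap \{\cJ=j\},
\]
which belongs to $\cF^1_t$ since $B \cap \{\theta_j \le t\} \in \cF^2_t$ and $\{\cJ=j\} \in \sigma(\cJ) \subseteq \cF^1_0$. Hence $B \cap \{\cJ=j\} \in \cF^1_\theta$, and taking $B = \{\E[Z|\cF^2_{\theta_j}] \le a\}$ shows that each summand $\E[Z|\cF^2_{\theta_j}]\ind_{\{\cJ=j\}}$ is $\cF^1_\theta$-measurable; therefore so is $Y$.

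For the integration identity, fix $A \in \cF^1_\theta$ and apply Lemma \ref{lem:set_decomposition} to obtain sets $(F_j)_{j\in\N}$ with $F_j \in \cF^2_{\theta_j}$ and $A \cap \{\cJ=j\} = F_j \cap \{\cJ=j\}$. Since $\cJ$ is independent of $\cF^2_T$ and both $Z \ind_{F_j}$ and $\E[Z|\cF^2_{\theta_j}]\ind_{F_j}$ are $\cF^2_T$-measurable,
\begin{align*}
\E[Z \ind_A]
&= \sum_{j\in\N} \E\big[Z \ind_{F_j} \ind_{\{\cJ=j\}}\big]
= \sum_{j\in\N} \E\big[Z \ind_{F_j}\big]\, \P(\cJ=j)\\
&= \sum_{j\in\N} \E\big[\E[Z|\cF^2_{\theta_j}] \ind_{F_j}\big]\, \P(\cJ=j)
= \sum_{j\in\N} \E\big[\E[Z|\cF^2_{\theta_j}] \ind_{F_j} \ind_{\{\cJ=j\}}\big] = \E[Y \ind_A],
\end{align*}
where the third equality uses $F_j \in \cF^2_{\theta_j}$. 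This proves \eqref{eqn:rv_decomp}. For the consequence, since $\indd{\cJ=j}$ is $\cF^1_0 \subseteq \cF^1_\theta$-measurable, \eqref{eqn:rv_decomp} applied to $Z_j$ gives
\[
\E[\indd{\cJ=j} Z_j | \cF^1_\theta] = \indd{\cJ=j} \sum_{k\in\N} \E[Z_j|\cF^2_{\theta_k}]\ind_{\{\cJ=k\}} = \indd{\cJ=j}\E[Z_j|\cF^2_{\theta_j}],
\]
and summing over $j$ (with an interchange justified by integrability of $\hat Z$) yields the announced formula. The only delicate point is the measurability argument above, but it is handled cleanly by invoking Lemma \ref{lem:set_decomposition} rather than analysing $\cF^1_\theta$ from scratch.
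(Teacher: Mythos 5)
Your proof is correct and follows essentially the same route as the paper's: the key computation verifying $\E[Z\ind_A]=\E[Y\ind_A]$ via Lemma \ref{lem:set_decomposition} and the independence of $\cJ$ from $\cF^2_T$ is the same, as is the reduction of the second claim to the first. Your explicit check that the right-hand side is $\cF^1_\theta$-measurable is a welcome addition that the paper leaves implicit.
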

\begin{proof}
We need to show that the expectations of the left and right-hand sides of \eqref{eqn:rv_decomp} multiplied by the indicator function of any set $A \in \cF^1_{\theta}$ are identical. Take $A \in \cF^1_{\theta}$. By Lemma \ref{lem:set_decomposition}, it has a representation $A\cap\{\cJ=j\}=F_j\cap\{\cJ=j\}$ for some $F_j\in\cF^2_{\theta_j}$, $j\in\N$. We have
\begin{align*}
\E\Big[\ind_{A} \sum_{j \in \N} \E[Z|\cF^2_{\theta_j}] \ind_{\{\cJ=j\}}\Big]
&=
\E\Big[ \sum_{j \in \N} \E[Z|\cF^2_{\theta_j}] \ind_{F_j} \ind_{\{\cJ=j\}}\Big]
=
\sum_{j \in \N} \E\big[ \E[Z \ind_{F_j}|\cF^2_{\theta_j}] \ind_{\{\cJ=j\}}\big]\\
&=
\sum_{j \in \N} \E[Z \ind_{F_j}] \P(\cJ=j)
=
\E\Big[ \sum_{j \in \N} Z \ind_{F_j} \ind_{\{\cJ=j\}}\Big]\\
&=
\E[Z \ind_{A}]
=
\E\big[\E[Z | \cF^1_\theta] \ind_{A}\big], 
\end{align*}
where in the third and fourth equality we used the independence of $\cF^2_T$ from $\cJ$.

For the second statement, we write
\[
\E[\hat Z | \cF^1_\theta] = \sum_{j \in \N} \ind_{\{\cJ=j\}} \E[Z_j|\cF^1_{\theta}]
= \sum_{j \in \N} \ind_{\{\cJ=j\}} \sum_{i \in \N} \indd{\cJ=i} \E[Z_j|\cF^2_{\theta_i}]
= \sum_{j \in \N} \ind_{\{\cJ=j\}} \E[Z_j|\cF^2_{\theta_j}]
\]
with the second equality justified by the first part of the lemma.
\end{proof}

\bibliographystyle{alpha}
\bibliography{biblio}

\begin{thebibliography}{DAMP22}

\bibitem[Bal17]{baldi2017stochastic}
P.~Baldi.
\newblock {\em Stochastic calculus}.
\newblock Springer, 2017.

\bibitem[BF74]{bensoussan1974}
A.~Bensoussan and A.~Friedman.
\newblock Nonlinear variational inequalities and differential games with
  stopping times.
\newblock {\em J. Funct. Anal.}, 16(3):305--352, 1974.

\bibitem[BFR25]{bavnas2025numerical}
L.~Ba{\v{n}}as, G.~Ferrari, and T.A. Randrianasolo.
\newblock Numerical approximation of {D}ynkin games with asymmetric
  information.
\newblock {\em SIAM J.\ Control Optim.}, 63(1):256--291, 2025.

\bibitem[Bil95]{Billingsley}
P.~Billingsley.
\newblock {\em Probability and Measure}.
\newblock John Wiley \& Sons Inc, 3 edition, 1995.

\bibitem[Bis77]{bismut1977}
J.-M. Bismut.
\newblock Sur un probl\`eme de {D}ynkin.
\newblock {\em Zeitschrift f{\"u}r Wahrscheinlichkeitstheorie und Verwandte
  Gebiete}, 39(1):31--53, 1977.

\bibitem[CL24]{christensen2024general}
S.~Christensen and K.~Lindensj{\"o}.
\newblock General {M}arkovian randomized equilibrium existence and construction
  in zero-sum {D}ynkin games for diffusions.
\newblock {\em arXiv:2412.09087}, 2024.

\bibitem[CR09]{cardaliaguet2009}
P.~Cardaliaguet and C.~Rainer.
\newblock Stochastic differential games with asymmetric information.
\newblock {\em Appl. Math. Optim.}, 59(1):1--36, 2009.

\bibitem[CS24]{christensen2024existence}
S.~Christensen and B.~Schultz.
\newblock On the existence of {M}arkovian randomized equilibria in {D}ynkin
  games of war-of-attrition-type.
\newblock {\em arXiv:2406.09820}, 2024.

\bibitem[DAEG22]{DEG2020}
T.~De~Angelis, E.~Ekstr\"{o}m, and K.~Glover.
\newblock {D}ynkin games with incomplete and asymmetric information.
\newblock {\em Math. Oper. Res.}, 47(1):560--586, 2022.

\bibitem[DAGV21]{DGV2017}
T.~De~Angelis, F.~Gensbittel, and S.~Villeneuve.
\newblock A {D}ynkin game on assets with incomplete information on the return.
\newblock {\em Math. Oper. Res.}, 46(1):28--60, 2021.

\bibitem[DAHP25]{DEAHP}
T.~De~Angelis, D.~Hobson, and J.~Palczewski.
\newblock A {D}ynkin game with foresight.
\newblock {\em preprint}, 2025.

\bibitem[DAMP22]{de2022value}
T.~De~Angelis, N.~Merkulov, and J.~Palczewski.
\newblock On the value of non-{M}arkovian {D}ynkin games with partial and
  asymmetric information.
\newblock {\em Ann. Appl. Probab.}, 32(3):1774--1813, 2022.

\bibitem[DGM22]{decamps2022mixed}
J.-P. D{\'e}camps, F.~Gensbittel, and T.~Mariotti.
\newblock Mixed-strategy equilibria in the war of attrition under uncertainty.
\newblock {\em arXiv:2210.08848}, 2022.

\bibitem[DGM24]{decamps2024mixed}
J.-P. D{\'e}camps, F.~Gensbittel, and T.~Mariotti.
\newblock Mixed {M}arkov-perfect equilibria in the continuous-time war of
  attrition.
\newblock {\em arXiv:2407.04878}, 2024.

\bibitem[DM78]{DellacherieMeyerA}
C.~Dellacherie and P.-A. Meyer.
\newblock {\em Probabilities and Potential.}
\newblock North-Holland Publishing Company, 1978.

\bibitem[DM83]{DellacherieMeyer}
C.~Dellacherie and P.-A. Meyer.
\newblock {\em Probabilities and Potential {B}. {T}heory of Martingales.}
\newblock North-Holland Mathematics Studies 72, Elsevier Science, 1983.

\bibitem[Dom02]{domansky2002}
V.K. Domansky.
\newblock Randomized optimal stopping times for a class of stopping games.
\newblock {\em Theory Probab. Appl.}, 46(4):708--717, 2002.

\bibitem[Dyn69]{dynkin1969}
E.B. Dynkin.
\newblock Game variant of a problem on optimal stopping.
\newblock {\em Soviet Math. Dokl.}, 10:270--274, 1969.

\bibitem[EI18]{esmaeeli2018}
N.~Esmaeeli and P.~Imkeller.
\newblock American options with asymmetric information and reflected {BSDE}.
\newblock {\em Bernoulli}, 24(2):1394--1426, 2018.

\bibitem[EK81]{elkaroui1981}
N.~El~Karoui.
\newblock {\em Les Aspects Probabilistes Du Controle Stochastique}.
\newblock Springer Berlin Heidelberg, 1981.

\bibitem[EP08]{ekstrom2008}
E.~Ekstr{\"o}m and G.~Peskir.
\newblock Optimal stopping games for {M}arkov processes.
\newblock {\em SIAM J. Control Optim.}, 47(2):684--702, 2008.

\bibitem[EV06]{ekstrom2006}
E.~Ekstr{\"o}m and S.~Villeneuve.
\newblock On the value of optimal stopping games.
\newblock {\em Ann. Appl. Probab.}, 16(3):1576--1596, 2006.

\bibitem[GG19]{GenGrun2019}
F.~Gensbittel and C.~Gr{\"u}n.
\newblock Zero-sum stopping games with asymmetric information.
\newblock {\em Math. Oper. Res.}, 44(1):277--302, 2019.

\bibitem[Gr{\"u}13]{Grun2013}
C.~Gr{\"u}n.
\newblock On {D}ynkin games with incomplete information.
\newblock {\em SIAM J. Control Optim.}, 51(5):4039--4065, 2013.

\bibitem[Hal74]{Halmos}
P.R. Halmos.
\newblock {\em Measure theory}.
\newblock Springer-Verlag New York, 1974.

\bibitem[HL00]{hamadene2000reflected}
S.~Hamad{\`e}ne and J.-P. Lepeltier.
\newblock Reflected {BSDEs} and mixed game problem.
\newblock {\em Stochastic Process.\ Appl.}, 85(2):177--188, 2000.

\bibitem[IW81]{ikeda}
N.~Ikeda and S.~Watanabe.
\newblock {\em Stochastic differential equations and diffusion processes},
  volume~24 of {\em North-Holland Mathematical Library}.
\newblock North-Holland Publishing Co., Amsterdam, 1981.

\bibitem[Kal02]{kallenberg2002}
O.~Kallenberg.
\newblock {\em Foundations of modern probability}.
\newblock Probability and its Applications (New York). Springer-Verlag, New
  York, second edition, 2002.

\bibitem[Kif71]{kiefer1971optimal}
Y.~Kifer.
\newblock Optimal stopping games.
\newblock {\em Theory Probab. Appl.}, 16:185--189, 1971.

\bibitem[Kif00]{kifer2000}
Y.~Kifer.
\newblock Game options.
\newblock {\em Finance Stoch.}, 4(4):443--463, 2000.

\bibitem[KP24]{kwon2023exit}
H.D. Kwon and J.~Palczewski.
\newblock Exit game with private information.
\newblock {\em Math.\ Oper.\ Res.}, 2024.

\bibitem[KS98a]{karatzas1998brownian}
I.~Karatzas and S.E. Shreve.
\newblock {\em Brownian {M}otion and {S}tochastic {C}alculus}.
\newblock Springer, 1998.

\bibitem[KS98b]{Karatzas1998}
I.~Karatzas and S.E. Shreve.
\newblock {\em Methods of Mathematical Finance}.
\newblock Springer-Verlag New York, 1998.

\bibitem[LM84]{lepeltier1984}
J.P Lepeltier and E.M. Maingueneau.
\newblock Le jeu de {D}ynkin en th{\'e}orie g{\'e}n{\'e}rale sans
  l'hypoth{\`e}se de {M}okobodski.
\newblock {\em Stochastics}, 13(1-2):25--44, 1984.

\bibitem[LS05]{LarakiSolan2005}
R.~Laraki and E.~Solan.
\newblock The value of zero-sum stopping games in continuous time.
\newblock {\em SIAM J. Control Optim.}, 43(5):1913--1922, 2005.

\bibitem[Nev75]{neveu}
J.~Neveu.
\newblock {\em Discrete-parameter martingales}.
\newblock North-Holland Publishing Company, Amsterdam, 1975.

\bibitem[Pro05]{protter2005stochastic}
P.E. Protter.
\newblock {\em Stochastic integration and differential equations}.
\newblock Springer, 2005.

\bibitem[RSV01]{rosenberg2001}
D.~Rosenberg, E.~Solan, and N.~Vieille.
\newblock Stopping games with randomized strategies.
\newblock {\em Probab. Theory Related Fields}, 119(3):433--451, 2001.

\bibitem[RW00]{RogersWilliams}
L.C.G. Rogers and D.~Williams.
\newblock {\em Diffusions, {M}arkov Processes and Martingales}, volume~2.
\newblock Cambridge University Press, 2 edition, 2000.

\bibitem[RY99]{revuzyor}
D.~Revuz and M.~Yor.
\newblock {\em Continuous Martingales and Brownian Motion}.
\newblock Springer-Verlag Berlin Heidelberg, 1999.

\bibitem[Smi24]{smith2024martingale}
J.E. Smith.
\newblock {\em A Martingale Theory for a Class of Dynkin Games with Asymmetric
  Information}.
\newblock PhD thesis, University of Leeds, 2024.

\bibitem[Ste82]{stettner1982}
{\L}.~Stettner.
\newblock Zero-sum {M}arkov games with stopping and impulsive strategies.
\newblock {\em Appl. Math. Optim.}, 9(1):1--24, 1982.

\bibitem[TV02]{TouziVieille2002}
N.~Touzi and N.~Vieille.
\newblock Continuous-time {D}ynkin games with mixed strategies.
\newblock {\em SIAM J. Control Optim.}, 41(4):1073--1088, 2002.

\bibitem[Wil91]{williams1991probability}
D.~Williams.
\newblock {\em Probability with martingales}.
\newblock Cambridge University Press, 1991.

\bibitem[Yas85]{yasuda1985}
M.~Yasuda.
\newblock On a randomized strategy in {N}eveu's stopping problem.
\newblock {\em Stoch. Process. Appl.}, 21(1):159--166, 1985.

\end{thebibliography}

\end{document}